\newcommand\lasso[5][]{
  \coordinate (source) at #2;
  \coordinate (goal) at #4;
  \coordinate (x0) at ($(goal)!2*#3!(source)$);
  \coordinate (x1) at ($(goal)!#3!45:(source)$);
  \coordinate (x2) at ($(goal)!#3!-45:(source)$);

  \draw[#1] let \p1=($(x1)-(goal)$), \p2=($(x2)-(goal)$),
        \n1={atan2(\y1,\x1)}, \n2={atan2(\y2,\x2)}, \n3={\n2>\n1?\n2:\n2+360}
    in #2 -- #5 (x0) .. controls +($0.5*(goal)-0.5*(x0)$) and +($0.5*(\p2)$) .. (x1) arc (\n1:\n3:#3)
  .. controls +($0.5*(\p1)$) and +($0.5*(goal)-0.5*(x0)$) .. (x0);
 \draw[->,dash pattern=on 0pt off #3]  ($(goal)!1.414*#3!135:(source)$) -- ($(goal)!-#3!(source)$);
}
\begin{document}
\title[Sphere bisets and decidability of Thurston equivalence]{Algorithmic aspects of branched coverings II/V.\\ Sphere bisets and decidability of Thurston equivalence}
\author{Laurent Bartholdi}
\email{laurent.bartholdi@gmail.com}
\author{Dzmitry Dudko}
\email{dzmitry.dudko@gmail.com}
\address{\'Ecole Normale Sup\'erieure, Paris \emph{and} Mathematisches Institut, Georg-August Universit\"at zu G\"ottingen}
\thanks{Partially supported by ANR grant ANR-14-ACHN-0018-01 and DFG grant BA4197/6-1}
\date{June 10, 2018}
\begin{abstract}
  We consider \emph{Thurston maps}: branched self-coverings of the
  sphere with ultimately periodic critical points, and prove that the
  Thurston equivalence problem between them (continuous deformation of
  maps along with their critical orbits) is decidable.

  More precisely, we consider the action of mapping class groups, by
  pre- and post-composition, on branched coverings, and encode them
  algebraically as \emph{mapping class bisets}.  We show how the
  mapping class biset of maps preserving a multicurve decomposes into
  mapping class bisets of smaller complexity, called \emph{small
    mapping class bisets}.

  We phrase the decision problem of Thurston equivalence between
  branched self-coverings of the sphere in terms of the conjugacy and
  centralizer problems in a mapping class biset.  Our decomposition
  results on mapping class bisets reduce these decision problems to
  small mapping class bisets; they correspond to rational maps,
  homeomorphisms and maps double covered by a torus endomorphism, and
  their conjugacy and centralizer problems are solvable respectively
  in terms of complex analysis, group theory and linear algebra.

  Branched coverings themselves are also encoded into bisets, with
  actions of the fundamental groups. We characterize those bisets that
  arise from branched coverings between topological spheres, and
  extend this correspondence to maps between spheres with multicurves,
  whose algebraic counterparts are \emph{sphere trees of bisets}.

  To illustrate the difference between Thurston maps and
  homeomorphisms, we produce a Thurston map with infinitely generated
  centralizer --- while centralizers of homeomorphisms are always
  finitely generated.
\end{abstract}
\maketitle

\setcounter{tocdepth}{1}
\tableofcontents

\section{Introduction}
Consider a topological sphere $S^2$ and a finite subset
$A\subset S^2$. A \emph{Thurston map} is a branched covering
$f\colon(S^2,A)\selfmap$ such that $A$ contains the critical values of
$f$. It is natural to consider Thurston maps up to \emph{combinatorial
  equivalence}, written $f\sim g$: we set $f_0\sim f_1$ if there
exists a path of Thurston maps
$(f_t\colon(S^2,A_t)\selfmap)_{t\in[0,1]}$ connecting them, along
which $\#A_t$ is constant. Thanks to this relation, Thurston maps
become combinatorial objects, given e.g.\ by a triangulation of
$(S^2,A)$ and data encoding how the triangulation is mapped, up to
isotopy, to itself by $f$.  Our main result is:
\begin{mainthm}[= Theorem~\ref{thm:A:new}]\label{thm:main}
  It is decidable whether two Thurston maps are
  equivalent. Furthermore, the centralizer of a Thurston map, namely
  the group of homeomorphisms $(S^2,A)\selfmap$ commuting with the
  Thurston map up to isotopy, is computable.
\end{mainthm}
Partial positive results existed beforehand: in case the Thurston maps
are homeomorphisms, the first statement was proven by
Hemion~\cite{hemion:homeos}, who showed that the conjugacy problem is
solvable in mapping class
groups. Bonnot-Braverman-Yampolsky~\cite{bonnot-braverman-yampolsky:thurstondecidable}
showed that equivalence to a rational map is decidable; this implies
decidability of combinatorial equivalence for ``unobstructed''
maps. Selinger-Yampolsky~\cite{selinger-yampolsky:geometrization}
showed decidability of combinatorial equivalence in case the Thurston
maps decompose canonically into rational maps with ``hyperbolic
orbifold''.

The difficult part of Theorem~\ref{thm:main} is that for general
Thurston maps, as opposed to degree-$1$ or unobstructed ones, the
centralizers of small maps affect rotations of annuli in the canonical
decomposition. This allows Thurston maps to have infinitely generated
centralizers, as we show by example in~\S\ref{ss:complicated
  centralizer}; and understanding centralizers is always at the core
of solving equivalence problems ($A\sim B$ if and only if the
centralizer of $A\sqcup B$ contains an element exchanging $A$ and $B$;
this is decidable if the centralizer can be given with an explicit
generating set).

There are negative results in a more general setting (replacing the
sphere by a finite cell complex, e.g.): in~\cite{bartholdi:wordorder}
it is shown that even the equality problem can become unsolvable. It
should be no surprise, therefore, that our argument makes crucial use,
in a specific place, of complex analysis.

The algorithms themselves, however, can be made almost entirely
symbolic; their complexity will be the topic of the last
article~\cite{bartholdi-dudko:bc5} in this series.

\subsection{The semigroup of Thurston maps}
Let us return to Thurston maps $f\colon(S^2,A)\selfmap$. We refine
combinatorial equivalence into a composition of isotopy and
conjugation: let us call Thurston maps $f_0,f_1\colon(S^2,A)\selfmap$
isotopic, and write $f_0\approx f_1$, if there is a path of Thurston
maps $(f_t\colon(S^2,A)\selfmap)_{t\in[0,1]}$ connecting them.

We denote by $K(S^2,A)$ the semigroup of isotopy classes of Thurston
maps $(S^2,A)\selfmap$, under composition.  In particular, those
isotopy classes of Thurston maps that fix $A$ and are homeomorphisms
form a group $\Mod(S^2,A)$, the \emph{pure mapping class group}.

Two elements $f,g\in K(S^2,A)$ are \emph{conjugate} if there exists
$m\in\Mod(S^2,A)$ with $m g=f m$. Combinatorial equivalence is
essentially a conjugacy problem: to determine whether two Thurston
maps $f\colon(S^2,A)\selfmap$ and $g\colon(S^2,C)\selfmap$ are
combinatorially equivalent, enumerate all bijections
$\phi\colon A\to C$, and extend each $\phi$ arbitrarily to a
homeomorphism $\widehat\phi\colon(S^2,A)\to(S^2,C)$. Then $f\sim g$ if
and only if $f$ and $\widehat\phi^{-1}\circ g\circ\widehat\phi$ are
conjugate in $K(S^2,A)$ for some $\phi\colon A\to C$; see
Definition~\ref{defn:combinatorial equivalence} and
Lemma~\ref{lem:combinatorial equivalence}.

From now on, we therefore concentrate on the semigroup of Thurston
maps. We solve the following decision problems in $K(S^2,A)$:
\begin{description}
\item[The conjugacy problem] Given $f,g\in K(S^2,A)$, are they
  conjugate? If so, give a witness
  $m\in\Mod(S^2,A)$ to $mg=f m$.
\item[The centralizer problem] Given $f\in K(S^2,A)$, compute
  its centralizer $Z(f)\coloneqq\{m\in\Mod(S^2,A)\mid m f=f m\}$.
\end{description}

A \emph{multicurve} on $(S^2,A)$ is a collection $\CC$ of disjoint
pairwise non-homotopic simple closed curves on $S^2\setminus A$, none
of which can be homotoped rel $A$ to a point, and viewed up to
isotopy.  We let $K(S^2,A,\CC)$ denote those Thurston maps
$f\in K(S^2,A)$ that preserve $\CC$ in the sense that
$f^{-1}(\CC)= \CC$, and we denote by $\Mod(S^2,A,\CC)$ the
subgroup of $\Mod(S^2,A)$ consisting of mapping classes that preserve
$\CC$ curvewise, including their orientation.

Consider $(S^2,A)$ and a multicurve $\CC$. The connected components of
$S^2\setminus\CC$ are called \emph{small spheres}, and are themselves
homeomorphic to punctured spheres.  Given $f\in K(S^2,A,\CC)$ and a
periodic component $S\subset S^2\setminus\CC$ viewed as a punctured
sphere $\widehat S$, the first return map
$f^e\colon \widehat S\selfmap$ is called a \emph{small Thurston
  map}. We denote by $R(f,A,\CC)$ the set of small Thurston maps of
$f$.

\subsection{Algorithms}
To prove Theorem~\ref{thm:main}, we devise a collection of algorithms
that manipulate branched coverings; the main task is achieved by
Theorem~\ref{thm:A}. The proof proceeds by considering a refinement of
Pilgrim's \emph{canonical decomposition} of spheres under a Thurston
map~\cite{pilgrim:combinations}.

As a first step, Algorithm~\ref{algo:decidelevy} finds, given two
Thurston maps, their maximal invariant multicurves that are mapped to
themselves by degree $1$.

Results from~\cite{bartholdi-dudko:bc4} imply that the resulting
small maps are either homeomorphisms, or expanding, or double-covered
by a torus endomorphism, and that these are computable.

The expanding maps may themselves be further decomposed along another
invariant multicurve, in such a manner that the small maps are
rational. Again, this multicurve and the small maps are computable.

Algorithm~\ref{algo:conj:LevyFree} solves the conjugacy problem in
each of the resulting classes (homeomorphism, double-covered by torus
endomorphism or rational): in the first case, using standard group
theory, in the second using floating-point approximations, and in the
third case using linear algebra.

The centralizers are also readily computed along the way: as an
important outcome of Nielsen-Thurston classifications, centralizers of
homeomorphisms are computable as finite-index subgroups of products of
mapping class groups. Thurston's rigidity theorem implies that
rational maps with hyperbolic orbifold have trivial
centralizer. Centralizers of affine maps on the torus are computable
as linear groups.

The crux of the proof of Theorem~\ref{thm:main} lies in the following
Theorem~\ref{thm:A}: it assembles the solutions of the conjugacy and
centralizer problems of the small maps to that of the original
ones. We phrase the main, somewhat technical result in the form of an
algorithm with oracle solving the conjugacy and centralizer problems
for small maps:
\begin{mainthm}[See Algorithm~\ref{algo:bisets_decomp}]\label{thm:A}
  Let $(S^2,A)$ be a punctured sphere, and let $\CC$ be a multicurve
  on $(S^2,A)$. Then there is an algorithm with oracle, that,
  \begin{list}{---}{\leftmargin=0pt\itemindent=1.5em}
  \item given two Thurston maps $f,g\in K(S^2,A,\CC)$,
  \item assuming that the centralizers of all small Thurston maps in
    $R(f,A,\CC)$ are computable (e.g.\ as
    finite-index subgroups of products of mapping class groups),
  \item assuming that the conjugacy problems between maps in
    $R(f,A,\CC)$ and in $R(g,A,\CC)$ are solvable, and that witnesses
    can be produced in case maps are conjugate,
  \end{list}
  answers whether $f,g$ are conjugate under $\Mod(S^2,A,\CC)$, if so
  produces a witness, and computes the centralizer of $f$ as the
  kernel of a homomorphism from a finite-index subgroup of a product
  of mapping class groups towards a finitely generated abelian group.
\end{mainthm}
We elected to state the main step of the proof of
Theorem~\ref{thm:main} in the form of an algorithm with oracle, so as
to separate the computational steps of combining small maps and
of computing them. The combination done by Theorem~\ref{thm:A} is
practically efficient, while the oracles can be very much
improved. Nevertheless, Algorithm~\ref{algo:bisets_decomp} is an
unconditional algorithm, because the oracles are algorithmic (and
described in this text). The issues of efficiency will be returned to
in~\cite{bartholdi-dudko:bc5}. In particular, we cannot avoid some use
of floating-point calculations, in~\S\ref{algo:ratmaps}, but this is the
only place where they appear.

We show by an example (see~\S\ref{ss:complicated centralizer}) that
this description of centralizers of Thurston maps is in a sense the
best that can be achieved; this makes centralizers of Thurston maps
significantly more complicated than centralizers of mapping classes,
which are always finite-index subgroups of products of mapping class
groups.

It is possible to approach a proof of Theorem~\ref{thm:main} by
considerations on Teichm\"uller space, as we discuss
in~\S\ref{ss:rem:Thurston iter}, by running Thurston's algorithm
(iteration of the pull-back map). This really amounts to obtaining the
canonical decomposition of Thurston maps into small maps. The main
step, however, remains: the data extracted from Teichm\"uller space
(in the form of rotation parameters, return maps, and their induced
actions) has to be supplied to Theorem~\ref{thm:A} in the form of an
oracle.

\subsection{Algebraic structure of branched coverings}
We delayed until now a discussion of \emph{how} Thurston maps and
$K(S^2,A)$ are to be represented as input and output to our
algorithms. By far the most convenient language is the symbolic,
group-theoretical language of \emph{bisets} pioneered by
Nekrashevych~\cite{nekrashevych:ssg}.

These notions were explained in detail
in~\cite{bartholdi-dudko:bc1}. In this article, we specialize its
results to $2$-dimensional spheres and Thurston maps.  It is in fact
worthwhile to consider a slightly more general situation: fix a
branched covering $f\colon(S^2,C)\to(S^2,A)$ such that $A$ contains
$f(C)$ and the critical values of $f$; we call $f$ a \emph{sphere
  map}. Set
\begin{equation}\label{eq:intro:m}
  M(f,C,A)=\{m'fm''\mid m'\in\Mod(S^2,C),m''\in\Mod(S^2,A)\}\,/\,\text{isotopy},
\end{equation}
the composition being written in algebraic (left-to-right) order. Then
$M(f,C,A)$ is a \emph{biset}: a set endowed with commuting left and
right actions by $\Mod(S^2,C)$ and $\Mod(S^2,A)$ respectively. We call
it the \emph{mapping class biset} of $f$.

Note that $K(S^2,A)$ is a $\Mod(S^2,A)$-$\Mod(S^2,A)$-biset, and that
if $f\colon(S^2,A)\selfmap$ is a Thurston map, $M(f,A,A)$ is a
subbiset of $K(S^2,A)$.  Note that $K(S^2,A)$ is not a finitely
generated semigroup. On the other hand, $M(f,A,A)$ is \emph{finitely
  presented} -- this is one of our main reasons to work with
$M(f,A,A)$. Finite presentation also allows to discuss the
\emph{efficiency} of algorithms, see~\S\ref{s:CombEquiv}
and~\cite{bartholdi-dudko:bc3}.

We give an algorithm (Algorithm~\ref{algo:compute M(f,C,A)}) that
computes the structure of $M(f,C,A)$ as a biset. It uses in a
fundamental manner the facts that $H\coloneqq\pi_1(S^2\setminus C)$
and $G\coloneqq\pi_1(S^2\setminus A)$ are free groups, and that
$\Mod(S^2,C)$ and $\Mod(S^2,A)$ act faithfully respectively on $H$ and
$G$ by outer automorphisms. Decompositions of (certain variants of)
mapping class bisets are important steps in proving
Theorem~\ref{thm:A}; see for example~\S\ref{ss:extensionsmcb}
and~\S\ref{ss:DecompOfMCB}.

Sphere maps $f\colon(S^2,C)\to (S^2,A)$ themselves are represented as
$H$-$G$-bisets $B(f)$, see~\eqref{eq:Dfn:SphBis}
and~\cite{bartholdi-dudko:bc1}*{\S\ref{bc1:ss:dynamics}}. It was shown
by Kameyama~\cite{kameyama:thurston} that $B(f)$ is a complete
invariant of isotopy, namely $B(f)\cong B(g)$ if and only if $f,g$ are
isotopic. We give a converse to this result in
Theorem~\ref{thm:dehn-nielsen-baer+} by showing that to every sphere
biset there is an associated sphere map, unique up to isotopy. This
allows us to switch freely between geometric and algebraic settings of
sphere maps and bisets.

Recall from~\cite{bartholdi-dudko:bc1} that left-free bisets are
naturally associated with topological correspondences, namely pairs of
maps $Y\leftarrow Z\to X$ such that $Z\to X$ is a covering. The
mapping class biset $M(f,C,A)$ is the biset of a well-known
correspondence, one between the moduli spaces $\mathscr M_C$ and
$\mathscr M_A$; see Proposition~\ref{prop:modular correspondence}
in~\S\ref{ss:examples}.

Multicurves $\CC$ on $(S^2,A)$ are represented as a collection of
conjugacy classes in $G$. The main result
from~\cite{bartholdi-dudko:bc1} is that, given a $1$-dimensional
cover of a space such as that afforded by the small spheres of
$(S^2,A)$, and a map compatible with this cover such as a Thurston map
$f$ preserving $\CC$, there exists a tree of bisets decomposition of
$f$ whose ``fundamental biset'' is isomorphic to $B(f)$. We show that
this decomposition is computable:
\begin{mainthm}[See Theorem~\ref{thm:DecompOfBiset}]
  There is an algorithm that, given a Thurston map
  $f\colon(S^2,A)\selfmap$ by its biset $B(f)$ and given a multicurve
  $\CC$ on $(S^2,A)$ with $f^{-1}(\CC)\supseteq\CC$, computes the tree
  of bisets decomposition of $B(f)$ along $\CC$.
\end{mainthm}

We extend Kameyama's result to maps with multicurves: we define
\emph{sphere} trees of bisets in Definition~\ref{defn:SphTreeOfBis},
and prove that the decomposition of a sphere map is a sphere tree of
bisets and conversely, giving a complete invariant up to isotopy or
combinatorial equivalence:
\begin{mainthm}[See Theorem~\ref{thm:kameyama-mc}, Corollary~\ref{cor:kameyama-mc} and Corollary~\ref{cor:dehn-nielsen-baer+-mc}]\label{thm:pilgrim}
  Let $f,f'\colon(S^2,A,\CC)\selfmap$ be Thurston maps with the same
  (possibly empty) multicurve $\CC$ satisfying
  $f^{-1}(\CC)\supseteq\CC\subseteq (f')^{-1}(\CC)$. Then $f,f'$ are
  isotopic rel $A\cup\CC$ if and only if the sphere trees of bisets of
  $f,f'$ are isomorphic.

  Consequently, if $g\colon(S^2,C,\DD)\selfmap$ is a Thurston map with
  $\DD$ satisfying $g^{-1}(\DD)\supseteq\DD$, then $f$ and $g$ are
  combinatorially equivalent by a homeomorphism sending $A$ to $C$ and
  $\CC$ to $\DD$ if and only if the sphere trees of bisets of $f,g$
  are \emph{conjugate}.

  Finally, for every sphere tree of bisets
  $\subscript\gfY\gfB_\gf$ with spheres
  $(S^2,C,\DD)$ and $(S^2,A,\CC)$ associated respectively with the
  trees of groups $\gfY$ and $\gf$ there exists a sphere map
  $f\colon(S^2,C,\DD)\to(S^2,A,\CC)$, unique up to isotopy rel
  $C\cup\DD$, whose graph of bisets is isomorphic to $\gfB$.
\end{mainthm}

Theorem~\ref{thm:pilgrim} expresses, in the algebraic language of
bisets, the decomposition and combination theorems of Kevin Pilgrim
(see e.g.~\cite{pilgrim:combinations}*{Theorem~5.1}). Numerous
examples of sphere tree of bisets decompositions appeared
in~\cite{bartholdi-dudko:bc0}*{\S\ref{bc0:ss:examples}}.  Here two
graphs of bisets $\subscript\gf\gfB_\gf$ and $\subscript\gfY\gfC_\gfY$
are called conjugate if there exists a biprincipal
(see~\cite{bartholdi-dudko:bc1}*{\S\ref{bc1:ss:BiprGrBis}}) tree of
bisets $\subscript\gf\gfI_\gfY$ such that $\gfB\otimes_\gf\gfI$ and
$\gfI\otimes_\gfY\gfC$ are isomorphic, namely have same underlying
graphs and isomorphic bisets and intertwiners.

We extend in~\S\ref{ss:orbispheres} the equivalence between sphere
maps and bisets to the setting of \emph{orbispheres} (spheres with
singularities of cone type $2\pi/n$): given an orbisphere map $f$, its
biset $B(f)$ is a complete invariant of isotopy; and conversely given
an orbisphere biset $B$ there is an orbisphere map $f$, unique up to
isotopy, realizing $B$, see Theorem~\ref{thm:dehn-nielsen-baer++}.

We end this article, in~\S\ref{ss:examples}, with a description of
$M(f,A,A)$ for some Thurston maps $f\colon(S^2,A)\selfmap$, recasting
the calculations of~\cite{bartholdi-n:thurston} and its interpretation
via Teichm\"uller theory in the language of mapping class
bisets.

The language of trees of groups and trees of bisets is particularly
well suited to describe the dynamical operations of \emph{tuning} and
\emph{renormalizing}. Consider a Thurston map
$f\colon (S^2,A)\selfmap$. Tuning refers to removing a neighbourhood
of a periodic critical cycle from $S^2$, and replacing the map locally
by a polynomial of same degree. Cutting $S^2$ along the boundary of
the neighbourhood yields a tree of groups decomposition of the
fundamental group, and a tree of bisets decomposition of $B(f)$, and
tuning amounts to replacing cyclic bisets (that of a map $z^d$ for
some $d\in\N$) by bisets of polynomials in the tree of bisets.

Renormalizing refers to considering a periodic component of a
decomposition of $S^2$ and taking its first return map. In terms of
graphs of groups, this amounts to taking an invariant subtree of a
tensor power of the biset.

Branched coverings $f\colon(S^2,C)\to(S^2,A)$, and the
biset~\eqref{eq:intro:m} may be considered for $C=\emptyset$, and
correspond to well-studied objects. When $C=\emptyset$, the biset
$B(f)$ is just a right $\pi_1(S^2\setminus A,*)$-set of cardinality
$\deg(f)$, namely an $\#A$-tuple of permutations in $\deg(f)\perm$
satisfying some conditions and considered up to conjugation
(see~\S\ref{ss:maps between spheres}). Two branched coverings
$f,f'\colon S^2\to(S^2,A)$ are isomorphic as coverings (they are also
called ``Hurwitz equivalent'') if and only if $f\approx m\circ f'$ for
a mapping class $m\in\Mod(S^2,A)$, if and only if
$f'\in M(f,\emptyset,A)$. The problem of classifying Hurwitz
equivalence classes for which the tuple of permutations has given
cycle structure (its ``Hurwitz passport'') has been extensively
investigated, even when $\#A=3$, see
e.g.~\cite{lando-zvonkin:graphsonsurfaces}*{Chapter~5}. From our
perspective, this problem can be interpreted as a classification of
mapping class bisets with given portrait, as subbisets of $K(S^2,A)$.

We formulate a symbolic version of the Thurston iteration, running on
the mapping class group, and conjecture its convergence to a certain
attractor, see Conjecture~\ref{conj:GenralNucl}. This supports the
following conjecture, in which the input to the algorithm is an
element $h\cdot f\cdot k$ with $h,k$ expressed as words in generators
of $\Mod(S^2,A)$:
\newtheorem{mainconj}[mainthm]{Conjecture}
\begin{mainconj}
  Let $f\colon(S^2,A)\selfmap$ be a Thurston map. Then the conjugacy
  and centralizer problem in $M(f)$ are solvable in polynomial time.
\end{mainconj}

\subsection{Notation}
We continue with the notation of the previous articles in the
series. In particular, we write $\looparrowright$ for group actions,
and $S\perm$ for the symmetric group on $S$.

Concatenation of paths is written $\gamma\#\delta$ for ``first
$\gamma$, then $\delta$''; inverses of paths are written
$\gamma^{-1}$.

If $\beta\colon[0,1]\to X$ is a path and $f\colon Y\to X$ is a
covering map, we denote by $\beta\lift f y$ the $f$-lift of the
path $\beta$ starting at $y\in f^{-1}(\beta(0))$.

We write $\approx$ for isotopy or homotopy of paths, maps etc, $\sim$
for conjugacy or combinatorial equivalence, and $\cong$ for
isomorphism of algebraic objects.  

The main objects of study are \emph{marked}, or \emph{punctured}
spheres. There is no fundamental difference between $S^2\setminus A$
and a pair $(S^2,A)$ with $A\subset S^2$; it is usually more
convenient to keep the points in $A$ while remembering that they have
a special status (for example, they are frozen by homotopies), but
when marked spheres are cut along multicurves the boundary components
appear topologically as punctures, not marked points. We therefore
sometimes have to switch between these notations. We allow
$A=\emptyset$, but forbid $\#A=1$.

In all sections except~\S\ref{ss:orbispheres}, by
$f\colon(S^2,C)\to(S^2,A)$ we denote an orientation preserving map
between marked spheres, called a \emph{sphere map}; see~\S\ref{ss:maps
  between spheres} for the precise
definition. In~\S\ref{ss:orbispheres} the map
$f\colon(S^2,C)\to(S^2,A)$ will be allowed to reverse orientation.

For a small sphere $S\subset(S^2,A)$, cut by boundary curves
$\CC_1,\dots,\CC_m\subset S^2\setminus A$, we denote by $\overline S$
its topological closure, namely $S\cup\CC_1\cup\cdots\cup\CC_m$, and
by $\widehat S$ the quotient of $\overline S$ obtained by shrinking
every boundary curve of $\overline S$ to a point. It is a sphere
marked by the image of $A\cap S$ and the boundary curves. Depending on
context, we call either of $S$, $\overline S$ and $\widehat S$ a
\emph{small sphere}.

A \emph{map} $f\colon (S^2,C,\DD)\to(S^2,A,\CC)$ with marked curves
$\CC,\DD$ means $f(C)\subseteq A$ and $f^{-1}(\CC)=\DD$, unless it is
explicitly stated that $f^{-1}(\CC)\supseteq\DD$. A \emph{small sphere
  map} of $f$ is the restriction of $f$ to a small sphere of
$(S^2,C,\DD)$. For a Thurston map $f\colon(S^2,A,\CC)\selfmap$, we
always require $f^{-1}(\CC)=\CC$ up to ignoring peripheral, trivial
and duplicate curves; we call \emph{small Thurston maps} of $f$ the
first return maps $f^e$ on periodic small spheres of $(S^2,A,\CC)$,
see Lemma-Definition~\ref{lem:SmallMaps}. We reserve the terminology
`$\CC$ is $f$-invariant' to mean $f^{-1}(\CC)=\CC$; this is sometimes
called \emph{completely invariant} in the literature.

We denote by $\one$ the identity map. By a slight abuse of notation,
if $A\subset C$ then we denote by $\one \colon (S^2,C)\to (S^2,A)$ the
identity map on $S^2$ which erases the marked points in
$C\setminus A$.

We recall briefly the notation for biset operations
from~\cite{bartholdi-dudko:bc1}: if $B$ is a $G$-$H$-biset and $C$ is
an $H$-$K$-biset, then $B\otimes C$ is the $G$-$K$-biset
$B\times C/\{(b g,c)=(b,g c)\text{ for all }b\in B,c\in C,g\in
G\}$. The \emph{contragredient} of $B$ is the $H$-$G$-biset $B^\vee$
with elements written $b^\vee$ and operations
$h\cdot b^\vee\cdot g\coloneqq (g^{-1}\cdot b\cdot h^{-1})^\vee$.

We refer to~\cite{bartholdi-dudko:bc0} for more details on the overall
strategy, algorithms, and examples of Thurston maps and combinatorial
equivalence.

\subsection{Acknowledgments}
We are grateful to Jean-Pierre Spaenlehauer for having helped in
computing the algebraic correspondence in~\S\ref{ss:pilgrim}.

\section{Spheres}
The previous article~\cite{bartholdi-dudko:bc1} in the series set up the
general theory of bisets associated with maps between topological
spaces. Much sharper results may be obtained in a more specific context,
that of maps between punctured spheres.

Let us consider a topological sphere $S^2$ with a finite, ordered
collection of marked points $A=\{a_1,\dots,a_n\}\subset S^2$. We allow
$n=0$, but expressly forbid $n=1$. Choose a basepoint $*\in
S^2\setminus A$. The fundamental group $\pi_1(S^2\setminus A ,*)$ may
then be presented in the following, explicit manner: choose for each
$i\in\{1,\dots,n\}$ a loop $\gamma_i$ in $S^2\setminus A $ that starts
at $*$, goes towards $a_i$, encircles it counterclockwise in a very
small loop, and returns to $*$ along the same path. Make sure that all
paths $\gamma_i$ are disjoint except at their endpoints, and that they
are arranged counterclockwise around $*$. We call the $\gamma_i$
\emph{Hurwitz generators}, or colloquially \emph{lollipops}. We then
have
\begin{equation}\label{eq:spheregp}
  G=\pi_1(S^2,A,*)\coloneqq \pi_1(S^2\setminus A ,*)=\langle \gamma_1,\dots,\gamma_n\mid \gamma_1\cdots\gamma_n\rangle.
\end{equation}
Indeed, if one cuts $S^2$ open along paths from the basepoint $*$ to
the marked points $a_i$ along the beginnings of the $\gamma_i$, one is
left with a disk, whose perimeter gives the unique relation of $G$.

A cycle (loop without basepoint) in $S^2\setminus A $ is represented
by a conjugacy class in $G\coloneqq\pi_1(S^2\setminus A ,*)$. The
group $G$ comes with extra data: the collection
$\{\gamma_1^G,\dots,\gamma_n^G\}$ of conjugacy classes, called
\emph{peripheral conjugacy classes}, defined by the property that
$\gamma_i^G$ represents can be homotoped to a loop circling $a_i$ once
counterclockwise.

We use the notation $\beta\approx_A\gamma$ to mean that curves
$\beta,\gamma$ are isotopic relative to their endpoints and to $A$; in
other words, that $\beta\#\gamma^{-1}$ is trivial in the fundamental
group $\pi_1(S^2,A,\beta(0))$.

\begin{defn}[Sphere groups]\label{defn:sphere groups}
  A \emph{sphere group} is a tuple $(G,\Gamma_1,\dots,\Gamma_n)$
  consisting of a group and $n\neq1$ conjugacy classes $\Gamma_i$ in $G$,
  such that $G$ admits a presentation as in~\eqref{eq:spheregp} for
  some choice of $\gamma_i\in\Gamma_i$. The classes
  $\Gamma_1,\dots,\Gamma_n$ are called the \emph{peripheral conjugacy
    classes} of $G$.
  The \emph{rank} of $G$ is $\rank(G)=n$.
\end{defn}
In other words, $G$ is a free group of rank $n-1$, with $n$
distinguished conjugacy classes $\Gamma_i$ satisfying a certain
relation. If $n=0$, then $G$ is a trivial group. For every $d\in\N$ we
denote by $\Gamma_i^d$ the subset $\{g^d \mid g\in\Gamma_i\}$. We also
set $\Gamma^+_i=\bigcup_{d\ge 1} \Gamma^d_i$.  By the remarks above,
\begin{lem}\label{lem:sphere to sphere group}
  Let $(S^2,A)$ be a marked sphere, and choose $*\in S^2\setminus A$. Then
  $\pi_1(S^2\setminus A,*)$ is a sphere group. Conversely, given a sphere
  group $(G,\Gamma_i)$ there exists a sphere, unique up to homeomorphism,
  whose fundamental group is isomorphic to $G$ and whose peripheral
  conjugacy classes are $\{\Gamma_i\}$.\qed
\end{lem}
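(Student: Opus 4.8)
The plan is to verify the two directions separately; this lemma is a sphere-level precursor of the Dehn--Nielsen--Baer statement proved later as Theorem~\ref{thm:dehn-nielsen-baer+}, and the only real content is careful bookkeeping of conventions. For the forward direction, I would take the Hurwitz generators $\gamma_1,\dots,\gamma_n$ constructed just before the statement and check that~\eqref{eq:spheregp} is a genuine presentation: as already noted, the initial arcs along which the $\gamma_i$ leave $*$, together with the points $a_i$, form an embedded tree $T\subset S^2$, and cutting $S^2$ along $T$ (equivalently, removing small disks around the $a_i$ and cutting the result along these arcs) produces a closed disk whose boundary, traversed once, spells the word $\gamma_1\cdots\gamma_n$. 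A van Kampen argument on this cell structure --- equivalently, collapsing the tree $T$ --- identifies $\pi_1(S^2\setminus A,*)$ with the free group on $\gamma_1,\dots,\gamma_{n-1}$ subject to the single relation $\gamma_1\cdots\gamma_n=1$; since each $\gamma_i$ is by construction a small counterclockwise loop around $a_i$, the classes $\gamma_i^G$ are peripheral, so $(\pi_1(S^2\setminus A,*),\gamma_1^G,\dots,\gamma_n^G)$ is a sphere group in the sense of Definition~\ref{defn:sphere groups}. The case $n=0$ gives the trivial group $\pi_1(S^2)$, the trivial sphere group, and $n=1$ is forbidden on both sides.

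For existence in the converse, I would fix once and for all a ``standard'' marked sphere $(S^2,A_0)$ with $A_0=\{a_1,\dots,a_n\}$ and its lollipops $\delta_1,\dots,\delta_n$, so that by the forward direction $\pi_1(S^2\setminus A_0,*)=\langle\delta_1,\dots,\delta_n\mid\delta_1\cdots\delta_n\rangle$ with peripheral classes $\delta_i^G$. Given an abstract sphere group $(G,\Gamma_1,\dots,\Gamma_n)$, choose $\gamma_i\in\Gamma_i$ realizing the presentation~\eqref{eq:spheregp}. The assignment $\delta_i\mapsto\gamma_i$ sends the relator to $\gamma_1\cdots\gamma_n=1$ and hence extends to a homomorphism $\Phi\colon\pi_1(S^2\setminus A_0,*)\to G$; it is surjective because the $\gamma_i$ generate $G$, and injective because it sends the free basis $\delta_1,\dots,\delta_{n-1}$ to the free basis $\gamma_1,\dots,\gamma_{n-1}$ of $G$ (both groups being free of rank $n-1$, the last generator being eliminated by the relator). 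By construction $\Phi$ carries $\delta_i^G$ to $\gamma_i^G=\Gamma_i$, so $(S^2,A_0)$ realizes $(G,\Gamma_i)$; for $n=0$ one takes $S^2$ with no marked points.

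For uniqueness, suppose $(S^2,A)$ is any marked sphere whose fundamental group is isomorphic to $G$. Comparing ranks of free groups --- $\pi_1(S^2\setminus A)$ has rank $\#A-1$ for $\#A\ge2$ and is trivial for $\#A=0$, and likewise for $G$ in terms of $n$, while $\#A\ne1\ne n$ --- forces $\#A=n$. Thus every marked sphere realizing $(G,\Gamma_i)$ carries exactly $n$ marked points, and the claim reduces to the classical fact that any two $n$-element marked spheres are related by an orientation-preserving homeomorphism carrying one ordered marking to the other; such a homeomorphism then automatically matches up the peripheral conjugacy classes. I do not expect any single step to be a genuine obstacle: the hard part, such as it is, will be keeping the orientation and cyclic-ordering conventions consistent throughout --- making sure the cyclic order of the $\gamma_i$ around $*$ is exactly the one recorded by the relator $\gamma_1\cdots\gamma_n$, and that the counterclockwise convention makes $\Phi$, and the homeomorphism in the uniqueness argument, respect the peripheral data rather than invert it --- while the underlying topology (a disk after cutting along a tree; transitivity of orientation-preserving homeomorphisms on ordered finite configurations in $S^2$) is entirely standard.
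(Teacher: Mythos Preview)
Your proposal is correct and follows essentially the same approach as the paper: the paper does not give a proof at all (the lemma is marked \qed and prefaced by ``By the remarks above,''), treating both directions as immediate from the presentation~\eqref{eq:spheregp} and the disk-cutting argument sketched just before Definition~\ref{defn:sphere groups}. Your writeup simply spells out those implicit details --- van Kampen on the cut-open disk for the forward direction, a standard model sphere plus rank comparison and transitivity of homeomorphisms on ordered configurations for the converse --- and the bookkeeping concerns you flag (cyclic order, orientation) are exactly the places where care is needed but no genuine obstacle arises.
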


From the perspective of a computer, a sphere group can be represented
in two different manners. It may be considered as just a number `$n$',
and its elements are words in the symbols $\pm1,\dots,\pm n$, subject
to appropriate reduction rules. It may also be considered as a
collection of $n$ reduced words $\gamma_1,\dots,\gamma_n$ in the
standard free group $F_{n-1}$ on $n-1$ generators, and its elements
are in bijection with reduced words in $F_{n-1}$'s generators. These
two forms are obviously equivalent, and both have their advantages in
terms of implementation.

\subsection{Maps between spheres}\label{ss:maps between spheres}
Let $\Mod(S^2,A)$ denote the pure mapping class group of $(S^2,A)$,
namely the set of isotopy classes of homeomorphisms $S^2\selfmap$
fixing $A$ pointwise. Given a choice of basepoint
$*\in S^2\setminus A$, consider the fundamental group
$G=\pi_1(S^2\setminus A,*)$. Then every homeomorphism
$\phi\colon(S^2,A)\selfmap$ induces an isomorphism
$G\to\pi_1(S^2\setminus A,\phi(*))$; choosing a path $\ell$ from $*$
to $\phi(*)$ in $S^2\setminus A$ leads to an automorphism of $G$
defined by $[\gamma]\mapsto[\ell\#(\phi\circ\gamma)\#\ell^{-1}]$,
which is well-defined up to inner automorphisms, and yields a
homomorphism $\Mod(S^2,A)\to\Out(G)$. The classical Dehn-Nielsen-Baer
Theorem asserts that this map is injective, and describes its image:
\begin{thm}[Dehn-Nielsen-Baer, see~\cite{farb-margalit:mcg}*{Theorem~8.8}]\label{thm:dehn-nielsen-baer}
  Let $(G,\Gamma_1,\dots,\Gamma_n)$ be a sphere group. Then the
  natural map $\Mod(S^2,A)\to\Out(G)$ is injective, and its image in
  $\Out(G)$ consists of those automorphisms of $G$ that map each
  $\Gamma_i$ to itself.\qed
\end{thm}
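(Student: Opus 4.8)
The plan is to derive the statement from the classical Dehn--Nielsen--Baer theorem for punctured surfaces (as recorded in~\cite{farb-margalit:mcg}), the only genuinely new work being to isolate which outer automorphisms correspond to the \emph{pure}, orientation-preserving mapping classes. Throughout put $S\coloneqq S^2\setminus A$ and $n\coloneqq\#A$. The cases $n\le2$ are degenerate: if $n=0$ then $G$ is trivial and $\Mod(S^2,A)=1$; if $n=2$ then $G$ is infinite cyclic, its only automorphism fixing the generating peripheral class $\Gamma_1$ is the identity, and $\Mod(S^2,\{a_1,a_2\})$ is trivial as well, so both sides are trivial. Hence assume $n\ge3$, in which case $S$ is a hyperbolic surface and therefore a $K(G,1)$.

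\emph{Injectivity.} Suppose a homeomorphism $\phi\colon(S^2,A)\selfmap$ induces an inner automorphism of $G$. After an isotopy rel $A$ (moving $\phi(*)$ back to $*$, and dragging $*$ around a suitable loop to absorb the inner automorphism) we may assume $\phi$ fixes $*$ and induces the identity on $G=\pi_1(S,*)$. Since $S$ is aspherical, $\phi|_S$ is homotopic to $\one|_S$ rel $*$; and homotopic homeomorphisms of a punctured surface are isotopic through homeomorphisms keeping the punctures fixed (classical, see~\cite{farb-margalit:mcg}), so $\phi\approx\one$ rel $A$. This gives injectivity of $\Mod(S^2,A)\to\Out(G)$.

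\emph{Description of the image.} For necessity, let $\phi$ fix $A$ pointwise. A small counterclockwise loop $\delta_i$ encircling $a_i$ is carried by $\phi$ to a small loop encircling $\phi(a_i)=a_i$, again counterclockwise since $\phi$ preserves orientation; joining it to $*$ shows that the conjugacy class of $\delta_i$, namely $\Gamma_i$, is sent by $\phi_*$ to $\Gamma_i$. Thus the image of $\Mod(S^2,A)$ lies in $\{\alpha\in\Out(G)\mid\alpha(\Gamma_i)=\Gamma_i\text{ for all }i\}$. For surjectivity onto this set, take an automorphism $\alpha$ of $G$ with $\alpha(\Gamma_i)=\Gamma_i$ for every $i$. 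The classical Dehn--Nielsen--Baer theorem for the $n$-times punctured sphere yields a homeomorphism $\phi$ of $S^2$ --- a priori permuting $A$ and possibly reversing orientation --- whose induced outer automorphism is $\alpha$. Because $\alpha$ fixes each $\Gamma_i$ individually (rather than permuting the $\Gamma_j$, and not sending some $\Gamma_i$ to $\Gamma_i^{-1}$), the homeomorphism $\phi$ fixes each $a_i$ and, comparing orientations of peripheral loops exactly as above, preserves orientation; hence $\phi\in\Mod(S^2,A)$. This identifies the image.

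The substantive obstacle is the realization step, i.e.\ the surjectivity half of the classical theorem: reconstructing a homeomorphism from an automorphism of the free group $G$. This is where Nielsen's argument enters --- extending $\alpha$ to a homeomorphism of the boundary circle of the Fuchsian group $G$ acting on $\mathbb H^2$, with the hypothesis $\alpha(\Gamma_i)=\Gamma_i$ ensuring that the parabolics attached to each cusp are respected --- or, alternatively, the reduction by ``filling in the punctures'' to the closed-surface case. I would treat this as a black box provided by~\cite{farb-margalit:mcg}; the only bookkeeping specific to the present setting is converting its ``set of peripheral classes, up to inversion'' formulation into the ``each peripheral class, with orientation'' formulation that singles out the pure, orientation-preserving subgroup.
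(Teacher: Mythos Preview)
Your proposal is correct. The paper does not give its own proof of this theorem at all: it is stated with a \qed\ immediately after the statement, treating it as a direct citation of~\cite{farb-margalit:mcg}*{Theorem~8.8}. What you have written is a sound unpacking of that citation---handling the degenerate cases $n\le2$, using asphericity for injectivity, and doing the bookkeeping that converts the classical ``peripheral classes up to permutation and inversion'' formulation into the ``each $\Gamma_i$ fixed'' version that singles out the pure, orientation-preserving subgroup.
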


\begin{defn}
  The \emph{mapping class group} $\Mod(G)$ of a sphere group $G$ as in
  Definition~\ref{defn:sphere groups} is the group of outer
  automorphisms of $G$ that preserve all peripheral conjugacy classes
  of $G$.
\end{defn}
For calculations, it is useful to know that $\Mod(G)$ is generated by
\emph{Dehn twists}: for example, the automorphisms $\tau_{i,j}$, for
$1\le i<j<n$, that map $\gamma_k$ to
$\gamma_k^{\gamma_i\gamma_{i+1}\cdots\gamma_j}$ for
$k\in\{i,\dots,j\}$ while fixing the other generators,
see~\cite{margalit+mccammond:braid}. Geometrically, $\tau_{i,j}$ is a
homeomorphism that acts as a ``Chinese burn'' on an annulus
surrounding the points $a_i,\dots,a_j$ and acts trivially elsewhere.

A \emph{sphere map} $f\colon(S^2,C)\to(S^2,A)$ between marked spheres
is a branched covering between the underlying spheres, locally
modelled at $c\in S^2$ in oriented complex charts by
$z\mapsto z^{\deg_c(f)}$ for some integer $\deg_c(f)\ge1$, and such
that $A$ contains $f(C)$ and all $f(c)$ for which $\deg_c(f)>1$. Those
$c\in S^2$ with $\deg_c(f)>1$ are called \emph{critical points}, and
their images $f(c)$ are called \emph{critical values}. The map $f$ is
a \emph{covering} if $C=f^{-1}(A)$. The restriction
$f\restrict C\colon C\to A$ is called the \emph{portrait} of
$f\colon(S^2,C)\to(S^2,A)$. Two sphere maps
$f_0,f_1\colon (S^2,C)\to (S^2,A)$ are \emph{isotopic} if there is a
continuous path $(f_t\colon (S^2,C)\to (S^2,A))_{t\in[0,1]}$ of sphere
maps connecting them.

\subsubsection{Sphere bisets}\label{ss:sphere bisets}
This notion of maps admits a precise algebraic translation, to which
we turn now. Consider a sphere map $f\colon (S^2,C)\to (S^2,A)$; we
may interpret it as a correspondence
\begin{equation}\label{eq:CorrOfSphMaps}
  (S^2,C)\overset{\one}\leftarrow (S^2,f^{-1}(A))\overset{f}{\rightarrow} (S^2,A)\end{equation}
such that $(S^2,C)\overset{\one}\leftarrow (S^2,f^{-1}(A))$ forgets
points in $f^{-1}(A)\setminus C$. Note that
$(S^2,C)\overset{\one}\leftarrow (S^2,f^{-1}(A))$ is not a sphere map
(unless $C=f^{-1}(A)$), however its inverse
$(S^2,C)\overset{\one}\rightarrow (S^2,f^{-1}(A))$ is a sphere
map. In~\cite{bartholdi-dudko:bc1}*{\S\ref{bc1:ss:biset of
    correspondence}} we gave a general definition for the biset of a
correspondence; applied to~\eqref{eq:CorrOfSphMaps} this definition
takes the following form (see
\cite{bartholdi-dudko:bc1}*{Equation~\eqref{bc1:eq:Bfi:lmm:FibrCorr}})
\begin{equation}\label{eq:Dfn:SphBis}
  B(f,\dagger,*)=\{\delta\colon[0,1]\to S^2\setminus C\mid\delta(0)=\dagger,f(\delta(1))=*\}\,/\,{\approx_C},
\end{equation}
and with left and right actions of $\pi_1(S^2,C,\dagger)$ and
$\pi_1(S^2,A,*)$ given by preconcatenation, respectively
postconcatenation through lifting by $f$. If $(S^2,C)=(S^2,A)$, then it will be usually assumed that $\dagger=*$; thus~\eqref{eq:Dfn:SphBis} is a $\pi_1(S^2,A,*)$-biset. The
biset~\eqref{eq:Dfn:SphBis} possesses extra properties coming from the
peripheral conjugacy classes of the sphere groups
$\pi_1(S^2,C,\dagger)$ and $\pi_1(S^2,A,*)$; they will be described in
Definition~\ref{dfn:SphBis} and Lemma~\ref{lem:SphBisOfSphMap}.

Hurwitz describes in~\cite{hurwitz:ramifiedsurfaces} an
elegant classification of branched self-coverings $S^2\selfmap$ in
terms of \emph{admissible $n$-tuples} of permutations
$(\sigma_1,\dots,\sigma_n)$ in $d\perm$. Such an $n$-tuple is
admissible if $\sigma_1\cdots\sigma_n=1$, the group
$\langle\sigma_1,\dots,\sigma_n\rangle$ acts transitively on
$\{1,\dots,d\}$, and the cycle lengths of the permutations
$\gamma_1,\dots,\gamma_n$ satisfy
\begin{equation}\label{eq:riemannhurwitz}
  \sum_{i=1}^n\sum_{\substack{c\text{ cycle}\\\text{of }\sigma_i}}\big(\text{length}(c)-1\big)=2d-2.
\end{equation}
Hurwitz gives a bijection between degree-$d$ branched self-coverings
$S^2\selfmap$ with critical values contained in $\{a_1,\dots,a_n\}$,
viewed up to isomorphism of coverings, and admissible $n$-tuples in
$d\perm$, viewed up to global conjugacy by $d\perm$ and the ``mapping
class group action''. For the latter, set $G=\pi_1(S^2\setminus A,*)$
and identify $(\sigma_1,\dots,\sigma_n)$ with the homomorphism
$\pi\colon G\to d\perm$ given by $\gamma_i\mapsto\sigma_i$, and let
the mapping class group $\Mod(G)$ act on $\pi$ by
precomposition. Condition~\eqref{eq:riemannhurwitz} amounts to the
statement that the Euler characteristic of the cover is $2=\chi(S^2)$.

\begin{defn}[Multiset of lifts~\cite{bartholdi-dudko:bc1}*{\S\ref{bc1:ss:ccgroups}}]\label{defn:lifts}
  Let $\subscript H B_G$ be a left-free biset, and contract every left
  orbit of $B$ to a point by considering $\{\cdot\}\otimes_H
  B$. Consider $g\in G$. Then $\{\cdot\}\otimes_H B$ decomposes into
  orbits $S_1\sqcup\cdots\sqcup S_\ell$ under the right action of $g$,
  of respective cardinalities $d_1,\dots,d_\ell$; and for all
  $i=1,\dots,\ell$, choosing $s_i\in B$ with
  $\{\cdot\}\otimes_H s_i\in S_i$ there are elements $h_i\in H$ with
  $h_is_i=s_i g^{d_i}$. The multiset
  $\{(d_i,h_i^H)\mid i=1,\dots,\ell\}$ consisting of degrees and
  conjugacy classes in $H$ is independent of the choice of the $s_i$,
  and depends only on the conjugacy class of $g$; it is called the
  \emph{multiset of lifts} of $g^G$.
\end{defn}
If $B$ be the biset of a sphere map $f\colon (S^2,C)\to (S^2,A)$ and
$g^G$ be interpreted as a simple closed curve, then the multiset
$\{(h_i^H)\mid i=1,\dots,\ell\}$ is, up to homotopy, the set
$f^{-1}(g^G)$, and the $d_i$ are the degrees with which $f$ maps the
corresponding closed curves onto $g^G$.

\begin{defn}[Sphere bisets]\label{dfn:SphBis}
  Consider $(H,\Delta_j)$ and $(G,\Gamma_i)$ two sphere groups. A
  \emph{sphere biset} is an $H$-$G$-biset $B$ such that the following
  hold:
  \begin{enumerate}\renewcommand\theenumi{SB\ensuremath{{}_\arabic{enumi}}}
  \item $B$ is left-free and right-transitive;\label{cond:1:dfn:SphBis}
  \item choose representatives
    $\gamma_1\in\Gamma,\dots,\gamma_n\in\Gamma_n$; then the
    permutations of $\{\cdot\}\otimes_H B$ induced by the right action
    of $\gamma_1,\dots,\gamma_n$
    satisfy~\eqref{eq:riemannhurwitz};\label{cond:2:dfn:SphBis}
  \item the multiset of all lifts of $\Gamma_1,\dots,\Gamma_n$
    contains exactly once every $\Delta_j$, all the other conjugacy
    classes being trivial.
   \label{cond:3:dfn:SphBis} 
  \end{enumerate}
  By the last condition, to every peripheral conjugacy class
  $\Delta_j$ in $H$ is associated a well-defined \emph{degree}
  $\deg_{\Delta_j}(B)\in\N$ and conjugacy class
  $\Gamma_i\eqqcolon B_*(\Delta_j)$, such that
  $(\deg_{\Delta_j}(B),\Delta_j)$ belongs to the lift of
  $\Gamma_i$. We define in this manner a map $B_*$ from the peripheral
  conjugacy classes in $H$ to those of $G$, called the \emph{portrait}
  of $B$.
\end{defn}
In case the peripheral conjugacy classes of $G,H$ are indexed as
$(\Gamma_a)_{a\in A}$ and $(\Delta_c)_{c\in C}$ respectively, we write
$B_*(c)=a$ rather than $B_*(\Delta_c)=\Gamma_a$, defining in this
manner a map $B_*\colon C\to A$. It is easy to see that if $B$ be the
biset of $f\colon (S^2,C)\to (S^2,A)$ and $\Gamma_a$ and $\Delta_c$
denote the peripheral conjugacy classes around $a\in A$ and $c\in C$
respectively, then $\deg_{\Delta_c}(B)=\deg_{c}(f)$, and $f(c)=a$ if
and only if $B_*(c)=a$.

\begin{lem}\label{lem:SphBisOfSphMap}
  The biset defined in~\eqref{eq:Dfn:SphBis} of a sphere map $f\colon
  (S^2,C)\to (S^2,A)$ is a sphere
  $\pi_1(S^2,C,\dagger)$-$\pi_1(S^2,A,*)$-biset.
\end{lem}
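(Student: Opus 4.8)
The plan is to verify, one at a time, the three defining conditions \ref{cond:1:dfn:SphBis}--\ref{cond:3:dfn:SphBis} of a sphere biset for $B=B(f,\dagger,*)$, where I write $H=\pi_1(S^2,C,\dagger)=\pi_1(S^2\setminus C,\dagger)$, $G=\pi_1(S^2,A,*)=\pi_1(S^2\setminus A,*)$, and $d=\deg(f)$. The single organizing observation is the following: since $*\notin A$, the fibre $f^{-1}(*)$ consists of $d$ distinct points, none of which lies in $f^{-1}(A)\supseteq C$, and $f$ restricts to a genuine degree-$d$ covering $S^2\setminus f^{-1}(A)\to S^2\setminus A$ (all branch points of $f$ lie in $f^{-1}(A)$, since every critical value lies in $A$). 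I would then identify $\{\cdot\}\otimes_H B$ with $f^{-1}(*)$ via $\delta\mapsto\delta(1)$: this map is well defined (paths in one left $H$-orbit share an endpoint), surjective because $S^2\setminus C$ is connected, and injective because if $\delta_1(1)=\delta_2(1)$ then $\delta_1\#\delta_2^{-1}$ represents some $h\in H$ with $h\delta_2\approx_C\delta_1$. Hence $B$ is left-free with exactly $d$ left orbits, and under this identification the right $G$-action on $\{\cdot\}\otimes_H B$ is the monodromy action of $G$ on $f^{-1}(*)$. This settles the left-freeness half of \ref{cond:1:dfn:SphBis}; for right-transitivity, given $\delta_1,\delta_2\in B$ I would homotope the path $\delta_1^{-1}\#\delta_2$, rel endpoints and inside $S^2\setminus C$, to a path $\eta$ transverse to --- hence disjoint from --- the finite set $f^{-1}(A)\setminus C$; then $\eta$ lies in $S^2\setminus f^{-1}(A)$, it is the $f$-lift of $g:=[f\circ\eta]\in G$ starting at $\delta_1(1)$, and $\delta_1\cdot g=\delta_1\#\eta\approx_C\delta_2$.

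For \ref{cond:2:dfn:SphBis}, I would use that under the identification above the right action of a Hurwitz generator $\gamma_i$ on $f^{-1}(*)$ is the monodromy permutation around $a_i$, whose cycles correspond bijectively to the points $c\in f^{-1}(a_i)$, the cycle at $c$ having length $\deg_c(f)$. Summing over $i$ then gives
\[
  \sum_{i=1}^n\;\sum_{\substack{c\text{ cycle}\\\text{of }\gamma_i}}\big(\text{length}(c)-1\big)=\sum_{c\in f^{-1}(A)}\big(\deg_c(f)-1\big)=\sum_{c\in S^2}\big(\deg_c(f)-1\big)=2d-2,
\]
the middle equality because a non-critical point contributes $0$ while every critical point lies in $f^{-1}(A)$, and the last equality being classical Riemann--Hurwitz for branched self-covers of $S^2$ (equivalently, $\chi(S^2\setminus f^{-1}(A))=d\,\chi(S^2\setminus A)$). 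This is exactly~\eqref{eq:riemannhurwitz}.

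For \ref{cond:3:dfn:SphBis} I would compute the multiset of lifts of each $\Gamma_i$ directly from Definition~\ref{defn:lifts}. Fix $i$ and $c\in f^{-1}(a_i)$; it corresponds to a $\gamma_i$-orbit of size $d_c=\deg_c(f)$ on $f^{-1}(*)$, and choosing $s=\delta\in B$ ending at a point $p$ of that orbit, the $f$-lift of $\gamma_i^{d_c}$ based at $p$ is, in the oriented local model $z\mapsto z^{d_c}$ at $c$, a loop encircling $c$ exactly once; hence the element $h_c\in H$ with $h_c s=s\gamma_i^{d_c}$ is represented by $\delta\#(\text{small loop around }c)\#\delta^{-1}$. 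If $c\notin C$ this loop is contractible in $S^2\setminus C$, so $h_c=1$; if $c\in C$ it is a peripheral loop, so $h_c^H=\Delta_c$ and $d_c=\deg_{\Delta_c}(B)$. As $i$ ranges over $\{1,\dots,n\}$ the point $c$ ranges over $f^{-1}(A)$, which contains $C$ because $f(C)\subseteq A$, and each $c_j\in C$ occurs exactly once; so the multiset of lifts of $\Gamma_1,\dots,\Gamma_n$ contains each $\Delta_j$ exactly once with all remaining entries trivial, which is~\ref{cond:3:dfn:SphBis}. En passant this also verifies $\deg_{\Delta_j}(B)=\deg_{c_j}(f)$ and $B_*(\Delta_j)=\Gamma_{f(c_j)}$, matching the remark following Definition~\ref{dfn:SphBis}.

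The computations themselves are routine (Euler-characteristic count, cycle structure of branched-cover monodromy, local normal form of $f$), so I do not expect a serious obstacle; the one point requiring care throughout is the mismatch between $S^2\setminus C$ --- the surface in which the paths representing elements of $B$ live --- and the smaller surface $S^2\setminus f^{-1}(A)$ over which $f$ is an honest covering, these being distinct precisely when $C\subsetneq f^{-1}(A)$. It is this distinction that the transversality step in the proof of right-transitivity is designed to handle, and that makes the dichotomy in \ref{cond:3:dfn:SphBis} work: preimages of the $a_i$ lying in $C$ produce the peripheral classes $\Delta_j$, while those lying in $f^{-1}(A)\setminus C$ produce the trivial class because a small loop around such a point is null-homotopic in $S^2\setminus C$.
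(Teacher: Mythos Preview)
Your proof is correct and follows essentially the same approach as the paper's: verify left-freeness and right-transitivity directly from the path model, identify the monodromy cycles of the Hurwitz generators with the local degrees at preimages of the $a_i$, and read off both the Riemann--Hurwitz count and the peripheral/trivial dichotomy for the lifts. Your treatment is somewhat more explicit than the paper's---in particular your transversality step making precise the distinction between $S^2\setminus C$ and $S^2\setminus f^{-1}(A)$ is exactly what the paper's phrase ``up to homotopy we may assume that $b_1,b_2$, and $\dagger$ are away from $f^{-1}(A)$'' is gesturing at.
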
 
\begin{proof}
  The biset $B(f)$ is clearly left-free. Consider $b_1,b_2\in
  B(f)$. Up to homotopy we may assume that $b_1,b_2$, and $\dagger$
  are away from $f^{-1}(A)$. Set
  $g\coloneqq f(b_1^{-1})\# f(b_2) \in \pi_1(S^2,A,*)$. Then
  $b_1g =b_2$; this verifies Condition~\eqref{cond:1:dfn:SphBis} of
  Definition~\ref{dfn:SphBis}.

  Consider a peripheral conjugacy class $\Gamma$, say around a
  puncture $a\in A$. Let $\{(d_i,h_i^H)\mid i=1,\dots,\ell\}$ be the
  multiset of lifts of $\Gamma$. We may enumerate $f^{-1}(a)$ as
  $\{c_1,c_2,\dots , c_\ell\}$ such that $f$ maps $c_i$ to $a$ with
  degree $d_i$ and such that $h_i^H$ is the peripheral conjugacy class
  around $c_i$, respectively the trivial class, if $c_i\in C$,
  respectively $c_i\notin C$. This verifies
  Condition~\eqref{cond:3:dfn:SphBis} of
  Definition~\ref{dfn:SphBis}. Since $f^{-1}(A)$ contains $2d-2$
  critical points counting with multiplicities, we
  have~\eqref{eq:riemannhurwitz}; this is
  Condition~\eqref{cond:2:dfn:SphBis}.
\end{proof}

\subsubsection{Equivalence between sphere maps and bisets} Kameyama essentially proves in~\cite{kameyama:thurston}*{Theorem~3.6}
that to an isomorphism class of sphere bisets corresponds a unique
isotopy class of sphere maps. In fact, we give a bijection between
isotopy classes of maps and isomorphism classes bisets, extending at
the same time the Dehn-Nielsen-Baer
Theorem~\ref{thm:dehn-nielsen-baer} to non-invertible maps:
\begin{thm}\label{thm:dehn-nielsen-baer+}
  Let $f_0,f_1\colon(S^2,C)\to(S^2,A)$ be sphere maps, and consider
  $H\coloneqq\pi_1(S^2\setminus C,\dagger)$ and
  $G\coloneqq\pi_1(S^2\setminus A,*)$ for choices of $\dagger\in
  S^2\setminus C$ and $*\in S^2\setminus A$. Then $B(f_0)$ is a sphere
  $H$-$G$-biset, and $B(f_0)\cong B(f_1)$ if and only if
  $f_0\approx f_1$.

  Conversely, for every sphere $H$-$G$-biset $B$ there exists a sphere
  map $f\colon(S^2,C)\to(S^2,A)$, unique up to isotopy, such
  that $B\cong B(f)$.
\end{thm}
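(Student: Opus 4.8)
The plan is to settle the "if" direction by homotopy invariance, to reduce the remaining assertions to the case of \emph{coverings} (i.e.\ $C=f^{-1}(A)$), and there to run a non-invertible version of the Dehn--Nielsen--Baer argument. The implication $f_0\approx f_1\Rightarrow B(f_0)\cong B(f_1)$ is the homotopy invariance of the biset-of-a-correspondence construction applied to~\eqref{eq:CorrOfSphMaps}: a path $(f_t)$ of sphere maps transports the paths $\delta$ of~\eqref{eq:Dfn:SphBis} continuously in $t$, yielding a bijection $B(f_0)\to B(f_1)$ commuting with both the pre- and postconcatenation actions; that $B(f_0)$ is a sphere biset is Lemma~\ref{lem:SphBisOfSphMap}. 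To reduce to coverings, note that a sphere map $f\colon(S^2,C)\to(S^2,A)$ has a \emph{completion} $\tilde f\colon(S^2,f^{-1}(A))\to(S^2,A)$, obtained by marking every preimage of $A$; it is a covering, $f$ is recovered from $\tilde f$ by forgetting the marked points outside $C$, and isotopies pass back and forth (an isotopy rel $C$ of $f$ drags $f^{-1}(A)$ along, forgetting points turns isotopies rel $f^{-1}(A)$ into isotopies rel $C$, and after an ambient isotopy rel $C$ one may assume that two given maps have equal full preimages of $A$). Algebraically, passing from $B(f)$ to $B(\tilde f)$ replaces $\approx_C$ by the finer $\approx_{f^{-1}(A)}$ in~\eqref{eq:Dfn:SphBis}; on sphere bisets this is the operation of adjoining one peripheral class of $H$ for each trivial lift occurring in~\eqref{cond:3:dfn:SphBis}, it is functorial, and it is injective on isomorphism classes because $B(f)$ is recovered from $B(\tilde f)$ by killing the loops around the forgotten punctures. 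Hence it suffices to treat the "only if" direction and the realization statement for coverings.

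For realization, start from a sphere $H$-$G$-biset $B$, contract its left orbits as in Definition~\ref{defn:lifts}, and let $\gamma_1,\dots,\gamma_n$ act on the $d$-element set $\{\cdot\}\otimes_H B$: by~\eqref{cond:1:dfn:SphBis} this action is transitive, the relation $\gamma_1\cdots\gamma_n=1$ makes the product of the permutations trivial, and~\eqref{cond:2:dfn:SphBis} is the Riemann--Hurwitz identity~\eqref{eq:riemannhurwitz}. The Hurwitz classification recalled above then produces a degree-$d$ branched covering of $(S^2,A)$, and~\eqref{eq:riemannhurwitz} is precisely what forces its total space to be a $2$-sphere. One marks in it the full preimage $\tilde C$ of $A$ and uses the left $H$-action of $B$ — which for a covering biset determines an identification of the monodromy subgroup $S\le G$ with $H$ — to label the total space compatibly; by Dehn--Nielsen--Baer (Theorem~\ref{thm:dehn-nielsen-baer}) such a labelling exists and is unique up to isotopy, so one obtains a covering $\tilde f$ with $B(\tilde f)\cong B$ by construction. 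For an arbitrary sphere biset one applies this to its completion and then forgets the marked points not named by the $\Delta_j$, using~\eqref{cond:3:dfn:SphBis} to see that these are exactly the ones with trivial peripheral class.

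For the "only if" direction, restricted to coverings $f_0,f_1\colon(S^2,\tilde C)\to(S^2,A)$ with $B(f_0)\cong B(f_1)$: the biset of a covering $f$ is, up to isomorphism, the pair $\bigl((f)_*\pi_1(S^2\setminus\tilde C,\dagger),\ (f)_*^{-1}\bigr)$ of a finite-index subgroup of $G$ and an isomorphism of it onto $H$, taken up to conjugacy; so the isomorphism $B(f_0)\cong B(f_1)$ says that, after moving basepoints and connecting paths, $f_0$ and $f_1$ have the same monodromy subgroup $S$ and $(f_0)_*=(f_1)_*$ as homomorphisms $H\to G$ up to an inner automorphism of $H$. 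The subgroup $S$ alone determines the covering space $S^2\setminus\tilde C\to S^2\setminus A$ up to isomorphism, so there is a homeomorphism $\eta$ of $S^2\setminus\tilde C$ with $f_1\circ\eta=f_0$; then $(f_1)_*\circ\eta_*=(f_0)_*$ and injectivity of $(f_1)_*$ force $\eta_*$ to be inner, while near each puncture $\eta$ conjugates the local model $z\mapsto z^k$ of $f_0$ to that of $f_1$, so $\eta$ extends to a homeomorphism of $(S^2,\tilde C)$ fixing $\tilde C$ pointwise. By Dehn--Nielsen--Baer, $\eta$ is isotopic to the identity rel $\tilde C$; composing $f_1$ with such an isotopy is a path of sphere maps from $f_0=f_1\circ\eta$ to $f_1$, so $f_0\approx f_1$, and uniqueness in the realization statement follows. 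I expect this last step to be the main obstacle: one must extract, from a purely combinatorial isomorphism of bisets, a homeomorphism of the total spaces inducing an inner automorphism on $\pi_1$, and it is exactly the faithfulness half of Dehn--Nielsen--Baer that converts "inner automorphism" into "isotopic to the identity"; by comparison the completion bookkeeping in the reduction step is routine but must be set up with care (for instance about what "rel $\tilde C$" means once the full preimage is allowed to move).
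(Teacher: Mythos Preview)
Your approach is essentially correct and follows the same strategy as the paper's: homotopy invariance for the easy direction, Hurwitz's classification for existence, and Dehn--Nielsen--Baer for uniqueness, all organized around the passage from a general sphere map to its underlying covering.

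The organizational difference is that you first reduce to the covering case via a ``completion'' operation and then run the argument there, whereas the paper works directly with the factorization $B=H b G_b\otimes{}_{G_b}G_G$ of~\eqref{eq:DecOfSphBis} together with Lemmas~\ref{lem:CorrOfSphBis} and~\ref{lem:CorrOfSphBis2}, which identify $G_b$ with the fundamental group of the completed source and endow it with its sphere-group structure. Your ``completion'' functor on abstract sphere bisets is exactly the passage $B\mapsto{}_{G_b}G_G$, and making it well defined --- in particular showing that $G_b$ is a sphere group with the expected peripheral classes --- is precisely the content of those lemmas, which you have elided. For the ``only if'' direction the paper builds the candidate isotopy directly as $\psi=f_0\circ f_1^{-1}$ on the completed sources and computes $B(\psi)\cong{}_H H_H$ as a tensor of the two halves of~\eqref{eq:DecOfSphBis}; this bypasses your basepoint bookkeeping. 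On that point, your phrase ``up to an inner automorphism of $H$'' is not quite right: the biset isomorphism $\beta$ gives $(f_0)_*=\operatorname{conj}_{g_0}\circ(f_1)_*$ for some $g_0\in G$, and only after choosing the basepoints compatibly via $\beta$ (taking $b$ and $\beta(b)$, so that $G_b=G_{\beta(b)}$) do the two homomorphisms agree on the nose --- which is exactly what makes $\eta_*$ trivial rather than merely inner. This does not affect the validity of your argument, but it is the step where the paper's tensor computation is cleaner than tracking paths.
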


The proof of Theorem~\ref{thm:dehn-nielsen-baer+} appears below,
after some preparation.  Consider a sphere $H$-$G$-biset $B$.  Choose
$b\in B$, and let $G_b$ be the stabilizer of $\{\cdot\}\otimes b$ in
the $G$-set $\{\cdot\}\otimes_H B$. It is a subgroup of $G$ of index
equal to the degree of $B$. Then $B$ naturally splits as
\begin{equation}
 \label{eq:DecOfSphBis}
 \subscript H B_{G} \cong H b G_b  \otimes \subscript{G_b}G_G. 
\end{equation}

This splitting is the algebraic counterpart
of~\eqref{eq:CorrOfSphMaps}, and we use it to give, along the way, the
structure of a sphere group to $G_b$:
\begin{lem}\label{lem:CorrOfSphBis}
  Suppose that $\subscript H B_G$ is the biset of a sphere map
  $f\colon (S^2,C)\to (S^2,A)$ as in~\eqref{eq:Dfn:SphBis}  and with
  $G=\pi_1(S^2\setminus A,*)$ and $H=\pi_1(S^2\setminus
  C,\dagger)$. Consider $b\in B$ and let $*'$ be the endpoint of
  $b$.

  Then $\pi_1(S^2,f^{-1}(A), *')$ is identified via
  $f_*\colon \pi_1(S^2,f^{-1}(A), *')\to \pi_1(S^2,A, *)$ with $G_b$,
  and via this identification the $\pi_1(S^2,f^{-1}(A),*')$-$G$-biset
  of $f\colon (S^2,f^{-1}(A))\to (S^2,A)$ is isomorphic to
  $\subscript{G_b}G_G$ while the $H$-$\pi_1(S^2,f^{-1}(A),*')$-biset of
  $(S^2,C)\overset{\one}{\rightarrow} (S^2,f^{-1}(A))$ is isomorphic
  to $H b G_b$.
 
  Moreover, via the identification of $G_b$ with
  $\pi_1(S^2,f^{-1}(A),*')$ the peripheral conjugacy classes of $G_b$
  are $(\Xi_{i,j})_{i,j}$ constructed as follows. Let
  $\Gamma_1,\dots,\Gamma_n$ be the peripheral conjugacy classes of
  $G$. Then for each $\Gamma_i$ there is a unique decomposition
  \begin{equation}\label{eq:SplGamToXi}
    \Gamma_i^+ \cap G_b = \Xi^{+}_{i,1}\sqcup \Xi^{+}_{i,2}\sqcup\dots \sqcup \Xi^{+}_{i,s} 
  \end{equation}
  such that every $\Xi_{i,j}$ is a conjugacy class of $G_b$. Assuming
  $\Xi_{i,j}$ is generated by $\gamma_{i,j}^{d(i,j)}$ with
  $\gamma_{i,j}\in \Gamma_i$, we let $\{(d(i,j),\Xi_{i,j})\}$ be the
  multiset of lifts of $\Gamma_i$ via $\subscript{G_b}G_{G}$.
\end{lem}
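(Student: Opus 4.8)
The plan is to unwind the definition~\eqref{eq:Dfn:SphBis} of the sphere biset of a map and to match it, orbit by orbit, against the topological correspondence~\eqref{eq:CorrOfSphMaps}. First I would fix $b\in B(f)$, so $b$ is (the $\approx_C$-class of) a path $\delta$ from $\dagger$ to a point $*'$ with $f(*')=*$; the endpoint $*'$ lies in $S^2\setminus f^{-1}(A)$, hence is a legitimate basepoint for $\pi_1(S^2,f^{-1}(A),*')$. The map $f$ restricted to $S^2\setminus f^{-1}(A)\to S^2\setminus A$ is an honest covering, so $f_*\colon\pi_1(S^2,f^{-1}(A),*')\to\pi_1(S^2,A,*)$ is injective with image of index $\deg(f)=\deg(B)$. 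The core identification is then: a loop $\gamma$ at $*$ fixes $\{\cdot\}\otimes b$ in $\{\cdot\}\otimes_H B(f)$ exactly when the $f$-lift of $\gamma$ starting at $*'$ is again a loop at $*'$ up to an element of $\pi_1(S^2\setminus C,\dagger)$ absorbed on the left — and by the standard covering-space dictionary this is precisely the image of $f_*$. So $G_b=f_*\bigl(\pi_1(S^2,f^{-1}(A),*')\bigr)$, which is the first assertion.

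Next I would identify the two factors in the splitting~\eqref{eq:DecOfSphBis} with the bisets of the two legs of~\eqref{eq:CorrOfSphMaps}. The right factor $\subscript{G_b}G_G$ is, tautologically, the biset of the covering $f\colon(S^2,f^{-1}(A))\to(S^2,A)$: for a covering map the associated biset is exactly the $G_b$-$G$-biset of cosets, this is a special case of the biset-of-a-correspondence construction in~\cite{bartholdi-dudko:bc1} when the left leg is the identity. The left factor $HbG_b$ is the $H$-$\pi_1(S^2,f^{-1}(A),*')$-biset of $(S^2,C)\overset{\one}{\to}(S^2,f^{-1}(A))$, i.e.\ of the map that merely forgets the points of $f^{-1}(A)\setminus C$; its biset is the set of homotopy classes of paths in $S^2\setminus C$ from $\dagger$ to points over $*'$ — but "over $*'$" under the identity means ending at $*'$ itself, so this biset is a single left $H$-orbit, namely $Hb$, with the evident right action of $\pi_1(S^2,f^{-1}(A),*')\cong G_b$ by post-concatenation. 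Associativity of $\otimes$ and the fact that these two bisets compose to $B(f)$ then gives~\eqref{eq:DecOfSphBis} with the claimed factors; this is really just the functoriality of bisets applied to the factorization of the correspondence, as in~\cite{bartholdi-dudko:bc1}.

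Finally, for the peripheral structure of $G_b$: a peripheral class $\Gamma_i$ of $G$ is represented by a small loop $\gamma_i$ around $a_i\in A$, and its full preimage under the covering $f\colon S^2\setminus f^{-1}(A)\to S^2\setminus A$ consists of small loops around the points $c_{i,1},\dots,c_{i,s}$ of $f^{-1}(a_i)$, the loop around $c_{i,j}$ wrapping $d(i,j)=\deg_{c_{i,j}}(f)$ times onto $\gamma_i$. Translating to $G_b$ via $f_*$: an element of $G_b$ is peripheral (around some point of $f^{-1}(A)$) iff it is conjugate in $\pi_1(S^2,f^{-1}(A),*')$ to a power of such a small loop, iff — pushing forward — it lies in $\Gamma_i^{d(i,j)}$ intersected with $G_b$ for the appropriate $j$; the decomposition of $\Gamma_i^+\cap G_b$ into the classes $\Xi_{i,j}$ is then exactly the partition of the fibre $f^{-1}(a_i)$, and the power $d(i,j)$ read off from $\Xi_{i,j}=\langle\gamma_{i,j}^{d(i,j)}\rangle^{G_b}$ is the local degree. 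Uniqueness of the decomposition~\eqref{eq:SplGamToXi} is forced because distinct preimages give disjoint $G_b$-conjugacy classes (two small loops around distinct points of $f^{-1}(a_i)$ cannot be conjugate in the punctured-sphere group), and that $\{(d(i,j),\Xi_{i,j})\}$ is the multiset of lifts of $\Gamma_i$ in the sense of Definition~\ref{defn:lifts} is immediate from comparing the right-action-orbit decomposition there with the fibre $f^{-1}(a_i)$.

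The main obstacle I anticipate is the bookkeeping in matching the abstract biset decomposition~\eqref{eq:DecOfSphBis} — where $G_b$ appears only as an index-$\deg(B)$ subgroup of $G$ with no a priori sphere-group structure — with the geometric picture, i.e.\ checking carefully that the path-model of $B(f)$ really does split as the composite of the path-model of the forgetful map and the coset-model of the covering, with compatible basepoints and without sign/orientation slips in the lifting conventions. Once that identification is pinned down, the statement about peripheral classes is essentially the covering-space description of subgroups of a surface group, and the uniqueness is a standard fact about non-conjugacy of distinct peripheral classes in a free group with a chosen peripheral structure.
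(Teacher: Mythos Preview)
Your proposal is correct and follows essentially the same approach as the paper's proof: the identification $G_b=f_*\bigl(\pi_1(S^2,f^{-1}(A),*')\bigr)$ via covering-space theory, the recognition of the two factors in~\eqref{eq:DecOfSphBis} as the bisets of the two legs of~\eqref{eq:CorrOfSphMaps}, and the description of the peripheral classes of $G_b$ via the fibre $f^{-1}(a_i)$ and local degrees. The paper's argument is terser but structurally identical; in particular your ``main obstacle'' of matching the abstract and geometric decompositions is precisely what the paper dispatches with the phrase ``it follows immediately from~\eqref{eq:Dfn:SphBis}''.
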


\begin{proof} 
  Since $f\colon S^2\setminus f^{-1}(A)\to S^2\setminus A$ is a
  covering map, $\pi_1(S^2,f^{-1}(A), *')$ is identified with $G_b$ via
  $f_*$ and the biset of $f\colon (S^2,f^{-1}(A))\to (S^2,A)$ is
  $\subscript{G_b}G_G$. It follows immediately from~\eqref{eq:Dfn:SphBis}
  that the $H$-$\pi_1(S^2,f^{-1}(A),*')$-biset of
  $(S^2,C)\overset{\one}{\rightarrow} (S^2,f^{-1}(A))$ is $H b G_b$.
  
  Let us prove the claims concerning $(\Xi_{i,j})_{i,j}$. Suppose that
  $\Gamma_i$ is a peripheral conjugacy class around $a_i\in A$,
  suppose that $f^{-1}(a_i)=\{c_{i,1},c_{i,2},\dots, c_{i,s}\}$ and
  that $f$ has degree $d(i,j)$ at $c_{i,j}$, and suppose that
  $\Xi_{i,j}$ is the peripheral conjugacy class of
  $\pi_1(S^2,f^{-1}(A),*')$ around $c_{i,j}$. Observe that $\Xi_{i,j}$
  are pairwise disjoint as peripheral conjugacy classes around
  different points in $f^{-1}(A)$. Therefore, $\Xi^{+}_{i,j}$ are
  pairwise disjoint. Then $f_*$ identifies
  $\Xi^{+}_{i,1}\sqcup \Xi^{+}_{i,2}\sqcup\dots \Xi^{+}_{i,s}$ with
  $\Gamma_i^+\cap G_b$ and we get~\eqref{eq:SplGamToXi}. If $d(i,j)$
  be the local degree of $f$ at $c_{i,j}$, then $f_*(\Xi_{i,j})$ is
  generated by $\gamma_{i,j}^{d(i,j)}$ with
  $\gamma_{i,j}\in \Gamma_i$.
\end{proof}
  
\begin{lem}\label{lem:CorrOfSphBis2}
  Consider a sphere $H$-$G$-biset $B$ and choose $b\in B$. Endow
  $G_b$ with the sphere structure given in Lemma~\ref{lem:CorrOfSphBis}:
  assuming that $\Gamma_1,\dots,\Gamma_n$ are peripheral conjugacy
  classes of $G$, the peripheral conjugacy classes of $G_b$ are
  $(\Xi_{i,j})_{i,j}$ specified by~\eqref{eq:SplGamToXi}. Then in the
  decomposition~\eqref{eq:DecOfSphBis} the bisets $H b G_b$ and
  $\subscript{G_b}G_G$ are sphere bisets.
\end{lem}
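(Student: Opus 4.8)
The plan is to realize each of the two bisets in the decomposition~\eqref{eq:DecOfSphBis} geometrically and read off the three conditions of Definition~\ref{dfn:SphBis}; the covering biset $\subscript{G_b}G_G$ carries essentially all the content, while $Hb G_b$ is largely formal.

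I would first treat $\subscript{G_b}G_G$. By Lemma~\ref{lem:sphere to sphere group}, $G=\pi_1(S^2\setminus A,*)$ for a marked sphere $(S^2,A)$, and $G_b$ has finite index $d$ in $G$, hence is the fundamental group of a connected $d$-sheeted cover $q\colon U\to S^2\setminus A$ whose biset is $\subscript{G_b}G_G$. Condition~\eqref{cond:1:dfn:SphBis} is immediate, and~\eqref{cond:2:dfn:SphBis} is inherited from $B$: contracting the left $H$-orbits in $B\cong Hb G_b\otimes_{G_b}\subscript{G_b}G_G$ and using that $G_b$ fixes $\{\cdot\}\otimes b$ identifies the $G$-sets $\{\cdot\}\otimes_H B$ and $\{\cdot\}\otimes_{G_b}\subscript{G_b}G_G=G_b\backslash G$, so $\gamma_1,\dots,\gamma_n$ act with the same cycle structure on both. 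The decisive use of~\eqref{eq:riemannhurwitz} is to see that $U$ is planar: its punctures are in bijection with the cycles of $\gamma_1,\dots,\gamma_n$ on the fibre, so filling them in yields a closed orientable surface of Euler characteristic $d(2-n)+\sum_{i}\#\{\text{cycles of }\gamma_i\}=d(2-n)+\big(nd-(2d-2)\big)=2$, i.e.\ a sphere. Thus $q$ extends to a covering sphere map $f\colon(S^2,\widetilde A)\to(S^2,A)$ with $\widetilde A=f^{-1}(A)$, so $G_b\cong\pi_1(S^2\setminus\widetilde A)$ is a sphere group; its biset is $\subscript{G_b}G_G$, which is therefore a sphere biset by Lemma~\ref{lem:SphBisOfSphMap}; and by Lemma~\ref{lem:CorrOfSphBis} applied to $f$ the peripheral conjugacy classes of $\pi_1(S^2\setminus\widetilde A)\cong G_b$ are exactly the $\Xi_{i,j}$ of~\eqref{eq:SplGamToXi}, so $G_b$ carries precisely the sphere structure demanded in the statement.

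For $Hb G_b$: left-freeness is inherited from $B$, and right-$G_b$-transitivity holds because for $x,y\in Hb G_b$ right-transitivity of $B$ gives $g\in G$ with $xg=y$, and contracting left $H$-orbits (again using that $G_b$ stabilizes $\{\cdot\}\otimes b$) forces $g\in G_b$. Since every element of $Hb G_b$ maps to the single point $\{\cdot\}\otimes b$ of $\{\cdot\}\otimes_H(Hb G_b)$, this biset has degree $1$ and~\eqref{cond:2:dfn:SphBis} holds vacuously ($0=2\cdot1-2$). For~\eqref{cond:3:dfn:SphBis} I would trace lifts through~\eqref{eq:DecOfSphBis} using Definition~\ref{defn:lifts}: a $\langle\gamma_i\rangle$-orbit on $\{\cdot\}\otimes_H B=G_b\backslash G$ through a coset $G_b x$ with minimal return power $d$ contributes to the lift of $\Gamma_i$ via $B$ the pair $(d,h^H)$, where $h\in H$ is determined by $hb=b\cdot(x\gamma_i^{d}x^{-1})$; the same orbit contributes $(d,\Xi_{i,j})$ to the lift via $\subscript{G_b}G_G$ with $\Xi_{i,j}=(x\gamma_i^{d}x^{-1})^{G_b}$, and contributes $(1,h^H)$ to the lift of that $\Xi_{i,j}$ via $Hb G_b$. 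Hence the multiset of conjugacy-class parts of the lifts of the peripheral classes of $G_b$ via $Hb G_b$ equals that of the lifts of $\Gamma_1,\dots,\Gamma_n$ via $B$, which by~\eqref{cond:3:dfn:SphBis} for $B$ is each $\Delta_j$ exactly once with the remaining classes trivial. This is~\eqref{cond:3:dfn:SphBis} for $Hb G_b$, so it is a sphere biset.

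The main obstacle — and the only point where the sphere-biset hypothesis on $B$ is genuinely used beyond inheritance — is the Euler-characteristic computation identifying the cover $U$ with a punctured sphere; everything else is bookkeeping with~\eqref{eq:DecOfSphBis} and Definition~\ref{defn:lifts}. One small verification is worth isolating for~\eqref{cond:3:dfn:SphBis} of $\subscript{G_b}G_G$: distinct $\langle\gamma_i\rangle$-orbits give distinct classes $\Xi_{i,j}$, and no $\Xi_{i,j}$ is trivial, since in a sphere group peripheral elements are never proper powers and $\gamma_{i'}$ is never conjugate into $\langle\gamma_i\rangle$ for $i'\neq i$; this ensures that each peripheral class of $G_b$ occurs exactly once among the lifts, matching the punctures of $\widetilde A$.
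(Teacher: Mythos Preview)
Your proof is correct and follows essentially the same approach as the paper: identify $\{\cdot\}\otimes_H B$ with $G_b\backslash G$ to inherit~\eqref{cond:2:dfn:SphBis}, invoke Hurwitz to see that the cover is a punctured sphere (you spell out the Euler-characteristic count where the paper simply cites Hurwitz's theorem), apply Lemmas~\ref{lem:SphBisOfSphMap} and~\ref{lem:CorrOfSphBis} to conclude for $\subscript{G_b}G_G$, and then factor lifts through~\eqref{eq:DecOfSphBis} to handle $HbG_b$. Your explicit check of right-$G_b$-transitivity and the closing remark on distinctness of the $\Xi_{i,j}$ are extra details the paper leaves implicit, but they change nothing substantive.
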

\begin{proof}
  Observe that $\{\cdot\}\otimes_H B \cong \{\cdot\}\otimes_{G_b} G$
  as right $G$-sets; thus the action of $G$ on $\{\cdot\}\otimes_{G_b}
  G$ satisfies the Hurwitz condition~\eqref{eq:riemannhurwitz}. Write
  $G=\pi_1(S^2\setminus A,*)$ as a sphere group; then by Hurwitz's
  theorem~\cite{hurwitz:ramifiedsurfaces}, the cover
  $(\widetilde{S^2\setminus A})/G_b$ of $S^2\setminus A$ associated
  with $G_b\le G$ is a finitely-punctured sphere. By
  Lemmas~\ref{lem:SphBisOfSphMap} and~\ref{lem:CorrOfSphBis} the group
  $G_b$ is a sphere group with peripheral conjugacy classes
  $(\Xi_{i,j})_{i,j}$ specified by~\eqref{eq:SplGamToXi} and,
  moreover, $\subscript{G_b}G_G$ is a sphere biset.
  
  Observe that the multiset of the lifts of $\Gamma_i$ via $\subscript H B_G$
  is the multiset of the lifts via $H b G_b$ of the lifts via
  $\subscript{G_b}G_G$ of $\Gamma_i$ with the corresponding degrees being
  multiplied. Since $H b G_b$ has degree one, the multiset of the lifts
  of $\Xi_{i,j}$ is a singleton $\{(1,\widetilde \Xi_{i,j})\}.$
  Therefore, the multiset of the lifts of $\Gamma_i$ is
  $\{(d(i,j),\widetilde \Xi_{i,j})\}$ with $d(i,j)$ as in
  Lemma~\ref{lem:CorrOfSphBis}. This verifies
  Conditions~\eqref{cond:2:dfn:SphBis} and~\eqref{cond:3:dfn:SphBis}
  of Definition~\ref{dfn:SphBis}; Condition~\eqref{cond:1:dfn:SphBis}
  is immediate.
\end{proof}

\begin{proof}[Proof of Theorem~\ref{thm:dehn-nielsen-baer+}]
  If $f_0\approx f_1$, say via a path
  $(f_t\colon (S^2,C)\to (S^2,A))_{t\in [0,1]}$, then
  $B(f_0)\cong B(f_1)$ because $B(f_t)$, being a discrete object,
  remains constant along the isotopy.
  
  Assume $B(f_0)\cong B(f_1)$; say $\beta\colon B(f_0)\to B(f_1)$ is a
  biset isomorphism. Let us construct a homeomorphism
  $\psi \colon (S^2,C)\selfmap$ such that $f_0=\psi f_1$ and such that
  $B(\psi)\cong \subscript H H_H$. Then using
  Theorem~\ref{thm:dehn-nielsen-baer} we will get $\psi\approx \one$,
  and therefrom $f_0\approx f_1$.
  
  Choose $b\in B(f_0)$ and decompose
  $\subscript H B(f_0)_{G} \cong H b G_b \otimes \subscript{G_b}G_G.$
  and
  $\subscript H B(f_1)_{G} \cong H \beta(b) G_{\beta(b)} \otimes
  \subscript{G_{\beta(b)}}G_G$ as in~\eqref{eq:DecOfSphBis}. Since
  $\beta$ is an isomorphism we have $G_{b}=G_{\beta(b)}$, so
  $\subscript{G_{b}}G_G = \subscript{G_{\beta(b)}}G_G$.  By
  Lemma~\ref{lem:CorrOfSphBis} the bisets of
  $f_0\colon (S^2,f_0^{-1}(A))\to (S^2,A)$ and
  $f_1\colon (S^2,f_1^{-1}(A))\to (S^2,A)$ are respectively isomorphic
  to $\subscript{G_{b}}G_G$ and $ \subscript{G_{\beta(b)}}G_G$ after
  identifying $\pi_1(S^2,f_0^{-1}(A))$ and $\pi_1(S^2,f_1^{-1}(A))$
  with $G_{b}$ and $G_{\beta(b)}$ as in
  Lemma~\ref{lem:CorrOfSphBis}. Therefore,
  $f_0\colon (S^2,f_0^{-1}(A))\to (S^2,A)$ and
  $f_1\colon (S^2,f_1^{-1}(A))\to (S^2,A)$ are covering maps
  associated with the same subgroup of $G$; thus the map
  $\psi\coloneqq f_0\circ f_1^{-1}\colon (S^2,f_0^{-1}(A))\to
  (S^2,f_1^{-1}(A))$ is a well defined homeomorphism specified so that
  $\psi(\text{the endpoint of }b)=\text{the endpoint of
  }\beta(b)$. Finally
  \[\subscript H B(\psi \colon (S^2,C)\selfmap)_H
    =H b G_{b}\otimes_{G_b=G_{\beta(b)}} G_{\beta(b)} \beta(b) H\cong
    \subscript H H_H.\]

  Let us now prove the second part of the theorem. Decompose
  $\subscript H B_{G} \cong H b G_b \otimes \subscript{G_b}G_G$ as
  in~\eqref{eq:DecOfSphBis}. By Hurwitz's
  theorem~\cite{hurwitz:ramifiedsurfaces} the cover of
  $S^2\setminus A$ associated with $G_b\le G$ is a finitely punctured
  sphere; write this cover as
  $f\colon S^2\setminus f^{-1}(A) \to S^2\setminus A$. As in
  Lemma~\ref{lem:CorrOfSphBis} we identify $G_b$ with
  $\pi_1(S^2,f^{-1}(A))$ and we denote by $(\Xi_a)_{a\in f^{-1}(A)}$
  the peripheral conjugacy classes of $G_b\cong \pi_1(S^2,f^{-1}(A)).$

  By Lemma~\ref{lem:CorrOfSphBis2} the biset $H b G_b$ is a sphere
  biset. Since $H b G_b$ has a single left orbit, for every
  $a\in f^{-1}(A)$ the multiset of the lifts of $\Xi_a$ is a singleton
  $\{(1,\widetilde \Xi_a)\}$. Then the homomorphism
  $\iota\colon G_b\to H$ given by $b g = \iota(g) b$ has the property
  that $\iota(\Xi_a)=\widetilde \Xi_a$; i.e. $\iota$ ``forgets'' all
  $\Xi_a$ with trivial $\widetilde \Xi_a$.

  Let $\one\colon (S^2,f^{-1}(A))\to (S^2,C')$ be the map forgetting
  all $a\in f^{-1}(A)$ with trivial $\widetilde \Xi_a$. Since the
  biset of this map is naturally identified with $H b G_b$, we obtain
  a natural isomorphism of $\pi_1(S^2,C')$ with $H$, and therefore of
  $C'$ with $C$. Using these identifications, the biset of $(S^2,C')
  \overset{\one}{\rightarrow} (S^2,f^{-1}(A)) \overset{f}{\rightarrow}
  (S^2,A)$ is isomorphic to $\subscript H B_G$.

  It follows from Theorem~\ref{thm:dehn-nielsen-baer} that there is a
  map, unique up to isotopy, identifying $(S^2,C)$ with $(S^2,C')$ and
  such that the biset of this map is isomorphic to $\subscript H H_H$. This
  finishes the construction of a sphere map
  $f\colon (S^2,C)\to (S^2,A)$ with the biset $\subscript H B_G$.
\end{proof}

\subsection{Thurston maps}
Consider a branched self-covering $f\colon(S^2,A)\selfmap$. This means
$f(A)\subseteq A$ and $A$ contains the critical values of $f$. In
particular, $A$ contains the \emph{post-critical set} of $f$,
\[P_f=\bigcup_{k\ge1}f^k(\text{critical points of }f).
\]
On the other hand, $A$ could also contain fixed points of $f$, or more
generally preperiodic points together with their forward orbit. If $A$
is finite, then $f$ is called a \emph{Thurston map}.

Let $f\colon(S^2,A)\selfmap$ be a Thurston map. In particular, $f$
induces a map $A\selfmap$. Then its portrait $f\colon A\selfmap $ is a
finite dynamical system, sometimes called its \emph{critical portrait}.

\begin{defn}[Combinatorial equivalence of maps]\label{defn:combinatorial equivalence}
  Let $f_0\colon(S^2,A_0)\selfmap$ and $f_1\colon(S^2,A_1)\selfmap$ be
  two Thurston maps. We say that $f_0$ and $f_1$ are
  \emph{combinatorially equivalent}, aka ``Thurston-equivalent'', if
  there exists a path of Thurston maps
  $(f_t\colon(S^2,A_t)\selfmap)_{t\in[0,1]}$ connecting $f_0$ to
  $f_1$. In that case, we write $f\sim g$.
\end{defn}

\noindent There is an equivalent formulation of combinatorial
equivalence in terms of isotopy: we shall use the following
\begin{lem}\label{lem:factorization}
  Two sphere maps $f_0,f_1\colon(S^2,C)\to(S^2,A)$, are isotopic if
  and only if $h f_0=f_1$ for a homeomorphism $h\colon(S^2,C)\selfmap$
  that is isotopic to the identity.
\end{lem}
\begin{proof}
  If there exists an isotopy $(h_t)$ witnessing $\one\approx_C h$, then
  $f_t\coloneqq h_t f_0$ witnesses $f_0\approx_C f_1$. Conversely,
  since all critical values of $f_t$ are frozen in $A$, the set
  $f_t^{-1}(y)$ moves homeomorphically for every $y\in S^2$
  (equivalently, no critical points collide). Therefore, we may factor
  $f_t = h_t f_0$, with $h_t(z)$ the trajectory of $z\in f_t^{-1}(y)$;
  this defines an isotopy from $\one$ to $h\coloneqq h_1$.
\end{proof}

\begin{lem}\label{lem:combinatorial equivalence}
  Two Thurston maps $f_0\colon(S^2,A_0)\selfmap$ and
  $f_1\colon(S^2,A_1)\selfmap$ are combinatorially equivalent if and
  only if there exists homeomorphisms
  $\phi_0,\phi_1\colon(S^2,A_0)\to(S^2,A_1)$ with
  $\phi_0\circ f_0=f_1\circ\phi_1$ and $\phi_0$ isotopic to $\phi_1$
  rel $A_0$:
  \[\begin{tikzcd}
      (S^2,A_0)\ar[r,"\phi_1"]\ar[d,"f_0" left]\ar[dr,phantom,"\circlearrowleft"] & (S^2,A_1)\ar[d,"f_1"]\\
      (S^2,A_0)\ar[r,"\phi_0"] & (S^2,A_1)
    \end{tikzcd}\]
  commutes up to isotopy rel $A_0$.
\end{lem}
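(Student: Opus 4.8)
The plan is to prove the two implications separately; in each direction the idea is to use an ambient isotopy of $S^2$ to reduce to a statement about Thurston maps on a \emph{fixed} marked sphere, where Theorem~\ref{thm:dehn-nielsen-baer+} and the classical fact that $\mathrm{Homeo}^+(S^2)$ is path-connected (it deformation retracts onto $SO(3)$) do all the work.

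For the implication ``$\Leftarrow$'', suppose $\phi_0,\phi_1\colon(S^2,A_0)\to(S^2,A_1)$ satisfy $\phi_0\circ f_0=f_1\circ\phi_1$ with $\phi_0\approx_{A_0}\phi_1$. First I would choose a path $(\beta_t)_{t\in[0,1]}$ in $\mathrm{Homeo}^+(S^2)$ with $\beta_0=\one$ and $\beta_1=\phi_1$; then $(\beta_t\circ f_0\circ\beta_t^{-1}\colon(S^2,\beta_t(A_0))\selfmap)_t$ is a path of Thurston maps from $f_0$ to $\phi_1\circ f_0\circ\phi_1^{-1}$ on $(S^2,A_1)$. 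Next I would rewrite $\phi_1\circ f_0\circ\phi_1^{-1}=(\phi_1\circ\phi_0^{-1})\circ(\phi_0\circ f_0)\circ\phi_1^{-1}=(\phi_1\circ\phi_0^{-1})\circ f_1$, observe that $\phi_1\circ\phi_0^{-1}$ is a homeomorphism of $(S^2,A_1)$ isotopic to $\one$ rel $A_1$ (post-compose $\phi_0^{-1}$ with the isotopy $\phi_0\rightsquigarrow\phi_1$ rel $A_0$), and post-compose $f_1$ with an isotopy $\phi_1\circ\phi_0^{-1}\rightsquigarrow\one$ to get a path of Thurston maps on $(S^2,A_1)$ from $\phi_1\circ f_0\circ\phi_1^{-1}$ to $f_1$. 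Concatenating the two paths yields $f_0\sim f_1$. Throughout, the family constructed at each stage is a path of Thurston maps because composing a branched covering with homeomorphisms only moves the critical values and marked set around, staying inside the (moving) marked set.

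For the implication ``$\Rightarrow$'', suppose $(f_t\colon(S^2,A_t)\selfmap)_{t\in[0,1]}$ is a path of Thurston maps and set $n=\#A_0$, so the ordered finite sets $A_t$ trace a path in the configuration space $\mathrm{Conf}_n(S^2)$. By the isotopy extension theorem (equivalently, path-lifting for the evaluation fibration $\mathrm{Homeo}^+(S^2)\to\mathrm{Conf}_n(S^2)$, $\theta\mapsto\theta(A_0)$) there is a path $(\alpha_t)$ in $\mathrm{Homeo}^+(S^2)$ with $\alpha_0=\one$ and $\alpha_t(A_0)=A_t$. Conjugating, $\tilde f_t\coloneqq\alpha_t^{-1}\circ f_t\circ\alpha_t$ is a path of Thurston maps on the fixed marked sphere $(S^2,A_0)$ from $\tilde f_0=f_0$ to $\tilde f_1=\alpha_1^{-1}\circ f_1\circ\alpha_1$. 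In particular $f_0\approx\tilde f_1$, so $B(f_0)\cong B(\tilde f_1)$ by the easy half of Theorem~\ref{thm:dehn-nielsen-baer+}; then the proof of Theorem~\ref{thm:dehn-nielsen-baer+} produces a homeomorphism $\psi\colon(S^2,A_0)\selfmap$ with $B(\psi)\cong\subscript H H_H$, hence isotopic to $\one$ rel $A_0$, such that $f_0=\psi\circ\tilde f_1$. Rearranging gives $(\alpha_1\circ\psi^{-1})\circ f_0=f_1\circ\alpha_1$, so setting $\phi_0\coloneqq\alpha_1\circ\psi^{-1}$ and $\phi_1\coloneqq\alpha_1$ we obtain homeomorphisms $(S^2,A_0)\to(S^2,A_1)$ (note $\psi$ fixes $A_0$ pointwise, so $\phi_0(A_0)=\alpha_1(A_0)=A_1$ and $\phi_0,\phi_1$ even agree on $A_0$) with $\phi_0\circ f_0=f_1\circ\phi_1$ and $\phi_0\approx_{A_0}\phi_1$, as required.

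The main obstacle is the use, in the ``$\Rightarrow$'' direction, of the \emph{constructive} content of Theorem~\ref{thm:dehn-nielsen-baer+}: it is not enough to know that $f_0$ and $\tilde f_1$ are isotopic, one needs them to differ by \emph{post-composition} with a single homeomorphism isotopic to the identity rel $A_0$, and this is precisely what the proof of that theorem supplies (via $B(\psi)\cong\subscript H H_H$ and Dehn--Nielsen--Baer). A secondary point to pin down is that a ``path of Thurston maps'' does keep $\#A_t$ constant and the $n$ ordered marked points distinct, so that the $\mathrm{Conf}_n(S^2)$-path and its lift $(\alpha_t)$ genuinely exist; this is implicit in the definition but should be stated. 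Everything else is routine bookkeeping with compositions of homeomorphisms.
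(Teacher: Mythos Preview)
Your argument is correct in outline, but the forward direction takes a genuinely heavier route than the paper. The paper proceeds entirely within elementary covering-space theory: given the path $(f_t)$, it directly factors $f_t=\lambda_t\circ f_0\circ\rho_t^{-1}$ with $\lambda_t,\rho_t\colon(S^2,A_0)\to(S^2,A_t)$ continuous in $t$ and $\lambda_0=\rho_0=\one$ (this is a standard lifting argument: choose $\lambda_t$ by isotopy extension so that $\lambda_t^{-1}\circ f_t$ is a continuous family of branched covers over the \emph{fixed} branch set $A_0$, then lift homotopies of covers to obtain $\rho_t$). Setting $\phi_t=\rho_1\circ\rho_{1-t}^{-1}\circ\lambda_{1-t}$ then gives the isotopy rel $A_0$ between $\phi_0=\lambda_1$ and $\phi_1=\rho_1$ in one stroke. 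By contrast, you reduce to the fixed sphere and then invoke the \emph{proof} of Theorem~\ref{thm:dehn-nielsen-baer+} to obtain the homeomorphism $\psi$; this is logically sound (that theorem does not depend on the present lemma), but it imports the biset machinery and Dehn--Nielsen--Baer into what is really an elementary isotopy statement. What your approach buys is conceptual uniformity with the rest of the section; what the paper's approach buys is self-containedness and a cleaner explicit isotopy.

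One small slip to fix: in the proof of Theorem~\ref{thm:dehn-nielsen-baer+}, the homeomorphism $\psi\colon(S^2,C)\selfmap$ is constructed so that $f_1\circ\psi=f_0$ (it is a deck-type map between the two covers of $(S^2,A)$), i.e.\ $\psi$ is \emph{pre}composed, not postcomposed. So you should have $f_0=\tilde f_1\circ\psi$, giving $\alpha_1\circ f_0=f_1\circ(\alpha_1\circ\psi)$; take $\phi_0=\alpha_1$ and $\phi_1=\alpha_1\circ\psi$. The conclusion is unchanged. Your backward direction is fine and is essentially the paper's argument with the single interpolating formula $f_t=\lambda_t\circ f_0\circ\phi_t^{-1}\circ\phi_0\circ\lambda_t^{-1}$ unpacked into a concatenation of two paths.
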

\begin{proof}
  Given a path $(f_t)$ of Thurston maps connecting $f_0$ to $f_1$,
  factor each map $f_t$ as $f_t=\lambda_t\circ f_0\circ\rho_t^{-1}$
  for homeomorphisms $\lambda_t,\rho_t\colon(S^2,A_0)\to(S^2,A_t)$
  depending continuously on $t$, with $\lambda_0=\rho_0=\one$. This
  can be done by first choosing $\lambda_t$ moving $A_0$ to $A_t$, and
  then applying Lemma~\ref{lem:factorization}.  Define then
  $\phi_t=\rho_1\circ\rho_{1-t}^{-1}\circ\lambda_{1-t}$, and note that
  $\phi_t\colon(S^2,A_0)\to(S^2,A_1)$ is a homeomorphism with
  $\phi_0=\lambda_1$ and $\phi_1=\rho_1$ so
  $\phi_0\circ f_0=f_1\circ\phi_1$. Furthermore,
  $(\phi_t)_{t\in[0,1]}$ is an isotopy rel $A_0$ from $\phi_0$ to
  $\phi_1$.

  Conversely, let $\phi_0,\phi_1\colon(S^2,A_0)\to(S^2,A_1)$ be
  isotopic homeomorphisms, and let $(\phi_t)_{t\in[0,1]}$ be an
  isotopy rel $A_0$ between them. Let $(\lambda_t)_{t\in[0,1]}$ be an
  isotopy (possibly non-constant on $A_0$) from $\lambda_0=\one$ to
  $\lambda_1=\phi_0$.  Define $A_t\coloneqq\lambda_t(A_0)$ and
  $f_t\coloneqq\lambda_t\circ
  f_0\circ\phi_t^{-1}\circ\phi_0\circ\lambda_t^{-1}$, and note
  $f_t(A_t)\subseteq A_t$, so $f_t$ is a Thurston map along an isotopy
  from $f_0$ to $f_1$.
\end{proof}

\begin{cor}[Kameyama~\cite{kameyama:thurston}*{Corollary~3.7}; see
  also~\cite{nekrashevych:ssg}*{Theorem~6.5.2}]\label{cor:kameyama}
  Two Thurston maps $f\colon (S^2,A)\selfmap,g\colon (S^2,C)\selfmap$ with $|C|,|A|\ge 2$ are combinatorially equivalent if and only
  if their bisets $B(f)$ and $B(g)$ are conjugate.
\end{cor}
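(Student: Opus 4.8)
The plan is to deduce Corollary~\ref{cor:kameyama} from Theorem~\ref{thm:dehn-nielsen-baer+} by "modding out" the choice of marking of $S^2$. Recall that combinatorial equivalence of $f$ and $g$ means, by Definition~\ref{defn:combinatorial equivalence} and Lemma~\ref{lem:combinatorial equivalence}, that there exist homeomorphisms $\phi_0,\phi_1\colon(S^2,A)\to(S^2,C)$ with $\phi_0\circ f=g\circ\phi_1$ and $\phi_0\approx\phi_1$ rel $A$. On the biset side, a conjugacy between $B(f)$ and $B(g)$ is a triple consisting of an isomorphism $H\to G$ (where $H=\pi_1(S^2\setminus A,*)$, $G=\pi_1(S^2\setminus C,\star)$) carrying peripheral classes to peripheral classes — which, by the Dehn--Nielsen--Baer Theorem~\ref{thm:dehn-nielsen-baer}, is realized by a homeomorphism $(S^2,A)\to(S^2,C)$ — together with a compatible bijection of the underlying sets. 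So the two statements "say the same thing"; the work is to match them up cleanly.

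First I would prove the easy direction: given a combinatorial equivalence via $(\phi_0,\phi_1)$, post-compose $g$ with $\phi_0^{-1}$ and pre-compose with a representative of the isotopy class of $\phi_1$ to reduce to the case $A=C$ and $f\approx g$; then $B(f)\cong B(g)$ by Theorem~\ref{thm:dehn-nielsen-baer+}, and tracking the homeomorphisms used shows this isomorphism is in fact a conjugacy of $H$-$G$-bisets in the sense defined in the excerpt. Concretely, a homeomorphism $\psi\colon(S^2,A)\to(S^2,C)$ induces an isomorphism $\psi_*\colon H\to G$ of sphere groups and an identification $B(g)\cong{}^\psi B(g)$ (pull-back of the $G$-biset along $\psi_*$), and the computation $\phi_0\circ f=g\circ\phi_1$ translates into an isomorphism $B(f)\to{}^{\psi}B(g)$ of the required form. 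One has to be a little careful because $\phi_0$ and $\phi_1$ need not be equal, only isotopic; but the isotopy $\phi_0\approx\phi_1$ rel $A$ guarantees that they induce the same outer automorphism, so the left and right "twisting" agree and the resulting map is genuinely a biset isomorphism over the fixed identification $\psi_*$.

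For the converse, suppose $B(f)$ and $B(g)$ are conjugate: there is an isomorphism of sphere groups $\theta\colon H\to G$ and a bijection $B(f)\to B(g)$ intertwining the actions through $\theta$. By Lemma~\ref{lem:sphere to sphere group} and the Dehn--Nielsen--Baer Theorem~\ref{thm:dehn-nielsen-baer}, $\theta$ is induced by a homeomorphism $\psi\colon(S^2,A)\to(S^2,C)$, unique up to isotopy. Replacing $g$ by the combinatorially equivalent map $\psi^{-1}\circ g\circ\psi\colon(S^2,A)\selfmap$ — which is legitimate since $\psi$ sends the critical portrait of $g$ to a portrait on $A$ — we are reduced to the situation $A=C$, $H=G$, $\theta=\one$, and $B(f)\cong B(g)$ as honest $H$-$H$-bisets. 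Now Theorem~\ref{thm:dehn-nielsen-baer+} gives $f\approx g$, hence $f\sim g$, hence the original $f$ and $g$ are combinatorially equivalent by transitivity of $\sim$. The hypothesis $|A|,|C|\ge2$ is used precisely where sphere groups are invoked, since $\#A=1$ is forbidden and $\#A=0$ is a degenerate (trivial-group) case one may either exclude or treat separately.

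The main obstacle is bookkeeping rather than mathematics: one must pin down exactly what "conjugate bisets" means here — presumably an isomorphism class under the equivalence $B\sim{}^\theta B$ for $\theta$ ranging over isomorphisms of the ambient sphere groups respecting peripheral structure — and then verify that the homeomorphism-level data of Lemma~\ref{lem:combinatorial equivalence} (two isotopic homeomorphisms $\phi_0,\phi_1$) corresponds precisely to that biset-level data, using Theorem~\ref{thm:dehn-nielsen-baer} to pass between the outer-automorphism picture and the homeomorphism picture in both directions. Once the dictionary is fixed, both implications are short applications of Theorem~\ref{thm:dehn-nielsen-baer+}.
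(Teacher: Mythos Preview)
Your proposal is correct and follows essentially the same route as the paper's proof: both directions reduce, via Lemma~\ref{lem:combinatorial equivalence} and the Dehn--Nielsen--Baer Theorem~\ref{thm:dehn-nielsen-baer}, to the case $A=C$ with a common sphere group, whereupon Theorem~\ref{thm:dehn-nielsen-baer+} converts isotopy of maps into isomorphism of bisets and conversely. The paper phrases the conjugacy as $B(f)\cong B(\phi)\otimes B(g)\otimes B(\phi)^\vee$ rather than your pull-back notation ${}^\psi B(g)$, and it works with a single homeomorphism $\phi$ (since $\phi_0\approx\phi_1$ lets one use either), but these are packaging differences, not substantive ones.
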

\begin{proof}
  If $f\colon(S^2,A)\selfmap$ and $g\colon(S^2,C)\selfmap$ are
  combinatorially equivalent, then by Lemma~\ref{lem:combinatorial
    equivalence} there exists a homeomorphism
  $\phi\colon(S^2,A)\to(S^2,C)$ such that $f$ and
  $\phi^{-1}\circ g\circ\phi$ are isotopic rel $A$; so $B(f)$ and
  $B(\phi)\otimes B(g)\otimes B(\phi)^\vee$ are isomorphic by
  Theorem~\ref{thm:dehn-nielsen-baer+}, and $B(f)$ and $B(g)$ are
  conjugate.

  Conversely, if $\subscript G B(f)_G$ and $\subscript H B(g)_H$ are
  conjugate with respective sphere groups $G=\pi_1(S^2\setminus A,*)$
  and $H=\pi_1(S^2\setminus C,\dagger)$, let $\varphi\colon G\to H$ be
  a sphere group isomorphism such that the bisets $B(f)$ and
  $B_\varphi\otimes B(g)\otimes B_\varphi^\vee$ are isomorphic. By
  Theorem~\ref{thm:dehn-nielsen-baer}, the homomorphism $\varphi$ may
  be (uniquely) realized as $\phi_*$ for a homeomorphism
  $\phi\colon(S^2,A)\to(S^2,C)$, so $f$ and
  $\phi^{-1}\circ g\circ\phi$ are isotopic rel $A$ by
  Theorem~\ref{thm:dehn-nielsen-baer+}, so $f$ and $g$ are
  combinatorially equivalent by Lemma~\ref{lem:combinatorial
    equivalence}.
\end{proof}

\section{Decompositions of sphere bisets}
\label{s:DecSphBis}
We now explore how spheres and sphere bisets can be decomposed into
simpler components. As before, we describe side by side the topology
and its associated group theory.

We will use the following conventional notations. Two closed curves
$\ell,\ell'$ in $S^2\setminus A$ are \emph{homotopic rel $A$} if their
parameterizations $\ell,\ell'\colon S^1\to S^2\setminus A$ are
homotopic rel $A$. Given a subset $T\subset S^2$, we say that $T$ is
\emph{homotopic rel $A$} to a point $x\in S^2$ (or $T$ is
\emph{contractible} to $x$) if there is a homotopy
$h\colon T\times [0,1)\to S^2$ rel $A$, namely $h(-,0)=\one$ and
$h(y,t)=h(y,0)$ for all $y\in T\cap A$, and $h(-,t)\to x$ as $t\to
1$. Note that if $T$ is homotopic to $x$, then either
$T\cap A\subseteq\{x\}$.

Finally, we say that $T\subset S^2$ is \emph{homotopic to a curve
  $\ell$} if there is a homotopy $h\colon T\times [0,1]\to S^2$ rel
$A$ and a curve $\ell'\subset T$ such that $h(-,0)=\one$,
$h(T,1)\subset \ell$, and the restriction of $h$ to $\ell'\times[0,1]$
is a homotopy rel $A$ between $\ell'$ and $\ell$.

\subsection{Multicurves}\label{ss:multicurves}
Let $(S^2,A)$ be a sphere. A \emph{multicurve} $\CC$ on $(S^2,A)$ is a
collection of non-trivial, non-peripheral, mutually non-homotopic,
non-intersecting, simple closed curves on $S^2\setminus A$.

Psychologically, a sphere with multicurve $(S^2,A,\CC)$ should be
thought of as a sphere on which the curves $\CC$ are extremely short,
so that the sphere looks more like a cactus of the genus
\emph{opuntia}.

If $(S^2,A,\CC)$ be a noded sphere, one may cut $S^2$ along $\CC$, and
shrink the boundary components to punctures. Algebraically, this
amounts to the following. Each curve in $\CC$ may be expressed as a
conjugacy class $\Gamma$ in $G\coloneqq\pi_1(S^2,A)$. Choose in each
$\Gamma\in\CC$ a representative $t_\Gamma\in\Gamma$. Then $G$
decomposes as a tree of groups, with one vertex per component $S$ of
$S^2\setminus\CC$ and associated vertex group $\pi_1(S)$, and one edge
per curve $\Gamma\in\CC$ with associated edge group $\langle
t_\Gamma\rangle$. The underlying graph is a tree.

So as to
follow~\cite{bartholdi-dudko:bc1}*{Definition~\ref{bc1:defn:gog_1dimcovers}},
we consider rather the barycentric subdivision of the above tree of
groups:
\begin{defn}[Tree of groups of a multicurve]\label{defn:gfofgroups}
  Let $(S^2,A,\CC)$ be a sphere given with a multicurve $\CC$. Its
  associated \emph{sphere tree of groups} $\gf$ is defined as follows.

  Consider the finite $1$-dimensional cover
  \cite{bartholdi-dudko:bc1}*{Definition~\ref{bc1:defn:1dimcovers}} of
  $(S^2,A)$ by components of $\CC$ and the set $\mathscr S$ of
  closures of components of $S^2\setminus\CC$; call the former
  \emph{curves} and the latter \emph{small spheres}.  The underlying
  graph of $\gf$ has one vertex per curve and one per small sphere. It
  has four edges per curve, with an edge connecting each curve to its
  two neighbouring spheres and back. Thus
  $V=\CC\sqcup\mathscr S$ is the vertex set, and
  $E=\{(v,w)\in\CC\times\mathscr S\sqcup\mathscr S\times\CC\mid
  v\cap w\neq\emptyset\}$ is the edge set. We have $(v,w)^-=v$ and
  $\overline{(v,w)}=(w,v)$.

  Consider a vertex $v\in V$; it is represented by a subset $S_v$ of
  $S^2$, and is either a curve or a small sphere. Choose a basepoint
  $*_v\in S_v\setminus A$, and for each edge $(v,w)$ choose a path
  $\ell_{v,w}$ in $(S_v\cup S_w)\setminus A$ from $*_v$ to $*_w$,
  assuming $\ell_{v,w}^{-1}=\ell_{w,v}$. The group associated to the
  vertex $v$ is the fundamental group $G_v=\pi_1(S_v\setminus
  A,*_v)$. The group associated with the edge $(v,w)$ is $G_v$ if
  $v\in\CC$ and is $G_w$ if $w\in\CC$. The homomorphism $G_{(v,w)}\to
  G_{(w,v)}$ is the identity. The homomorphism $G_{(v,w)}\mapsto G_v$
  is the identity if $v\in\CC$ and is
  $\gamma\mapsto\ell_{v,w}\#\gamma\#\ell_{w,v}$ otherwise.
\end{defn}  
In particular, the groups associated with small spheres are sphere
groups (there is no distinction, from their point of view, between
boundary components in $A$ or in $\CC$), and the vertex groups
associated with curves are infinite cyclic (they could be thought as
sphere groups of $S^2\setminus \{0,\infty\}$), as are the edge
groups. The underlying graph is a tree.  The van Kampen theorem,
see~\cite{bartholdi-dudko:bc1}*{Theorem~\ref{bc1:thm:vankampen}},
asserts that the fundamental group of $(S^2,A)$ is isomorphic to the
fundamental group of $\gf$.  We recall
from~\cite{bartholdi-dudko:bc1}*{Lemma \ref{bc1:lem:gfCongClass}} that
a tree of groups is defined uniquely up to congruence
(see~\cite{bartholdi-dudko:bc1}*{Definition~\ref{bc1:def:congruence}}).

For a small sphere $\overline S\subset(S^2,A)$, we write $\widehat S$
for the quotient of $\overline S$ in which every boundary curve is
shrunk to a point; so $\widehat S$ is a topological sphere, marked by
the image of $A$ and the boundary curves.

\subsubsection{Algebraic multicurves}
Recall from~\cite{bartholdi-dudko:bc1}*{\S\ref{bc1:ss:vk}} that the
\emph{barycentric subdivision} of a graph $\gf=V\sqcup E$ is a new
graph $\gf'=V'\sqcup E'$ with vertex set $V'=\gf/\{x=\overline x\}=V
\sqcup E/\{x=\overline x\}$ and edge set $E'=E\times\{+,-\}$; for
$e\in E$ and $\varepsilon\in\{\pm1\}$, set
$(e,\varepsilon)^\varepsilon=e^\varepsilon$ and
$(e,\varepsilon)^{-\varepsilon}=[e]$ and
$\overline{(e,\varepsilon)}=(\overline e,-\varepsilon)$. Let us say
that a vertex $v\in V'$ is \emph{old} if $v\in V$ and is \emph{new} if
$v\in E/\{x=\overline x\}$.  Old and new vertices form a bipartite
structure of $\gf'$. We are now ready to state an algebraic
counterpart of Definition~\ref{defn:gfofgroups}; the equivalence of
the objects is proven in Lemma~\ref{lem:MultToAlgMult}.
\begin{defn}[Sphere groups and algebraic multicurves]\label{def:multicurve}
A \emph{stable sphere tree of groups} is 
\begin{enumerate}
\item a tree $\gf$ that is the barycentric subdivision of a smaller
  tree; old vertices of $\gf$ are called \emph{sphere vertices} while
  new vertices of $\gf$ are called \emph{curve vertices};
\item a sphere group at every vertex, so that curve vertex groups are
  cyclic (thought of as $\pi_1(\hC\setminus\{0,\infty\})$) while sphere
  vertex groups have at least $3$ peripheral conjugacy classes;
\item a cyclic group at every edge, which embeds into vertex groups by
  mapping a generator to an element of a peripheral conjugacy class,
  in such a manner that different edge groups attach to different
  peripheral conjugacy classes.
\end{enumerate}

A peripheral conjugacy class in a vertex group $G_v$ is \emph{vacant}
if it doesn't intersect the image of any edge group. Clearly, vacant
peripheral classes in a sphere tree of groups $\gf$ are identified
with peripheral conjugacy classes in $\pi_1(\gf)$.

  Let $G$ be a sphere group. An algebraic \emph{multicurve} is a
  collection $\CC$ of distinct, non-peripheral conjugacy classes in
  $G$, such that there exists a decomposition of $G$ as a sphere tree
  of groups with edges in bijection with $\CC$, such that the edge
  group associated with $\Gamma\in\CC$ is cyclic and generated by a
  representative $t_\Gamma\in\Gamma$.
\end{defn}

\noindent The following extends Lemma~\ref{lem:sphere to sphere group} to
spheres with multicurves:
\begin{lem}\label{lem:MultToAlgMult}
  Let $(S^2,A)$ be a marked sphere with multicurve $\CC$, and choose
  $*\in S^2\setminus A$. Then the tree of groups decomposition $\gf$
  of $G=\pi_1(S^2\setminus A,*)$ along $\CC$ is a sphere tree of
  groups as in Definition~\ref{def:multicurve}. Via this
  decomposition, peripheral conjugacy classes in $G$ form vacant
  peripheral conjugacy classes in $\gf$ while conjugacy classes of $G$
  encoding $\CC$ form non-vacant peripheral conjugacy classes of
  $\gf$.
  
   Conversely, to a sphere tree of groups corresponds a marked
  sphere with multicurve, which is unique up to homeomorphism. 
\end{lem}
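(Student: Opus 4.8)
The plan is to prove Lemma~\ref{lem:MultToAlgMult} in two directions, matching the two halves of its statement, and to reduce almost everything to Lemma~\ref{lem:sphere to sphere group} together with the van Kampen theorem cited after Definition~\ref{defn:gfofgroups} and the uniqueness of tree-of-groups decompositions up to congruence.

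First I would handle the forward direction. Given $(S^2,A,\CC)$ and a basepoint $*$, one forms the sphere tree of groups $\gf$ of Definition~\ref{defn:gfofgroups}: its old (sphere) vertices correspond to the closures of components of $S^2\setminus\CC$, its new (curve) vertices to the curves in $\CC$, and the edge and vertex groups are as prescribed there. The van Kampen theorem gives $\pi_1(\gf)\cong\pi_1(S^2\setminus A,*)=G$, so $\gf$ is a tree-of-groups decomposition of $G$. It remains to check that $\gf$ satisfies the three conditions of Definition~\ref{def:multicurve}. Condition (1) holds because $\gf$ is the barycentric subdivision of the (unsubdivided) tree with one vertex per small sphere and one edge per curve, which is a tree since cutting $S^2$ along disjoint simple closed curves produces a tree-like adjacency pattern on the sphere. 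Condition (2): each sphere-vertex group $\pi_1(\overline S\setminus A,*_v)$ is a sphere group by Lemma~\ref{lem:sphere to sphere group} (the boundary curves become punctures, with no distinction from the marked points of $A$); it has at least three peripheral classes because a multicurve component is by definition non-peripheral and non-trivial, so any small sphere with fewer than three boundary-or-marked points would force one of its bounding curves to be trivial or peripheral, or two of them to be homotopic, contradicting that $\CC$ is a multicurve. Curve-vertex groups are infinite cyclic by construction. Condition (3): each edge group is the cyclic group generated by a peripheral element $t_\Gamma$ of the adjacent sphere group, and two distinct curves in $\CC$ give boundary curves lying in distinct free-homotopy classes on each small sphere they bound, hence distinct peripheral conjugacy classes there; also the boundary curves corresponding to components of $A$ remain uncovered by any edge group. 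This last observation is exactly the claim that peripheral classes of $G$ (images of the $\gamma_i$) become the vacant peripheral classes of $\gf$, while the classes encoding $\CC$ become the non-vacant ones, which follows since a class of $G$ is peripheral iff it is freely homotopic to a loop around a point of $A$, iff, read inside the relevant small sphere, it hits a boundary component coming from $A$ rather than from $\CC$.

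For the converse, given an abstract sphere tree of groups $\gf$, I would build the marked sphere with multicurve by the obvious gluing: to each sphere vertex $v$ attach, via Lemma~\ref{lem:sphere to sphere group}, a marked sphere $\widehat S_v$ realizing the sphere group $G_v$ with its peripheral classes; then for each edge of the underlying (unsubdivided) tree, which records that a peripheral class of one sphere vertex group is identified with a peripheral class of another, remove a small disk around each of the two corresponding marked points and glue the resulting boundary circles by an orientation-reversing homeomorphism. Because the underlying graph is a tree, no nontrivial genus is created and the result is again a topological sphere $S^2$; the unglued peripheral points (the vacant classes) become the marked set $A$, and the glued circles form a collection $\CC$ of simple closed curves. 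These curves are pairwise non-homotopic, non-trivial and non-peripheral precisely because, by condition (2) of Definition~\ref{def:multicurve}, each sphere vertex group has at least three peripheral classes, so each glued circle separates off at least two distinct punctures on either side. Van Kampen applied to this gluing recovers $\pi_1(S^2\setminus A)\cong\pi_1(\gf)\cong G$, and the tree of groups of $\CC$ in the sense of Definition~\ref{defn:gfofgroups} is congruent to $\gf$. Uniqueness up to homeomorphism follows from the uniqueness clause of Lemma~\ref{lem:sphere to sphere group} for each vertex sphere together with the standard fact that a homeomorphism between the pieces matching the gluing data extends over the glued circles (any two orientation-reversing homeomorphisms of $S^1$ are isotopic), and from the uniqueness of tree-of-groups decompositions up to congruence, which pins down the combinatorics.

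The main obstacle I expect is bookkeeping rather than conceptual difficulty: precisely matching the barycentric-subdivided combinatorial data of Definition~\ref{defn:gfofgroups} with the three-clause abstract definition in Definition~\ref{def:multicurve}, and keeping straight the correspondence ``edge of the unsubdivided tree $\leftrightarrow$ curve $\leftrightarrow$ pair of peripheral classes, one in each adjacent sphere group,'' together with the orientation conventions $\ell_{v,w}^{-1}=\ell_{w,v}$, $(v,w)^-=v$, $\overline{(v,w)}=(w,v)$. The one genuinely geometric point to argue carefully is the claim used in both directions that the adjacency graph of the small spheres of a multicurve on $S^2$ is a tree (so that no spurious genus or extra $\pi_1$ generators appear when gluing back), which follows from the Jordan curve theorem: each curve of $\CC$ separates $S^2$, so collapsing small spheres to vertices and curves to edges yields a connected graph in which every edge is separating, i.e.\ a tree. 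I would state this as a small observation and reduce everything else to Lemma~\ref{lem:sphere to sphere group}, van Kampen, and the uniqueness of decompositions up to congruence.
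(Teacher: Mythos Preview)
Your proposal is correct and follows essentially the same route as the paper: the forward direction is dismissed as a routine check, and for the converse you realize each sphere vertex group as a marked sphere (Lemma~\ref{lem:sphere to sphere group}), remove disks at non-vacant peripheral points, and glue along the resulting boundary circles---the paper does the same, phrasing the gluing as attaching cylinders with a marked core curve rather than identifying circles directly, which is topologically equivalent.

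The one small difference worth noting is in the uniqueness argument. You argue piecewise: uniqueness of each $\widehat S_v$ from Lemma~\ref{lem:sphere to sphere group}, together with the fact that any two orientation-reversing homeomorphisms of $S^1$ are isotopic so the gluings can be matched. The paper instead invokes Theorem~\ref{thm:dehn-nielsen-baer} (Dehn--Nielsen--Baer) globally: two candidate spheres with multicurve yielding the same sphere tree of groups have isomorphic fundamental groups with matching peripheral data, hence are homeomorphic. Your argument is more elementary and self-contained; the paper's is shorter and avoids the bookkeeping of extending homeomorphisms across the glued annuli, but at the cost of appealing to a stronger external result.
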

\begin{proof}
  It is easy to check that the tree of groups decomposition of
  $\pi_1(S^2\setminus A,*)$ along $\CC$ satisfies
  Definition~\ref{def:multicurve}. Conversely, given a sphere tree of
  groups as in Definition~\ref{def:multicurve}, construct a sphere
  with multicurve by first realizing every small sphere vertex group
  as the fundamental group of a sphere with marked points; remove a
  small disk around each marked point whose corresponding peripheral
  class is not vacant, and attach cylinders, with a marked curve along
  their core, between these spheres by gluing their boundary to the
  boundary of the removed disks. The uniqueness of the obtained sphere
  follows from Theorem~\ref{thm:dehn-nielsen-baer}.
\end{proof}

Note that there exists a general notion, that of ``JSJ decomposition''
of groups~\cite{rips-sela:jsj}, over cyclic subgroups. One usually
considers it for freely indecomposable groups; while, on the other
hand, we apply it here to study bisets over decompositions of
free groups over cyclic subgroups.

\begin{thm}\label{thm:mc to gog}
  There is an algorithm that, given $(G,\Gamma_i)$ a sphere group and $\CC$
  a collection of conjugacy classes in $G$, computes a sphere tree of
  groups decomposition as in Definition~\ref{def:multicurve} (if $\CC$ is
  an algebraic multicurve), or returns \texttt{fail} (if it is not).
\end{thm}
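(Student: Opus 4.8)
The plan is to reduce the computation of the sphere tree of groups to a sequence of decidable subproblems, each of which is a standard algorithmic question in free groups. We are given $G$ presented as a sphere group, i.e.\ either as a free group $F_{n-1}$ together with $n$ distinguished peripheral elements $\gamma_1,\dots,\gamma_n$ satisfying $\gamma_1\cdots\gamma_n=1$, and a finite list $\CC=\{\Gamma_1,\dots,\Gamma_m\}$ of conjugacy classes, each represented by a cyclically reduced word. First I would normalize the input: cyclically reduce each representative, discard any $\Gamma_i$ that is trivial or conjugate into a peripheral class (using Whitehead's algorithm / conjugacy in free groups to test this), and test pairwise non-conjugacy, discarding duplicates; if any $\Gamma_i$ is a proper power, we may either keep its root or record the power, since an algebraic multicurve only requires the edge group to be generated by \emph{some} representative. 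If at any stage one of these membership/conjugacy tests fails in a way incompatible with Definition~\ref{def:multicurve} we return \texttt{fail}.

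The combinatorial heart is to reconstruct the underlying tree and the vertex groups. By Lemma~\ref{lem:MultToAlgMult} a sphere tree of groups, if it exists, is unique up to congruence, and it is exactly the tree of groups decomposition of $G$ along $\CC$; geometrically the curves $\Gamma_i$ cut $(S^2,A)$ into small spheres, and the small-sphere vertex groups are the stabilizers of the components. Algebraically I would proceed as follows. The classes $\Gamma_i$ must be realizable by disjoint simple closed curves; this is the condition that the collection $\CC$ is a multicurve, and it can be checked algorithmically because disjointness of simple closed curves on $(S^2,A)$ is detected by a computable condition on their representatives in $G$ (one may, for instance, work with the dual: enumerate candidate sphere-group structures on the putative vertex groups and candidate gluings, bounding the complexity by $n$ and $m$, since the rank of $G$ is $n-1$ and each vertex group has rank strictly smaller). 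Concretely: the number of small spheres is at most $m+1$, the sum of the ranks of the vertex groups is $n-1$ (an Euler-characteristic count), so there are only finitely many combinatorial types of trees of groups with edge set $\CC$; for each I would attempt to build an isomorphism with $G$ matching peripheral classes. Since $\Out$ of a free group acting on conjugacy classes is effectively computable (Dehn--Nielsen--Baer, Theorem~\ref{thm:dehn-nielsen-baer}, plus Whitehead's algorithm), one can decide whether such an isomorphism exists; if one does for some candidate, output it, and if none does, return \texttt{fail}.

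Alternatively, and more efficiently, I would build the tree of groups directly and greedily: pick $\Gamma_1$, find via the Stallings-graph / folding machinery (applied to the free group $G$ modulo the relator) the splitting $G=G_1\ast_{\langle t_{\Gamma_1}\rangle}G_2$ or $G=G_1\ast_{\langle t_{\Gamma_1}\rangle}$ induced by the corresponding simple closed curve, which exists and is computable precisely because an embedded essential simple closed curve on a surface gives a one-edge splitting of $\pi_1$ over $\Z$; this step already \emph{tests} whether $\Gamma_1$ is realized by a simple closed curve. Then recurse on each side with the sublist of the $\Gamma_i$ lying in that vertex group (membership of a conjugacy class in a free factor / vertex group is decidable), checking disjointness against $\Gamma_1$ en route. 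After $m$ iterations one has the full tree of groups; finally take its barycentric subdivision as in Definition~\ref{defn:gfofgroups} to present it in the required form, and verify condition~(3) of Definition~\ref{def:multicurve}, namely that distinct edges attach to distinct peripheral classes. The main obstacle is the simple-closed-curve recognition: deciding whether a given conjugacy class in the sphere group $G$ is represented by an \emph{embedded} simple closed curve (equivalently, induces a splitting of $G$ compatible with the peripheral structure), and doing so constructively so that the splitting itself is produced. This is where I would invest the most care; it can be handled either via Whitehead-type algorithms recognizing which primitive-like classes are simple on a punctured sphere, or by explicitly running the surface-topology algorithm (e.g.\ via normal-curve coordinates or via the action on the arc/curve complex), and the correctness then rests on the uniqueness statement of Lemma~\ref{lem:MultToAlgMult} together with Dehn--Nielsen--Baer.
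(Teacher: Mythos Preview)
Your greedy recursive strategy---split along one curve, distribute the remaining curves to the two sides, recurse---is exactly the paper's approach, and you correctly identify simple-closed-curve recognition and the explicit construction of the one-edge splitting as the crux. Where your proposal stays vague, though, the paper is concrete, and its device is worth knowing.

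For detecting simple closed curves the paper cites the geometric-intersection-number algorithms of Birman--Series and Cohen--Lustig (your ``normal-curve coordinates'' points in the same direction). For actually \emph{producing} the splitting, the paper does not use Stallings folding or Whitehead---neither of which directly yields an amalgamated decomposition over $\Z$---but rather the following trick. Once $g^G$ is known to be simple, the bipartition $\{1,\dots,n\}=\{i_1,\dots,i_s\}\sqcup\{j_1,\dots,j_t\}$ of peripheral indices is read off the abelianization: the image of $g$ in $G/[G,G]\cong\Z^n/(1,\dots,1)$ equals $e_{i_1}+\cdots+e_{i_s}=-(e_{j_1}+\cdots+e_{j_t})$. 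Topological reasons (isotope $g$ into a tubular neighbourhood of the equator with the $a_i$ on it) guarantee that $g=\gamma_{i_1}^{u_1}\cdots\gamma_{i_s}^{u_s}$ for some conjugators $u_k\in G$; since such an expression is known to exist, one simply enumerates tuples $(u_1,\dots,u_s)$ until the equation holds in $G$. The two vertex sphere groups are then generated by $\gamma_{i_1}^{u_1},\dots,\gamma_{i_s}^{u_s},g^{-1}$ and by $g,\gamma_{j_1}^{v_1},\dots,\gamma_{j_t}^{v_t}$ respectively, with the obvious sphere relations. The remaining classes in $\CC$ are rewritten in these new generators; if some class cannot be so rewritten, the curves are not disjoint and one returns \texttt{fail}. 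Your ``enumerate all candidate tree structures and test for isomorphism'' alternative would also terminate in principle, but it is both less efficient and harder to certify than this direct construction.

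One minor correction: a proper power is never represented by an embedded simple closed curve on a punctured sphere, so if some $\Gamma_i$ is a proper power you should return \texttt{fail} rather than pass to its root.
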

\begin{proof}
  There are algorithms
  (see~\cites{birman-series:sccs,cohen-lustig:intersectionnumbers})
  that compute geometric intersection numbers of curves; and, in
  particular, whether a conjugacy class represents a simple closed
  curve.

  Given a multicurve $\CC$, apply such an algorithm to each curve
  $g^G$ of $\CC$, in turn. If $g^G$ is a simple closed curve, then
  $\{1,\dots,n\}$ may be partitioned as
  $\{i_1,\dots,i_s\}\sqcup\{j_1,\dots,j_t\}$ in such a manner that $g$
  may be written as a product of conjugates of Hurwitz generators of
  $G$ as
  \[g=\gamma_{i_1}^{u_1}\cdots\gamma_{i_s}^{u_s}=(\gamma_{j_1}^{v_1}\cdots\gamma_{j_t}^{v_t})^{-1}\]
  for $u_1,\dots,u_s,v_1,\dots,v_t\in G$. The partition
  $\{i_1,\dots,i_s\}\sqcup\{j_1,\dots,j_t\}$ is obtained by computing
  the image of $g$ in $G/[G,G]\cong\Z^n/(1,\dots,1)$ and writing it as
  $e_{i_1}+\cdots+e_{i_s}=-(e_{j_1}+\cdots+e_{j_t})$ with
  $e_1,\dots,e_n$ the image of the standard basis of $\Z^n$.

  The claimed decomposition of $g$ exists for topological reasons:
  picture the sphere with $*$ at the north pole, all $a_i$ on the
  equator, and all lollipop generators along meridians except for
  their little loop around $a_i$. Consider $g$ as a closed curve on
  $S^2$. There is then an isotopy of $S^2$ that brings $g$ into a thin
  neighbourhood of the equator, turning once around it. Under the
  isotopy, every lollipop generators gets deformed to a conjugate of
  itself. The curve $g$ is isotopic to the product of the original
  lollipop generators $\gamma_i$ that lie above it, namely such that
  the curve $g$ passes below $a_i$ as it revolves around $S^2$.  Once
  we know that such a decomposition exists, we can find it by
  enumerating all choices of $u_1,\dots,u_s,v_1,\dots,v_t$.

  This shows how to split $G$ as an amalgamated free product over
  $\Z$: the edge group $\Z$ is generated by $g$, one vertex group is
  the sphere group $G'$ generated by
  $\gamma'_1=\gamma_{i_1}^{u_1},\dots,\gamma'_s=\gamma_{i_s}^{u_s},g^{-1}$
  and relation $\gamma'_1\cdots\gamma'_s g^{-1}=1$, and the other
  vertex group $G''$ is similarly generated by $g$ and the
  $\gamma''_1=\gamma_{j_1}^{v_1},\dots,\gamma''_t=\gamma_{j_t}^{v_t}$
  with relation $\gamma''_1\cdots\gamma''_t g=1$.

  The remaining curves in $\CC\setminus\{g^G\}$ should now be
  rewritten in terms of the generators of $G'$ or of $G''$. If this is
  impossible, then the curves in $\CC$ are not disjoint. Otherwise,
  proceed inductively with $G'$ and $G''$, which have smaller
  complexity.
\end{proof}

\subsection{Invariant multicurves}\label{ss:invariant mc}
Consider a sphere map $f\colon(S^2,C)\to(S^2,A)$ and a multicurve
$\CC$ in $(S^2,A)$. Since $f$ is a covering away from $A$, there is a
\emph{lifted} multicurve $f^{-1}(\CC)$ in $(S^2,C)$.

Algebraically, $f$ is represented as an $H$-$G$-biset $B(f)$, and
$\CC$ is represented by conjugacy classes in $G$ up to
$g^G\leftrightarrow (g^{-1})^G$, which we denote by $g^{\pm G}$. The
multiset of lifts, Definition~\ref{defn:lifts}, can be defined for
$g^{\pm}$ since the inverse of a lift is the lift of the inverse. The
multiset of lifts of a multicurve under $B(f)$ is the $f$-preimage of
the multicurve.

We consider more generally the situation of a sphere map
$f\colon(S^2,C,\DD)\to(S^2,A,\CC)$, meaning $\DD$ is isotopic to a
subset of $f^{-1}(\CC)$. For our purpose, the most valuable
information about lifts of multicurves is contained in the
\emph{Thurston matrix},
see~\cite{bartholdi-dudko:bc1}*{\eqref{bc1:eq:conjclasses}}: it is the
linear operator $T_{f,\CC}\colon\Q\CC\to\Q\DD$ given by
\begin{equation}\label{eq:thurston matrix}
  T_{f,\CC}(\gamma)=\sum_{\substack{\varepsilon\in
      f^{-1}(\gamma)\\\varepsilon\approx\delta\in\DD}}\frac1{\deg(f\restrict\varepsilon\colon\varepsilon\to\gamma)}\delta.
\end{equation}
Here by $\deg$ one means the usual positive degree of $f$; i.e.\ for
$d\in\Z$ the degree of $z^d\colon\{|z|=1\}\selfmap$ is $|d|$.

If $\subscript H B_G$ be a sphere biset, $\CC$ be a multicurve in $G$
and $\DD$ be a multicurve in $H$, both represented as collections of
conjugacy classes, then
by~\cite{bartholdi-dudko:bc1}*{\S\ref{bc1:ss:ccgroups}} there is a
linear operator $T_{B,\CC}\colon\Q\CC\to\Q\DD$, which coincides
with~\eqref{eq:thurston matrix} in case $B=B(f)$.

In Proposition~\ref{prop:MCBedge} we shall later interpret the
Thurston matrix as giving the structure of abelian subbisets for the
mapping class group $\Mod(G)$.

Consider a Thurston map $f\colon(S^2,A)\selfmap$. A multicurve $\CC$
is \emph{$f$-invariant} if $\CC=f^{-1}(\CC)$, up to isotopy,
identifying isotopic curves and removing inessential curves. The
Thurston matrix $T_{f,\CC}$ is then an endomorphism of $\Q\CC$.

The importance of invariant multicurves was made clear by a
fundamental result of Thurston characterizing Thurston maps that are
combinatorially equivalent to rational maps. An \emph{annular
  obstruction} for a Thurston map $f$ is an $f$-invariant multicurve
$\CC$ such that the spectral radius of $T_{f,\CC}$ is $\ge1$.
\begin{thm}[Thurston~\cite{douady-h:thurston}]\label{thm:thurston}
  Let $f\colon(S^2,P_f)\selfmap$ be a Thurston map with hyperbolic
  orbifold, see~\S\ref{ss:orbispheres}. Then $f$ is combinatorially
  equivalent to a rational map if and only if $f$ admits no annular
  obstruction.  Furthermore, in that case the rational map is unique
  up to conjugation by M\"obius transformations.
\end{thm}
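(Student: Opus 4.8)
The plan is to transfer the statement into the dynamics of \emph{Thurston's pullback map} on Teichm\"uller space. I would write $\mathcal T=\mathcal T(S^2,P_f)$ for the Teichm\"uller space of the marked sphere, a point of which is represented by an orientation preserving homeomorphism $\varphi\colon(S^2,P_f)\to(\hC,\varphi(P_f))$, two such being identified when they differ by post-composition with a M\"obius transformation and an isotopy rel $P_f$. Given $\varphi$, one pulls the standard complex structure on $\hC$ back through $\varphi\circ f$ and uniformizes; this produces another marked sphere, hence a point $\sigma_f(\varphi)\in\mathcal T$, and one checks that $\sigma_f\colon\mathcal T\selfmap$ is well defined. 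The first thing I would establish is the translation: $f$ is combinatorially equivalent to a rational map exactly when $\sigma_f$ has a fixed point --- a fixed point furnishes one complex structure on $S^2$ in which $f$ is holomorphic, hence rational, and conversely a rational representative of $f$ is visibly a fixed point. Moreover, distinct fixed points of $\sigma_f$ correspond to rational maps that are \emph{not} M\"obius-conjugate, since two isotopic rational maps coincide (Theorems~\ref{thm:dehn-nielsen-baer} and~\ref{thm:dehn-nielsen-baer+}); so the theorem reduces to: $\sigma_f$ has a fixed point iff $f$ carries no annular obstruction, and then it is unique.

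Next I would record the analytic behaviour of $\sigma_f$. It is holomorphic for the natural complex structure on $\mathcal T$, hence $1$-Lipschitz for the Teichm\"uller (= Kobayashi) metric by the Schwarz--Pick lemma; more precisely its coderivative at each point is, under the standard identifications, the pushforward operator $q\mapsto f_*q$ on the (finite-dimensional) space of integrable holomorphic quadratic differentials, and $\|f_*q\|_1\le\|q\|_1$. Hyperbolicity of the orbifold enters precisely here: equality of norms would force $q$ to reflect a Euclidean-orbifold structure invariant under $f$, which is impossible unless $q=0$; since the unit sphere of quadratic differentials is compact, this upgrades to a strict bound $\|d\sigma_f\|<1$ at every point. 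That gives \emph{uniqueness}: if $\sigma_f$ fixed two distinct points of $\mathcal T$, restricting it to the Teichm\"uller geodesic disc through them would give a holomorphic self-map of a disc fixing two points, hence the identity on that disc, hence an isometry there, contradicting the strict contraction of the coderivative.

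For existence I would iterate: fix a basepoint and set $X_n\coloneqq\sigma_f^n(X_0)$, then split into two cases. If $(X_n)$ stays in a compact subset $K$ of $\mathcal T$, then $\sigma_f$ is uniformly contracting on $K$ (the coderivative norm being bounded away from $1$ there), so $(X_n)$ is Cauchy and converges to a fixed point; $f$ is then rational, and carries no obstruction by the converse estimate below. If instead $(X_n)$ leaves every compact set, Mumford's compactness criterion forces some simple closed geodesics on $X_n$ to have length tending to $0$; the task is to organise these pinching curves, via the markings, into a single $f$-invariant multicurve $\CC$ on $(S^2,P_f)$ and to show that the spectral radius of $T_{f,\CC}$ is $\ge 1$, i.e. that $\CC$ is an annular obstruction. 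This pinching-to-obstruction step is the hard direction and, to my mind, the main obstacle: one must control how the collapsing lengths of a curve and of all its $f$-preimages propagate along the iteration, so that the asymptotic degenerating geometry is faithfully encoded by the linear map $T_{f,\CC}$, and one must verify no curves are spuriously created or lost in the limit --- this is most cleanly handled by passing to augmented Teichm\"uller space and exhibiting the limit as a fixed point of the extended $\sigma_f$ on a boundary stratum.

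Finally, the converse estimate. Suppose $\CC$ is $f$-invariant and $\sigma_f(X)=X$, and realize $f$ as a rational map on $X$. For $\gamma\in\CC$ let $\mu_\gamma$ be the modulus of the maximal embedded annulus in $X$ homotopic to $\gamma$. Each component $\varepsilon$ of $f^{-1}(\gamma)$ is homotopic to some $\delta\in\CC$ and its annulus maps to that of $\gamma$ with degree $\deg(f\restrict\varepsilon)$, hence has modulus $\mu_\gamma/\deg(f\restrict\varepsilon)$; as the components homotopic to a given $\delta$ are disjoint and nested in the maximal annulus around $\delta$, the Gr\"otzsch inequality gives $\mu_\delta\ge\sum_{\gamma}(T_{f,\CC})_{\delta\gamma}\,\mu_\gamma$ coordinatewise, with $(\mu_\gamma)$ strictly positive. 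By Perron--Frobenius the leading eigenvalue of $T_{f,\CC}$ is then $\le 1$, and in fact $<1$: equality would force the preimage annuli to exhaust the maximal annuli, impossible since their common complement has positive area (the orbifold being hyperbolic) and maps onto a positive-area set. Hence an $f$-invariant multicurve can be an annular obstruction only when $\sigma_f$ has no fixed point; contrapositively, absence of obstructions yields a fixed point, which closes the equivalence.
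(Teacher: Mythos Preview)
The paper does not supply its own proof of this theorem: it is stated with attribution to Thurston and the Douady--Hubbard reference, and is used thereafter as a black box. There is therefore nothing in the paper to compare your proposal against.

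That said, your sketch is the standard Douady--Hubbard strategy and is broadly sound as an outline. Two places merit care if you flesh it out. First, in the bounded-orbit case: pointwise $\|d\sigma_f\|<1$ together with compactness does give a uniform bound $\|d\sigma_f\|\le c<1$ on $K$, but the Teichm\"uller metric is Finsler and $K$ need not be geodesically convex, so turning this into ``$(X_n)$ is Cauchy'' requires an argument (Douady--Hubbard handle this by first observing that $d(X_n,X_{n+1})$ is nonincreasing, so once a subsequence enters a compact set the full tail stays in a slightly larger compact set). Second, in your Gr\"otzsch estimate you should note that only those components of $f^{-1}(\gamma)$ homotopic to some $\delta\in\CC$ contribute to $T_{f,\CC}$; the others are peripheral or trivial and are simply discarded, which only strengthens the inequality. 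Your acknowledgement that the ``pinching-to-obstruction'' direction is the delicate step is accurate; that is where essentially all of the work in the cited reference lies.
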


\subsection{Decomposition along multicurves}
Consider again a sphere map $f\colon(S^2,C,\DD)\to(S^2,A,\CC)$ with
multicurves satisfying as before $\DD\subseteq f^{-1}(\CC)$. By
Theorem~\ref{thm:mc to gog}, there are sphere tree of groups decompositions
$\gf$ of $G=\pi_1(S^2\setminus A,*)$ and $\gfY$ of
$H=\pi_1(S^2\setminus C,\dagger)$.  Using the van Kampen Theorem for
bisets, we shall decompose the $H$-$G$-biset $B(f)$ into an
$\gfY$-$\gf$-tree of bisets $\gfB(f)$.

Up to homotopy, the map $f\colon(S^2,C,\DD)\to(S^2,A,\CC)$ may be
written as a correspondence
\begin{equation}
\label{eq:corr:f_i}(S^2,C,\DD)\overset i\leftarrow(S^2,f^{-1}(A),f^{-1}(\CC))\overset
f\to(S^2,A,\CC);
\end{equation}
the map $f$ is the same as the original map $f$, but is now a
covering, and we specify the map $i$ as follows. It first forgets all
points in $f^{-1}(A)\setminus C$ and all curves in $f^{-1}(\CC)$ that
are not isotopic to curves in $\DD$. It squeezes all annuli between
the remaining curves in $f^{-1}(\CC)$ that are isotopic in
$S^2\setminus C$, and maps them to the corresponding curve in
$\DD$. This defines uniquely $i$ as a monotone map (preimages of
connected sets are connected), up to isotopy.

\noindent Let us say that a curve $\gamma\in f^{-1}(\CC)$ is
\begin{idescription}
\item[essential] if $\gamma$ is homotopic rel $C$ to a curve in $\DD$,
\item[non-essential] otherwise.   
\end{idescription}

\begin{lem}\label{lem:PropOfEssCurves}
  Consider $\gamma\in f^{-1}(\CC)$. If both connected components of
  $S^2\setminus \gamma$ contain essential curves of $f^{-1}(\CC)$,
  then $\gamma$ is non-trivial rel $C$.
\end{lem}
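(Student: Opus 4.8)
The plan is to argue by contradiction: suppose $\gamma$ is trivial rel $C$, that is, null-homotopic in $S^2\setminus C$. A simple closed curve in $S^2$ separates it, by Jordan--Schoenflies, into two open disks; and null-homotopy of $\gamma$ in $S^2\setminus C$ means precisely that one of these disks, call it $U$, contains no point of $C$ (a simple closed curve that is non-null-homotopic in $S^2\setminus C$ has at least one marked point of $C$ on each side). So $U$ is an open, simply connected subset of $S^2$ with $U\cap C=\emptyset$, hence $U\subseteq S^2\setminus C$, and $U$ is one of the two connected components of $S^2\setminus\gamma$.

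Next I would invoke the hypothesis. The curves of $f^{-1}(\CC)$ are pairwise disjoint simple closed curves lying in $S^2\setminus C$, and in particular they are disjoint from $\gamma$; hence each of them is contained in exactly one component of $S^2\setminus\gamma$. By assumption the component $U$ contains some essential curve $\delta\in f^{-1}(\CC)$. Since $U$ is simply connected, $\delta$ is null-homotopic in $U$, and since $U\subseteq S^2\setminus C$ it is therefore null-homotopic in $S^2\setminus C$, i.e.\ trivial rel $C$.

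This contradicts the definition of \emph{essential}: by that definition $\delta$ is homotopic rel $C$ to a curve of the multicurve $\DD$, and every curve of a multicurve is non-trivial, so $\delta$ is non-trivial rel $C$. Hence $\gamma$ cannot be trivial rel $C$, which is the claim. The only step requiring any care is the first one, namely turning ``trivial rel $C$'' into ``$\gamma$ bounds a complementary disk disjoint from $C$''; this rests on the Jordan--Schoenflies theorem together with the standard dictionary for simple closed curves on a punctured sphere (trivial iff it bounds a disk containing no marked point, peripheral iff exactly one). Everything downstream of that is formal, so I do not expect a genuine obstacle here.
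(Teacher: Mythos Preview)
Your argument is correct and follows the same route as the paper: assume $\gamma$ is trivial rel $C$, observe that one complementary component $U$ then misses $C$, and conclude that every curve inside $U$ is trivial rel $C$, hence not essential. The paper phrases this as ``$\gamma$ surrounds a disc homotopic rel $C$ to a point, so all curves in this disc are peripheral or trivial rel $C$''; your version is a cleaner write-up of the same contrapositive.
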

\begin{proof}
  If $\gamma$ is a non-essential curve, then it surrounds a disc
  homotopic rel $C$ to a point in $S^2$. Therefore, all curves in this
  disc are peripheral or trivial rel $C$.
\end{proof}

The respective multicurves $\DD$, $f^{-1}(\CC)$ and $\CC$
in~\eqref{eq:corr:f_i} define $1$-dimensional covers as in
Definition~\ref{defn:gfofgroups};
following~\cite{bartholdi-dudko:bc1}*{\S\ref{bc1:ss:VanKampCorr}} we
define the tree of bisets associated with $f$ as follows:

\begin{defn}[Sphere tree of bisets $\gfB(f)$]\label{defn:gfofbisets}
  Consider a map $f\colon(S^2,C,\DD)\to(S^2,A,\CC)$ and decompose it
  as in~\eqref{eq:corr:f_i}. Let $\gfY$ and $\gf$ be the sphere trees
  of groups associated with $(S^2,C,\DD)$ and $(S^2,A,\CC)$
  respectively as in Definition~\ref{defn:gfofgroups}. Then the
  $\gfY$-$\gf$ \emph{sphere tree of bisets $\gfB=\gfB(f)$} is
  constructed as follows.

  Vertices of $\gfB$ are in bijection with components of
  $f^{-1}(\CC)$ and closures of components of $S^2\setminus
  f^{-1}(\CC)$; call the former \emph{curve vertices} and the latter
  \emph{sphere vertices}.  Consider a vertex $z$ of $\gfB(f)$;
  it is represented by a subset $S_z$ of $S^2$, and is either a curve
  or a small sphere. The graph morphism $\rho\colon \gfB\to
  \gf$ is given by the covering $f$ so that $f(S_z)=S_{\rho(z)}$.

  Define the map $\lambda \colon \gfB \to \gfY$ such
  that $i(S_z)\subset S_{\lambda(z)}$ for every object
  $z\in \gfB$ and such that
  \begin{enumerate}\renewcommand\theenumi{TB\ensuremath{{}_\arabic{enumi}}}
  \item $\lambda$ is a monotone map; i.e. $\lambda^{-1}(z)$ is a subtree
    of $\gfB$ for every $z\in \gfB$;
    and\label{cond:1:defn:gfofbisets}
  \item $\lambda$ maps essential curve vertices to essential curve
    vertices.\label{cond:2:defn:gfofbisets}
  \end{enumerate}

  \noindent For every $z\in \gfB$ the
  $G_{\lambda(z)}$-$G_{\rho(z)}$-biset attached to $z$ is
  \begin{equation}\label{eq:dfn:B_z}
    B(i\colon S_z\setminus f^{-1}(A) \to S_{\lambda(z)}\setminus C)^\vee\otimes B(f\colon S_z\setminus f^{-1}(A) \to S_{\rho(z)}\setminus A).\qedhere
  \end{equation} 
\end{defn}
\noindent
By~\cite{bartholdi-dudko:bc1}*{Theorem~\ref{bc1:thm:vankampenbis}}, the
fundamental biset of $\gfB(f)$ is isomorphic to $B(f)$.

In the next Lemma~\ref{lem:PropOfLambda} we will show that
Conditions~\eqref{cond:1:defn:gfofbisets}
and~\eqref{cond:2:defn:gfofbisets} uniquely specify
$\lambda \colon \gfB \to \gfY$; in particular these
conditions are realizable. Let us also note that without these
conditions there are in general many choices of $\lambda$, and
therefore many slightly different graphs of bisets as
in~\cite{bartholdi-dudko:bc1}*{\S\ref{bc1:ss:VanKampCorr}} decomposing
$B$.

\begin{lem}[Uniqueness of $\gfB$]\label{lem:UniqSphGrBis}
  Definition~\ref{defn:gfofbisets} determines the map $\lambda \colon
  \gfB\to \gfY$ uniquely. Conditions~\eqref{cond:1:defn:gfofbisets}
  and~\eqref{cond:2:defn:gfofbisets} are equivalent to 
  \begin{enumerate}\renewcommand\theenumi{TB'\ensuremath}
  \item for every curve vertex $y\in \gfY$ and every vertex
    $z\in \gfB$ we have $\lambda(z)=y$ if and only if
    $i(S_z)\subset S_y$.\label{cond1:lem:UniqSphGrBis}
  \end{enumerate}
  Up to congruence of trees of bisets (see
  \cite{bartholdi-dudko:bc1}*{Definition~\ref{bc1:def:congruence}}),
  the sphere tree of bisets $\gfB=\gfB(f)$ is determined by the
  isotopy type of $f\colon(S^2,C,\DD)\to(S^2,A,\CC)$.
  
  Furthermore, for every curve vertex $e\in \gfY$ we have
  $i^{-1}(S_y)= \bigcup_{z\in \lambda^{-1}(y)}S_z$ and this set is an
  annulus (possibly a closed curve) bounded by two (possibly equal)
  curves in $ f^{-1}(\CC)$ that are isotopic rel $C$ to $S_e$.
\end{lem}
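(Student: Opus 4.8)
The plan is to analyze the map $i\colon (S^2,f^{-1}(A),f^{-1}(\CC))\to(S^2,C,\DD)$ combinatorially, using the fact that it is monotone (preimages of connected sets are connected) and that it collapses exactly the non-essential curves of $f^{-1}(\CC)$ together with the annuli they bound. First I would establish the existence of a map $\lambda\colon\gfB\to\gfY$ satisfying \eqref{cond:1:defn:gfofbisets} and \eqref{cond:2:defn:gfofbisets}: for a vertex $z$ of $\gfB$, the subset $i(S_z)\subset S^2$ lies in a unique closure of a component of $S^2\setminus\DD$ or in a unique curve of $\DD$, because $i$ maps the multicurve $f^{-1}(\CC)$ into $\DD\cup(\text{collapsed points})$, and $S_z$ is a single small sphere or curve of the $f^{-1}(\CC)$-decomposition. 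This assignment $z\mapsto\lambda(z)$ is forced, and monotonicity of $\lambda$ is inherited from monotonicity of $i$ (a subtree of $\gfY$ pulls back under $\lambda$ to the set of vertices $z$ with $i(S_z)$ in the corresponding subset of $S^2$, which is connected because $i^{-1}$ of a connected set is connected).

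Next I would prove the equivalence of \eqref{cond:1:defn:gfofbisets}--\eqref{cond:2:defn:gfofbisets} with \eqref{cond1:lem:UniqSphGrBis}. The direction \eqref{cond1:lem:UniqSphGrBis}$\Rightarrow$\eqref{cond:1:defn:gfofbisets}, \eqref{cond:2:defn:gfofbisets} is straightforward once one knows how curve vertices behave. For the converse, the key point is that an essential curve vertex $y\in\gfY$ has, as its preimage $i^{-1}(S_y)$, precisely the union of all curves of $f^{-1}(\CC)$ isotopic rel $C$ to $S_y$ together with the annuli between consecutive such parallel curves — this is exactly the description in the last clause of the lemma, and it follows from the explicit construction of $i$ in \eqref{eq:corr:f_i} (it squeezes parallel curves of $f^{-1}(\CC)$ onto the corresponding curve of $\DD$). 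Since these curves and annuli form a connected chain in $\gfB$, and by Lemma~\ref{lem:PropOfEssCurves} / Lemma~\ref{lem:PropOfEssCurves} the essential curves in $f^{-1}(\CC)$ are exactly the ones whose associated vertices must map to curve vertices, condition \eqref{cond:2:defn:gfofbisets} together with monotonicity pins down $\lambda$ on curve vertices; the sphere vertices are then forced because $\gfB$ and $\gfY$ are barycentrically-subdivided trees, so each sphere vertex is determined by the curve vertices adjacent to it. This simultaneously proves uniqueness of $\lambda$ and establishes the "furthermore" clause about $i^{-1}(S_y)$.

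Finally, for the statement that $\gfB=\gfB(f)$ is determined up to congruence by the isotopy type of $f\colon(S^2,C,\DD)\to(S^2,A,\CC)$: an isotopy $(f_t)$ carries the whole picture — the multicurves $f_t^{-1}(\CC)$, the correspondence \eqref{eq:corr:f_i}, the covers, the maps $\rho$ and $\lambda$, and the bisets \eqref{eq:dfn:B_z} — continuously, and all of these are discrete data, hence locally constant in $t$; so the combinatorial type is constant, and the biset at each vertex is constant by Theorem~\ref{thm:dehn-nielsen-baer+}. The choices made in Definition~\ref{defn:gfofgroups} (basepoints $*_v$, connecting paths $\ell_{v,w}$) only affect the result up to congruence, as recorded in \cite{bartholdi-dudko:bc1}*{Lemma~\ref{bc1:lem:gfCongClass}}. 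The main obstacle I anticipate is the careful bookkeeping in the converse direction of the equivalence — namely verifying that \eqref{cond:1:defn:gfofbisets} and \eqref{cond:2:defn:gfofbisets} really do force $\lambda$ uniquely on \emph{all} vertices and not merely on the essential curve vertices; this requires knowing that every vertex of $\gfB$ is "reachable" from an essential curve vertex through the tree structure, i.e.\ that $\gfB$ has no component of non-essential curves and spheres that is invisible to $\gfY$, which is exactly the content of Lemma~\ref{lem:PropOfEssCurves} applied repeatedly.
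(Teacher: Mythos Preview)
Your proposal is correct and follows essentially the same route as the paper: identify $i^{-1}(S_y)$ for a curve vertex $y\in\gfY$ as an annulus bounded by two essential curves of $f^{-1}(\CC)$, use \eqref{cond:2:defn:gfofbisets} together with the standing constraint $i(S_z)\subset S_{\lambda(z)}$ to pin down $\lambda$ on those boundary curves, and invoke monotonicity \eqref{cond:1:defn:gfofbisets} for the vertices in between; the isotopy statement is then reduced to \cite{bartholdi-dudko:bc1}*{Lemma~\ref{bc1:lem:gfCongClass}} once $\lambda$ and $\rho$ are known to be uniquely determined.

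Two small remarks. First, your separate treatment of sphere vertices (``each sphere vertex is determined by the curve vertices adjacent to it'') and the anticipated ``reachability'' obstacle are unnecessary: the paper observes directly that \eqref{cond1:lem:UniqSphGrBis} alone gives uniqueness, since for any $z$ either $i(S_z)$ lies in some curve $S_y$ (and then $\lambda(z)=y$), or it does not (and then the remaining constraint $i(S_z)\subset S_{\lambda(z)}$ singles out the unique sphere vertex). Second, for the isotopy claim the paper does not run a continuity argument but simply notes that, since $\lambda$ and $\rho$ are now uniquely specified by $f$, the cited lemma from~\cite{bartholdi-dudko:bc1} applies directly; your continuity argument is a valid alternative.
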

\begin{proof}
  Clearly, Condition~\eqref{cond1:lem:UniqSphGrBis} implies the
  uniqueness of $\lambda$. Let us prove that
  Conditions~\eqref{cond:1:defn:gfofbisets}
  and~\eqref{cond:2:defn:gfofbisets} are equivalent
  to~\eqref{cond1:lem:UniqSphGrBis}. This last property implies the
  monotonicity of $\lambda$ (by monotonicity of $i$) and, clearly,
  $\lambda$ maps essential curve vertices to curve
  vertices. Conversely, for every curve vertex $y\in \gfY$ the set
  $i^{-1}(S_y)$ is an annulus (possibly a closed curve) bounded by two
  curves $S_{z}, S_{z'}\in f^{-1}(\CC)$ isotopic rel $C$ (possibly
  with $S_{z}=S_{z'}$). By Condition~\eqref{cond:2:defn:gfofbisets} we
  have $\lambda(z)=\lambda(z')$, and by monotonicity of $\lambda$ we
  have $i^{-1}(S_y)\subset \bigcup_{z\in
    \lambda^{-1}(y)}S_z$. Therefore,
  $i^{-1}(S_y)= \bigcup_{z\in \lambda^{-1}(y)}S_z$ because
  $i(S_v)\subset S_{\lambda(v)}$ for all vertices $v\in\gfB$, and the
  claim of the lemma follows.  The second claim follows from
  \cite{bartholdi-dudko:bc1}*{Lemma~\ref{bc1:lem:gfCongClass}} because
  $\lambda,\rho$ are uniquely specified. The last claim has already
  been verified.
\end{proof}

Note here one of the reasons we considered in
Definition~\ref{defn:gfofgroups} the barycentric subdivision of the
simpler decomposition with one vertex per small sphere and one edge
per curve: our definition of graphs does not allow vertices to be
mapped to midpoints of edges, as we would like to do in case $z$ is a
small sphere vertex and $i(S_z)$ is homotopic to a curve. Without
taking a barycentric subdivision, we would have been forced to choose
one of its two neighbouring spheres to which to map $z$; and no such
choice can be made canonical.

\begin{thm}\label{thm:DecompOfBiset}
  There is an algorithm that, given a sphere biset $\subscript H B_G$ and
  algebraic multicurves $\CC$ in $G$ and $\DD$ in $H$ with $\DD$
  contained in the $B$-lift of $\CC$, computes the decomposition of
  $B$ as a sphere tree of bisets.
\end{thm}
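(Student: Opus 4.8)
The plan is to reduce Theorem~\ref{thm:DecompOfBiset} to a sequence of effective operations on free groups and bisets, following exactly the geometric recipe of Definition~\ref{defn:gfofbisets} but performed algebraically. First I would invoke Theorem~\ref{thm:mc to gog} to compute the sphere tree of groups decompositions $\gf$ of $G$ along $\CC$ and $\gfY$ of $H$ along $\DD$; this is already algorithmic, so the vertex groups $G_v$, edge groups, and the peripheral conjugacy classes (including which are vacant) are all explicitly available. Next I would compute the $B$-lift $f^{-1}(\CC)$ of $\CC$: using Definition~\ref{defn:lifts}, each $\gamma^{\pm G}\in\CC$ has a multiset of lifts $\{(d_i,h_i^H)\}$, obtained by decomposing the $\gamma$-orbits on $\{\cdot\}\otimes_H B$ and reading off the return elements $h_i\in H$; this yields a finite list of conjugacy classes in $H$ together with their degrees. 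By Theorem~\ref{thm:mc to gog} applied to this list (together with a check for parallelism and triviality, again via the intersection-number algorithms cited there), we obtain the sphere tree of groups $\widetilde\gf$ attached to the middle space $(S^2,f^{-1}(A),f^{-1}(\CC))$ in the correspondence~\eqref{eq:corr:f_i}.

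The heart of the algorithm is then to build the bipartite graph underlying $\gfB(f)$ together with the two graph morphisms $\rho$ and $\lambda$. The covering map $f\colon\widetilde\gf\to\gf$ is read off directly: a sphere vertex $v$ of $\gf$, i.e.\ a subgroup $G_v\le G$, pulls back under the sub-biset $\subscript{G_b}G_G$ of~\eqref{eq:DecOfSphBis} — equivalently via Lemma~\ref{lem:CorrOfSphBis2} — to the sphere vertices of $\widetilde\gf$ lying above it, and similarly curve vertices pull back through their multiset of lifts; so $\rho$ and the vertex/edge bisets of the ``$f$-part'' of~\eqref{eq:dfn:B_z} are computable by the same sub-biset decompositions used in the proof of Theorem~\ref{thm:dehn-nielsen-baer+}. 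For $\lambda$, I would use the characterization~\eqref{cond1:lem:UniqSphGrBis} from Lemma~\ref{lem:UniqSphGrBis}: first mark which curve vertices of $\widetilde\gf$ are essential, namely whose conjugacy class in $H$ is non-trivial and parallel to a curve in $\DD$ (decidable by the intersection-number algorithms); then collapse each maximal chain of non-essential curve vertices and the sphere vertices between parallel essential curves, mapping the resulting block to the appropriate vertex of $\gfY$. Concretely, a sphere vertex $z$ of $\widetilde\gf$ maps to the curve vertex $y\in\gfY$ precisely when $i(S_z)$ is homotopic to the curve $S_y$, which by the last assertion of Lemma~\ref{lem:UniqSphGrBis} happens exactly when the vertex group $G_z$, read inside $H$, becomes cyclic generated by (a conjugate of) $t_{S_y}$ after the forgetful map $i_*$; and it maps to a sphere vertex of $\gfY$ otherwise, matching vertex groups by the isomorphism $\pi_1$ induces. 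Having $\rho$ and $\lambda$, the edge bisets and intertwiners are the tensor expressions~\eqref{eq:dfn:B_z}, each a tensor product of a conjugate-inverse forgetful biset with a covering biset between explicit sphere groups, hence computable.

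The main obstacle is the explicit, algorithmic construction of the forgetful map $i$ and its induced biset $B(i\colon S_z\setminus f^{-1}(A)\to S_{\lambda(z)}\setminus C)$ — in particular, recognizing algebraically which curves of $f^{-1}(\CC)$ are non-essential (they bound a disc contractible rel $C$, cf.\ Lemma~\ref{lem:PropOfEssCurves}) and which collections of essential curves are mutually parallel, so that the squeezing of annuli is performed correctly. This is where the cited intersection-number and simple-closed-curve-recognition algorithms~\cites{birman-series:sccs,cohen-lustig:intersectionnumbers} do the real work; once essentiality and parallelism are decided, everything else is bookkeeping in free groups, and Lemma~\ref{lem:UniqSphGrBis} guarantees the output is the unique correct tree of bisets. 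A secondary point worth checking is that the finitely many arbitrary choices along the way — representatives $t_\Gamma$, basepoints $*_v$, connecting paths $\ell_{v,w}$ — only change the result up to the congruence of trees of bisets already quotiented out by Lemma~\ref{lem:UniqSphGrBis}, so the algorithm is well-defined on isomorphism classes.
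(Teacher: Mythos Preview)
Your strategy matches the paper's: factor $B=HbG_b\otimes{}_{G_b}G_G$, compute the three trees of groups, build $\rho$ by inclusion and $\lambda$ via essential curves extended by monotonicity (Lemma~\ref{lem:UniqSphGrBis}), then assemble the vertex bisets. Two points, however, deserve correction or comparison.

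First, there is a slip in your construction of the middle tree of groups $\widetilde\gf$. You compute the multiset of lifts of $\CC$ through $B$ itself, obtaining conjugacy classes $h_i^H$ in $H$; but the middle space has fundamental group $G_b\cong\pi_1(S^2,f^{-1}(A))$, and $\widetilde\gf$ (the paper calls it $\mathfrak K$) must be a decomposition of $G_b$, not of $H$. The classes you wrote down are the $i_*$-images of the curves in $f^{-1}(\CC)$, and many of them collapse to trivial or parallel classes; you cannot recover $\mathfrak K$ from them. The fix is exactly what the paper does: lift $\CC$ through the right factor ${}_{G_b}G_G$, obtaining conjugacy classes in $G_b$, and run Theorem~\ref{thm:mc to gog} on \emph{those}.

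Second, you identify the construction of $B(i)^\vee$ in the tensor~\eqref{eq:dfn:B_z} as the main obstacle and leave it to intersection-number heuristics. The paper sidesteps this completely: rather than building $B(i)$ and tensoring, it realizes each $B_z$ directly as a \emph{subbiset} of $B$. Concretely, one computes the finitely generated subgroup $L_z\le H$ determined by $bG_z=L_zb$, searches (by enumeration) for $h\in H$ with $L_z\le H_{\lambda(z)}^{\,h}$ --- such $h$ exists precisely because $i(S_z)\subset S_{\lambda(z)}$ --- and sets $B_z\coloneqq H_{\lambda(z)}\,h\,b\,G_{\rho(z)}\subset B$. This is both simpler and genuinely algorithmic, whereas your route would still require you to produce the forgetful homomorphism $i_*$ on each vertex group, which you have not made explicit.
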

\begin{proof}
  By Theorem~\ref{thm:dehn-nielsen-baer+} we may assume that $\subscript H B_G$
  is the biset of a sphere map $f\colon (S^2,C)\to (S^2,A)$. Then
  $\CC$ and $\DD$ define multicurves, still denoted by $\CC$ and
  $\DD$, in $(S^2,A)$ and $(S^2,C)$.

  Choose $b\in B$ and decompose $\subscript H B_G = H b G_b \otimes \subscript{G_b} G_G$
  as in~\eqref{eq:DecOfSphBis}. Recall from
  Lemmas~\ref{lem:CorrOfSphBis} and~\ref{lem:CorrOfSphBis2} that this
  decomposition is the algebraic counterpart
  of~\eqref{eq:CorrOfSphMaps} and that $G_b$ is identified with the
  fundamental group of $(S^2,f^{-1}(A))$.

  Denote by $(\Gamma_a)_{a\in A}$ the peripheral conjugacy classes in
  $G$, by $(\Delta_c)_{c\in C}$ the peripheral conjugacy classes in
  $H$, and by $(\Xi_a)_{a\in f^{-1}( A)}$ the peripheral conjugacy
  classes in $G_b$. Note that $(\Xi_a)_{a\in f^{-1}( A)}$ is easily
  computable as the set of all lifts of $(\Gamma_a)_{a\in A}$ through
  $\subscript{G_b}{G}_G$, see Lemma~\ref{lem:CorrOfSphBis}.

  Denote by $f^{-1}(\CC)$ the set of all lifts of $\CC$ through
  $\subscript{G_b}{G}_G$. Again $f^{-1}(\CC)$ is easily computable and is the
  algebraic counterpart of the multicurve $f^{-1}( \CC)$ in
  $(S^2,f^{-1}( A))$.  By Theorem~\ref{thm:mc to gog}, the sphere
  groups $G,G_b,H$ can be decomposed relatively to $\CC,f^{-1}( \CC),\DD$
  as trees of groups $\gf,\mathfrak K,\gfY$ respectively.
  
  By definition, the underlying tree of bisets decomposing $B$ is the
  graph $\mathfrak K$. The map $\rho\colon\mathfrak K\to\gf$ is
  induced by inclusion: consider a vertex $z\in\mathfrak K$, with
  corresponding group $G_z$. Then $\rho(z)$ is the unique vertex of
  $\gf$ such that $G_z\subseteq G_{\rho(z)}$.

  By lifting curves in $f^{-1}(\CC)$ through $H b G_b$ we compute the
  set of essential curve vertices in $\mathfrak K$ as well the
  $\lambda$ on this set. By monotonicity of $\lambda$ (see the second
  claim in Lemma~\ref{lem:UniqSphGrBis}) it has a unique, and thus
  computable, extension on the whole graph $\mathfrak K$.

  As a graph, $\gfB$ is the underlying graph of $\mathfrak K$.
  Consider now a vertex $z\in\mathfrak K$; we define a
  $H_{\lambda(z)}$-$G_{\rho(z)}$-biset $B_z$, to be put at $z$ in the
  tree of bisets, as follows. We compute $b G_z=L_z b$ for a finitely
  generated subgroup $L_z$ of $H$; this is possible since elements of
  $G_z$ stabilize $b$. We find (e.g.\ by enumeration) an element
  $h\in H$ with $L_z\le H_{\lambda(z)}^h$. This is possible because
  $i(S_z)\subset S_{\lambda(z)}$ in the notation
  of~\eqref{eq:dfn:B_z}. We finally set
  $B_z\coloneqq H_{\lambda(z)}h b G_{\rho(z)}$, the subbiset of $B$
  given in~\eqref{eq:dfn:B_z}.
\end{proof}

\subsection{Kameyama's theorem extended to spheres with multicurves}
We are ready to extend Kameyama's algebraic characterization of
combinatorial equivalence, taking multicurves into account. This will
lead to algorithmic constructibility of sphere decompositions.

\begin{thm}\label{thm:kameyama-mc}
  Let $f,g\colon(S^2,C,\DD)\to(S^2,A,\CC)$ be two sphere maps and let
  $\gfY$ and $\gf$ be the sphere trees of groups associated with
  $(S^2,C,\DD)$ and $(S^2,A,\CC)$ respectively as in
  Definition~\ref{defn:gfofgroups}. Then $f$ is isotopic to $g$ by an
  isotopy mapping $\DD$ to $\CC$ if and only if the sphere trees of
  bisets $\subscript\gfY\gfB(f)_\gf$ and
  $\subscript{\gfY}{\gfB(g)}_\gf$ are isomorphic.
\end{thm}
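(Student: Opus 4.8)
The plan is to reduce to the already-established Kameyama-type statement for sphere maps without multicurves (Theorem~\ref{thm:dehn-nielsen-baer+}) by recognizing that an isomorphism of sphere trees of bisets assembles, via the van~Kampen theorem for bisets \cite{bartholdi-dudko:bc1}*{Theorem~\ref{bc1:thm:vankampenbis}}, into an isomorphism of fundamental bisets, and conversely that an isotopy respecting the multicurves descends, vertex by vertex, to isomorphisms of the local bisets \eqref{eq:dfn:B_z} together with compatible intertwiners.

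\emph{Forward direction.} Suppose $f\approx g$ by an isotopy carrying $\DD$ to $\CC$. Such an isotopy may be taken to respect the correspondence picture \eqref{eq:corr:f_i}: it induces isotopies of $(S^2,f^{-1}(A),f^{-1}(\CC))$ and of the squeezing maps $i$, hence a bijection of the small spheres and curve annuli of $f^{-1}(\CC)$ for $f$ with those for $g$ that commutes with $\rho$ and $\lambda$ (using that $\lambda$ is canonically determined by Lemma~\ref{lem:UniqSphGrBis}). On each vertex $z$ the isotopy restricts to an isotopy between the local sphere maps $f\restrict S_z$ and $g\restrict S_z$ in the sense of \eqref{eq:dfn:B_z}, so by Theorem~\ref{thm:dehn-nielsen-baer+} the corresponding bisets $B_z(f)$ and $B_z(g)$ are isomorphic; the isotopy being global, these isomorphisms are compatible along edges, i.e.\ define the required intertwiners. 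Hence $\gfB(f)\cong\gfB(g)$.

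\emph{Converse direction.} Suppose $\Phi\colon\subscript\gfY{\gfB(f)}_\gf\to\subscript{\gfY}{\gfB(g)}_\gf$ is an isomorphism of sphere trees of bisets. First, realize $B$ and $B'\coloneqq B(g)$ as bisets of sphere maps $f,g$ (they already are). Since $\gfB(f)$ and $\gfB(g)$ share the underlying graph maps $\rho,\lambda$ (forced by Lemma~\ref{lem:UniqSphGrBis}), $\Phi$ consists of biset isomorphisms $\Phi_z\colon B_z(f)\to B_z(g)$ intertwined along the edge bisets. By \cite{bartholdi-dudko:bc1}*{Theorem~\ref{bc1:thm:vankampenbis}} the fundamental bisets are then isomorphic, $B(f)\cong B(g)$, so by Theorem~\ref{thm:dehn-nielsen-baer+} there is an isotopy $(h_t)$ from $f$ to $g$. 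It remains to promote $(h_t)$ to an isotopy sending $\DD$ to $\CC$. For this I would argue that the multicurve $f^{-1}(\CC)$ is recovered from the tree-of-bisets data: the essential curve vertices of $\gfB$ carry cyclic edge bisets whose conjugacy classes are precisely (the relevant powers of) the curves in $\DD$ and their images in $\CC$, and $\Phi$ preserves these. Thus $\Phi$ records a bijection $\DD\leftrightarrow\DD$ compatible with $\CC$; using that $\Mod(S^2,C)$ acts transitively on isotopies realizing a prescribed permutation of a multicurve, one adjusts $(h_t)$ by a path in the mapping class group fixing the endpoints' bisets so that the resulting isotopy carries $\DD$ to $\CC$ curve by curve. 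Combining, $f$ and $g$ are isotopic by an isotopy mapping $\DD$ to $\CC$.

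\emph{Main obstacle.} The routine parts are the two applications of Theorem~\ref{thm:dehn-nielsen-baer+} and van~Kampen. The delicate point is the converse direction's last step: the a~priori isotopy furnished by $B(f)\cong B(g)$ need not respect the multicurves, and one must show that the extra combinatorial bookkeeping carried by $\Phi$ (the matching of edge bisets and of the $\lambda$-structure) is exactly enough to correct it. I expect to handle this by working vertex-by-vertex: decompose $f$ and $g$ as in \eqref{eq:corr:f_i}, use $\Phi_z$ on each small sphere to get local isotopies rel the boundary curves (which are marked points of $\widehat S_z$), and then glue these local isotopies across the curve annuli using the intertwiners; the gluing is possible precisely because $\Phi$ commutes with the edge maps, and the boundary-curve markings guarantee the local isotopies agree on overlaps. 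Checking that this gluing is well-defined up to the allowed ambiguity (inner automorphisms / choices of $t_\Gamma$) is where the real work lies.
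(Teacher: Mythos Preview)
Your argument is correct but takes a longer path than the paper, and the ``main obstacle'' you identify is in fact a non-issue once you spot the right observation.

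The paper's proof is essentially two lines. The key point is that the sphere tree of bisets $\gfB(f)$ is \emph{canonically} constructed from the pair $(B(f);\CC,\DD)$: by Lemma~\ref{lem:UniqSphGrBis} and Theorem~\ref{thm:DecompOfBiset} the underlying graph, the maps $\lambda,\rho$, and the vertex/edge bisets are all uniquely determined (up to congruence) by the fundamental biset together with the multicurve conjugacy classes. Since $\CC$ and $\DD$ are the \emph{same} fixed data for both $f$ and $g$, one has $\gfB(f)\cong\gfB(g)$ if and only if $B(f)\cong B(g)$ (with the multicurve classes automatically matched). Then Theorem~\ref{thm:dehn-nielsen-baer+} finishes. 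There is no need to glue local isotopies: once $B(f)\cong B(g)$, the isotopy $f\approx g$ produced by Theorem~\ref{thm:dehn-nielsen-baer+} is through sphere maps $(S^2,C)\to(S^2,A)$, and since the isotopy class of the multicurve $f_t^{-1}(\CC)$ in $S^2\setminus C$ is a discrete invariant varying continuously in $t$, it is constant, hence contains $\DD$ throughout.

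Your route---pass to the fundamental biset via van~Kampen, get an isotopy, then repair it vertex-by-vertex---would work, but the vertex-by-vertex gluing is exactly what the canonicity of the construction lets you avoid. What you gain from the paper's viewpoint is that the equivalence ``tree-of-bisets isomorphism $\Leftrightarrow$ biset isomorphism respecting $\CC,\DD$'' is tautological, not something to be assembled from local pieces; what your approach would buy, if carried out, is a more hands-on understanding of how the local Kameyama statements on small spheres patch together, which is closer in spirit to Pilgrim's combination theorems but unnecessary here.
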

\begin{proof}
  Since the sphere tree of bisets $\gfB(f)$ is constructed out of
  $B(f)$ and $\CC,\DD$ viewed as conjugacy classes in $\pi_1(S^2,C)$
  and $\pi_1(S^2,A)$; and similarly $\gfB(g)$ is constructed
  out of $B(g)$ and $\CC,\DD$, the trees of bisets $\gfB(f)$
  and $\gfB(g)$ are isomorphic if and only if $B(f)$ and $B(g)$
  are isomorphic by an isomorphism preserving the dynamics on
  multicurves.

  By Theorem~\ref{thm:dehn-nielsen-baer+}, this happens if and only if
  $f$ is isotopic to $g$ by an isotopy preserving $\DD$ and $\CC$
\end{proof}

We recall that two graphs of bisets $\subscript\gf\gfB_\gf$ and
$\subscript\gfY\gfC_\gfY$ are called \emph{conjugate} if there exists
a biprincipal
(see~\cite{bartholdi-dudko:bc1}*{\S\ref{bc1:ss:BiprGrBis}}) graph of
bisets $\subscript\gf\gfI_\gfY$ such that $\gfB\otimes_\gf\gfI$ and
$\gfI\otimes_\gfY\gfC$ are isomorphic, namely have same underlying
graphs and isomorphic vertex and edge bisets. If
$\subscript\gf\gfB_\gf$ and $\subscript\gfY\gfC_\gfY$ are sphere trees
of bisets, then $\subscript\gf\gfI_\gfY$ is also required to be a
sphere tree of bisets.

Two Thurston maps $f_0\colon(S^2,A_0,\CC_0)\selfmap$ and
$f_1\colon(S^2,A_1,\CC_1)\selfmap$ with respective invariant
multicurves $\CC_0,\CC_1$ are \emph{combinatorially equivalent} if
there exists a path of Thurston maps
$(f_t\colon(S^2,A_t,\CC_t)\selfmap)_{t\in[0,1]}$ connecting them.

Recall finally
from~\cite{bartholdi-dudko:bc1}*{Definition~\ref{bc1:def:congruence}}
that two graphs of bisets $\subscript\gf\gfB_\gf$ and
$\subscript{\gf'} \gfC_{\gf'}$ are called \emph{conjugate} if
$\subscript\gf\gfB_\gf$ is isomorphic to
$\gfI \otimes \subscript{\gf'} \gfC_{\gf'} \otimes \gfI^\vee$ for a
biprincipal graph of bisets $\subscript\gf\gfI_{\gf'}$.

\begin{cor}\label{cor:kameyama-mc}
  Let $f\colon(S^2,A,\CC)\selfmap$ and $g\colon(S^2,C,\DD)\selfmap$ be
  Thurston maps with respective invariant multicurves $\CC,\DD$. Then
  $f,g$ are combinatorially equivalent if and only if the sphere trees
  of bisets of $f,g$ are conjugate.
\end{cor}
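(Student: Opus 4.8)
The plan is to deduce Corollary~\ref{cor:kameyama-mc} from Theorem~\ref{thm:kameyama-mc} in essentially the same way that Corollary~\ref{cor:kameyama} was deduced from Theorem~\ref{thm:dehn-nielsen-baer+}. The only extra ingredient is to keep track of how the sphere trees of bisets transform under a change of basepoint/homeomorphism, which is precisely what the notion of \emph{conjugacy} of graphs of bisets encodes. First I would treat the forward direction. Suppose $f$ and $g$ are combinatorially equivalent: by (the multicurve version of) Lemma~\ref{lem:combinatorial equivalence}, there is a homeomorphism $\phi\colon(S^2,A,\CC)\to(S^2,C,\DD)$ such that $f$ and $\phi^{-1}\circ g\circ\phi$ are isotopic rel $A\cup\CC$, i.e.\ by an isotopy carrying $\CC$ to itself. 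By Theorem~\ref{thm:kameyama-mc}, applied to the two maps $f$ and $\phi^{-1}\circ g\circ\phi$ on $(S^2,A,\CC)$, the sphere trees of bisets $\gfB(f)$ and $\gfB(\phi^{-1}\circ g\circ\phi)$ are isomorphic. It remains to observe that $\gfB(\phi^{-1}\circ g\circ\phi)$ is obtained from $\gfB(g)$ by conjugating with the biprincipal sphere tree of bisets $\gfI=\gfB(\phi)$; this is the tree-of-bisets analogue of $B(\phi^{-1}\circ g\circ\phi)\cong B(\phi)\otimes B(g)\otimes B(\phi)^\vee$, and it respects the multicurve decompositions because $\phi$ carries $\CC$ to $\DD$. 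Hence the sphere trees of bisets of $f,g$ are conjugate.

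For the converse, suppose $\subscript\gf{\gfB(f)}_\gf$ and $\subscript{\gfY}{\gfB(g)}_{\gfY}$ are conjugate, say $\gfB(f)\cong\gfI\otimes_{\gfY}\gfB(g)\otimes_{\gfY}\gfI^\vee$ for a biprincipal sphere tree of bisets $\subscript\gf\gfI_{\gfY}$. By Lemma~\ref{lem:MultToAlgMult} the trees of groups $\gf$ and $\gfY$ correspond to marked spheres with multicurves, unique up to homeomorphism, which we may identify with $(S^2,A,\CC)$ and $(S^2,C,\DD)$; and by the principal-biset version of the Dehn--Nielsen--Baer theorem (Theorem~\ref{thm:dehn-nielsen-baer}) the biprincipal tree of bisets $\gfI$ is realized, uniquely up to isotopy, by a homeomorphism $\phi\colon(S^2,A,\CC)\to(S^2,C,\DD)$ sending $\CC$ to $\DD$, with $\gfB(\phi)\cong\gfI$. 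Then $\gfB(f)$ and $\gfB(\phi^{-1}\circ g\circ\phi)$ are isomorphic sphere trees of bisets over $\gf$, so Theorem~\ref{thm:kameyama-mc} yields that $f$ is isotopic to $\phi^{-1}\circ g\circ\phi$ by an isotopy carrying $\CC$ to itself; by Lemma~\ref{lem:combinatorial equivalence} this means $f$ and $g$ are combinatorially equivalent, and the witnessing homeomorphism carries $\CC$ to $\DD$ as desired.

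I expect the main obstacle to be purely bookkeeping rather than conceptual: checking that ``conjugation of bisets by a homeomorphism biset'' commutes cleanly with ``decomposing a biset into a sphere tree of bisets along a multicurve,'' i.e.\ that $\gfB(\phi^{-1}\circ g\circ\phi)\cong\gfB(\phi)\otimes\gfB(g)\otimes\gfB(\phi)^\vee$ as sphere trees of bisets when $\phi$ respects the multicurves. This is naturally a consequence of the functoriality of the van Kampen construction for bisets (\cite{bartholdi-dudko:bc1}*{Theorem~\ref{bc1:thm:vankampenbis}}) together with the uniqueness statement in Lemma~\ref{lem:UniqSphGrBis}: both sides have fundamental biset $B(\phi)\otimes B(g)\otimes B(\phi)^\vee\cong B(\phi^{-1}\circ g\circ\phi)$, both are sphere trees of bisets over the fixed trees of groups $\gf$, and the uniqueness of $\lambda,\rho$ in Lemma~\ref{lem:UniqSphGrBis} pins down the decomposition up to congruence. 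A minor additional point is to make sure the definition of combinatorial equivalence of Thurston maps \emph{with multicurves} (the existence of a path $(f_t\colon(S^2,A_t,\CC_t)\selfmap)_{t}$) really is equivalent to the ``$\phi_0\circ f=g\circ\phi_1$ with $\phi_0\approx\phi_1$ rel $A\cup\CC$'' formulation; this is the evident adaptation of Lemma~\ref{lem:combinatorial equivalence}, obtained by the same factorization-of-the-path argument while additionally tracking the curves $\CC_t$.
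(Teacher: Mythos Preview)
Your proposal is correct and follows essentially the same approach as the paper's proof: factor a combinatorial equivalence (with multicurves) as a conjugation by a homeomorphism followed by an isotopy rel $A\cup\CC$, then invoke Theorem~\ref{thm:kameyama-mc}. The paper compresses both directions into a single sentence, while you spell out each direction and the bookkeeping about $\gfB(\phi^{-1}g\phi)\cong\gfB(\phi)\otimes\gfB(g)\otimes\gfB(\phi)^\vee$; one small citation fix is that realizing the biprincipal $\gfI$ by a homeomorphism is really Corollary~\ref{cor:dehn-nielsen-baer+-mc} (or Theorem~\ref{thm:dehn-nielsen-baer+} at the level of fundamental bisets), not Theorem~\ref{thm:dehn-nielsen-baer} itself.
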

\begin{proof}
  As in the case of Thurston maps without multicurve (see
  Lemma~\ref{lem:combinatorial equivalence}), a combinatorial
  equivalence between Thurston maps with multicurves factors as a
  composition of an isotopy and a conjugation. The corollary therefore
  follows from Theorem~\ref{thm:kameyama-mc}.
\end{proof}

\section{Renormalization}
Let us consider a sphere map $f\colon(S^2,C,\DD)\to(S^2,A,\CC)$. In
this section we assume $\DD= f^{-1}(\CC)$ rel $C$. In this case $\DD$
naturally cuts $f$ into exactly $\#\DD+1$ sphere bisets which we call
\emph{small bisets of the decomposition}.
 
It easily follows from the assumption $\DD= f^{-1}(\CC)$ rel $C$ that
every non-essential curve $\gamma\in f^{-1}(\CC)$ is either trivial or
peripheral. Since $\gfB$ is a tree, every edge vertex $z\in
\gfB$ disconnects $\gfB$ into exactly two connected
components.

\begin{lem}\label{lem:PropOfLambda}
  Let $z\in \gfB$ be a curve vertex. Then $z$ is essential if and only
  if every connected component of the graph $\gfB\setminus \{z\}$
  contains an object $v$ with $\lambda(v)\neq\lambda(z)$.
\end{lem}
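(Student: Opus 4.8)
The plan is to prove both implications by examining the $\lambda$-fibre over $\lambda(z)$, using (as recalled just before the lemma) that $\gfB$ is a tree, so that deleting the curve vertex $z$ disconnects $\gfB$ into exactly two components; each such component $Q$ corresponds to one of the two closed discs $\overline U\subset S^2$ bounded by $S_z$, and in fact $\overline U=\bigcup_{v\in Q}S_v$. Throughout I use the remark preceding the lemma --- namely that, since $\DD=f^{-1}(\CC)$ rel $C$, every non-essential curve of $f^{-1}(\CC)$ is trivial or peripheral rel $C$ --- together with the description of $\lambda$ and of the fibres $i^{-1}(S_y)$ furnished by Lemma~\ref{lem:UniqSphGrBis}.

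For the forward implication, assume $z$ is essential. Then $y\coloneqq\lambda(z)$ is a curve vertex of $\gfY$ by Condition~\eqref{cond:2:defn:gfofbisets}, and by Lemma~\ref{lem:UniqSphGrBis} the set $i^{-1}(S_y)=\bigcup_{v\in\lambda^{-1}(y)}S_v$ is an embedded annulus $\mathcal A\subseteq S^2\setminus C$ (possibly degenerating to a single curve). Let $Q$ be either of the two components of $\gfB\setminus\{z\}$, with associated disc $\overline U$, and suppose for contradiction that $\lambda(v)=y$ for every $v\in Q$. Then $\overline U=\bigcup_{v\in Q}S_v\subseteq i^{-1}(S_y)=\mathcal A$, so the curve $S_z=\partial\overline U$ bounds the disc $\overline U$ inside $\mathcal A$ and is therefore null-homotopic in $S^2\setminus C$; this contradicts that $z$, being essential, represents a non-trivial curve. (If $\mathcal A$ is a single curve the contradiction is immediate, since $\overline U$ is two-dimensional.) Hence each component $Q$ contains a vertex $v$ with $\lambda(v)\neq y$.

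For the reverse implication I prove the contrapositive: assuming $z$ is non-essential, I exhibit a component of $\gfB\setminus\{z\}$ on which $\lambda$ is constant equal to $\lambda(z)$. Since $S_z$ is trivial or peripheral rel $C$, it bounds a closed disc $\overline D\subset S^2$ with $|\overline D\cap C|\le1$. Any curve of $f^{-1}(\CC)$ contained in $\overline D$ is disjoint from $S_z$ (curves of $f^{-1}(\CC)$ being pairwise disjoint) or equal to $S_z$, and in either case bounds a disc with at most one marked point, hence is itself trivial or peripheral, hence non-essential; so the essential curves of $f^{-1}(\CC)$ all lie in $S^2\setminus\overline D$. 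The map $i$ forgets the non-essential curves of $f^{-1}(\CC)$ and then collapses each maximal family of parallel essential curves to its annulus $\mathcal A_j$; since every $\mathcal A_j$ has its two boundary curves among the essential curves of $f^{-1}(\CC)$, those boundary curves are disjoint from $\overline D$, and therefore the connected set $\overline D$ (being disjoint from the union of all these boundary curves) either meets no $\mathcal A_j$ or is contained in exactly one of them. In the first case $i$ is injective on $\overline D$, so $i(\overline D)$ is a disc disjoint from $\DD$ and hence lies in the interior of a single small sphere $S_u$ of $\gfY$; in the second case $i(\overline D)$ lies in the single curve $S_u\in\DD$ onto which that $\mathcal A_j$ is collapsed. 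Either way $S_v\subseteq\overline D$, hence $i(S_v)\subseteq S_u$, for every vertex $v$ of the $D$-component of $\gfB\setminus\{z\}$ and also for $v=z$; and by the uniqueness of $\lambda$ from Lemma~\ref{lem:UniqSphGrBis} --- using the tautology $i(S_v)\subseteq S_{y'}\iff S_v\subseteq i^{-1}(S_{y'})$ to pin down possible curve-vertex values, and the disjointness of the small-sphere interiors otherwise --- this forces $\lambda(v)=u$. Thus $\lambda$ is constant equal to $\lambda(z)=u$ on that component, as required.

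The step I expect to be the main obstacle is the topological bookkeeping around $i$ in the reverse implication: showing that $\overline D$ is either disjoint from, or engulfed by, one of the collapsed annuli $\mathcal A_j$, and deducing from this that $i$ carries $\overline D$ --- and with it every $S_v$ it contains --- into a single vertex set of $\gfY$; this is what reduces the constancy of $\lambda$ on the $D$-component to the already-established uniqueness of $\lambda$. Everything else, including in the forward direction the identification of $i^{-1}(S_y)$ with the annular fibre $\bigcup_{v\in\lambda^{-1}(y)}S_v$, is supplied directly by Lemma~\ref{lem:UniqSphGrBis}.
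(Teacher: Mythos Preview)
Your proof is correct and follows essentially the same approach as the paper's. In the essential direction you argue by contradiction that a closed disc cannot sit inside the annular fibre $i^{-1}(S_{\lambda(z)})$, which is exactly the paper's observation that the annulus has boundary homotopic to $S_z$ and hence leaves vertices on both sides; in the non-essential direction you flesh out, via the case analysis on whether $\overline D$ meets some collapsed annulus $\mathcal A_j$, the step the paper compresses into the single sentence ``Therefore, all objects representing small spheres and curves within $D$ have the same image under $\lambda$ as $z$.''
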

\begin{proof}
  If $z$ is a non-essential curve vertex, then $S_z$ surrounds a disc
  $D$ homotopic rel $C$ to a point. Therefore, all objects
  representing small spheres and curves within $D$ have the same image
  under $\lambda$ as $z$. Conversely, suppose $z$ is an essential
  curve vertex. Then
  $A_z\coloneqq i^{-1}(S_{\lambda(z)})=\bigcup_{v\in
    \lambda^{-1}(\lambda(z))}S_v$ is an annulus (possibly a closed
  curve) such that its boundary components are homotopic to
  $S_z$. Therefore, there are $S_v,S_w\subset S^2\setminus A_z$ in
  different components of $S^2\setminus S_z$ such that
  $\lambda(v)\neq\lambda(z)\neq\lambda(w)$.
\end{proof}

\begin{cor}\label{cor:EssSphVert}
  Suppose that the underlying graph of $\gfB$ is not a
  singleton. Let $y\in \gfY$ be a sphere vertex. Then
  $\lambda^{-1}(y)$ contains a unique $z\in \gfB$ that
  neighbours at least one essential curve vertex.
\end{cor}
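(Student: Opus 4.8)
The plan is to single out, inside the subtree $\lambda^{-1}(y)\subseteq\gfB$, the one ``central'' sphere vertex $z_0$ through which every boundary curve of the small sphere $S_y$ is realized, and then to show that $z_0$ is the unique vertex of $\lambda^{-1}(y)$ adjacent to an essential curve vertex. Three preliminary remarks set the stage. First, $\lambda^{-1}(y)$ is a subtree of $\gfB$, since $\lambda$ is monotone (Condition~\eqref{cond:1:defn:gfofbisets}; see also Lemma~\ref{lem:UniqSphGrBis}). Second, every curve vertex of $\gfB$ lying in $\lambda^{-1}(y)$ is non-essential: by Condition~\eqref{cond:2:defn:gfofbisets} an essential curve vertex is sent by $\lambda$ to a curve vertex of $\gfY$, not to the sphere vertex $y$. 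Third, since every edge of $\gfB$ joins a curve vertex to a sphere vertex, any vertex of $\lambda^{-1}(y)$ adjacent to an essential curve vertex must itself be a sphere vertex.

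For existence, observe that because $\gfB$ is not a singleton the small sphere $S_y$ is bounded by at least one curve $\delta\in\DD$. By the last assertion of Lemma~\ref{lem:UniqSphGrBis}, the set $i^{-1}(\delta)$ is an annulus (possibly degenerating to a single curve) one of whose boundary components $\gamma$ faces $S_y$; this $\gamma$ is a curve of $f^{-1}(\CC)$ isotopic rel $C$ to $\delta$, hence essential. Writing $z_0$ for the sphere vertex of $\gfB$ on the $S_y$-side of $\gamma$, one checks that $i(S_{z_0})\subseteq S_y$, so $z_0\in\lambda^{-1}(y)$ and $z_0$ neighbours the essential curve vertex $\gamma$.

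For uniqueness, suppose $z,z'\in\lambda^{-1}(y)$ are distinct and respectively adjacent to essential curve vertices $e$ and $e'$. By the third remark $z$ and $z'$ are sphere vertices, and since $\gfB$ is bipartite between curve and sphere vertices the geodesic from $z$ to $z'$ inside $\lambda^{-1}(y)$ has even positive length, hence contains a curve vertex $w$, which is non-essential by the second remark. Then $S_w$ cuts $S^2$ into two discs, one containing $S_z$ together with its boundary curve $S_e$ (note $S_e\neq S_w$, as $e$ is essential and $w$ is not), the other containing $S_{z'}$ together with $S_{e'}\neq S_w$. But the argument proving Lemma~\ref{lem:PropOfEssCurves} shows that a non-essential curve of $f^{-1}(\CC)$ bounds a disc of $S^2$ in which every curve of $f^{-1}(\CC)$ is non-essential; applied to $w$, one of the two discs just described contains no essential curve, contradicting the presence of $S_e$ or $S_{e'}$ in it. Therefore $z=z'$, and the vertex $z_0$ produced above is the unique vertex of $\lambda^{-1}(y)$ adjacent to an essential curve vertex.

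The only step that goes beyond tree bookkeeping and the two cited lemmas is the appeal to the hypothesis that $\gfB$ is not a singleton, which enters precisely to guarantee that $S_y$ has a boundary curve, equivalently that $f^{-1}(\CC)$ contains an essential curve. This — and not the purely formal uniqueness argument — is the point I would expect to require the most care.
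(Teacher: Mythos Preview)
Your proof is correct and follows the same route as the paper's: existence via an essential boundary curve of $S_y$, and uniqueness by observing that two candidates would be separated in $\lambda^{-1}(y)$ by a non-essential curve vertex, contradicting Lemma~\ref{lem:PropOfEssCurves}. Your existence argument is in fact more explicit than the paper's (which simply says ``Clearly\dots''), and your identification of the delicate step---that $\gfB$ not a singleton forces $S_y$ to have a boundary curve in $\DD$---is well placed.
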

\begin{proof}
  Clearly there is at least one sphere vertex in $\lambda^{-1}(y)$
  neighbouring at least one essential curve vertex. If there are two
  such sphere vertices $v,w\in \lambda^{-1}(y)$, then they are
  separated by a non-essential curve vertex; this contradicts
  Lemma~\ref{lem:PropOfEssCurves}.
\end{proof}

\subsection{Essential spheres}
We return to a sphere map $f\colon(S^2,C,\DD)\to(S^2,A,\CC)$, keeping
the convention $\DD=f^{-1}(\CC)$ rel $C$, and consider its
decomposition as tree of bisets. The curve vertices in
$\gfB(f)$ correspond to edges in the decomposition of
$(S^2,f^{-1}(A),f^{-1}(\CC))$, and to connected components of
$f^{-1}(\CC)$, while the sphere vertices $z\in\gfB(f)$
correspond to small spheres $S_z$. We classify sphere vertices in
$\gfB(f)$ as follows: a sphere vertex $z\in\gfB(f)$ is
\begin{idescription}
\item[trivial] if $S_z$ is homotopic rel $C$ to a point in $(S^2,C)$;
\item[annular] if $S_z$ is homotopic rel $C$ to a curve in $\DD$;
\item[essential] otherwise.
\end{idescription}

\noindent A trivial vertex is thus a dynamically-irrelevant small
sphere in $S^2\setminus f^{-1}(\CC)$: it gets blown down to a point
(marked or not) by projecting to $(S^2,C)$. We have the following
characterization in terms of graph combinatorics:
\begin{lem}\label{lem:unique algebraic essential}
  Suppose that the underlying tree of $\gfB$ is not a
  singleton. A sphere vertex $z\in \gfB$ is trivial if there is
  a non-essential curve vertex in $\gfB$ separating $z$ from at
  least one essential curve vertex; it is annular if it separates two
  essential curve vertices of $\gfB$ mapping to the same curve
  vertex of $\gfY$; and it is a sphere vertex if $\lambda(z)$
  is a sphere vertex and $z$ is adjacent to an essential curve vertex.
\end{lem}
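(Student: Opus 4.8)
The strategy is to translate each of the three geometric conditions in the definitions of trivial, annular, and essential sphere vertices into the combinatorial language afforded by the monotone map $\lambda\colon\gfB\to\gfY$, using the structural facts established in Lemma~\ref{lem:PropOfEssCurves}, Lemma~\ref{lem:PropOfLambda} and Corollary~\ref{cor:EssSphVert}. The overarching observation is that, under the standing hypothesis $\DD=f^{-1}(\CC)$ rel $C$, a non-essential curve in $f^{-1}(\CC)$ is always trivial or peripheral, so that the non-essential part of $\gfB$ consists of ``dead branches'' that $\lambda$ crushes to a single vertex of $\gfY$. Each case is then a matter of recognizing which part of the tree $S_z$ lies in.

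\emph{Trivial vertices.} First I would show $z$ is trivial if and only if some non-essential curve vertex separates $z$ from an essential curve vertex. For the forward-looking direction: if $z$ is trivial, then $S_z$ is contained in a disc $D$ that is homotopic rel $C$ to a point; by Lemma~\ref{lem:PropOfEssCurves} no essential curve of $f^{-1}(\CC)$ lies inside such a disc, while (assuming $\gfB$ is not a singleton) there is at least one essential curve outside, and the boundary curve of the innermost such disc containing $S_z$ is a non-essential curve vertex separating $z$ from it. Conversely, if a non-essential curve vertex $w$ separates $z$ from an essential curve vertex, then $S_w$ bounds a disc $D$ homotopic rel $C$ to a point containing $S_z$ (it must be the side \emph{not} containing the essential curve, since the essential side is non-trivial by Lemma~\ref{lem:PropOfEssCurves}), and therefore $S_z$ is contractible rel $C$, i.e.\ $z$ is trivial.

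\emph{Annular and essential vertices.} For an annular $z$: $S_z$ is homotopic rel $C$ to a curve $\delta\in\DD$, corresponding to a curve vertex $y\in\gfY$; then $i(S_z)\subset S_y$, so $\lambda(z)=y$ is a curve vertex, and $A_y=i^{-1}(S_y)=\bigcup_{v\in\lambda^{-1}(y)}S_v$ is an annulus whose two boundary curves are essential curve vertices of $\gfB$ isotopic rel $C$ (hence mapping to the same curve vertex $y$ of $\gfY$ by Condition~\eqref{cond:2:defn:gfofbisets}); since $z$ lies in this annulus, it separates these two essential curve vertices. Conversely, if $z$ separates two essential curve vertices $v,v'$ with $\lambda(v)=\lambda(v')$ a curve vertex $y$, then by Lemma~\ref{lem:UniqSphGrBis} the set $\bigcup_{w\in\lambda^{-1}(y)}S_w=i^{-1}(S_y)$ is an annulus bounded by two curves isotopic rel $C$ to $S_y$, and $z$—lying between $v$ and $v'$ on the path, hence in $\lambda^{-1}(y)$—is squeezed into this annulus, so $S_z$ is homotopic rel $C$ to a curve in $\DD$. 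Finally, the remaining (essential) case is handled by elimination together with Corollary~\ref{cor:EssSphVert}: the unique $z\in\lambda^{-1}(y)$ over a sphere vertex $y\in\gfY$ adjacent to an essential curve vertex is the one that is neither trivial (it meets an essential curve, so no non-essential curve cuts it off from all essential curves) nor annular ($\lambda(z)=y$ is a sphere, not a curve, vertex).

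\emph{Main obstacle.} The delicate point is the ``annular'' equivalence, specifically controlling which side of a separating curve vertex is the trivial one and verifying that ``$z$ separates two essential curve vertices mapping to the same curve vertex of $\gfY$'' is both necessary and sufficient, rather than merely necessary. The sufficiency requires knowing that \emph{every} vertex on the path between such a pair $v,v'$ lies in $\lambda^{-1}(\lambda(v))$—this is exactly the monotonicity of $\lambda$ from Lemma~\ref{lem:UniqSphGrBis}—and that $i^{-1}(S_{\lambda(v)})$ is genuinely an annulus (not a more complicated surface), which is the last assertion of Lemma~\ref{lem:UniqSphGrBis}. Once these are in hand, the three characterizations partition the sphere vertices and the proof closes by the disjunction with Corollary~\ref{cor:EssSphVert}.
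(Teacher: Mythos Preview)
Your proposal is correct and follows essentially the same approach as the paper's proof: both argue each case by translating the topological definitions into tree combinatorics via $\lambda$, invoke Lemma~\ref{lem:PropOfEssCurves} for the trivial case, and close the essential case by elimination using Corollary~\ref{cor:EssSphVert}. The only minor stylistic difference is in the annular case, where you route the argument through the annulus $i^{-1}(S_y)=\bigcup_{w\in\lambda^{-1}(y)}S_w$ from Lemma~\ref{lem:UniqSphGrBis} and monotonicity of $\lambda$, whereas the paper argues more directly that two boundary components of $S_z$ itself must be essential (and conversely that a sphere squeezed between two isotopic essential curves is homotopic to them); both routes are equivalent and equally short.
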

\begin{proof}
  A sphere vertex $z\in \gfB$ is trivial if and only if $S_z$
  lies within a disc that is contractible rel $C$ and is surrounded by
  a non-essential curve in $f^ {-1}(\CC)$; this is equivalent to the
  condition stated in the lemma.

  If a sphere vertex $z\in \gfB$ is annular, then there are two
  components in $\partial S_z$ homotopic rel $C$ to a curve in $\DD$;
  namely $S_z$ separates two homotopic rel $C$ essential curves in
  $f^ {-1}(\CC)$; this is equivalent to the condition stated in the
  lemma. Conversely, if $S_z$ separates two essential curves in
  $f^ {-1}(\CC)$ that are homotopic rel $C$, then $S_z$ is itself
  homotopic rel $C$ to these curves.

  By definition, if $z\in \gfB$ is an essential sphere vertex,
  then $\lambda(z)$ is a sphere vertex. Conversely, if
  $z\in \gfB$ is a sphere vertex while $\lambda(z)$ is a curve
  vertex, then $z$ is either a trivial or an annular sphere vertex. By
  Corollary~\ref{cor:EssSphVert}, for a vertex $y\in \gfY$
  there is a unique vertex $z'\in \lambda^{-1}(y)$ that neighbours an
  essential curve vertex of $\gfB$. On one hand, all sphere
  vertices in $\lambda^{-1}(y)\setminus \{z'\}$ are trivial (by the
  first claim of this lemma), on the other hand $z'$ is neither
  trivial not annular (by the first two claims of this lemma);
  i.e. $z'$ is an essential sphere vertex.
\end{proof}

\begin{cor}\label{cor:unique essential}
  There is a computable bijection between the small spheres in
  $S^2\setminus\DD$ and the essential spheres in
  $S^2\setminus f^{-1}(\CC)$.
\end{cor}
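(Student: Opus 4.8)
The plan is to obtain the bijection as the restriction of the graph morphism $\lambda\colon\gfB(f)\to\gfY$ to the set of essential sphere vertices. By Theorem~\ref{thm:dehn-nielsen-baer+} we may assume that $B=B(f)$ for an actual sphere map $f\colon(S^2,C,\DD)\to(S^2,A,\CC)$, and recall that throughout this section $\DD=f^{-1}(\CC)$ rel $C$. By Theorem~\ref{thm:DecompOfBiset} the sphere tree of bisets $\gfB(f)$, its underlying graph, and the graph morphisms $\lambda,\rho$ are computable; and by Lemma~\ref{lem:unique algebraic essential} the partition of the sphere vertices of $\gfB(f)$ into trivial, annular and essential ones is a combinatorial condition on $(\gfB,\lambda)$, hence also computable. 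By construction (Definition~\ref{defn:gfofgroups}) the small spheres of $S^2\setminus\DD$ are exactly the sphere vertices of $\gfY$, and by construction (Definition~\ref{defn:gfofbisets}) the essential spheres of $S^2\setminus f^{-1}(\CC)$ are exactly the $S_z$ for $z$ an essential sphere vertex of $\gfB$; so it remains to show that $\lambda$ maps the essential sphere vertices bijectively onto the sphere vertices of $\gfY$.

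First I would treat the degenerate case in which the underlying tree of $\gfB$ is a singleton: then $f^{-1}(\CC)=\DD=\emptyset$ rel $C$, each side has a single small sphere, that sphere is essential (it is not homotopic rel $C$ to a point or to a curve), and the bijection is the evident one. For the remaining case I would check the three required properties using only Corollary~\ref{cor:EssSphVert} and Lemma~\ref{lem:unique algebraic essential}. Well-definedness of the image: Lemma~\ref{lem:unique algebraic essential} characterizes $z$ as an essential sphere vertex precisely when $\lambda(z)$ is a sphere vertex and $z$ is adjacent to an essential curve vertex, so in particular $\lambda(z)$ is a sphere vertex of $\gfY$. Surjectivity: given a sphere vertex $y\in\gfY$, Corollary~\ref{cor:EssSphVert} gives a unique $z\in\lambda^{-1}(y)$ adjacent to at least one essential curve vertex; since sphere and curve vertices alternate in the bipartite tree $\gfB$, this $z$ is a sphere vertex, and since $\lambda(z)=y$ is a sphere vertex, Lemma~\ref{lem:unique algebraic essential} shows $z$ is essential. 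Injectivity: if $z,z'$ are essential sphere vertices with $\lambda(z)=\lambda(z')=y$, then both are adjacent to essential curve vertices (Lemma~\ref{lem:unique algebraic essential}), hence both coincide with the unique such vertex of $\lambda^{-1}(y)$ furnished by Corollary~\ref{cor:EssSphVert}, so $z=z'$.

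I do not expect a genuine obstacle here: once $\gfB(f)$ and $\lambda$ are available, the statement is a direct assembly of Corollary~\ref{cor:EssSphVert} and Lemma~\ref{lem:unique algebraic essential}. The only point requiring care is the identification of the two geometric families of spheres with the two combinatorial families of vertices, together with the verification that $\lambda$ really is a graph morphism between the subdivided trees --- so that the bipartite alternation invoked in the surjectivity step is legitimate rather than a map that might collapse an edge to a vertex --- but this is exactly what the barycentric subdivision in Definition~\ref{defn:gfofgroups} and the monotonicity condition~\eqref{cond:1:defn:gfofbisets}, as unpacked in Lemma~\ref{lem:UniqSphGrBis}, were arranged to provide. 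It is worth adding a closing remark that under this bijection $i$ carries $S_z$ onto $S_{\lambda(z)}$ up to forgetting marked points --- the trivial and annular small spheres of $\lambda^{-1}(\lambda(z))$ being squeezed away by $i$ --- so the bijection is the geometrically natural one, and this is the form in which it is used when defining small sphere maps.
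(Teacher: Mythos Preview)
Your proposal is correct and takes essentially the same approach as the paper: the paper's proof simply states that the result follows from Lemma~\ref{lem:unique algebraic essential} and Corollary~\ref{cor:EssSphVert}, with computability via Theorem~\ref{thm:DecompOfBiset}. You have merely spelled out in detail the bijection $\lambda\restrict{\{\text{essential sphere vertices}\}}$ and verified well-definedness, injectivity and surjectivity from exactly those two results, which is what the paper leaves implicit.
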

\begin{proof}
  This follows from Lemma~\ref{lem:unique algebraic essential} and
  Corollary~\ref{cor:EssSphVert}. All conditions are expressed in
  terms of the map of finite graphs $\lambda\colon\gfB\to\gfY$, so are
  computable by Theorem~\ref{thm:DecompOfBiset}.
\end{proof}
\noindent Let us remark that the classification of small spheres in
$(S^2,f^{-1}(A), f^{-1}(\CC))$ depends only on the forgetful map
$(S^2,f^{-1}(A), f^{-1}(\CC))\overset{\one}\rightarrow (S^2,C,\DD)$;
see Figure~\ref{fig:taesmallspheres}.

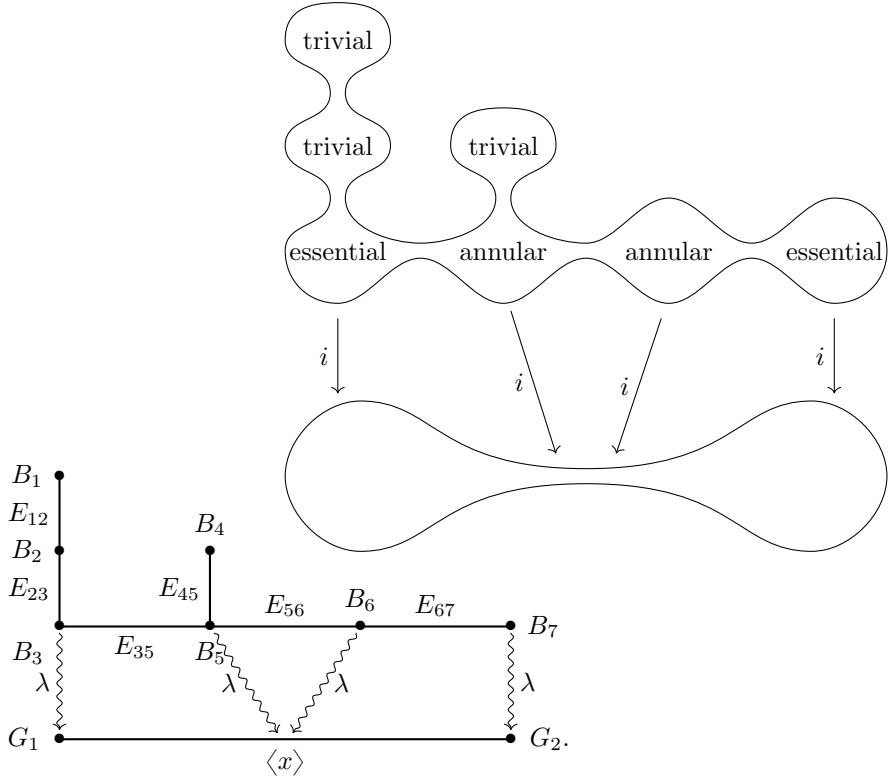
\begin{figure}
  \begin{center}
    \begin{tikzpicture}
      \def\upup{.. controls +(90:0.4) and +(270:0.4) ..}
      \def\upright{.. controls +(90:0.4) and +(180:0.4) ..}
      \def\rightdown{.. controls +(0:0.4) and +(90:0.4) ..}
      \def\downdown{.. controls +(270:0.4) and +(90:0.4) ..}
      \def\downright{.. controls +(270:0.4) and +(180:0.4) ..}
      \def\rightup{.. controls +(0:0.4) and +(270:0.4) ..}
      \def\rightright{.. controls +(0:0.4) and +(180:0.4) ..}
      \def\downleft{.. controls +(270:0.4) and +(0:0.4) ..}
      \def\leftleft{.. controls +(180:0.4) and +(0:0.4) ..}
      \def\leftup{.. controls +(180:0.4) and +(270:0.4) ..}
      \draw (-4,3) \upup (-3.4,3.7) \upup (-4,4.4) \upup (-3.4,5.1) \upup
      (-4,5.8) \upright (-3.3,6.3) \rightdown (-2.6,5.8) \downdown
      (-3.2,5.1) \downdown (-2.6,4.4) \downdown (-3.2,3.7)
      \downright (-2.2,3.1) \rightup (-1.2,3.7) \upup (-1.8,4.4) \upright
      (-1.1,4.9) \rightdown (-0.4,4.4) \downdown (-1.0,3.7) \downright
      (0.0,3.1) \rightright (1.1,3.7) \rightright (2.2,3.1) \rightright
      (3.3,3.7) \rightdown (4.0,3.0) \downleft (3.3,2.3) \leftleft
      (2.2,2.9) \leftleft (1.1,2.3) \leftleft (0.0,2.9) \leftleft (-1.1,2.3)
      \leftleft (-2.2,2.9) \leftleft (-3.3,2.3) \leftup (-4,3); 

      \draw (0,0.1) .. controls +(180:2) and +(0:1) .. (-3,1)
      .. controls +(180:0.5) and +(90:0.5) .. (-4,0)
      .. controls +(270:0.5) and +(180:0.5) .. (-3,-1)
      .. controls +(0:1) and +(180:2) .. (0,-0.1);
      \draw (0,0.1) .. controls +(0:2) and +(180:1) .. (3,1)
      .. controls +(0:0.5) and +(90:0.5) .. (4,0)
      .. controls +(270:0.5) and +(0:0.5) .. (3,-1)
      .. controls +(180:1) and +(0:2) .. (0,-0.1);

      \foreach\c in {(-3.3,5.8),(-3.3,4.4),(-1.1,4.4)} {\node at \c {trivial};}
      \foreach\c in {(-1.1,3),(1.1,3)} {\node at \c {annular};}
      \foreach\c in {(-3.3,3),(3.3,3)} {\node at \c {essential};}
      \draw[->] (-3.3,2.1) -- node[left] {$i$} +(0,-1.0);
      \draw[->] (-1.0,2.2) -- node[left] {$i$} (-0.4,0.3);
      \draw[->] (1.0,2.1) -- node[left] {$i$} (0.4,0.3);
      \draw[->] (3.3,2.1) -- node[left] {$i$} +(0,-1.0);
      \begin{scope}[xshift=-4cm,yshift=-4cm]
    \node[inner sep=0pt] (B1) at (-3,4) [label={left:$B_1$}] {$\bullet$};
    \node[inner sep=0pt] (B2) at (-3,3) [label={left:$B_2$}] {$\bullet$};
    \node[inner sep=0pt] (B3) at (-3,2) [label={below left=-1mm:$B_3$}] {$\bullet$};
    \node[inner sep=0pt] (B4) at (-1,3) [label={above:$B_4$}] {$\bullet$};
    \node[inner sep=0pt] (B5) at (-1,2) [label={below:$B_5$}] {$\bullet$};
    \node[inner sep=0pt] (B6) at (1,2) [label={above:$B_6$}] {$\bullet$};
    \node[inner sep=0pt] (B7) at (3,2) [label={right:$B_7$}] {$\bullet$};
    \draw[thick] (B1.center) -- node[left] {$E_{12}$} (B2.center) --
    node[left] {$E_{23}$} (B3.center) -- node[below] {$E_{35}$} (B5.center) --
    node[left] {$E_{45}$} (B4.center) (B5.center) -- node[above] {$E_{56}$}
    (B6.center) -- node[above] {$E_{67}$} (B7.center);

    \node[inner sep=1pt] (G1) at (-3,0.5) [label={left:$G_1$}] {$\bullet$};
    \node[inner sep=1pt] (G2) at (3,0.5) [label={right:$G_2$.}] {$\bullet$};
    \draw[thick] (G1.center) -- node[below] {$\langle x\rangle$} (G2.center);

    \draw (B3) edge[lambda] node[left] {$\lambda$} (G1);
    \draw (B5) edge[lambda] node[left] {$\lambda$} (-0.1,0.58);
    \draw (B6) edge[lambda] node[right] {$\lambda$} (0.1,0.58);
    \draw (B7) edge[lambda] node[right] {$\lambda$} (G2);
      \end{scope}
    \end{tikzpicture}
  \end{center}
  \caption{Trivial, annular and essential small spheres, and the corresponding tree of bisets (note that the actual graph of bisets is a barycentric subdivision of the one shown on the picture)  }\label{fig:taesmallspheres}
\end{figure}

\subsection{Epic-monic factorizations}\label{ss:epic-monic}
Recall
from~\cite{bartholdi-dudko:bc1}*{Lemma~\ref{bc1:lem:biset=fibre}} that
for a transitive $H$-$G$-biset $B$ there exist a group $K$ and
homomorphisms $\phi\colon K\to G$, $\psi\colon K\to H$ such that
$B=B_\psi^\vee\otimes_K B_\phi$. Moreover, there is a minimal such
$K$.

Every group homomorphism factors as a composition of a surjective
(``epic'') morphism with an injective (``monic'') morphism. Let us
apply this factorization to $\phi$ and $\psi$; that is, set
$P\coloneqq \psi(K)$, $Q\coloneqq \phi(K)$, and view $\psi$ and $\phi$
as homomorphisms onto $P$ and $Q$ respectively, followed by natural
inclusions. We get the factorization
\[B= \subscript{H}H_{P} \otimes B_\psi^\vee\otimes_K B_\phi \otimes
  \subscript{Q}G_G.
\]
Combining the last three terms into a biset $\subscript{P}B'_G$ we get the
\emph{left epic-monic} factorization
\[B=\subscript H H_{P} \otimes_{P}B'_{G}.
\]
Below is a direct way how to calculate $P$ and $B'$; its proof is
immediate.

\begin{lem}\label{Lem:leftInjDecomp}
  Let $B$ be a transitive $H$-$G$-biset. Choose $b\in B$ and let $P$
  be the stabilizer of $b\otimes \{\cdot\}$ in
  $B\otimes_{G} \{\cdot\} $. Then $B'=P b G$ and
  $B=\subscript{H}H_{P} \otimes P b G$.\qed
\end{lem}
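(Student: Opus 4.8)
The plan is to verify directly that the group $P$ defined as the stabilizer of $b\otimes\{\cdot\}$ in the right $\{\cdot\}$-set $B\otimes_G\{\cdot\}$ coincides with the group $P=\psi(K)$ appearing in the left epic-monic factorization, and that $B'=PbG$ is the correct middle biset. First I would recall the setup: by~\cite{bartholdi-dudko:bc1}*{Lemma~\ref{bc1:lem:biset=fibre}} a transitive $H$-$G$-biset $B$ is realized as $B=B_\psi^\vee\otimes_K B_\phi$ for a minimal group $K$ with homomorphisms $\phi\colon K\to G$ and $\psi\colon K\to H$. Concretely, one may take $K$ to be the stabilizer in $H\times G$ of a chosen element $b\in B$, with $\phi,\psi$ the two coordinate projections; then $B_\psi^\vee\otimes_K B_\phi\cong B$ via the map sending the class of $(h^{-1},g)$ to $hbg$, and the minimality of $K$ is equivalent to the faithfulness of this action.

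Next I would unwind what $b\otimes\{\cdot\}\in B\otimes_G\{\cdot\}$ means. Tensoring on the right with the trivial $G$-$\{\cdot\}$-biset collapses right $G$-orbits, so $B\otimes_G\{\cdot\}$ is the left $H$-set of $G$-orbits of $B$, and $b\otimes\{\cdot\}$ is the orbit $bG$. An element $h\in H$ fixes $bG$ exactly when $hb\in bG$, i.e.\ when $hb=bg$ for some $g\in G$; equivalently, writing $K$ as the stabilizer of $b$ in $H\times G$, this says $h\in\psi(K)$. Hence the stabilizer of $b\otimes\{\cdot\}$ is precisely $P=\psi(K)$, which is exactly the group occurring in the epic-monic factorization after splitting $\psi$ as a surjection onto $P$ followed by the inclusion $P\hookrightarrow H$.

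It then remains to identify $B'$. By definition $B'$ is the composite $B_{\psi}^\vee\otimes_K B_\phi\otimes_Q\subscript{Q}G_G$ viewed with $\psi$ regarded as a surjection $K\twoheadrightarrow P$; equivalently, $B'=\subscript{P}B'_G$ is the sub-$P$-$G$-biset of $B$ consisting of all $hbg$ with $h\in P$, $g\in G$, which is $PbG$. One checks $PbG$ is closed under the left $P$- and right $G$-actions (immediate), that it is transitive as a $P$-$G$-biset (inherited from transitivity of $B$), and that the multiplication map $\subscript{H}H_P\otimes_P PbG\to B$, $h\otimes pbg\mapsto hpbg$, is an isomorphism of $H$-$G$-bisets: surjectivity is transitivity of $B$, and injectivity follows because two elements $h_1\otimes p_1bg_1$ and $h_2\otimes p_2bg_2$ with the same image satisfy $h_1p_1bg_1=h_2p_2bg_2$, forcing $(h_2p_2)^{-1}h_1p_1$ to fix $bG$, hence to lie in $P$, which lets one absorb it into the $P$-factor. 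This gives $B=\subscript{H}H_P\otimes_P PbG$ as claimed. The only mildly delicate point — and the one I would write out carefully — is the identification "stabilizer of $bG$ in $H$" $=\psi(K)$, since it is the hinge connecting the combinatorial description of $P$ to its role in the epic-monic factorization; everything else is a routine check of biset axioms, which is why the lemma's proof "is immediate."
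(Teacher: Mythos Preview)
Your proposal is correct and fills in exactly the kind of verification the paper omits: the lemma is stated with a terminal \qed and the surrounding text simply declares that ``its proof is immediate,'' giving no argument at all. Your identification of $P$ with $\psi(K)$ via the collapse $B\otimes_G\{\cdot\}\cong B/G$ and the subsequent check that $H\otimes_P PbG\to B$ is bijective are precisely the routine biset manipulations the authors are taking for granted.
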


\noindent We remark that the left epic-monic decomposition of a sphere
biset $\subscript{H}B_G$ is trivial (i.e.~$P=H$) because the
$G$-action is transitive. Let us now generalize
Lemma~\ref{Lem:leftInjDecomp} to the context of graphs of
bisets. Recall from~\cite{bartholdi-dudko:bc1} that for bisets
$\subscript H B_G,\subscript{H'}{B'}_{G'}$ an \emph{intertwiner} is a
triple of maps
$(\gamma\colon H\to H',\beta\colon B\to B',\alpha\colon G\to G')$
satisfying $\beta(h b g)=\gamma(h)\beta(b)\alpha(g)$ for all
$h\in H,b\in B,g\in G$.
\begin{prop}\label{Lem:GrBisleftInjDecomp}
  Let $\subscript\gfY\gfB_\gf$ be a graph of bisets and
  assume that every $B_z$ is a transitive
  $G_{\lambda(z)}$-$G_{\rho(z)}$-biset.  Let $\gf^\circ$ be the graph
  of groups obtained from $\gf$ by replacing each group $G_z$ with the
  trivial group. Denote by $(\gf,\gf^\circ)$ the
  $\gf$-$\gf^\circ$-graph of bisets with $\lambda =\rho =\one$ and
  bisets $B_z=\subscript{G_{z}}\{\cdot\}_{\one}$ for all
  $z\in (\gf,\gf^\circ)$.

  Consider the graph of bisets $\gfB \otimes
  (\gf,\gf^\circ)$. For every $z\in\gfB$, choose $b_z\in B_z$.
  Let $P_{\lambda(z)}\le G_{\lambda(z)}$ be the stabilizer of $b_z
  \otimes \{\cdot\}$ in $B_{(z,\rho(z))}$, with $(z,\rho(z))$ viewed
  as an object in $\gfB\otimes (\gf,\gf^\circ)$. As in
  Lemma~\ref{Lem:leftInjDecomp}, decompose $B_z=
  G_{\lambda(z)}\otimes_{P_{\lambda(z)}} B'_z$.
 
  Let $\mathfrak P$ be the graph of groups with underlying graph
  $\gfB$ and group $P_z$ at every $z\in \mathfrak P$, and with
  morphisms $P_{z}\to P_{z^-}$ given by $g\to (g^-)^{\gamma_z}$ with
  $\gamma_z$ chosen so that
  $\gamma^{-1}_z (b_{(z,\rho(z))})^{-}=b_{(z^-,\rho(z^-))}$.

  Let $\gfB'_z$ be the $\mathfrak P$-$\gf$-graph of bisets with biset
  $\subscript{P_{z}}(B'_z)_{G_{\rho(z)}}$ attached to each $z\in \gfB'$
  and biset intertwiners $B'_z\to B'_{z^-}$ given by $b\to
  \gamma_z^{-1}b^{-}$. Let $(\gfY,\mathfrak P)$ be the
  $\gfY$-$\mathfrak P$-graph of bisets with underlying graph $\gfB$
  and with biset $B_z=\subscript{G_{\lambda(z)}} {G_{\lambda(z)}}
  _{P_z}$ attached to $z\in (\gfY,\mathfrak P)$ with biset intertwiners
  $B_z\to B_{z^-}$ given by $g\to g\gamma_z$.
   
  Then $\gfB$ decomposes as
  \begin{equation}\label{eq:Lem:GrBisleftInjDecomp}
    \gfB= (\gfY,\mathfrak P) \otimes \gfB'.
  \end{equation}
\end{prop}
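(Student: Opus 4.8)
The statement is essentially a vertex-by-vertex reduction to Lemma~\ref{Lem:leftInjDecomp} (the left epic-monic factorization), together with the bookkeeping needed to reassemble the local factorizations into a factorization of graphs of bisets. So the plan is: first establish the local decomposition at each vertex, then identify the edge intertwiners, and finally check that the tensor product $(\gfY,\mathfrak P) \otimes \gfB'$ recovers $\gfB$ both on underlying graphs and on the bisets and intertwiners.

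First I would apply Lemma~\ref{Lem:leftInjDecomp} to each transitive biset $B_z$. The subtlety is that the stabilizer $P_{\lambda(z)}$ is defined using not $B_z$ directly but the tensored biset $B_{(z,\rho(z))}$ in $\gfB\otimes(\gf,\gf^\circ)$; but since $(\gf,\gf^\circ)$ has trivial groups on the right and bisets $\{\cdot\}$, tensoring with it on the right is exactly the operation $B_z \mapsto B_z \otimes_{G_{\rho(z)}} \{\cdot\}$, so the stabilizer of $b_z\otimes\{\cdot\}$ in $B_{(z,\rho(z))}$ is precisely the stabilizer of $b_z\otimes\{\cdot\}$ in $B_z\otimes_{G_{\rho(z)}}\{\cdot\}$, which is what Lemma~\ref{Lem:leftInjDecomp} needs. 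This gives $B_z \cong G_{\lambda(z)}\otimes_{P_{\lambda(z)}} B'_z$ with $B'_z = P_{\lambda(z)} b_z G_{\rho(z)}$ as a subbiset of $B_z$, exactly as claimed. I would note here that $P_{\lambda(z)}$ depends a priori on the choice of $b_z$, but two choices give conjugate subgroups and isomorphic decompositions; this is where the elements $\gamma_z$ enter, chosen to match up the distinguished elements across an edge.

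Next I would pin down the edge structure. For an edge $z$ of $\gfB$ (recall a graph is a set of vertices together with edges, and $z^-$ denotes the source-type vertex of $z$), the intertwiner $B_z\to B_{z^-}$ of the original graph of bisets sends $b_{(z,\rho(z))}$ to some element of $B_{z^-}$; choosing $\gamma_z\in G_{\lambda(z^-)}$ with $\gamma_z^{-1}(b_{(z,\rho(z))})^- = b_{(z^-,\rho(z^-))}$ is possible precisely because the relevant biset is left-free (sphere bisets are left-free) so the left $G_{\lambda(z^-)}$-action is simply transitive on each left orbit. With this choice the conjugation morphism $P_z \to P_{z^-}$, $g\mapsto (g^-)^{\gamma_z}$, is well-defined: $(g^-)^{\gamma_z}$ stabilizes $b_{(z^-,\rho(z^-))}\otimes\{\cdot\}$ because $g$ stabilizes $b_{(z,\rho(z))}\otimes\{\cdot\}$ and the edge intertwiner is compatible with the tensor decomposition. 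This makes $\mathfrak P$ a genuine graph of groups, $\gfB'$ a genuine $\mathfrak P$-$\gf$-graph of bisets, and $(\gfY,\mathfrak P)$ a genuine $\gfY$-$\mathfrak P$-graph of bisets; one checks the intertwiner axiom $\beta(hbg)=\gamma(h)\beta(b)\alpha(g)$ for each in turn, which is a direct computation from the definitions of the structure maps $g\mapsto g\gamma_z$ and $b\mapsto\gamma_z^{-1}b^-$.

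Finally I would verify~\eqref{eq:Lem:GrBisleftInjDecomp}. Both sides have underlying graph $\gfB$, the same map $\rho$ to $\gf$, and the same map $\lambda$ to $\gfY$ (for $(\gfY,\mathfrak P)$, $\lambda$ is the original $\lambda$; for $\gfB'$ it is $\one$; the composite is $\lambda$). At each vertex $z$, the biset of the tensor product is $\subscript{G_{\lambda(z)}}{G_{\lambda(z)}}_{P_z} \otimes_{P_z} (B'_z)_{G_{\rho(z)}} = G_{\lambda(z)}\otimes_{P_z} B'_z$, which is isomorphic to $B_z$ by the first step. At each edge, the edge intertwiner of the tensor product is the composite of $g\mapsto g\gamma_z$ and $b\mapsto\gamma_z^{-1}b^-$, which sends $g\otimes b$ to $g\gamma_z\otimes\gamma_z^{-1}b^- = g\otimes b^-$ after absorbing $\gamma_z$ across the $P_z$-balanced tensor — so it agrees with the original edge intertwiner $B_z\to B_{z^-}$ under the identifications just made. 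I expect the main obstacle to be purely notational: keeping the three graphs of bisets $(\gfY,\mathfrak P)$, $\gfB'$ and their tensor product consistently oriented, and checking that all the $\gamma_z$-twists cancel correctly across each edge so that the reassembled intertwiners match the originals rather than differing by an inner automorphism. Once the edge compatibility is checked at one generic edge, the rest is routine.
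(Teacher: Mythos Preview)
Your proposal is correct and follows the same approach as the paper: apply Lemma~\ref{Lem:leftInjDecomp} at each object and then observe that the definition of the tensor product of graphs of bisets reassembles the local factorizations into the global one. The paper's proof is in fact a one-liner (``follows directly from the construction and the definition of the tensor product''), and what you have written is precisely the unpacking of that sentence.

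One small correction: when you justify the existence of $\gamma_z$, you invoke left-freeness, but that is neither assumed in the proposition nor needed. The element $(b_{(z,\rho(z))})^-$ lives in $B_{z^-}\otimes_{G_{\rho(z^-)}}\{\cdot\}$, and since $B_{z^-}$ is assumed \emph{transitive}, this set is a single left $G_{\lambda(z^-)}$-orbit; that is what guarantees $\gamma_z$ exists. Left-freeness would give uniqueness of $\gamma_z$, which is not required here.
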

\begin{proof}
  Follows directly from the construction and the definition of the
  tensor product: $(\gfY,\mathfrak P) \otimes \gfB'$ is
  $\gfB$ as a graph with
  $\subscript{G_{\lambda(z)}} {G_{\lambda(z)}}_{{P_z}}\otimes B'_z= B_z$
  attached to $z\in (\gfY,\mathfrak P)$.
\end{proof}

\subsection{Small sphere maps}
The classification of small spheres can be directly seen at the
algebraic level. Consider a sphere biset $\subscript H B_G$,
multicurves $\CC,\DD$ in $G,H$ respectively such that $\DD$ is
contained in the $B$-lift of $\CC$.  Let $\gf,\gfY$ denote the tree of
groups decompositions of $G,H$ along $\CC,\DD$ respectively, and
consider the corresponding tree of bisets decomposition
$\subscript\gfY\gfB_\gf$ of $B$.

We express algebraically the notions of trivial, annular and essential
sphere vertices using the epic-monic factorization
from~\S\ref{ss:epic-monic}:
\begin{defn}[Algebraically trivial, annular, essential sphere vertices]
\label{defn:AlgSphClass}
  Let $\subscript\gfY\gfB_\gf$ be a sphere tree of bisets. A
  sphere vertex of $z\in\gfB_\gf$ is
  \begin{idescription}
  \item[trivial] if $B_z$ is of the form
    $G_{\lambda(z)}\otimes_{P_z} B'_z$ for a sphere
    $P_z$-$G_{\rho(z)}$-biset $B'_z$ and a subgroup
    $P_z\le G_{\lambda(z)}$ generated by a
    representative of a peripheral in $\pi_1(\gfY)$ or trivial
    conjugacy class, so that the sphere structure of $P_z$ is viewed
    accordingly as either that of $\pi_1(S^2)$ or as that of
    $\pi_1(S^2\setminus \{0,\infty\})$;
  \item[annular] if $B_z$ is of the form
    $G_{\lambda(z)}\otimes_{P_z} B'_z$ for a sphere
    $P_z$-$G_{\rho(z)}$-biset $B'_z$ and a subgroup
    $P_z\le G_{\lambda(z)}$ generated by a
    representative of a class in $\DD$, so that the sphere structure
    of $P_z$ is viewed as that of $\pi_1(S^2\setminus \{0,\infty\})$;
  \item[essential] if $B_z$ is a sphere biset.
  \end{idescription}

  Similarly, a curve vertex $e\in \gfB$ is
  \begin{idescription}
  \item[non-essential] if $B_e$ is of the form
    $G_{\lambda(e)}\otimes_{P_e} B'_e$ for a sphere
    $P_e$-$G_{\rho(e)}$-biset $B'_e$ and a subgroup $P_e\le
    G_{\lambda(e)}$ generated by a
    representative of a conjugacy class that is peripheral or trivial
    in $\pi_1(\gfY)$, so that the sphere structure of $P_e$ is
    viewed accordingly as that of $\pi_1(S^2)$ or as that of
    $\pi_1(S^2\setminus \{0,\infty\})$;
  \item[essential] if $B_e$ is a sphere biset.\qedhere
  \end{idescription}
\end{defn}

\noindent Observe that all these cases are exclusive; for example, in
the first, second and fourth cases the biset $B_z$ is not a sphere
biset.

\begin{lem}\label{lem:AlgClassOfBis}
  Let $f\colon(S^2,C,\DD)\to(S^2,A,\CC)$ be a sphere map with
  associated tree of bisets $\gfB(f)$. Then a small sphere
  vertex of $\gfB(f)$ is trivial, annular or essential if and
  only if the corresponding small sphere is respectively trivial,
  annular or essential, and a curve vertex of $\gfB(f)$ is
  essential or non-essential if and only if the corresponding curve is
  respectively essential or non-essential.
\end{lem}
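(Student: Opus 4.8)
The plan is to read off the algebraic type of a vertex $z$ of $\gfB(f)$ directly from the topology of the small sphere or curve $S_z$, using the explicit formula~\eqref{eq:dfn:B_z}: writing $i_z$ for $i\colon S_z\setminus f^{-1}(A)\to S_{\lambda(z)}\setminus C$ and $f_z$ for $f\colon S_z\setminus f^{-1}(A)\to S_{\rho(z)}\setminus A$, we have $B_z=B(i_z)^\vee\otimes B(f_z)$. Since $f$ restricts in~\eqref{eq:corr:f_i} to a covering away from $A$, the map $f_z$ is a covering, so $B(f_z)$ is a sphere biset by Lemma~\ref{lem:SphBisOfSphMap}, in particular left-free and right-transitive. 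The map $i_z$, on the other hand, is up to homotopy a composition of forgetting the marked points of $f^{-1}(A)\setminus C$ lying in $S_z$, collapsing each non-essential boundary curve of $S_z$ together with the disc it bounds, and squeezing the annuli between parallel essential curves; so $(i_z)_*$ forgets the loops around the suppressed marked points and sends the loops around the suppressed boundary curves to the corresponding peripheral classes (trivial, for curves enclosing no point of $C$) of $G_{\lambda(z)}=\pi_1(S_{\lambda(z)}\setminus C)$.

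First I would compute the left epic-monic factorization $B_z=G_{\lambda(z)}\otimes_{P_z}B'_z$ of Lemma~\ref{Lem:leftInjDecomp}. Tensoring~\eqref{eq:dfn:B_z} on the right with the trivial $G_{\rho(z)}$-set and using that $B(f_z)$ is right-transitive and left-free with $\pi_1(S_z\setminus f^{-1}(A))$ embedded in $G_{\rho(z)}$ via $(f_z)_*$, the $B(f_z)$-factor collapses to a single point carrying the trivial $\pi_1(S_z\setminus f^{-1}(A))$-action; hence $B_z\otimes_{G_{\rho(z)}}\{\cdot\}\cong B(i_z)^\vee\otimes_{\pi_1(S_z\setminus f^{-1}(A))}\{\cdot\}$ as left $G_{\lambda(z)}$-sets, and the stabilizer named in Lemma~\ref{Lem:leftInjDecomp} is a conjugate of the image $P_z=(i_z)_*\bigl(\pi_1(S_z\setminus f^{-1}(A))\bigr)\le G_{\lambda(z)}$, carrying the sphere structure induced from $i_z$. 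Thus the algebraic classification of Definition~\ref{defn:AlgSphClass} is governed entirely by this image, and the very same computation applies to curve vertices.

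Next I would run the case analysis; since both classifications partition the sphere (resp.\ curve) vertices into the same three (resp.\ two) mutually exclusive types, it is enough to prove the forward implications. If $z$, or a curve vertex $e$, is topologically trivial — respectively non-essential, hence (by the observation opening the present section) contractible or peripheral rel $C$ — then $S_z$ collapses rel $C$ onto a point $p$, $i_z$ factors through $p$, and so $P_z=1$ when $p\notin C$ while $P_z$ is generated by the peripheral class of $p$ when $p\in C$; either way $P_z$ is trivial or peripheral in $\pi_1(\gfY)$, which is the algebraic trivial (resp.\ non-essential) condition. If $z$ is topologically annular, $S_z$ collapses rel $C$ onto a curve of $\DD$ carried by the curve vertex $\lambda(z)$, so $(i_z)_*$ is onto $G_{\lambda(z)}\cong\Z$ and $P_z$ is generated by a representative of that class of $\DD$ — the algebraic annular condition. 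Finally, if $z$ (or $e$) is topologically essential, then $i_z$ suppresses only marked points and homotopically trivial pieces and maps $S_z$ onto $S_{\lambda(z)}$, so $(i_z)_*$ is onto, $P_z=G_{\lambda(z)}$, the epic-monic factorization is trivial, and $B_z=B'_z$; it remains to see that $B_z$ is a sphere biset, which holds because $B_z$ is the biset of the induced small sphere map $\widehat{f_z}\colon\widehat{S_z}\to\widehat{S_{\rho(z)}}$ (identifying $\widehat{S_z}$ with $S_{\lambda(z)}$ via Corollary~\ref{cor:unique essential}), hence a sphere biset by Lemma~\ref{lem:SphBisOfSphMap}; for an essential curve vertex $\widehat{f_z}$ is a power map $z\mapsto z^{\deg(f_e)}$, whose biset is the corresponding cyclic sphere biset.

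The main obstacle is the middle step: making rigorous the identification of the Lemma~\ref{Lem:leftInjDecomp} stabilizer with $(i_z)_*\bigl(\pi_1(S_z\setminus f^{-1}(A))\bigr)$. This forces one to describe $B(i_z)$ for the non-covering map $i_z$ — after the homotopy normalization above $i_z$ is of forgetful type, with $B(i_z)$ equal to $G_{\lambda(z)}$ acted on the left by $\pi_1(S_z\setminus f^{-1}(A))$ through $(i_z)_*$ — and then to transport the sphere structures carefully through the dual and the tensor product, keeping track of which punctures of $S_{\lambda(z)}$ arise from $C$, which from collapsed curves of $\DD$, and which from vanished non-essential curves. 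Once this is in place, and the combinatorial dichotomies of the preceding subsections (Lemma~\ref{lem:unique algebraic essential}, Corollary~\ref{cor:EssSphVert}, and the definitions of trivial, annular and essential small spheres) are invoked, the case analysis is routine.
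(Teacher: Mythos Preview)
Your proposal is correct and follows essentially the same approach as the paper: both identify the subgroup $P_z$ in the epic-monic factorization with the image $(i_z)_*\pi_1(S_z\setminus f^{-1}(A))$ and then read off the trichotomy case by case. The only minor variation is in the essential case, where the paper argues that $(i_z)_*$ is onto because $i^{-1}(S_{\lambda(z)})\setminus S_z$ consists of discs contractible rel~$C$, and then notes that since $G_{\lambda(z)}$ has at least three peripheral classes the factorization cannot be proper, whence $B_z$ is itself a sphere biset; your route via the small sphere map $\widehat{f_z}$ and Lemma~\ref{lem:SphBisOfSphMap} reaches the same conclusion slightly more explicitly.
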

\begin{proof}
  If $z\in \gfB$ is a trivial sphere vertex, then
  $P_z\coloneqq i_* \pi_1(S_z\setminus f^{-1}(A))$ is either trivial
  or generated by a representative of a peripheral conjugacy
  class. Decompose $i\colon S_z\to S_{\lambda(z)}$ as
  $S_z \overset{i}{\rightarrow} i(S_z)\hookrightarrow S_{\lambda(z)}$;
  so $B(i\colon S_z\to S_{\lambda(z)})^\vee$ factors as
  $G_{\lambda(z)}\otimes_{P_z} B''_z$ for a sphere biset
  $B''_z$. By~\eqref{eq:dfn:B_z}, $B_z$ is of the form
  $G_{\lambda(z)}\otimes_{P_z} B'_z$ for a sphere biset $B'_z$. If
  $z\in \gfB$ is an annular sphere vertex, then
  $P_z\coloneqq i_*\pi_1(S_z\setminus f^{-1}(A))$ is generated by a
  representative of a conjugacy class in $\DD$. As above, $B_z$ is of
  the form $G_{\lambda(z)}\otimes_P B'_z$.

  Suppose that $z\in \gfB$ is an essential sphere
  vertex. Observe
  $i_*\pi_1(S_z\setminus f^{-1}(A))= \pi_1(S_{\lambda(z)}\setminus C)$
  because $i^{-1}(S_{\lambda(z)})\setminus S_z$ consists of finitely
  many contractible rel $C$ discs. Since $G_{\lambda(z)}$ has at least
  three peripheral conjugacy classes, $B_z$ does not factor as
  $G_{\lambda(z)}\otimes_P B'$ for a subgroup $P\lneq G_{\lambda(z)}$
  and it follows that $B_z$ is a sphere biset.
  
  The characterization of essential curve vertices is verified
  similarly.
\end{proof}

Let $\mathscr S,\mathscr T,\mathscr U$ denote the collection of small
spheres in the decomposition of
$S^2\setminus\CC,S^2\setminus f^{-1}(\CC),S^2\setminus\DD$
respectively. By Corollary~\ref{cor:unique essential}, there is a
well-defined map, still written $f\colon\mathscr U\to\mathscr S$,
sending each small sphere $U_w\subset S^2\setminus\DD$ to the image by
$f$ of the unique essential sphere of $S^2\setminus f^{-1}(\CC)$
contained in $U_w$. To avoid cumbersome indices, we write
interchangeably $f(U_w)=S_v$ and $f(w)=v$, defining by the latter a
map from the vertex set of $\gfY$ to the vertex set of $\gf$.

\begin{lemdef}\label{lem:SmallMaps}
  The induced map $\widehat f\colon \widehat{U_w}\to \widehat{S_{f(w)}}$ is a
  sphere map, called a \emph{small sphere map} of $f$.
\end{lemdef}
\begin{proof}
  Let $T_z$ be the essential sphere above $U_w$. By construction,
  $i\colon T_z\to U_w$ is a homeomorphism onto its image, and
  components of $U_w\setminus i(T_z)$ are contractible.

  The space $\widehat{U_w}$ can equivalently be constructed by
  attaching a copy of the unit disk $\mathbb D$, along its boundary,
  to every $S^1$-boundary component. The disk is marked at its
  centre. In this manner, $U_w$ is seen as a subset of
  $\widehat{U_w}$.  We perform the same construction on $T_z$ and on
  $S_{f(w)}$.

  The map $f\colon T_z\to S_{f(w)}$ extends naturally to a branched
  covering $g\colon\widehat{T_z}\to\widehat{S_{f(w)}}$ with branched
  values in $A\cup\{\text{centres of disks}\}$: on $T_z$ it is defined
  as $f$, while on a disk $\mathbb D$ with boundary curve $C$ mapping
  by degree $d$ to $f(C)$ it is defined as $z^d\colon\mathbb
  D\to\mathbb D$.

  The map $i\colon T_z\to U_w$ extends naturally to a homeomorphism
  $j\colon\widehat{T_z}\to\widehat{U_w}$: on $T_z$ it is defined as
  $i$, while on disks $\mathbb D$ it is defined as the identity. We
  set $\widehat f=j^{-1}g$ and note that it is a branched covering.
\end{proof}

These definitions admit direct analogues in the algebraic setting of a
sphere tree of bisets $\subscript\gfY\gfB_\gf$. By
Lemma~\ref{lem:unique algebraic essential}, for every vertex
$w\in\gfY$ there is a unique essential vertex $z\in\gfB$
in $\lambda^{-1}(w)$, and $\rho(z)=v$ for some sphere vertex
$v\in\gf$. We define a map $B_*$ from the vertex set of $\gfY$
to the vertex set of $\gf$ by
\[B_*(w)\coloneqq v\text{ if $\lambda(z)=w$ and $\rho(z)=v$ and $z$ essential.}\]
We may also extend $B_*$ into a map from the geometric realization of
$\gfY$ to that of $\gf$. Recall that, for a graph $\gf$, its
geometric realization $\gf_0$ is the topological space
\[\gf_0=\gf\times[0,1]\,/\,\{(x,t)=(\bar x,1-t),(x^-,t)=(x,0)\,\forall x\in\gf,\forall t\in[0,1]\}.\]
The map
\[\lambda^{-1}\colon \{\text{sphere vertices of }\gfY\}\to
\{\text{essential sphere vertices of }\gfB\}
\]
extends into an essentially unique (up to isotopy rel sphere vertices)
tree embedding $\lambda^{-1}\colon \gfY_0\hookrightarrow
\gfB_0$. Indeed, any geodesic $\ell$ in $\gfY_0$ lifts under $\lambda$
into a unique geodesic traveling through essential curve vertices, see
Lemma~\ref{lem:PropOfEssCurves}. Composing with $\rho$, we obtain the
map $B_*\colon \gfY_0\to\gf_0$.

\begin{exple}\label{exple:MattOFConjPolyn}
  The map $B_*$ is closely related to the self-map of the ``Hubbard
  tree'' associated with a complex polynomial; we refer
  to~\cite{bartholdi-dudko:bc1}*{\S\ref{bc1:ss:hubbardtrees}} and the
  original references~\cites{douady-h:edpc1,douady-h:edpc2}.

  Consider a complex polynomial $p$ and let
  $g\coloneqq p\FM \overline p$ be the formal mating of $p$ with its
  complex conjugate. Let us denote by $p\colon \HT\selfmap$ the
  Hubbard tree of $p$; the Hubbard tree
  $\overline p\colon \overline \HT\selfmap $ of $\overline p$ is
  obtained from $p\colon \HT\selfmap$ by applying complex conjugation.

  For every edge $e\in \HT$ there is a simple closed curve $\gamma_e$
  intersecting $\HT$ once at $e$ and intersecting $\overline \HT$ once
  at $\overline e$ such that $\gamma_e$ follows external rays of $p$
  and $\overline p$ away from $e$ and $\overline e$. If the edge
  $e_i\in \HT$ covers $d_{i,j}$ times $e_j\in \HT$, then
  $g^{-1}(\gamma_{e_j})$ has exactly $d_{i,j}$ components isotopic to
  $\gamma_{e_i}$ and each of these components is a degree-one preimage
  of $\gamma_{e_j}$. Therefore,
  $\CC_\HT\coloneqq \{\gamma_e\colon e\in \HT\}$ is an invariant
  multicurve with Thurston matrix $T_{g,\CC_\HT}=(d_{i,j})$, an
  integer matrix; all the small maps in this decomposition are
  finite-order or monomial rational maps, so $\CC_\HT$ is the
  canonical obstruction of $g$.

  Let $\subscript\gfY\gfB_\gfY$ be the graph of
  bisets encoding the decomposition of $g$ relative to $\CC_\HT$, and
  let $\subscript\gf{\mathfrak T}_\gf$ be the graph of bisets associated
  with $p\colon \HT\selfmap$ as in
  \cite{bartholdi-dudko:bc1}*{\S\ref{bc1:ss:hubbardtrees}}. Sphere
  vertices of $\gfY$ are in bijection with vertices of $\HT$
  and with non-edge vertices of $\gf$. Curve vertices of $\gfY$
  are in bijection with edges of $\HT$. Essential sphere vertices of
  $\gfB$ are in bijection with essential vertices of $\HT^1$.

  We conclude that $B_*$, as a self-map of sphere vertices of $\gfY$,
  coincides with $p$, as a self-map of vertices of $\mathcal T$.

  At the graph level $\lambda,\rho \colon \gfB\to \gfY$
  are the same as $\lambda,\rho \colon \mathfrak T\to
  \gf$. Furthermore, there is a semiconjugacy
  $\pi\colon \subscript\gfY\gfB_\gfY\to
  \subscript\gf{\mathfrak T}_\gf$ given by identifying the northern and
  southern hemisphere. More precisely, consider a sphere vertex
  $y\in\gfY$. Its associated group $G_y$ maps to the
  corresponding group $G_{\pi(y)}=\Z/\ord(\pi(y))$ in $\gf$ by sending
  the peripheral generator in the northern and southern hemisphere to
  $1$ and $-1$ respectively and all other peripheral generators to
  $0$.

\end{exple}

\subsection{Refinement of sphere trees of groups}
Recall from~\cite{bartholdi-dudko:bc1}*{\S\ref{bc1:ss:vk}} that the
fundamental group of a graph of groups does not change under the
operations ``split an edge'' and ``add an edge''. Recall also that,
in a sphere tree of groups, a peripheral conjugacy class in a vertex
group is vacant if it intersects no image of an edge group.

If $\gf$ is a stable sphere tree of groups, then we adjust these
operations to respect the sphere structure as follows:
\begin{description}
\item[(1) split a curve vertex] choose a curve vertex $v\in \gf$. Let
  $e_0,\bar e_0$ and $e_1,\bar e_1$ be the pair of edges adjacent to
  $v$. Declare $v$ to be a sphere vertex add split $e_0,\bar e_0$ and
  $e_1,\bar e_1$ as follows. Add a curve vertex $\ell_0$ to $\gf$ and
  replace $e_0,\bar e_0$ by
  $e_{00},\bar e_{00},e_{01},\bar e_{01}$ with
  $e_{00}^-=e_0^-$, $e_{00}^+=\ell_0=e^-_{01}$, $e_{01}^+=e_0^+$; and make a similar operation for the pair $e_1,\bar e_1$: replace it by edges $e_{10},\bar e_{10},e_{11},\bar e_{11}$ and a new curve vertex $\ell_1$. Declare all groups to be
  equal to $G_v$ with the obvious maps between them;
\item[(2) add an edge] choose a vertex $v\in V$, and either trivial or
  a vacant peripheral class $\Gamma$ of $v$. Let $H$ be a (trivial or
  cyclic) subgroup of $G_v$ generated by a representative in
  $h\in \Gamma$. Add a new sphere vertex $w$ to $\gf$, add a new curve
  vertex $\ell$ to $\gf$, and add new edges $e_0,\overline e_0$ and
  $e_1,\overline e_1$ with $e_0^-=v$, $e_0^+=\ell$ and $e_0^-=\ell$,
  $e_1^+=w$. Define the new groups by
  $G_{e_0}=G_{\ell}=G_{e_1}=G_w=H$, with the obvious maps between
  them. The peripheral conjugacy classes of $H$ form either empty set in
  case $H$ is trivial or $\{h,h^{-1}\}$ in case $H$ is cyclic.
\end{description} 
A \emph{refinement} of a graph of groups $\gf$ is a graph of groups
obtained from $\gf$ by applying finitely many times the above
operations (1) and (2). An \emph{unstable sphere tree of groups} is a
refinement of a stable sphere tree of groups.

Let $\mathfrak P$ be a refinement of $\gf$. Define a graph map
$\lambda \colon \mathfrak P \to \gf$ by sending vertex
$v\in \mathfrak P$ to the vertex of $\gf$ from which $v$ was
refined. Define the tree of bisets $(\gf,\mathfrak P)$ to be
$ \mathfrak P$ as a graph with $\lambda$ as above,
$\rho\coloneqq \one$, and with
$B_z= \subscript{G_{\lambda(z)}} {G_{\lambda(z)}}_{G_{\rho(z)}}$ with natural
maps between them. Clearly, the fundamental biset of
$(\gf,\mathfrak P)$ is isomorphic to
$\subscript{\pi_1(\gf)}{\pi_1(\gf)}_{\pi_1(\gf)}$.

\begin{lem}\label{lem:StabOfVacClass}
  Let $\mathfrak P$ be a sphere refinement of a sphere tree of groups
  $\gf$. Then for any vacant peripheral class $\Gamma$ of $\gf$ there
  is a unique vacant peripheral class of $\Gamma'$ of $\mathfrak P$
  identified with $\Gamma$ via $(\gf,\mathfrak P)$.
\end{lem}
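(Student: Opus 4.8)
The plan is to induct on the number of elementary operations (1) and (2) needed to pass from $\gf$ to $\mathfrak P$, so that it suffices to treat a single operation, and then to reduce the whole statement to the observation that the canonical isomorphism $\pi_1(\mathfrak P)\cong\pi_1(\gf)$ carried by $(\gf,\mathfrak P)$ is an isomorphism of sphere groups. Indeed, in any (possibly unstable) sphere tree of groups the vacant peripheral classes are canonically identified with the peripheral conjugacy classes of the fundamental group (as recalled before Definition~\ref{def:multicurve}), and the fundamental biset of $(\gf,\mathfrak P)$ is isomorphic to $\subscript{\pi_1(\gf)}{\pi_1(\gf)}_{\pi_1(\gf)}$, hence amounts to an isomorphism $\pi_1(\mathfrak P)\cong\pi_1(\gf)$. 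Once one knows that this isomorphism matches up the peripheral structures, the bijection between vacant peripheral classes of $\mathfrak P$ and of $\gf$ --- and with it both the existence and the uniqueness of the class $\Gamma'$ corresponding to $\Gamma$ --- follows formally.

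For operation (1), splitting a curve vertex $v$, I would note that no vertex or edge group changes (all the new groups equal $G_v$) and $\pi_1$ is literally unchanged; the only effect is that $v$ becomes a sphere vertex whose two peripheral classes are now attached to the two edges joining it to the new curve vertices $\ell_0,\ell_1$, while each $\ell_i$ is a curve vertex with its two peripheral classes attached to its two adjacent edges. Hence no peripheral class changes its vacancy status, and the identity map on the (unchanged) vertex groups realizes the desired bijection.

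For operation (2), adding an edge at $v$ along a subgroup $H$ generated by $h\in\Gamma$: if $H$ is trivial there is nothing to check, since then no peripheral class is created or consumed. If $H=\langle h\rangle$ is cyclic with $h$ in a vacant peripheral class $\Gamma$ of $G_v$, then attaching $e_0$ along $H$ makes $\Gamma$ meet the image of an edge group, so $\Gamma$ is no longer vacant in $\mathfrak P$; on the other hand the new pendant vertex group $G_w=\langle h\rangle$ has exactly two peripheral classes, one attached to the edge joining $w$ to $\ell$, and the other --- call it $\Gamma'$ --- attached to no edge, hence vacant. All other vertex groups and all other edge attachments are untouched, so every vacant class distinct from $\Gamma$ is unaffected. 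Since the new pendant edges and the vertex $w$ are attached along the full group $H$, the van Kampen theorem gives $\pi_1(\mathfrak P)\cong\pi_1(\gf)$, and chasing $h$ through the edge monomorphisms into $G_v$ and into $G_w$ identifies $\Gamma'$ with $\Gamma$ under this isomorphism; this yields both halves of the claim.

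The step I expect to be the main obstacle is the bookkeeping inside operation (2): one must verify that the two peripheral classes of $G_w$ really split as claimed --- one consumed, one vacant --- and, more delicately, that the vacant one corresponds to $\Gamma$ and not to a conjugate or inverse of it, which comes down to keeping the orientation conventions for cyclic (curve) vertex groups consistent across all the edge maps. A secondary point to check, again a matter of convention rather than of substance, is that in operation (1) turning a curve vertex into a sphere vertex with only two peripheral classes introduces no vacant class --- which holds precisely because both those classes are consumed by the two freshly created edges.
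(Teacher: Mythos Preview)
Your proposal is correct and takes essentially the same approach as the paper: induct on elementary refinements and check that each of operations (1) and (2) preserves the set of vacant peripheral classes. The paper's proof is a single sentence to this effect (``these operations clearly respect the set of vacant peripheral classes''), whereas you have spelled out the bookkeeping in each case, including the point about operation (2) trading the consumed class $\Gamma$ at $v$ for the new vacant class $\Gamma'$ at the pendant vertex $w$; the orientation concern you flag is exactly the right thing to watch, and is handled by the convention that the two peripheral classes of a cyclic vertex group are $\{h\}$ and $\{h^{-1}\}$, with the edge consuming one and leaving the other.
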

\begin{proof}
  The tree of groups $\mathfrak P$ is obtained from $\gf$ via
  operations ``split a curve vertex'' and ``add an edge''; these
  operations clearly respect the set of vacant peripheral classes in a
  sphere tree of groups.
\end{proof}

\subsection{Sphere trees of bisets}
We give in this section a characterization of those trees of bisets
that come from sphere maps with invariant multicurve. Combined with
Theorem~\ref{thm:kameyama-mc}, this extends
Theorem~\ref{thm:dehn-nielsen-baer+} to branched coverings with
multicurves.

\begin{defn}\label{defn:SphTreeOfBis}
  A tree of bisets $\subscript\gfY\gfB_\gf$ is a
  \emph{sphere tree of bisets} if
  \begin{enumerate}\renewcommand\theenumi{ST\ensuremath{{}_\arabic{enumi}}}
  \item $\gfY$ and $\gf$ are stable sphere trees of
    groups; \label{cond:1:defn:SphTreeOfBis}
  \item $\gfB$ is left-free, see~
    \cite{bartholdi-dudko:bc1}*{Definition~\ref{bc1:dfn:FibrationGrBis}},
    and $B_z$ is transitive for all
    $z\in\gfB$; \label{cond:2:defn:SphTreeOfBis}
  \end{enumerate}
  write $\gfB= (\gfY,\mathfrak P) \otimes \gfB'$
  as in~\eqref{eq:Lem:GrBisleftInjDecomp};
  \begin{enumerate}\renewcommand\theenumi{ST\ensuremath{{}_3}}
  \item $\mathfrak P$ has a sphere structure and is a sphere
    refinement of $\gfY$, every biset in $\gfB'$ is a
    sphere biset.\label{cond:3:defn:SphTreeOfBis}\qedhere
  \end{enumerate}
\end{defn}

\begin{lem}
  The tree of bisets $\gfB(f)$ associated with a map
  $f\colon (S^2,C,\DD)\to (S^2,A,\CC)$ as in
  Definition~\ref{defn:gfofbisets} with $\DD=f^{-1}(\CC)$ rel $C$ is a
  sphere tree of bisets.
\end{lem}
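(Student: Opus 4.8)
The plan is to check the three conditions of Definition~\ref{defn:SphTreeOfBis} in turn, the first two being essentially formal and the third carrying all the content. Condition~\eqref{cond:1:defn:SphTreeOfBis} is immediate from Lemma~\ref{lem:MultToAlgMult}: $\gfY$ and $\gf$ are the sphere trees of groups of the multicurves $\DD$ on $(S^2,C)$ and $\CC$ on $(S^2,A)$, and these are \emph{stable} because the curves of a multicurve are non-trivial, non-peripheral and pairwise non-homotopic, so no small sphere can be an unmarked disk, a once-marked disk, or an annulus, whence every sphere vertex group has at least three peripheral conjugacy classes. For Condition~\eqref{cond:2:defn:SphTreeOfBis}: since the right leg $f$ of the correspondence~\eqref{eq:corr:f_i} is a covering, the associated tree of bisets is left-free by~\cite{bartholdi-dudko:bc1}; and each vertex biset is transitive, since writing $B_z=B(i\restrict S_z)^\vee\otimes B(f\restrict S_z)$ as in~\eqref{eq:dfn:B_z}, the right factor is the biset of a covering of connected spaces (hence right-transitive) while the left factor carries the left-regular action of $G_{\lambda(z)}$, so any two elements of $B_z$ are related by a left and a right translation.

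For Condition~\eqref{cond:3:defn:SphTreeOfBis} I would first fix the left epic-monic factorization $\gfB(f)=(\gfY,\mathfrak P)\otimes\gfB'$ supplied by Proposition~\ref{Lem:GrBisleftInjDecomp}; there the group at a vertex $z$ of $\mathfrak P$ is $P_z=i_*\pi_1(S_z\setminus f^{-1}(A))\le G_{\lambda(z)}$ and $B_z=G_{\lambda(z)}\otimes_{P_z}B'_z$. That every $B'_z$ is a sphere biset is then exactly the content of Lemma~\ref{lem:AlgClassOfBis} read through Definition~\ref{defn:AlgSphClass}: each sphere vertex of $\gfB(f)$ is trivial, annular or essential and each curve vertex is essential or non-essential, matching the geometric type of the corresponding piece of $S^2\setminus f^{-1}(\CC)$, and in every one of these cases the proof of Lemma~\ref{lem:AlgClassOfBis} puts $B_z$ in the form $G_{\lambda(z)}\otimes_{P_z}B'_z$ with $B'_z$ a sphere biset (for essential vertices $P_z=G_{\lambda(z)}$ and $B'_z=B_z$ is itself a sphere biset).

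It then remains to see that $\mathfrak P$ is a sphere refinement of $\gfY$. The idea is to identify $\mathfrak P$ with the pushforward under the monotone map $i$ of the (possibly unstable) sphere tree of groups of $(S^2,f^{-1}(A),f^{-1}(\CC))$, and to read off how $\gfY$ is recovered from it. Following~\eqref{eq:corr:f_i}, $i$ forgets the points of $f^{-1}(A)\setminus C$, forgets the non-essential curves of $f^{-1}(\CC)$, and squeezes each maximal annulus lying between parallel essential curves. Reversing these operations at the level of trees of groups: over each curve vertex of $\gfY$ there sits in $\mathfrak P$ a chain of essential curve vertices alternating with annular sphere vertices, all with the same cyclic group (by Lemma~\ref{lem:unique algebraic essential} and Corollary~\ref{cor:EssSphVert}), produced by iterating ``split a curve vertex''; and each non-essential curve vertex together with the trivial sphere vertices beyond it forms a pendant subtree produced by iterating ``add an edge''. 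The hypothesis $\DD=f^{-1}(\CC)$ rel $C$ enters precisely here: it forces every non-essential curve of $f^{-1}(\CC)$ to be trivial or peripheral rel $C$, so that these ``add an edge'' moves attach a trivial or cyclic edge group at a vacant peripheral class — the class of a point of $C$, or of an as-yet-unused puncture of a sphere added at an earlier step. Hence $\mathfrak P$ is obtained from $\gfY$ by finitely many legal sphere-refinement moves, and with the previous paragraph this establishes~\eqref{cond:3:defn:SphTreeOfBis}.

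I expect this last step to be the main obstacle: making the dictionary between the topological operations performed by $i$ and the combinatorial moves ``split a curve vertex''/``add an edge'' fully precise, and in particular verifying that every ``add an edge'' is carried out at a genuinely vacant peripheral class and that the moves can be ordered consistently. This should come down to an induction organized by the monotonicity and essential-to-essential properties of $\lambda\colon\gfB(f)\to\gfY$ established in Lemma~\ref{lem:PropOfLambda}, Lemma~\ref{lem:unique algebraic essential} and Corollaries~\ref{cor:EssSphVert}, \ref{cor:unique essential}, using the description of $i$ after~\eqref{eq:corr:f_i} and the fact (from the proof of Lemma~\ref{lem:PropOfEssCurves}) that a non-essential curve bounds a disk contractible rel $C$.
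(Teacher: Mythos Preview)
Your proposal is correct and follows essentially the same route as the paper: invoke the definitions for~\eqref{cond:1:defn:SphTreeOfBis}, use the covering property of the right leg of the correspondence for~\eqref{cond:2:defn:SphTreeOfBis}, and derive~\eqref{cond:3:defn:SphTreeOfBis} from Lemma~\ref{lem:AlgClassOfBis} via the epic-monic factorization.

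The one notable difference is emphasis. The paper's proof of~\eqref{cond:3:defn:SphTreeOfBis} is a single sentence: it records that each $B'_z$ is a sphere biset and that $P_z$ is trivial or generated by a peripheral representative, and treats the refinement structure of $\mathfrak P$ over $\gfY$ as implicit in this. You, by contrast, try to make the refinement explicit by translating the topological effect of $i$ (forgetting non-essential curves, collapsing annuli between parallel essential curves) into a sequence of ``split a curve vertex'' and ``add an edge'' moves, organized via Lemmas~\ref{lem:PropOfLambda}, \ref{lem:unique algebraic essential} and Corollary~\ref{cor:EssSphVert}. This is not a different argument so much as a filling-in of what the paper leaves to the reader; your dictionary between geometric pieces and refinement moves is the right one, and the hypothesis $\DD=f^{-1}(\CC)$ rel $C$ is used exactly where you say (forcing non-essential curves to be trivial or peripheral, so that the attached edge groups are legitimate). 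Your concern in the last paragraph about ordering the moves and vacancy of the relevant peripheral classes is well placed but not a genuine obstacle: process the pendant subtrees of trivial vertices from the inside out, and at each step the peripheral class you attach to is vacant precisely because the curve in question has not yet been touched.
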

\begin{proof}
  Condition~\eqref{cond:1:defn:SphTreeOfBis} is
  immediate. Condition~\eqref{cond:2:defn:SphTreeOfBis} follows from
  \cite{bartholdi-dudko:bc1}*{Proposition~\ref{bc1:prop:DecompFibrMaps}}
  and
  Lemma~\ref{lem:SphBisOfSphMap}. Condition~\eqref{cond:3:defn:SphTreeOfBis}
  follows from Lemma~\ref{lem:AlgClassOfBis}: for every $z\in
  \gfB$ in the decomposition $B_z=G_{\lambda(z)}\otimes_{P_z}
  B'_z$ as in Lemma~\ref{Lem:leftInjDecomp} the biset $B'_z$ is sphere
  with the group $P_z$ either trivial or generated by a peripheral
  conjugacy class in $B_z$.
\end{proof}

\begin{thm}\label{thm:sphere tree gives sphere biset}
  The fundamental biset of a sphere tree of bisets
  $\subscript\gfY\gfB_\gf$ is a sphere biset.
\end{thm}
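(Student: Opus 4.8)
The plan is to verify the three conditions of Definition~\ref{dfn:SphBis} for the fundamental biset $B\coloneqq\pi_1(\gfB)$. It is an $H$-$G$-biset with $H\coloneqq\pi_1(\gfY)$ and $G\coloneqq\pi_1(\gf)$, and both $H$ and $G$ are sphere groups because $\gfY,\gf$ are sphere trees of groups and the vacant peripheral classes of a sphere tree of groups are precisely the peripheral classes of its fundamental group.

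First I would reduce to $\gfB'$. Using~\eqref{cond:3:defn:SphTreeOfBis}, write $\gfB=(\gfY,\mathfrak P)\otimes\gfB'$ as in~\eqref{eq:Lem:GrBisleftInjDecomp}, where $\mathfrak P$ is a sphere refinement of $\gfY$ and every vertex and edge biset of $\gfB'$ is a sphere biset. By Lemma~\ref{lem:StabOfVacClass} the graph of bisets $(\gfY,\mathfrak P)$ canonically identifies $\pi_1(\mathfrak P)$ with $H$ as sphere groups, and, by the computation preceding that lemma, its fundamental biset is the trivial biset $\subscript H H_H$. Since forming fundamental bisets commutes with tensoring graphs of bisets (see~\cite{bartholdi-dudko:bc1}), we get $B\cong\subscript H H_H\otimes_H\pi_1(\gfB')\cong\pi_1(\gfB')$. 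Hence it suffices to prove that $B'\coloneqq\pi_1(\gfB')$ is a sphere biset, and now all pieces of $\gfB'$ are sphere bisets.

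Next I would realize $\gfB'$ geometrically. By Lemma~\ref{lem:MultToAlgMult} realize $\gf$ and $\mathfrak P$ as spheres with multicurves $(S^2,A,\CC)$ and $(S^2,C,\DD)$ respectively (where $\DD$ may carry some inessential curves coming from the refinement). By Theorem~\ref{thm:dehn-nielsen-baer+}, realize each sphere biset $B'_z$ at a sphere vertex $z\in\gfB'$ by a sphere map between the corresponding small spheres, and each cyclic edge biset by a power map $\zeta\mapsto\zeta^{d}$ of circles. Because the biset intertwiners of $\gfB'$ identify, along each edge, the two restrictions of the neighbouring vertex bisets, these local sphere maps agree on the gluing annuli and patch to a single branched covering $f\colon(S^2,C,\DD)\to(S^2,A,\CC)$ whose tree of bisets in the sense of Definition~\ref{defn:gfofbisets} is congruent to $\gfB'$. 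Then $B(f)\cong\pi_1(\gfB')=B'$ by the van Kampen theorem for bisets~\cite{bartholdi-dudko:bc1}*{Theorem~\ref{bc1:thm:vankampenbis}}, and Lemma~\ref{lem:SphBisOfSphMap} shows that $B(f)$, hence $B$, is a sphere biset.

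The hard part will be the gluing step: producing the global branched covering $f$ and checking that its decomposition is exactly $\gfB'$ forces one to match the degree and peripheral data along every curve vertex of $\gfB'$ with the edge data of $\mathfrak P$ and $\gf$, and in particular to reconcile non-essential curve vertices with the trivial and annular sphere vertices that the refinement $\mathfrak P$ has made visible. A purely algebraic route would instead verify Definition~\ref{dfn:SphBis} directly on $B'=\pi_1(\gfB')$: condition~\eqref{cond:1:dfn:SphBis} holds because $\pi_1(\gfB')$ is left-free (all its constituent bisets are) and right-transitive (right-transitivity propagates through the tree, since a tensor product of right-transitive bisets is right-transitive); while conditions~\eqref{cond:2:dfn:SphBis} and~\eqref{cond:3:dfn:SphBis} reduce, by multiplicativity of the multiset of lifts under tensor products (as exploited in the proof of Lemma~\ref{lem:CorrOfSphBis2}), to additivity of the Euler characteristic and of the peripheral lift data along the tree $\gfB'$. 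That bookkeeping --- treating curve vertices and the degenerate trivial and annular pieces correctly --- is again the delicate point, whichever route one takes.
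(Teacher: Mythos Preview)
Your primary route --- realize each $B'_z$ as a sphere map via Theorem~\ref{thm:dehn-nielsen-baer+} and glue --- is not how the paper proceeds. In fact the paper derives geometric realization of a sphere tree of bisets (Corollary~\ref{cor:dehn-nielsen-baer+-mc}) \emph{from} the present theorem, so your gluing argument, if made rigorous, would amount to an independent proof of that corollary. The sentence ``these local sphere maps \dots\ patch to a single branched covering'' hides the real work. Granted, the glued domain is a sphere because $\mathfrak P$ is a sphere tree of groups, and each $f_z$ is locally a branched covering; but for the glued $f$ to be a global branched covering you still need the total degree $\sum_{z\in\rho^{-1}(v)}\deg(B'_z)$ to be independent of the sphere vertex $v\in\gf$, and the boundary degrees to match across every curve vertex. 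Extracting that consistency from axioms~\eqref{cond:1:defn:SphTreeOfBis}--\eqref{cond:3:defn:SphTreeOfBis} is precisely the bookkeeping you defer, and it is no easier done geometrically than algebraically.

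The paper instead carries out your ``purely algebraic route'' directly. Condition~\eqref{cond:1:dfn:SphBis} is as you say. For~\eqref{cond:3:dfn:SphBis} it uses the factorization $\gfB=(\gfY,\mathfrak P)\otimes\gfB'$: a vacant peripheral class of $H\cong\pi_1(\mathfrak P)$ sits in some $P_z$ by Lemma~\ref{lem:StabOfVacClass}, and since $B'_z$ is a sphere biset that class appears exactly once among the lifts of the vacant classes of $G_{\rho(z)}$. The step you did not supply is~\eqref{cond:2:dfn:SphBis}, and here the paper gives an explicit count rather than an appeal to ``additivity of the Euler characteristic.'' With $n$ sphere vertices in $\gf$, $m$ in $\gfB$, and $d$ the degree of $\pi_1(\gfB)$: summing $2(d-\#\rho^{-1}(x))$ over sphere vertices $x\in\gf$ gives $2(dn-m)$ critical cycles attached to \emph{all} peripheral classes of the $G_x$; summing the same quantity over the $n-1$ curve vertices of $\gf$ gives $2(d(n-1)-(m-1))$ critical cycles attached to the \emph{non-vacant} ones, since each curve vertex determines exactly two such classes, one in each neighbouring sphere. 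Subtracting leaves $2d-2$, which is~\eqref{eq:riemannhurwitz}. This inclusion--exclusion over curve versus sphere vertices is the content your sketch gestures at but does not provide.
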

\begin{proof}
  Let $\gfB$ be a sphere tree of bisets. We check that
  $\pi_1(\gfB)$ satisfies Definition~\ref{dfn:SphBis}.

  By~\cite{bartholdi-dudko:bc1}*{Corollary~\ref{bc1:cor:HatBisLeftFree}},
  the biset $\pi_1(\gfB)$ is left free; and since each biset
  $B_z$ is right transitive so is $\pi_1(\gfB)$. This
  is~\eqref{cond:1:dfn:SphBis}.

  Next we check~\eqref{cond:3:dfn:SphBis}. Write
  $\gfB= (\gfY,\mathfrak P) \otimes \gfB'$ as
  in~\eqref{eq:Lem:GrBisleftInjDecomp}. By
  Lemma~\ref{lem:StabOfVacClass} the set of vacant conjugacy classes
  in $\gf$ and in $\mathfrak P$ are naturally identified. Let $\Gamma$
  be a vacant conjugacy class in $\mathfrak P$, say in $P_z$. Since
  $B'_z$ is a sphere biset, $\Gamma$ appears exactly once in the
  multiset of all lifts of all vacant peripheral conjugacy classes of
  $G_{\rho(z)}$.

  We check~\eqref{cond:3:dfn:SphBis} by a counting argument. A cycle
  $c$ as in~\eqref{eq:riemannhurwitz} is \emph{critical} of
  \emph{multiplicity} $\text{length}(c)-1$ if
  $\text{length}(c)>1$. Let $d$ be the degree (i.e.\ the number of
  left orbits) of $\pi_1(\gfB)$. We show that the number,
  counting multiplicity, of all critical cycles associated with vacant
  peripheral classes in $\gfB$ is $2d-2$. Let $n$ be the number
  of sphere vertices in $\gf$ and $m$ be the number of sphere vertices
  in $\gfB$. Then the number of curve vertices in $\gf$ and in
  $\gfB$ is $n-1$ and $m-1$ respectively. We also note
  $n\le m< d n$.

  Consider a sphere vertex $x\in \gf$. Then the number of critical
  cycles in $\bigsqcup_{z\in \rho^{-1}(x)}B_z$ that are associated with
  all peripheral classes of $G_x$ is equal to
  $2 (d- \#(\rho^{-1}(x)))$. By taking the sum over all sphere
  vertices $x\in \gf$ we see that the number of all critical cycles in
  $\gfB$ that are associated with all peripheral classes of all
  sphere groups in $\gf$ is equal to $2(d n-m)$. From this number we
  will now subtract the number of whose critical cycle that are
  associated with non-vacant peripheral classes.

  Consider a curve vertex $x\in \gf$; in two neighbouring sphere
  vertices, say $v$ and $v'$, there are exactly two non-vacant
  peripheral conjugacy classes, say $\Gamma$ and $\Gamma'$, defined by
  embedding $G_x$ into $G_v$ and into $G_{v'}$. The number of critical
  cycles in $\sqcup_{z\in \rho^{-1}(x)}B_z$ that are associated with
  $\Gamma$ and $\Gamma'$ is equal to $2 (d- \#(\rho^{-1}(x)))$. Taking
  the sum over all curve vertices $x\in \gf$, we get
  $2(d(n-1)-m+1)$. Finally,
  \[2(d n-m) -2(d(n-1)-m+1) = 2d-2,\] 
  which is~\eqref{cond:3:dfn:SphBis}.
\end{proof}

\begin{cor}\label{cor:dehn-nielsen-baer+-mc}
  Let $\subscript\gfY\gfB_\gf$ be a sphere tree of
  bisets. Write $\gfY$ as the sphere tree of groups associated
  with $(S^2,C,\DD)$ and associate similarly $\gf$ with
  $(S^2,A,\CC)$. Then there exists a sphere map
  $f\colon(S^2,C,\DD)\to(S^2,A,\CC)$, unique up to isotopy rel
  $C\cup\DD$, whose graph of bisets is isomorphic to $\gfB$.
\end{cor}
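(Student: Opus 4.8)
The plan is to reduce Corollary~\ref{cor:dehn-nielsen-baer+-mc} to Theorem~\ref{thm:dehn-nielsen-baer+} via Theorem~\ref{thm:sphere tree gives sphere biset}, and then to upgrade the resulting isotopy of sphere maps to an isotopy \emph{rel $C\cup\DD$} using Theorem~\ref{thm:kameyama-mc}. First I would invoke Theorem~\ref{thm:sphere tree gives sphere biset}: the fundamental biset $B\coloneqq\pi_1(\gfB)$ is a sphere $\pi_1(\gfY)$-$\pi_1(\gf)$-biset. Since $\gfY$ and $\gf$ are the sphere trees of groups of $(S^2,C,\DD)$ and $(S^2,A,\CC)$ respectively, we have canonical identifications $\pi_1(\gfY)\cong\pi_1(S^2\setminus C,\dagger)=:H$ and $\pi_1(\gf)\cong\pi_1(S^2\setminus A,*)=:G$, under which the peripheral conjugacy classes match the punctures $C$ and $A$, and the non-vacant peripheral classes of the vertex groups encode the multicurves $\DD$ and $\CC$. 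By Theorem~\ref{thm:dehn-nielsen-baer+}, there exists a sphere map $f\colon(S^2,C)\to(S^2,A)$, unique up to isotopy rel $C$, with $B(f)\cong B$.

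Next I would argue that this $f$ respects the multicurves, i.e.\ that $\DD$ is isotopic to a subset of $f^{-1}(\CC)$, so that the construction of Definition~\ref{defn:gfofbisets} applies. The multiset of lifts of the conjugacy classes in $\CC$ under $B(f)\cong B=\pi_1(\gfB)$ is computed, edge by edge, from the bisets $B_z$; since each essential curve vertex of $\gfB'$ carries a sphere biset whose portrait records exactly one copy of the corresponding conjugacy class in $\DD$ (and non-essential curve vertices contribute only peripheral or trivial classes), the conjugacy classes in $\DD$ appear among the lifts of $\CC$. Hence $\DD\subseteq f^{-1}(\CC)$ rel $C$, so $f\colon(S^2,C,\DD)\to(S^2,A,\CC)$ is a sphere map in the sense of~\S\ref{ss:invariant mc}, and its tree of bisets $\gfB(f)$ is defined.

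Now I would compare $\gfB(f)$ with the given $\gfB$. Both are sphere trees of bisets over the same pair $\gfY,\gf$ with the same underlying data (the graph morphisms $\lambda,\rho$ are uniquely determined by Lemma~\ref{lem:UniqSphGrBis} and Definition~\ref{defn:gfofbisets}), and both have fundamental biset isomorphic to $B$ via an isomorphism compatible with the dynamics on the multicurves — for $\gfB(f)$ this is the content of~\cite{bartholdi-dudko:bc1}*{Theorem~\ref{bc1:thm:vankampenbis}}, and for $\gfB$ it is the isomorphism $\pi_1(\gfB)\cong B$. By the uniqueness part of Theorem~\ref{thm:kameyama-mc} (applied with $g=f$ and the two trees of bisets $\gfB(f)$ and $\gfB$), the trees of bisets $\gfB(f)$ and $\gfB$ are isomorphic as $\gfY$-$\gf$-trees of bisets; equivalently, running Theorem~\ref{thm:kameyama-mc} in the other direction, any sphere map $f'$ realizing $\gfB$ is isotopic to $f$ rel $A\cup\CC$ (and correspondingly rel $C\cup\DD$ on the source). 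The uniqueness rel $C\cup\DD$ is then exactly Theorem~\ref{thm:kameyama-mc}: two sphere maps with isomorphic sphere trees of bisets are isotopic by an isotopy preserving the multicurves.

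The main obstacle I anticipate is the bookkeeping in the second paragraph: checking that the abstract combinatorial data of $\gfB$ — namely the portraits of the sphere bisets sitting at the essential vertices of $\gfB'$, together with the epic-monic factorization $\gfB=(\gfY,\mathfrak P)\otimes\gfB'$ — translate, under $B(f)\cong\pi_1(\gfB)$, precisely into the statement that the geometric preimage $f^{-1}(\CC)$ contains a subcurve isotopic to $\DD$ with the right degrees. This is a matter of tracking how the multiset of lifts behaves under the tensor decomposition (as in Lemma~\ref{lem:CorrOfSphBis2}, where lifts through a composite biset multiply degrees), but it is where the sphere-biset conditions~\eqref{cond:2:dfn:SphBis} and~\eqref{cond:3:dfn:SphBis} of the vertex bisets, guaranteed by Definition~\ref{defn:SphTreeOfBis}, do the essential work; everything else is a formal consequence of Theorems~\ref{thm:dehn-nielsen-baer+},~\ref{thm:kameyama-mc} and~\ref{thm:sphere tree gives sphere biset}.
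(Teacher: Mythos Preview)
Your approach is essentially the paper's: invoke Theorem~\ref{thm:sphere tree gives sphere biset} to get a sphere biset, Theorem~\ref{thm:dehn-nielsen-baer+} to realize it by a sphere map $f$, and Theorem~\ref{thm:kameyama-mc} for uniqueness rel $C\cup\DD$; your additional paragraph verifying $\DD\subseteq f^{-1}(\CC)$ is a reasonable detail that the paper's one-line proof leaves implicit. One small correction: the isomorphism $\gfB(f)\cong\gfB$ does not follow from ``Theorem~\ref{thm:kameyama-mc} with $g=f$'' (that instance is vacuous); it comes from the uniqueness of the sphere-tree-of-bisets decomposition of a given biset along given multicurves, which is Lemma~\ref{lem:UniqSphGrBis} together with the observation in the proof of Theorem~\ref{thm:kameyama-mc} that the tree of bisets is constructed purely from $B(f)$ and the conjugacy classes $\CC,\DD$ --- so any sphere tree of bisets with fundamental biset $B(f)$ over the same $\gfY,\gf$ must agree with $\gfB(f)$.
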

\begin{proof}
  Follows from Theorems~\ref{thm:sphere tree gives sphere biset},
  \ref{thm:dehn-nielsen-baer+} and~\ref{thm:kameyama-mc}.
\end{proof}

\subsection{The dynamical situation}\label{ss:return maps}
In the dynamical situation in which $A=C$ and $\CC=\DD$, we have a
Thurston map $f\colon(S^2,A,\CC)\selfmap$ with an invariant multicurve
$\CC$. We then also have dynamics $f\colon\mathscr S\selfmap$ on the
set $\mathscr S$ of small spheres of $S^2\setminus\CC$. We define the
set of \emph{return maps}, also called \emph{small Thurston maps},
\[R(f,A,\CC)\coloneqq\{f^e\colon S\selfmap\;\mid S\in\mathscr S,\;f^e(S)=S\text{ with $e$ minimal}\}.\]

Similarly algebraically, in the dynamical situation in which $G=H$ and
$\CC=\DD$, we have a biset $\subscript G B_G$ with an invariant multicurve
$\CC$. Let us denote by $\subscript\gf\gfB_\gf$ the tree of bisets
decomposition of $B$, and by $V$ the set of sphere vertices of $\gf$,
with essential biset $B_v$ corresponding to $v\in V$, as given by
Lemma~\ref{lem:unique algebraic essential}. We define the set of
\emph{return bisets}, also called \emph{small Thurston bisets},
\begin{multline}\label{eq:def:SmallThBis}
  R(\gfB)= R(B,\CC)\\
  \coloneqq \{B_v\otimes B_{B_*(v)}\otimes\cdots\otimes
  B_{B_*^{e-1}(v)}\selfmap\;\mid v\in V,\;B_*^e(v)=v\text{ with $e$ minimal}\},
\end{multline}
namely the bisets obtained by following a cycle in the tree of bisets
into which $B$ decomposes. By Lemma~\ref{lem:AlgClassOfBis} all bisets
in $R(B,\CC)$ are sphere bisets.

\subsection{Distinguished conjugacy classes}\label{ss:distinguished cc}
Let $(S^2,A)$ be a marked sphere, and let $\CC$ be a multicurve. It is
often convenient to treat similarly the conjugacy classes describing
elements of $A$ and of $\CC$. Consider $\gf$ a sphere tree of
groups. The set of \emph{distinguished conjugacy classes} $X$ of $\gf$
is the set of all peripheral conjugacy classes of all vertex groups
with two conjugacy classes identified if they are related by an
edge. Equivalently, if $\gf$ is the tree of groups decomposition of
$(S^2,A,\CC)$, then $X$ is in bijection with $A\cup \CC$. Note that
the set of geometric edges of $\gf$ is naturally a subset of the
distinguished conjugacy classes.

The following algorithm determines when a bijection between (possibly
peripheral) multicurves is induced by a homeomorphism between the
underlying spheres:
\begin{algo}\label{algo:MarkCl:Prom}
  \textsc{Given} $\gf$ and $\gfY$ two sphere trees of groups
  with distinguished conjugacy classes $X$ and $Y$, and given a
  bijection $h\colon X\to Y$,\\
  \textsc{Decide} whether $h\colon X\to Y$ promotes to a conjugator
  $\gfI$ from $\gf$ to $\gfY$, and if so
  \textsc{construct} $\gfI$ \textsc{as follows:}\\\upshape
  \begin{enumerate}
  \item Check whether $h$ restricts to an isomorphism between the geometric
    edge sets of $\gf$ and $\gfY$. If not return \texttt{fail}.
  \item Check whether the isomorphism between the edge sets promotes
    into a graph-isomorphism $h\colon\gf\to\gfY$. If
    not, return \texttt{fail}.
  \item For a sphere vertex $v\in \gf$ let $\Gamma_v\subset X$ be the set
    of peripheral conjugacy classes of $G_v$. Check whether
    $h(\Gamma_v)=\Gamma_{h(v)}$ for all vertices $v\in \gf$. If
    not, return \texttt{fail}.
  \item For every sphere vertex $v\in\gf$ choose an isomorphism
    $\phi(v)\colon G_v\to G_{h(v)}$ compatible with $h\colon \Gamma_v\to
    \Gamma_{h(v)}$. For every edge $e\in \gfY$ choose an isomorphism
    $\phi(e)\colon G_e\to G_{h(e)}$.
  \item Set $\gfI\coloneqq \gf$, $\lambda\coloneqq\one$,
    $\rho \coloneqq h$, $B_z\coloneqq G_{h(z)}$ for all $z\in \gfI$;
    the left action of $G_z$ on $B_z$ is via $\phi(z)$, the right
    action is natural, and the inclusion of $B_e$ into $B_{e^-}$ is
    via $1\mapsto g$, for any $g\in G_{h(z)}$ with
    $()^g\circ\phi(e)=\phi(e^-)$, if we identify $G_e$ with a subgroup
    of $G_{e^-}$.
  \item Return $\gfI$.
  \end{enumerate}
\end{algo}

Let $\subscript\gf\gfB_\gf$ and $\subscript\gfY\gfC_\gfY$ be two
sphere trees of bisets. If $\gfI$ be a conjugator between $\gf$ and
$\gfY$, then $\gfI \otimes \gfC \otimes \gfI^{\vee}$ is an $\gf$-graph
of bisets. Recall from~\S\ref{ss:mcb} the notations $\Mod(\gf)$ and
$M(\gfB)$.  The following two algorithms determine, given two trees of
bisets that stabilize the same multicurve, whether they are twists of
one another by mapping classes respecting the multicurve.

The first algorithm expresses, if possible, a sphere tree of bisets as
a left multiple of another one. It relies on the following
observation. Suppose that we want to construct a biprincipal sphere
tree of bisets $\mathfrak T$ over a sphere tree of groups $\gf$, and
that its vertex and edge bisets are already given, so that only the
intertwiners $T_e\to T_{e^-}$ need be specified at edges of
$\mathfrak T$. Consider an edge pair $\{e,\overline e\}$. The bisets
$T_e$ and $T_{e^\pm}$ may be identified with the groups $G_e$ and
$G_{e^\pm}$ respectively; then the intertwiners $T_e\to T_{e^\pm}$ are
defined by $1\mapsto g_\pm$ for some $g_\pm\in G_{e^\pm}$ which
commutes with the image of $G_e$. Since the $G_{e^\pm}$ are free while
$G_e$ is Abelian, the element $g_\pm$ may be chosen arbitrarily in
$(G_e)^\pm$. All resulting choices of maps $T_e\to T_{e^\pm}$ are
called \emph{legal intertwiners}. In fact, writing $g_\pm=(h_\pm)^\pm$
for some $h_\pm\in G_e$, the isomorphism class of $\mathfrak T$
depends only on $h_+(h_-)^{-1}$.
\begin{algo}\label{algo:chech:InMCB:pre}
  \textsc{Given} ${}_\gf\mathfrak B_\gf$ and ${}_\gf\mathfrak C_\gf$
  two sphere $\gf$-trees of bisets,\\
  \textsc{Decide} whether there is an  $\mathfrak M\in \Mod(\gf)$ such that 
  $\mathfrak C\cong \mathfrak M \otimes \mathfrak B $, and if so
  \textsc{construct} $\mathfrak M$ and the isomorphism \textsc{as follows:}\\\upshape
  \begin{enumerate} 
  \item Try to construct an isomorphism of trees
    $h\colon \mathfrak B\to\mathfrak C$ mapping essential vertices
    into essential vertices such that
    $\lambda_\gfB (z)=\lambda_\gfC \circ h(z)$ and
    $\rho_\gfB(z)=\rho_\gfC \circ h(z)$ for every $z\in\gfB$. If $h$
    does not exist, then return \texttt{fail}. Otherwise $h$ is
    uniquely defined.
  \item Choose an essential sphere vertex $v\in \gfB$. Try to find
    $M_{\lambda(v)} \in \Mod(G_{\lambda(v)})$ such that
    $M_{\lambda(v)} \otimes B_v \cong C_{h(v)}$. If such
    $M_{\lambda(v)}$ do not exist, return \texttt{fail}. Otherwise set
    $S\coloneqq \{v\}$ and run over pairs $\{e,\overline e\}\not\subset S$
    but with $e^- \in S$
    Steps~\ref{algo:chech:InMCB:St3}--\ref{algo:chech:InMCB:St6}.
  \item If $\lambda(e)\notin \lambda(S)$, then do the
    following. (Note that in this case $\lambda(e)$ is an edge in
    $\gf$.) Add $e$ and $\overline e$ to $S$, let $M_{\lambda(e)}$ be
    a principal $\Z$-biset, choose any legal intertwiner
    $()^-\colon M_{\lambda(e)}\to M_{\lambda(e)^-}$, and define
    $M_{\overline e}$ similarly.\label{algo:chech:InMCB:St3}
  \item Try to find an isomorphism between
    $M_{\lambda(e)}\otimes B_{e}$ and $C_{h(e)}$ compatible with the
    intertwiner maps. If it does not exist, return \texttt{fail}.
  \item If $e^+$ is not an essential sphere vertex, then do the
    following. If $\lambda(e^+)\notin S$, then (in this case
    $\lambda(e^+)$ is a curve vertex) choose a biprincipal $\Z$-biset
    $M_{\lambda(e^+)}$, choose a legal intertwiner from
    $M_{\lambda(e)}$ to $M_{{\lambda(e^+)}}$, and add $e^+$ to
    $S$. Try to find an isomorphism between
    $M_{\lambda(e^+)}\otimes B_{e^+}$ and $C_{h(e^+)}$ that is
    compatible with the isomorphism between
    $M_{\lambda(e)}\otimes B_{e}$ and $C_{h(e)}$ via the
    intertwiner maps. If it does not exist, return
    \texttt{fail}.
  \item If $e^+$ is an essential sphere vertex, then do the
    following. Try to find $M_{\lambda(z)} \in \Mod(G_{\lambda(z)})$
    such that $M_{\lambda(z)} \otimes B_z \cong C_{h(z)}$. If such
    $M_{\lambda(z)}$ do not exist, return \texttt{fail}. Try to find a
    legal intertwiner $()^-\colon M_{\lambda(e)}\to M_{\lambda(e^+)}$
    such that the isomorphism between $M_{\lambda(e)}\otimes B_{e}$
    and $C_{h(e)}$ is compatible with the isomorphism between
    $M_{\lambda(e)}\otimes B_{e}$ and $C_{h(e)}$ via the
    intertwiner maps. If no such
    intertwiner exists, return
    \texttt{fail}. Add $e^+$ to $S$. \label{algo:chech:InMCB:St6}
  \item Return the principal sphere tree of bisets $\mathfrak M$ and
    the isomorphism between $\gfC$ and $\mathfrak M \otimes \gfB$
    constructed via $h$.
  \end{enumerate}
\end{algo}

\begin{algo}\label{algo:chech:InMCB}
  \textsc{Given} $\subscript \gf\gfB_\gf$ and $\subscript \gf\gfC_\gf$
  two sphere $\gf$-trees of bisets,\\
  \textsc{Decide} whether $\gfC\in M(\gfB)$, and if so
  \textsc{construct} $\mathfrak M, \mathfrak N\in \Mod(\gf)$
  such that $\gfC\cong \mathfrak M \otimes \gfB \otimes
  \mathfrak N$ \textsc{as follows:}\\\upshape
  \begin{enumerate}
  \item Follow Algorithm~\ref{algo:compute M(f,C,A)} to compute a
    basis of the mapping class biset $M(\gfB)$.
  \item For each $\mathfrak N$ in the basis, do the following. Run
    Algorithm~\ref{algo:chech:InMCB:pre} on $\gfC$ and
    $\gfB\otimes\mathfrak N$. If there exists
    $\mathfrak M\in\Mod(\gf)$ with
    $\gfC\cong\mathfrak M\otimes(\mathfrak B\otimes\mathfrak N)$,
    return $(\mathfrak M,\mathfrak N)$.
  \item Return \texttt{fail}.
  \end{enumerate}
\end{algo}

\section{Extensions of bisets}
This section explains how algorithmic problems on bisets, and in
particular, conjugacy and computation of centralizers can be carried
from subbisets to an extension.

We place ourselves in an abstract setting of group actions. Since a
$G$-$H$-biset is nothing more than a $(G\times H)$-set under the rule
$(g,h)\cdot b=g\cdot b\cdot h^{-1}$, we consider left $G$-sets in this
section.

Let $G$ be a group, let $\subscript G B$ be a left $G$-set, and let
$\phi\colon H\to G$ be a group homomorphism. The following two
decision problems for $(\phi,\subscript G B)$ will be of importance in this
section:
\begin{description}
\item[The orbit problem] Given $b_1,b_2\in B$, does there exist $h\in H$
  with $h^\phi\cdot b_1=b_2$? If so, find one.
\item[The stabilizer problem] Given $b\in B$, compute
  $H_b\coloneqq\{h\in H\mid h^\phi\cdot b=b\}$.
\end{description}
Typically, $G$ and $H$ will have fixed generating sets, and $B$ will
be transitive, hence of the form $G b_0$ for some $b_0\in B$ fixed
once and for all.  We make this more precise as follows: we are given
a class $\Omega$ of groups, thought of as ``computable'' groups. They
should be finitely generated, and have solvable word problem, i.e.\
for each $G\in\Omega$ there is an algorithm that, given $g,h\in G$ as
words in a generating set for $G$, determines whether or not $g=h$. We
assume $G$ and $H$ belong to $\Omega$.  A subgroup $K$ of a computable
group $G$ is \emph{computable} if $K$ is finitely generated and has
solvable membership problem (i.e.\ there is an algorithm that, given
$g\in G$, decides whether $g\in K$). A subgroup $L$ of a computable
group $G$ is is \emph{sub-computable} if there is a computable
subgroup $K\le G$ and a computable homomorphism $K\to A$ to an Abelian
group such that $L=\ker(K\to A)$.  (It follows from the definition
that $L$ also has solvable membership problem because it is decidable
if $h\in H$ is in $\ker(H\to A)$.)

We say that the orbit problem is \emph{solvable} if there is an
algorithm that, upon receiving $b_1,b_2\in B$ as input, answers
whether there exists an $h\in H$ with $h^\phi b_1=b_2$, and if so
produces one. The elements $b_1,b_2$ will be given as $g_1b_0,g_2b_0$
for words $g_1,g_2$ in the generating set of $G$, and $h$, if it
exists, will be returned as a word in the generators of $H$.

We say that the stabilizer problem is \emph{solvable}, respectively
\emph{sub-solvable}, if there is an algorithm that, upon receiving
$b\in B$ as input, computes the stabilizer $H_b$ as a computable,
respectively sub-computable subgroup of $H$.

Note that the orbit problem could be reduced to the stabilizer problem
as soon as the class $\Omega$ is closed under taking products and
finite extensions. More precisely, consider an orbit problem $\exists?
h\in H:\,h^\phi\cdot b_1=b_2$ on a $G$-set $B$. Consider the wreath
product $G'\coloneqq G\wr2\perm$ acting naturally on $B\times B$, with
$G\times G$ acting co\"ordinatewise on $B\times B$ and $2\perm$ acting
by permuting the factors of $B\times B$. Define $\phi'\colon
H'\coloneqq H\wr2\perm\to G'$ naturally. If the stabilizer
$H'_{(b_1,b_2)}$ of $(b_1,b_2)$ is computable in $H'$, then original
orbit problem is solvable: $b_1$ and $b_2$ are in the same orbit if
and only if some generator of $H'_{(b_1,b_2)}$ permutes both copies of
$B$; and a witness $h$ to the orbit problem is easily obtained from
this generator. We elected not to make use of this reduction, treating
the orbit problem as an illustrative step.

\subsection{Extensions of \boldmath $G$-sets}
Let $\subscript G B$ be a left $G$-set and let $N$ be a normal subgroup of
$G$. We have a short exact sequence of groups
\begin{equation}
\label{eq:ExtOfGr} 1\longrightarrow N\longrightarrow G \overset\pi\longrightarrow G/N\longrightarrow 1.
\end{equation}

The $N$-set $\subscript NB$ is defined to be $B$ as a set, with the
restricted action of $N$.

Let us denote by $N\backslash(\subscript G B)$ the set of connected
components of $\subscript NB$; namely, its space of $N$-orbits:
\[N\backslash(\subscript G B)\coloneqq B/\{b_1\sim b_2 \text{ if and only if $b_1=n b_2$ for some
}n\in N\}.
\]
We denote by
$\pi\colon\subscript G B \twoheadrightarrow N\backslash(\subscript G
B)$ the induced quotient map. The action of $G/N$ on
$N\backslash(\subscript G B)$ is given by
\[g^\pi\cdot b^\pi=(g b)^\pi.
\]
It is straightforward to verify that $N\backslash(\subscript G B)$ is
a $G/N$-set.  We will call the sequence
\begin{equation}\label{eq:ExtOfG_Sets}
  \subscript NB\hookrightarrow\subscript G B\overset\pi\twoheadrightarrow N\backslash(\subscript G B)
\end{equation}
an \textit{extension of $G$-sets}.

\subsection{Orbit and stabilizer problems in \boldmath $G$-sets}
We consider a class $\Omega$ of ``computable'' groups as above. The
main step in our program to reduce decision problems on extensions
to decision problems on kernel and quotient is contained in the
following
\begin{algo}\label{algo:RedConjAndEqPr}
  Let $\subscript G B$ be a $G$-set, let $N$ be a central subgroup of $G$, let
  \[N\overset\iota\hookrightarrow G \overset\pi\twoheadrightarrow
    G/N\qquad\text{and}\qquad \subscript NB\hookrightarrow\subscript G B
    \overset\pi\twoheadrightarrow N\backslash(\subscript G B)
  \]
  be the corresponding extensions of groups and sets respectively, and
  let $\phi\colon H\to G$ be a group homomorphism. Assume that $H,N,G$
  all belong to $\Omega$, and that the orbit and stabilizer problems
  are solvable in $(\iota,\subscript G B)$.
  
  \textsc{Given} 
  \begin{list}{---}{\leftmargin=0pt \itemindent=1.5em}
  \item elements $b_1,b_2\in B$ such that $b_1^\pi$ and $b_2^\pi$ are
    in the same orbit of $N\backslash(\subscript G B)$;
  \item an element $h'\in H$ such that $b_1^\pi=(h'^\phi b_2)^\pi$;
  \item the stabilizer of $b_1^\pi$ as a computable subgroup of $H$
    that belongs to $\Omega$,
  \end{list}
  \textsc{Decide} whether $b_1,b_2$ belong to the same orbit of
  $\subscript{H^\phi}{B}$, if yes, \textsc{Find} an element $h\in H$ such that $b_1=h^\phi b_2$, and
  \newline \textsc{Compute} their stabilizers as kernels of maps
  $\chi\colon\tilde H\to A$
  \textsc{as follows:}\upshape
  
  \begin{enumerate}
  \item Compute an element $a'\in N$ such that $b_1=a'(h')^\phi b_2$,
    using the assumption that the orbit problem is solvable in
    $(\iota,\subscript G B)$.\label{RCAPR:step:1}
  \item Compute the stabilizer
    $H_{b_1^\pi}=\{h\in H\mid (h^\phi b_1)^\pi=b_1^\pi\}$ of $b_1^\pi$
    and the stabilizer $N_{b_1}=\{a\in N\mid a b_1=b_1\}$ of $b_1$.
  \item Compute the natural homomorphism\label{RCAPR:step:3}
    \[\chi\colon H_{b_1^\pi}\to N/N_{b_1}\cong N G_{b_1}/G_{b_1}.\]
  \item By computing in the abelian group $N/N_{b_1}$, check whether
    there exists $h\in H_{b_1^\pi}$ with $h^\chi=a' N_{b_1}$. If not,
    $b_1$ and $b_2$ are not in the same orbit. If yes, then
    $b_1=(h h')^\phi b_2$ and return $h h'$.
  \item Return the stabilizer of $b_1$ as the kernel of $\chi$. 
  \end{enumerate}.
\end{algo}
\begin{proof}[Correctness of the algorithm]
  At step~\eqref{RCAPR:step:1} the equality
  $b_1^\pi=((h'^\phi)b_2)^\pi$ is lifted to $\subscript G B$. It
  remains to check whether $a'$ may be chosen in $H^\phi$.

  As step~\eqref{RCAPR:step:3}, the map $\chi$ is computable because
  the quotient $N/N_{b_1}$ is computable as a quotient of finitely
  generated abelian groups, and for each $h\in H_{b_1^\pi}$ there is a
  computable $a\in N$ with $h^\phi b_1=ab_1$, for which
  $h^\chi=a N_{b_1}$.
\end{proof}

\begin{cor}
 If the orbit and stabilizer problems are
  solvable in $(\phi\pi,N\backslash(\subscript G B))$ and in $(\iota,\subscript G B)$,
  then they are sub-solvable in $(\phi,B)$.
\end{cor}

We remark that the stabilizer cannot be better written than as kernel
of map to some abelian group; in particular, it can be infinitely
generated, and therefore not directly describable by itself,
see~\S\ref{ss:complicated centralizer}.

\subsection{Extensions of left-free bisets}
A $G_1$-$G_2$-biset is a $(G_1\times G_2)$-set with a fixed
decomposition of $G_1\times G_2$ as a product of $G_1$ and $G_2$.
Namely, if $\subscript{G_1}B_{G_2}$ is a biset, then the \textit{associated
  $(G_1\times G_2)$-set} $\subscript{G_1\times G_2}B$ is defined by
\begin{equation}\label{eq:GsetsVsBisets}
  (g_1,g_2)\cdot b=g_1 b g_2^{-1}.
\end{equation}
In the inverse direction, if $G_1\times G_2$ acts on $B$, then the
$G_1$- and $G_2$-actions on $B$ commute and~\eqref{eq:GsetsVsBisets}
defines a $G_1$-$G_2$-biset structure on $B$.

If $N_1$, $N_2$ are normal subgroups of $G_1$, $G_2$ respectively,
then the $N_1$-$N_2$-biset $\subscript{N_1}B_{N_2}$ and the quotient
$G_1/N_1$-$G_2/N_2$-biset $N_1\backslash(\subscript{G_1}B_{G_2})/N_2$ are
defined as for $G$-sets using~\eqref{eq:GsetsVsBisets}.

\begin{defn}[Extensions of bisets]\label{defn:BisExt}
  Let $\subscript{G_1}B_{G_2}$ be a $G_1$-$G_2$-biset and let $N_1,N_2$ be
  normal subgroups of $G_1$ and $G_2$ respectively, so that for
  $i=1,2$ we have short exact sequences
  \begin{equation}\label{eq:ExtGr12}
    1\longrightarrow N_i\longrightarrow G_i\overset\pi\longrightarrow Q_i\longrightarrow 1.
  \end{equation}
  If the quotient $Q_1$-$Q_2$-biset $N_1\backslash B/N_2$, consisting
  of connected components of $\subscript{N_1}B_{N_2}$, is left-free, then the
  sequence
  \begin{equation}\label{eq:BisetExt}
    \subscript{N_1}B_{N_2}\hookrightarrow\subscript{G_1}B_{G_2}
    \overset\pi\twoheadrightarrow\subscript{Q_1}(N_1\backslash B/N_2)_{Q_2}
  \end{equation}
  is called an \emph{extension of left-free bisets}.
\end{defn}

\noindent The following statement follows directly from
Definition~\ref{defn:BisExt}.
\begin{prop}\label{prop:ExtOfBis}\pushQED{\qed}
  Let $\subscript{G_1}B_{G_2}$ be a left-free $G_1$-$G_2$-biset and
  let $N_1,N_2$ be normal subgroups of $G_1,G_2$ respectively.  Then
  the sequence~\eqref{eq:BisetExt} is an extension of left-free bisets
  if and only if the following property holds:
  \[\text{if $g_1\in G_1$, $n_2\in N_2$, $b\in B$, and $g_1b n_2=b$,
      then $g_1\in N_1$.}\qedhere\]
\end{prop}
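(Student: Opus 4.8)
The plan is to unwind Definition~\ref{defn:BisExt}: by definition the sequence~\eqref{eq:BisetExt} is an extension of left-free bisets exactly when the quotient $Q_1$-$Q_2$-biset $\bar B\coloneqq N_1\backslash B/N_2$ is left-free, so the proposition reduces to characterizing freeness of the left $Q_1$-action on $\bar B$. First I would dispose of the two innocuous terms of the sequence: $\subscript{G_1}B_{G_2}$ is left-free by hypothesis, and $\subscript{N_1}B_{N_2}$ inherits left-freeness because the restriction of a free action to a subgroup is again free; so only $\bar B$ is at issue.

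Next I would set up $\bar B$ concretely. Under the dictionary~\eqref{eq:GsetsVsBisets} the connected components of $\subscript{N_1}B_{N_2}$ are the sets $[b]\coloneqq N_1 b N_2$ for $b\in B$, and the left $Q_1$-action is $\bar g_1\cdot[b]=[g_1 b]$ for $\bar g_1=g_1 N_1$; this is well-defined precisely because $N_1\triangleleft G_1$ and the $G_1$- and $G_2$-actions on $B$ commute. Thus left-freeness of $\bar B$ says: $[g_1b]=[b]$ implies $g_1\in N_1$, and $[g_1b]=[b]$ unwinds to the existence of $n_1\in N_1$ and $n_2\in N_2$ with $g_1 b=n_1 b n_2$.

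The heart of the argument is then a two-line translation in each direction. Assume $\bar B$ is left-free and suppose $g_1 b n_2=b$ with $g_1\in G_1$, $n_2\in N_2$; then $g_1 b=b n_2^{-1}\in N_1 b N_2$, so $[g_1 b]=[b]$ and left-freeness gives $g_1\in N_1$, which is the displayed property. Conversely, assume the property and suppose $[g_1 b]=[b]$; writing $g_1 b=n_1 b n_2$ we get $(n_1^{-1}g_1)\,b\,n_2=b$ with $n_1^{-1}g_1\in G_1$ and $n_2\in N_2$, so the property yields $n_1^{-1}g_1\in N_1$, hence $g_1\in N_1$; thus the left $Q_1$-action is free and~\eqref{eq:BisetExt} is an extension of left-free bisets.

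I do not anticipate any genuine obstacle. The only point that needs a moment's care is the well-definedness of the left $Q_1$-action on the double cosets $[b]$ — which is exactly where normality of $N_1$ and commutation of the $G_1$- and $G_2$-actions enter — together with keeping straight which side of $b$ each group multiplies. Everything else is formal bookkeeping, consistent with the remark preceding the statement that it follows directly from Definition~\ref{defn:BisExt}.
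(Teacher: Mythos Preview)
Your proposal is correct and matches the paper's approach: the paper simply asserts that the statement ``follows directly from Definition~\ref{defn:BisExt}'' and gives no further proof, and your argument is precisely the natural unwinding of that definition. One harmless slip: from $g_1 b=n_1 b n_2$ you should get $(n_1^{-1}g_1)\,b\,n_2^{-1}=b$ rather than $(n_1^{-1}g_1)\,b\,n_2=b$, but since $n_2^{-1}\in N_2$ this does not affect the conclusion.
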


\noindent Let $\subscript G B_G$ be a $G$-biset. We are interested in
the following decision problems in $B$:
\begin{description}
\item[The conjugacy problem] Given $b_1,b_2\in B$, does there exist $g\in
  G$ with $g\cdot b_1=b_2\cdot g$? If so, find one.
\item[The centralizer problem] Given $b\in B$, compute
  \[Z(b)\coloneqq\{g\in G\mid g\cdot b=b\cdot g\}.\]
\end{description}
To solve these problems, we remark that conjugacy and centralizer
problems in bisets are special cases of the orbit and stabilizer
problems in $G$-sets, using \eqref{eq:GsetsVsBisets} with $H=G$ and
$\phi\colon H\to G\times G$ given by $g^\phi=(g,g)$. We will use the
same conventions for conjugacy and stabilizer problems in bisets as
for orbit and stabilizer problems in $G$-sets.

\begin{exple}
  A natural source of examples of biset extensions appears as
  follows. Consider for $i=1,2$ path-connected topological bundles
\begin{equation}\label{eq:bundle}
  \begin{tikzcd}
    F_i\arrow[hook]{r} & E_i\arrow[->>]{d}{p_i}\\
    & X_i
  \end{tikzcd}
\end{equation}
with path-connected fibres; fix basepoints $*_i\in F_i\subseteq E_i$,
so $F_i=p_i^{-1}(p_i(*_i))$. Consider the fundamental groups
$N_i=\pi_1(F_i,*_i)$, $G_i=\pi_1(E_i,*_i)$ and
$Q_i=\pi_1(X_i,p_i(*_i))$. Then we have short exact sequences of
groups $N_i\hookrightarrow G_i\twoheadrightarrow Q_i$ provided that the connecting homomorphisms $\beta_i\colon \pi_2(X_i)\to N_i$ are trivial (alternatively we may define $N_i $ as $\pi_1(F_i,*_i)/\beta_i( \pi_2(X_i))$). Let there also
be given a bundle partially-defined covering map
$(f\colon E_1\dashrightarrow E_2,g\colon X_1\dashrightarrow X_2)$,
with $p_2\circ f=g\circ p_1$. Recall (as in~\eqref{eq:Dfn:SphBis})
\begin{equation}\label{eq:biset}
  B(f)=\{\beta\colon[0,1]\to E_1\mid\beta(0)=*_1,\,f(\beta(1))=*_2\}\,/\,{\approx}.
\end{equation}
We then have a short exact sequence of bisets
\[\subscript{N_1}B(f)_{N_2}\hookrightarrow\subscript{G_1}B(f)_{G_2}\twoheadrightarrow{Q_1}(N_1\backslash
  B(f)/N_2)_{Q_2},
\]
which are all left-free by Proposition~\ref{prop:ExtOfBis}, since 
for every loop in a fibre $F_2$ its $f$-lifts are all confined to
fibres. Furthermore,
\begin{lem}Suppose that $p_1^{-1}(y)\cap f^{-1}(p^{-1}_2(f(y)))$ has a single path connected component. Then the natural map $\beta\mapsto p_1\circ\beta$ defines a
  $Q_1$-$Q_2$-biset isomorphism
  \[N_1\backslash B(f)/N_2\overset\cong\longrightarrow B(g).\]
\end{lem}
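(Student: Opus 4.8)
The plan is to show that the assignment $\Phi\colon\beta\mapsto p_1\circ\beta$ is a well-defined morphism $B(f)\to B(g)$ of bisets with respect to the quotient homomorphisms $G_1\twoheadrightarrow Q_1$ and $G_2\twoheadrightarrow Q_2$. Since $N_1\le\ker(G_1\to Q_1)$ and $N_2\le\ker(G_2\to Q_2)$ act trivially on the image, $\Phi$ is constant on $N_1$-$N_2$-double orbits and therefore descends to a $Q_1$-$Q_2$-biset morphism $\bar\Phi\colon N_1\backslash B(f)/N_2\to B(g)$; it then remains to prove that $\bar\Phi$ is a bijection, since a bijective biset morphism is automatically an isomorphism.

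I would first dispatch the formal points. Well-definedness: if $\beta\in B(f)$ then $p_1(\beta(0))=p_1(*_1)$ and $g(p_1(\beta(1)))=p_2(f(\beta(1)))=p_2(*_2)$, so $p_1\circ\beta$ represents an element of $B(g)$ depending only on the class of $\beta$. Equivariance: for $g_1\in G_1$ one has $\Phi(g_1\beta)=(p_1\circ g_1)\#(p_1\circ\beta)=\overline{g_1}\,\Phi(\beta)$ with $\overline{g_1}$ the image of $g_1$ in $Q_1$; and for $g_2\in G_2$, writing $\widetilde{g_2}$ for the $f$-lift of $g_2$ starting at $\beta(1)$, the path $p_1\circ\widetilde{g_2}$ starts at $p_1(\beta(1))=\Phi(\beta)(1)$ and satisfies $g\circ(p_1\circ\widetilde{g_2})=p_2\circ g_2$, so by uniqueness of lifts through the covering $g$ it is the $g$-lift of $\overline{g_2}$, whence $\Phi(\beta\cdot g_2)=\Phi(\beta)\cdot\overline{g_2}$. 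For surjectivity of $\bar\Phi$ — which I expect will not use the hypothesis — I would take $\gamma\in B(g)$, lift it through the fibration $p_1$ to a path $\beta_0$ with $\beta_0(0)=*_1$ and $p_1\circ\beta_0=\gamma$, observe that $f(\beta_0(1))\in p_2^{-1}(p_2(*_2))=F_2$, join $f(\beta_0(1))$ to $*_2$ by a path $\delta$ in the path-connected fibre $F_2$, $f$-lift $\delta$ to $\widetilde\delta$ starting at $\beta_0(1)$, and set $\beta=\beta_0\#\widetilde\delta$; then $\beta\in B(f)$ and $p_1\circ\widetilde\delta$ is constant (being a path in the discrete set $g^{-1}(p_2(*_2))$), so $\Phi(\beta)\approx\gamma$.

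The hard part is injectivity, and it is here that the hypothesis enters. Given $\beta_1,\beta_2\in B(f)$ with $\Phi(\beta_1)\approx\Phi(\beta_2)$, the homotopy fixes the endpoints, which range over the discrete set $g^{-1}(p_2(*_2))$; hence $p_1(\beta_1(1))=p_1(\beta_2(1))=:y$, and both $\beta_1(1)$ and $\beta_2(1)$ lie in $p_1^{-1}(y)\cap f^{-1}(p_2^{-1}(p_2(*_2)))$ — exactly the set the hypothesis declares path-connected. I would pick a path $\sigma$ inside it from $\beta_1(1)$ to $\beta_2(1)$; then $p_1\circ\sigma$ is constant and $f\circ\sigma$ is a loop in $F_2$ based at $*_2$, defining an element $n_2\in N_2$, and since $\sigma$ is itself the $f$-lift of $f\circ\sigma$ starting at $\beta_1(1)$ we get $\beta_1\cdot n_2=\beta_1\#\sigma$ in $B(f)$. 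Now $\beta_1\#\sigma$ and $\beta_2$ are paths from $*_1$ to $\beta_2(1)$ with $p_1\circ(\beta_1\#\sigma)=\Phi(\beta_1)\#(\text{constant})\approx\Phi(\beta_2)=p_1\circ\beta_2$, so $(\beta_1\#\sigma)\#\beta_2^{-1}$ is a loop at $*_1$ whose $p_1$-image is null-homotopic, i.e.\ an element $n_1\in\ker(G_1\to Q_1)=N_1$, and $\beta_1\#\sigma\approx n_1\cdot\beta_2$. Consequently $\beta_2=n_1^{-1}\cdot\beta_1\cdot n_2$, so $\beta_1$ and $\beta_2$ have the same image in $N_1\backslash B(f)/N_2$.

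The main obstacle is precisely this injectivity step: the stated hypothesis is exactly what allows one to connect $\beta_1(1)$ to $\beta_2(1)$ within a single $p_1$-fibre by a path that $f$-projects into $F_2$; without it, distinct sheets of $f$ lying over one $p_1$-fibre produce elements of $B(f)$ with equal image in $B(g)$ but lying in different $N_1$-$N_2$ double cosets, and $\bar\Phi$ fails to be injective. A further point to keep in mind throughout is the partial-definedness of $f$: all the lifts appearing above remain inside $\operatorname{dom}(f)=p_1^{-1}(\operatorname{dom}(g))$, and this is where the fibrewise-covering nature of the bundle map $f$ is used; this bookkeeping is routine and I would relegate it to a remark.
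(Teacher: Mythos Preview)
Your proof is correct and follows the same route as the paper's: show that $\beta\mapsto p_1\circ\beta$ is a biset epimorphism, then use the path-connectedness hypothesis to produce $n_2\in N_2$ adjusting the endpoint of $\beta_1$ to that of $\beta_2$, and finally recognise the remaining difference as an element of $N_1$. The paper's argument is a four-sentence sketch of exactly this; you have simply filled in the details (well-definedness, equivariance, the explicit surjectivity argument via lifting through $p_1$ and then through $f$) that the paper dismisses with ``Clearly''.
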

\begin{proof}
  Clearly, $\beta\mapsto p_1\circ\beta$ defines an epimorphism; we
  need to show that is is also injective.  Consider
  $\beta,\beta'\in B(f)$ with $p_1\circ\beta\approx
  p_1\circ\beta'$. Then $\beta,\beta'$ end at the same fibre $F'$ of
  $p_1\colon E_1 \twoheadrightarrow X_1$. We may choose $n_2\in N_2$
  such that $\beta=\beta' n_2$. Finally there exists an $n_1\in N_1$
  such that $n_1\beta n_2=\beta'$ (because
  $p_1\circ\beta\approx p_1\circ\beta'$).
\end{proof}
\end{exple}

\section{Mapping class groups and mapping class bisets}
First, we recall basic properties of mapping class groups (for more
details, see~\cite{farb-margalit:mcg} and~\cite{hemion:homeos}) and
the solution of the conjugacy problem in mapping class groups.

\subsection{Mapping class groups}
Consider a marked sphere $(S^2,A)$, with $A$ the set of marked points,
and a multicurve $\CC\subset(S^2,A)$, see~\S\ref{ss:multicurves}. Let
\begin{itemize}
\item $\Mod(S^2,A)$ be the pure mapping class group of $(S^2,A)$; it
  is the set of homeomorphisms $h\colon S^2\selfmap$ which
  fix $A$, considered up to isotopy rel $A$;
\item $\Mod(S^2,A,\CC)$ be the maximal subgroup of $\Mod(S^2,A)$
  that fixes each curve of $\CC$ up to isotopy;
\item $\Mod[e](S^2,A,\CC)$ be the subgroup of $\Mod(S^2,A,\CC)$
  generated by Dehn twists around $\CC$;
\item $\Mod[v](S^2,A,\CC)$ be the pure mapping class group of
  $(S^2,A)\setminus \CC$.
\end{itemize}
Their meaning is the following: $\Mod$ is the classical mapping class
group, fixing a marked set and possibly a multicurve.  We remark that
$\Mod(S^2,A)$ could equally well have been defined as \emph{homotopy}
classes of homeomorphisms, see~\cite{epstein:isotopies}*{\S6}.
$\Mod[e]$ is the group of mapping classes that act only on edges of
the decomposition of $S^2$ along $\CC$. Similarly, $\Mod[v]$ is the
group of mapping classes that act disjointedly on vertices of the
decomposition of $S^2$ along $\CC$.

It is known~\cite{farb-margalit:mcg}*{Lemma~3.11} that $\Mod[e](S^2,A,\CC)$
is isomorphic to $\Z^{\#\CC}$ and is in the center of
$\Mod(S^2,A,\CC)$. Furthermore, there is a central exact sequence of groups
\begin{equation}\label{eq:Mp_Gr_seq}
  1\longrightarrow\Mod[e](S^2,A,\CC)\overset \iota\longrightarrow
  \Mod(S^2,A,\CC)\overset\pi\longrightarrow\Mod[v](S^2,A,\CC)\longrightarrow1
\end{equation}
which is easily seen to be split,
see~\cite{farb-margalit:mcg}*{Proposition~3.20}.

\begin{thm}[Nielsen~\cite{nielsen:surfaces}, Thurston~\cite{thurston:surfaces}]\label{thm:nielsen-thurston}
  Every homeomorphism $f\colon(S^2,A)\selfmap$ may be isotoped to a
  homeomorphism $g\colon(S^2,A)\selfmap$ that either
  \begin{itemize}
  \item is periodic;
  \item preserves a multicurve on $(S^2,A)$ (in this case, $g$ is
    called \emph{reducible}); or
  \item is pseudo-Anosov.
  \end{itemize}
  A pseudo-Anosov homeomorphism is never periodic nor reducible.\qed
\end{thm}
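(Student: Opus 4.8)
The plan is to follow Thurston's original argument via the action on the compactified Teichm\"uller space; see \cite{thurston:surfaces} and the exposition in \cite{farb-margalit:mcg}. If $\#A\le 3$ then $\Mod(S^2,A)$ is trivial and every homeomorphism $f\colon(S^2,A)\selfmap$ is isotopic to $\one$, hence periodic; so assume $\#A\ge 4$. Let $\mathcal T=\mathcal T(S^2,A)$ be the Teichm\"uller space of complex structures on $(S^2,A)$, a cell of real dimension $2\#A-6$ on which $\Mod(S^2,A)$ acts by isometries of the Teichm\"uller metric. The map sending $X\in\mathcal T$ to the projective class of the length function $c\mapsto\ell_X(c)$ on the set $\mathcal S$ of isotopy classes of essential simple closed curves is an embedding whose closure is a closed ball $\overline{\mathcal T}=\mathcal T\sqcup\mathcal{PML}(S^2,A)$, the boundary sphere being the space $\mathcal{PML}(S^2,A)$ of projective measured laminations; the $\Mod(S^2,A)$-action extends continuously to $\overline{\mathcal T}$.

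The first step is to apply Brouwer's fixed point theorem to $f$ acting on the ball $\overline{\mathcal T}$, producing a fixed point $x$. If $x\in\mathcal T$, then $f$ fixes a conformal structure, hence is isotopic rel $A$ to a biholomorphism of the underlying Riemann surface $\cong\hC$ fixing $A$ pointwise; since a M\"obius transformation fixing $\ge 3$ points is the identity, $f\approx\one$, in particular periodic. If $x=[\mathcal L]\in\mathcal{PML}(S^2,A)$ with $\mathcal L$ \emph{not} filling --- say $i(\mathcal L,c)=0$ for some essential simple closed curve $c$ --- then $\mathcal L$ either has a closed leaf or fills a proper essential subsurface, and $f$ fixes the corresponding nonempty multicurve (the closed leaves of $\mathcal L$, respectively the boundary of that subsurface) up to isotopy, so $f$ is reducible.

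It remains to handle $x=[\mathcal L]$ with $\mathcal L$ filling; this is where the real work lies. The plan is to produce a second $f$-fixed class $[\mathcal L']\in\mathcal{PML}(S^2,A)$ transverse to $\mathcal L$, that is with $i(\mathcal L,\mathcal L')>0$: pick any $\mu\in\mathcal{PML}(S^2,A)$, pass to a subsequential limit of $(f^{-n}\mu)_{n\ge0}$, and verify by a North--South type estimate on the intersection functional $i(\mathcal L,-)$ that the limit is $f$-fixed and transverse to $\mathcal L$. The transverse pair $(\mathcal L,\mathcal L')$ then equips $(S^2,A)$ with a flat structure with finitely many cone singularities whose horizontal and vertical foliations are $\mathcal L$ and $\mathcal L'$, and $f$ multiplies the two transverse measures by $\lambda$ and $\lambda^{-1}$ for some $\lambda>0$. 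If $\lambda=1$ then $f$ preserves this flat structure, hence is realized by a finite-order isometry and is periodic (the previous case); if $\lambda\ne1$ then, replacing $f$ by $f^{-1}$ if necessary, $\lambda>1$ and $f$ is by definition pseudo-Anosov. Carrying out this construction --- producing $\mathcal L'$, and establishing both its transversality to $\mathcal L$ and the strict inequality $\lambda>1$ --- is the technical heart of the proof and the step I expect to be the main obstacle.

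Finally, the mutual exclusivity. Let $f$ be pseudo-Anosov with dilatation $\lambda>1$ and filling unstable foliation $\mathcal L^+$; then $i(\mathcal L^+,c)>0$ for every essential simple closed curve $c$, and $i(\mathcal L^+,f^n(c))=\lambda^{-n}\,i(\mathcal L^+,c)$ for all $n\in\Z$. If $f^n=\one$ for some $n\ge1$, then $i(\mathcal L^+,c)=\lambda^{-n}\,i(\mathcal L^+,c)$, forcing $i(\mathcal L^+,c)=0$, a contradiction; so $f$ is not periodic. If $f$ preserved a multicurve $\CC$, then $f^k(c)\approx c$ for some $c\in\CC$ and $k\ge1$, and the same computation forces $i(\mathcal L^+,c)=0$, again contradicting that $\mathcal L^+$ fills; so $f$ is not reducible.
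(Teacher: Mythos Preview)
The paper does not prove this theorem: it is quoted as a classical result of Nielsen and Thurston, with only the citations and a \qed. Your proposal, by contrast, sketches the standard proof via Thurston's compactification of Teichm\"uller space and the Brouwer fixed point theorem, which is exactly the route taken in the references the paper cites (and in \cite{farb-margalit:mcg}). The outline is correct, including the mutual exclusivity argument via intersection numbers; your honest flag that producing the transverse fixed lamination $\mathcal L'$ and establishing $\lambda>1$ is the technical core is accurate, and this is precisely what the cited sources carry out. One minor remark: in the interior-fixed-point case you conclude $f\approx\one$ because $\Mod(S^2,A)$ is \emph{pure} in this paper; that is fine here, but be aware the general statement only yields ``finite order'' rather than ``trivial''.
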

Furthermore, there are algorithms, such as ``train track'' technology
(see~\cite{bestvina-h:tt}) that determine to which class $f$ belongs,
and compute the multicurve along which $f$ can be reduced.

Let $\CC_f$ be the minimal multicurve that is invariant by $f$ and
such that all periodic components of
$f\colon(S^2,A)\setminus \CC\selfmap$ are, up to isotopy, either
pseudo-Anosov or of finite order. The multicurve $\CC_f$ is unique, so
$f$ belongs to $\Mod(S^2,A,\CC_f)$ because a mapping class permuting
non-trivially a multicurve must also permute the marked points.

The following algorithm solves the conjugacy problem under
$\Mod(S^2,A)$ for homeomorphisms $f,g\colon(S^2,A)\selfmap$. The
problem of combinatorial equivalence between homeomorphisms
$(S^2,A)\selfmap$ reduces to the pure case because the non-pure mapping class group of $(S^2,A)$ is a finite extension of $\Mod(S^2,A)$. The extension of this algorithm to
non-invertible maps is the main outcome of this series of articles.

\begin{algo}\label{algo:conjmcg}
  \textsc{Given} homeomorphisms $f,g\colon(S^2,A)\selfmap$,\\
  \textsc{Compute} whether they are conjugate by $\Mod(S^2,A)$, and \textsc{compute} the centralizer of $f$ in $\Mod(S^2,A)$ \textsc{as follows:}\upshape
  \begin{enumerate}
  \item Compute the canonical multicurves $\CC_f,\CC_g$. They may be
    constructed e.g.\ using the Bestvina-Handel ``train track''
    algorithm~\cite{bestvina-h:tt}.

  \item Check whether there is an $h\in \Mod(S^2,A)$ such that $h$
    conjugates $g\colon \CC_g\selfmap$ to $f\colon \CC_f\selfmap$. If
    $h$ does not exist, then $f$ and $g$ are not conjugate by
    $\Mod(S^2,A)$. Otherwise choose one such $h$ (all such $h$ differ
    by postcomposition by elements in $\Mod(S^2,A,\CC_f)$) and replace
    $g$ by $g^h$.  Then $\CC_f=\CC_g\eqqcolon\CC$ and the conjugacy
    problem for $f,g$ under $\Mod(S^2,A)$ and under $\Mod(S^2,A,\CC)$
    are equivalent.

  \item Check whether the projections
    $\pi(f),\pi(g)\in\Mod[v](S^2,A,\CC)$ are conjugate under
    $\Mod[v](S^2,A,\CC)$. Every periodic component of $\pi(f)$ and
    $\pi(g)$ is either of finite order or pseudo-Anosov; and in both
    of these cases the conjugacy and centralizer problems are
    solvable, see~\cite{hemion:homeos}.

    If the projections are conjugate, choose an element
    $q\in\Mod(S^2,A,\CC)$ such that $\pi(f)=\pi(g^q)$. Since
    $\Mod[e](S^2,A)$ belongs to the centre of $\Mod(S^2,A,\CC)$, the rotation
    parameter $f^{-1}g^q\in\Mod[e](S^2,A,\CC)$ does not depend on the
    choice of $q$.
  \item The maps $f,g$ are conjugate under $\Mod(S^2,A,\CC)$ if and
    only if $f^{-1}g^q$ is the identity in $\Mod[e](S^2,A,\CC)$, and a
    conjugator is for example $q$.

  \item The centralizer $Z(f)$ is a subgroup of $\Mod(S^2,A,\CC)$, and
    we describe it in terms of~\eqref{eq:Mp_Gr_seq}. Its intersection
    with $\Mod[e](S^2,A,\CC)\cong\Z^\CC$ is easy to compute: it
    consists of those maps $\CC\to\Z$ that are constant on every
    $f$-orbit in $\CC$.

    Consider next the image of $Z(f)$ in $\Mod[v](S^2,A,\CC)$. Let
    $\widehat S_0,\widehat S_1\coloneqq f( \widehat
    S_0),\dots,\widehat S_n= \widehat S_0$ be a cycle of small spheres
    under $\pi(f)$; for simplicity, write
    $f_i\colon \widehat S_i\to \widehat S_{i+1}$, with indices
    computed modulo $n$, for the restrictions of $f$ to the small
    spheres; they are all easily computable.

    Elements of $Z(f)$ must be constant on orbits of small spheres in
    the sense that, if $z_i\colon \widehat S_i\selfmap$ are the
    co\"ordinates of an element centralizing $f$, then
    $z_i^{f_i}=z_{i+1}$, indices again read modulo $n$. Compute the
    return map $f'=f_0f_1\dots f_{n-1}\colon \widehat
    S_0\selfmap$. Then, by our assumption, the map $f'$ is either of
    finite order or pseudo-Anosov, and this can be determined by
    checking if $(f')^e=\one$ for the smallest $e$ such that $(f')^e$
    is pure. Suppose that $f'$ is of finite order. Consider the
    quotient surface $\widehat S_0/f'$; it is also a sphere, with up
    to two extra punctures corresponding to the centres of rotation of
    $f'$. Then $Z(f')$ is isomorphic to a finite-index subgroup of
    $\Mod(\widehat S_0/f')$: every $z_0\in Z(f')$ projects via
    $\widehat S_0\to \widehat S_0/f'$ to an element in $\Mod(\widehat
    S_0/f')$, and conversely every $h\in\Mod(\widehat S_0/f')$ lifts
    to $e$ possibly non-pure mapping classes commuting with $f$,
    namely $\widetilde h,f\widetilde h,\dots,f^{e-1}\widetilde h$, and
    at most one of these is pure.
    
    Let us now briefly consider the case that $f'$ is pseudo-Anosov;
    details will appear in~\cite{bartholdi-dudko:bc5}. Since $f'$
    preserves transverse foliations on $\widehat S_0$, its centralizer
    coincides with its \emph{radical},
    $\sqrt{f'}=\{z\in\Mod(\widehat S_0)\mid \exists k,\ell\neq 0\text{
      with }z^k=(f')^\ell\}$. The map $f'$ admits a \emph{train track
      representation}, namely there exists a $1$-dimensional
    submanifold with forks on $\widehat S_0$, such that $f'$ and in
    fact all elements of $\sqrt{f'}$ may be represented by
    non-negative integer labellings of the train track subject to some
    arithmetic conditions. It follows that $\sqrt{f'}$ is a cyclic
    subgroup, and since the numbers on the train track increase upon
    taking powers, a generator of $\sqrt{f'}$ may be found by
    enumerating all labellings with numbers less than those of $f'$ to
    find a minimal root of $f'$.
  \end{enumerate}
\end{algo}

\subsection{Mapping class bisets}\label{ss:mcb}
We now turn to maps of degree $>1$.  Kameyama introduces
in~\cite{kameyama:thurston}*{\S4} a semigroup containing
$\Mod(S^2,A)$,
\[K(S^2,A)=\{\text{Thurston maps }(S^2,A)\selfmap\}\,/\,{\approx}
\]
and derives some important properties. Clearly $K(S^2,A)$ is a
$\Mod(S^2,A)$-biset, under pre- and post-composition of Thurston
maps. It is a graded semigroup under the map
$\deg\colon K(S^2,A)\to\N^*$, and is finitely generated in every
degree.

We will need the more general situation afforded by non-dynamical
maps, i.e. maps with different domain and range. Let therefore
$f\colon(S^2,C)\to(S^2,A)$ be a sphere map, and let $\DD,\CC$ be
multicurves on $S^2\setminus C,S^2\setminus A$ respectively with
$\DD\subseteq f^{-1}(\CC)$.
\begin{defn}[Mapping class bisets]
\label{def:MapClassBis}
  The $\Mod(S^2,C)$-$\Mod(S^2,A)$-biset $M(f,C,A)$ is defined as
  \[M(f,C,A)=\{m'fm''\mid m'\in\Mod(S^2,C),m''\in\Mod(S^2,A)\}\,/\,{\approx}.\]
  It admits as a subbiset the  $\Mod(S^2,C,\DD)$-$\Mod(S^2,A,\CC)$-biset
  \[M(f,C,A,\DD,\CC)=\{m'fm''\mid m'\in\Mod(S^2,C,\DD),m''\in\Mod(S^2,A,\CC)\}\,/\,{\approx}.\]
  The left and right actions are given by
  $m'fm''=m''\circ f\circ m'$, in keeping with using the algebraic
  order of operations in bisets.
\end{defn}

In case $G=\pi_1(S^2\setminus A,*)$ is a sphere group, we write
$\Mod(G)$ for $\Mod(S^2,A)$; by Theorem~\ref{thm:dehn-nielsen-baer},
it is a subgroup of the outer automorphism group of $G$, and is
defined algebraically in terms of $G$ and its peripheral conjugacy
classes. If $\gf$ be a sphere tree of groups, we denote by $\Mod(\gf)$
its group of pure self-congruences, so that if $\CC$ is a multicurve
on $(S^2,A)$ then $\Mod(\gf)\cong\Mod(S^2,A,\CC)$ for the tree of
groups decomposition $\gf$ of $G$ along $\CC$. Similarly, we write
\[K(G)=\{\text{$G$-$G$-\text{bisets}}\}\,/\,{\cong}
\]
for the $\Mod(G)$-biset isomorphism classes of bisets of Thurston maps
$(S^2,A)\selfmap$, which by Theorem~\ref{thm:dehn-nielsen-baer+} is in
bijection with $K(S^2,A)$.

If $B$ be a sphere $H$-$G$-biset, then by $M(B)$ we denote the
$\Mod(H)$-$\Mod(G)$-biset
\[M(B)=\{B_\psi\otimes B\otimes
  B_\phi\mid\psi\in\Mod(H),\phi\in\Mod(G)\}\,/\,{\cong}.
\]
By Theorem~\ref{thm:dehn-nielsen-baer+}, for a sphere map
$f\colon(S^2,C)\to(S^2,A)$ the bisets $M(f,C,A)$ and $M(B(f))$ are
isomorphic, so $M(f,C,A)$ can also be viewed as the biset of
isomorphism classes of bisets of the form
$B(m')\otimes B(f)\otimes B(m'')$ with
$m'\in\Mod(S^2,C),m''\in\Mod(S^2,A)$,

Similarly, if $\gfB$ be an $\gfY$-$\gf$-tree of bisets,
we define
\[M(\gfB)=\{\mathfrak N' \otimes \gfB \otimes
  \mathfrak N''\mid \mathfrak N'\in\Mod(\gfY),\mathfrak
  N''\in\Mod(\gf)\}\,/\,{\cong},
\]
and note that for a sphere map $f\colon(S^2,C,\DD)\to(S^2,A,\CC)$ the
bisets $M(f,C,A,\DD,\CC)$ and $M(\gfB(f))$ are isomorphic. This
may be seen in a manner that makes more explicit the tree of bisets
decomposition of $f$: let $\subscript\gfY\gfB_\gf$ be the
tree of bisets decomposition of $f$. Every element
$b\in M(f,C,A,\DD,\CC)$ is encoded by a well-defined tree of bisets
$\subscript\gfY\gfC_\gf$ over the same trees of groups
$\gfY,\gf$; so $M(f,C,A,\DD,\CC)$ is the biset of isomorphism
classes of such trees of bisets.

\subsubsection{Properties of Mapping class bisets}
\begin{prop}[An extension of~\cite{kameyama:thurston}*{Proposition~4.1}]\label{prop:MCBfree}
  The bisets $M(f,C,A)$ and $M(f,C,A,\DD,\CC)$ are left-free.
\end{prop}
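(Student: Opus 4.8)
The plan is to reduce the statement to a single geometric fact about branched coverings and then prove that fact by lifting isotopies. First I would dispose of the ``with multicurve'' version: $M(f,C,A,\DD,\CC)$ is by construction a subbiset of $M(f,C,A)$ --- its left group $\Mod(S^2,C,\DD)$ is a subgroup of $\Mod(S^2,C)$ and its underlying set is a $\Mod(S^2,C,\DD)$-invariant subset of $M(f,C,A)$ --- so it is enough to prove that the left $\Mod(S^2,C)$-action on $M(f,C,A)$ is free, since the restriction of a free action to a subgroup acting on an invariant subset is again free.

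Next, using $m'fm''=m''\circ f\circ m'$ and the algebraic (left-to-right) order of composition, one checks that the left stabilizer of an arbitrary element $[m''\circ f\circ m']\in M(f,C,A)$ is a conjugate (by $m'$) of the left stabilizer of $[f]$ itself. Hence it suffices to show that, for every $k\in\Mod(S^2,C)$, the relation $f\circ k\approx f$ through sphere maps $(S^2,C)\to(S^2,A)$ forces $k\approx\one$ rel $C$. Equivalently, via Theorem~\ref{thm:dehn-nielsen-baer+}, that $B(f\circ k)\cong B(k)\otimes B(f)\cong B(f)$ forces $k$ to be inner in $\Mod(S^2,C)$; but I will argue with isotopies rather than bisets.

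So let $(f_t)_{t\in[0,1]}$ be an isotopy of sphere maps with $f_0=f\circ k$ and $f_1=f$. The key step is to lift it through the branched covering $f$: since the critical values of every $f_t$ lie in the fixed finite set $A$, the branch locus of the family $(x,t)\mapsto(f_t(x),t)$ does not move, so this family lifts through $(x,t)\mapsto(f(x),t)$ over $(S^2\setminus A)\times[0,1]$ by the covering-homotopy property --- the relevant domain fibres over the contractible interval, and the necessary $\pi_1$-inclusion holds at $t=0$ because $f_0=f\circ k$ with $k$ a homeomorphism --- and then extends across the branch points by standard surface topology. This produces continuous $g_t\colon S^2\selfmap$ with $f\circ g_t=f_t$ and $g_0=k$. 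Multiplicativity of the degree gives $\deg(g_t)=\deg(f_t)/\deg(f)=1$, so each $g_t$ is an (unbranched) homeomorphism; and each $g_t$ fixes $C$ pointwise, because for $c\in C$ the point $f_t(c)$ moves in the discrete set $A$ and is therefore constantly $f(c)$, so $g_t(c)$ moves in the finite set $f^{-1}(f(c))$ starting from $g_0(c)=k(c)=c$, hence is constantly $c$. Thus $(g_t)$ is an isotopy in $\Mod(S^2,C)$ from $k$ to a homeomorphism $g_1$ with $f\circ g_1=f$.

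Finally I would show $g_1\approx\one$. Now $g_1$ is a deck transformation of the branched covering $f$, so it lies in the deck group of $f$, which is finite because it embeds into the symmetric group on a generic fibre; hence $g_1$ is a finite-order orientation-preserving homeomorphism of $S^2$, topologically a rotation, and therefore fixes at most two points unless it is the identity. If $\#C\ge3$, then $g_1$, fixing all of $C$, must be the identity, so $k\approx g_1=\one$. If $\#C\le2$ (recall $\#C=1$ is forbidden), then $\Mod(S^2,C)$ is already trivial, so $k=\one$. In either case $k\approx\one$, which is exactly the asserted left-freeness. I expect the one genuinely delicate point to be the isotopy-lifting step: the sets $f_t^{-1}(A)$ may move with $t$, and the lift must also be extended across the branch points; everything else is routine.
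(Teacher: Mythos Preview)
Your argument is correct and essentially the same as the paper's: both lift the isotopy through $f$ to obtain a deck transformation of $f$ that is isotopic rel $C$ to the given mapping class, and then use that a nontrivial finite-order orientation-preserving homeomorphism of $S^2$ has at most two fixed points to conclude. The only cosmetic difference is that the paper re-derives finite order by a short path-lifting argument (lifts of an arc issuing from a point of $A$ are permuted, so some power fixes every endpoint), whereas you invoke directly that the deck group embeds in the symmetric group on a fibre.
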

\begin{proof}
  The second claim obviously follows from the first.  Consider
  $g\in M(f,C,A)$ and $m\in\Mod(S^2,C)$ with $mg=g$. We use the
  same letters $g,m$ to denote a representative Thurston map and
  homeomorphism respectively; so $g$ and $g\circ m$ are isotopic along
  an isotopy fixing $C$. Lifting the isotopy from $g$ to $g\circ m$
  gives $g=g\circ m'$ for some homeomorphism $m'$ isotopic to $m$. We
  wish to show that $m'$ is isotopic to the identity.

  Consider a path $\gamma\colon[0,1]\to S^2\setminus A$ with
  $\gamma(0)\in A$, possibly a critical value; and consider its
  $g$-preimages $\gamma_1,\dots,\gamma_d$. They are permuted by $m'$,
  in the sense that $m'\circ\gamma_i\in\{\gamma_1,\dots,\gamma_d\}$,
  so there exists $e\in\{1,\dots,d\}$ such that
  $(m')^e\circ\gamma_1=\gamma_1$ and in particular
  $(m')^e(\gamma_1(1))=\gamma_1(1)$. This continues to hold as
  $\gamma_1$ is deformed, and $\gamma_1(1)$ is arbitrary, so
  $(m')^e=1$, and $m'$ being of finite order is isotopic to a M\"obius
  transformation by Theorem~\ref{thm:nielsen-thurston}. If $\#C\ge3$,
  we are done since a M\"obius transformation fixing three points is
  trivial, while if $\#C=2$ then $m'$ is isotopic to a rotation on a
  sphere with two marked points, and again is isotopic to the
  identity.
\end{proof}

\begin{exple}
  Let us note that the non-pure version of the mapping class biset
  needs not be left-free; here is the simplest example. Consider the
  sphere map $f(z)=z^2\colon
  (\hC,\{-1,1\})\to(\hC,\{0,1,\infty\})$. The non-pure mapping class
  group $\Mod^*(\hC,\{-1,1\})$ of $(\hC,\{-1,1\})$ consists of two
  elements $\one, z\to -z$. Since $z^2 \circ [ z\to -z]=z^2$ the left
  action of $\Mod^*(\hC,\{-1,1\})$ on $\{f\}$ is trivial.
\end{exple}

We extract, out of a left-free biset, relevant information from which
a basis of mapping class bisets can be constructed.
\begin{defn}[Distillations]\label{defn:distillation}
  Let $G$ be a sphere group with fixed Hurwitz generators
  $\gamma_1,\dots,\gamma_n$. Let $\subscript H B_G$ be a sphere biset,
  and choose a basis $S$ of $B$. Consider the \emph{wreath map}
  associated with $S$,
  see~\cite{bartholdi-dudko:bc1}*{\S\ref{bc1:ss:wreath}}: the map
  $\Phi\colon G\to H\wr S\perm$ such that $s\cdot g=h\cdot t$ if
  $\Phi(g)=\pair{\cdots, h,\cdots}\pi$ with $s^\pi=t$ and the `$h$' in
  position $s$.

  Write $\Phi(\gamma_i)=\pair{h_{i,1},\dots,h_{i,d}}\pi_i$. For each
  cycle $C_{i,j}$ of $\pi_i$, let $\widehat{h_{i,j}}$ be the conjugacy
  class, in $H$, of the product of the $h_{i,k}$ along the cycle. Then
  each $\widehat{h_{i,j}}$ is either trivial or a peripheral conjugacy
  class in $H$. We call the collection
  $\overline\Phi=\{\pi_i,\widehat{h_{i,j}}\}$ the \emph{distillation}
  of $\Phi$.

  Note that we consider distillations up to the diagonal action by
  conjugation by a permutation of $S$, since the set $S$ is not
  assumed ordered. Note also that there are finitely many
  distillations, since there are finitely many choices for the
  permutations $\pi_i$ and the conjugacy classes
  $\widehat{h_{i,j}}$. Furthermore, all peripheral conjugacy classes
  appear exactly once among the $\widehat{h_{i,j}}$, by
  Definition~\ref{dfn:SphBis}\eqref{cond:3:dfn:SphBis}.
\end{defn}

\begin{lem}\label{lem:distillations}
  Let $M(B_0)$ be the mapping class biset of a sphere biset
  $\subscript H{(B_0)}_G$, and consider a biset $B\in M(B_0)$, with
  wreath recursion $\Phi$. Then the distillation $\overline\Phi$
  depends only on $B$, and is called the \emph{distillation $\overline
    B$ of $B$}.

  Let $B,C\in M(B_0)$ be bisets. Then $\overline B=\overline C$ if
  and only if $B,C$ belong to the same left orbit of $M(B_0)$.
\end{lem}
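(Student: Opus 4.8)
The plan splits naturally into the well-definedness of $\overline B$ and the two implications of the final equivalence.

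\emph{Well-definedness.} First I would verify that the distillation does not depend on the chosen basis $S$. If $S'=\{g_s\cdot s\mid s\in S\}$ is another basis, with $g_s\in H$, a one-line computation shows that the wreath recursion of $B$ with respect to $S'$ has the same permutation parts $\pi_i$ and return words $g_s\,h_{i,s}\,g_{s^{\pi_i}}^{-1}$; hence along each cycle $C_{i,j}$ of $\pi_i$ the product of the return words is merely conjugated in $H$, so the conjugacy classes $\widehat{h_{i,j}}$ are unchanged. Re-ordering $S$ conjugates the whole recursion by a permutation of $S$, which is exactly the ambiguity already quotiented out in Definition~\ref{defn:distillation}. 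Finally a biset isomorphism carries a basis of $B$ to a basis of an isomorphic biset, realizing identical wreath recursions; thus $\overline B$ descends to isomorphism classes, in particular to elements of $M(B_0)$, which proves the first assertion.

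\emph{The easy implication.} Suppose $C\cong B_\psi\otimes B$ with $\psi\in\Mod(H)$. Using $\{1\otimes s\mid s\in S\}$ as a basis of $B_\psi\otimes B$ and pushing scalars across the tensor symbol, one sees that its wreath recursion has the same permutation parts $\pi_i$ and return words $\widetilde\psi(h_{i,s})$, for any automorphism $\widetilde\psi$ representing $\psi$; so each cycle product becomes $\widetilde\psi$ of the old one. Each $\widehat{h_{i,j}}$ is peripheral or trivial by Definition~\ref{dfn:SphBis}\eqref{cond:3:dfn:SphBis}, and $\psi\in\Mod(H)$ fixes each peripheral conjugacy class of $H$, as well as the trivial class; hence every $\widehat{h_{i,j}}$ is preserved and $\overline C=\overline B$.

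\emph{The hard implication.} Conversely, assume $\overline B=\overline C$; the claim is that $B,C$ lie in one left $\Mod(H)$-orbit of $M(B_0)$. I would pass to geometry: by Theorem~\ref{thm:dehn-nielsen-baer+} realize $B=B(f)$ and $C=B(g)$ for sphere maps $f,g\colon(S^2,\mathcal C)\to(S^2,\mathcal A)$ with common source and target, where $(S^2,\mathcal A)$ realizes $G$ with its fixed Hurwitz generators and $(S^2,\mathcal C)$ realizes $H$; note $\deg f=\deg g$ since $B,C\in M(B_0)$. Through Lemma~\ref{lem:CorrOfSphBis} and the correspondence~\eqref{eq:CorrOfSphMaps}, the distillation translates into geometric data: up to simultaneous conjugacy the permutations $(\pi_i)$ are the monodromy of the covering $S^2\setminus f^{-1}(\mathcal A)\to S^2\setminus\mathcal A$, and the cycle $C_{i,j}$ of $\pi_i$ corresponds to a preimage of $a_i$ which lies in $\mathcal C$ with peripheral class $\Delta$ exactly when $\widehat{h_{i,j}}=\Delta$, and outside $\mathcal C$ exactly when $\widehat{h_{i,j}}$ is trivial. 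Thus $\overline B=\overline C$ says that $f$ and $g$, viewed as branched coverings with \emph{all} preimages of $\mathcal A$ marked, have the same monodromy and the same labelling of those preimages. Branched coverings with conjugate monodromy are isomorphic over the base, so there is a homeomorphism $\theta$ of $S^2$ with $\hat g\circ\theta=\hat f$, where $\hat f,\hat g$ denote $f,g$ with all preimages of $\mathcal A$ marked; the conjugating permutation matches the cycle $C_{i,j}$ for $f$ with the corresponding cycle for $g$, and equality of distillations then forces $\theta$ to map the marked set $\mathcal C$ of the source of $f$ onto that of $g$ respecting peripheral classes, hence (distinct marked points carry distinct peripheral classes) to fix $\mathcal C$ pointwise. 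So $\theta$ represents a class $\psi\in\Mod(H)$ with $f=g\circ\theta$, whence $B=B(f)\cong B(\theta)\otimes B(g)\cong B_\psi\otimes C$ by functoriality of the biset construction, and $B,C$ lie in one left $\Mod(H)$-orbit.

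The delicate step is this last paragraph: the translation between the purely combinatorial distillation and the geometric picture of a branched covering --- its monodromy together with the record of which preimages of the marked points are themselves marked, and with which peripheral class --- and the verification that an isomorphism of branched coverings respecting this record descends to a homeomorphism of the marked source intertwining $f$ and $g$ up to post-composition. Everything else is routine wreath-recursion bookkeeping on top of the equivalence (Theorem~\ref{thm:dehn-nielsen-baer+}, Lemma~\ref{lem:CorrOfSphBis}) between sphere maps and sphere bisets.
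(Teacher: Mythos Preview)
Your argument is correct, but the route you take for the hard implication is genuinely different from the paper's. You pass to geometry via Theorem~\ref{thm:dehn-nielsen-baer+}, realize $B,C$ as sphere maps $f,g$, invoke the classification of branched coverings by monodromy to produce a covering isomorphism $\theta$, and then read off from the distillation that $\theta$ fixes the marked set $\mathcal C$ pointwise. The paper instead stays entirely algebraic: after aligning the permutation parts of the two wreath recursions, it takes the cycle products $h'_{i,j}$ of $B$ and $h''_{i,j}$ of $C$, observes that (by Definition~\ref{dfn:SphBis}\eqref{cond:3:dfn:SphBis}) the nontrivial ones form a Hurwitz generating set of the sphere group $H$, and defines an automorphism of $H$ by $h'_{i,j}\mapsto h''_{i,j}$, which is well-defined because the single sphere relation is preserved. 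This automorphism fixes each peripheral class (same distillation), hence lies in $\Mod(H)$ by Theorem~\ref{thm:dehn-nielsen-baer}, and conjugates one wreath recursion into the other.

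The paper's argument is shorter and avoids the round trip through topology; yours makes transparent \emph{why} equality of distillations should suffice, by identifying the distillation with exactly the data classifying the branched covering together with the labelling of marked preimages. One small slip: in your easy implication the return words of $B_\psi\otimes B$ are $\widetilde\psi^{-1}(h_{i,s})$ rather than $\widetilde\psi(h_{i,s})$, but since $\psi^{-1}\in\Mod(H)$ also fixes the peripheral classes, the conclusion is unaffected.
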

\begin{proof}
  For the first statement, the multiset of lifts, and the monodromy
  actions of the Hurwitz generators, are invariants of $B$.

  For the second claim, let $\Phi,\Psi$ be wreath recursions of $B,C$
  respectively.  If $\overline\Phi=\overline\Psi$ then they have the
  same permutations $\pi_i$, up to relabelling, and therefore the same
  cycles $C_{i,j}$. Let $h'_{i,j}$ be the product of the entries of
  $\Phi(\gamma_i)$ along the cycle $C_{i,j}$ and let $h''_{i,j}$ be
  the product of the entries of $\Psi(\gamma_i)$ along the same cycle
  $C_{i,j}$, in the same order. Then $h'_{i,j}$ and $h''_{i,j}$ are
  conjugate for each $i,j$, and the map $h'_{i,j}\mapsto h''_{i,j}$
  extends to an automorphism of $H$, because the only relation in $H$
  is that the product of the $h'_{i,j}$, in some order, is trivial,
  and the product of the $h''_{i,j}$, in the same order, is also
  trivial. This automorphism is-defined up to inner
  automorphisms. Since it preserves the peripheral conjugacy classes,
  it defines an element $m'\in\Mod(H)$, again by
  Theorem~\ref{thm:dehn-nielsen-baer}, and we have $\Psi=\Phi
  (m'\times\cdots\times m')$, so $B=m C$.

  Conversely, if $B=m C$ for $m\in\Mod(H)$ then obviously $\overline
  B=\overline C$.
\end{proof}

\begin{prop}[Essentially~\cite{koch-pilgrim-selinger:pullback}*{Proposition~3.1}]\label{prop:MCBfinite}
  The biset $M(f,C,A)$ has finitely many left $\Mod(S^2,C)$-orbits,
  and the biset $M(f,C,A,\DD,\CC)$ has finitely many left
  $\Mod(S^2,C,\DD)$-orbits.
\end{prop}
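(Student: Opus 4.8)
The two statements will be proved in turn, the second bootstrapping from the first via the central extension of the acting mapping class group.

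\emph{First claim.} By Theorem~\ref{thm:dehn-nielsen-baer+} we may replace $M(f,C,A)$ by the isomorphic biset $M(B(f))$ and count its left $\Mod(H)$-orbits instead, where $H=\pi_1(S^2\setminus C)$. Write $B_0\coloneqq B(f)$. By Lemma~\ref{lem:distillations}, two bisets of $M(B_0)$ lie in the same left $\Mod(H)$-orbit if and only if they have the same distillation, and by Definition~\ref{defn:distillation} there are only finitely many distillations: the degree $d=\deg(B_0)$ is fixed, and a distillation consists of an $n$-tuple of permutations in $d\perm$ (with $n=\#A$) together with, for each cycle, one peripheral-or-trivial conjugacy class of $H$. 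Hence the left orbits of $M(B_0)$ inject into a finite set. (This is the statement that there are finitely many Hurwitz classes of maps $m'\circ f$, refined by finitely many mapping-class-group orbits within each Hurwitz class.)

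\emph{Reduction for the second claim.} Place $\Mod(S^2,C,\DD)$ inside the central exact sequence~\eqref{eq:Mp_Gr_seq}, with kernel $N\coloneqq\Mod[e](S^2,C,\DD)\cong\Z^{\#\DD}$ and quotient $\Mod[v](S^2,C,\DD)$, and form the quotient biset $N\backslash M(f,C,A,\DD,\CC)$, which is a left $\Mod[v](S^2,C,\DD)$-biset. Because $N$ is a \emph{subgroup} of $\Mod(S^2,C,\DD)$, the preimage in $M(f,C,A,\DD,\CC)$ of a single left $\Mod[v](S^2,C,\DD)$-orbit of $N\backslash M(f,C,A,\DD,\CC)$ equals $\Mod(S^2,C,\DD)\cdot N\cdot b=\Mod(S^2,C,\DD)\cdot b$ for any $b$ above it, that is, a single left $\Mod(S^2,C,\DD)$-orbit. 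It therefore suffices to bound the number of left $\Mod[v](S^2,C,\DD)$-orbits of $N\backslash M(f,C,A,\DD,\CC)$; passing to this quotient amounts to forgetting the source-side Dehn-twist parameters along $\DD$.

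\emph{Finiteness over $\Mod[v]$.} By Theorem~\ref{thm:dehn-nielsen-baer+} and Definition~\ref{defn:gfofbisets}, $M(f,C,A,\DD,\CC)\cong M(\gfB(f))$ is the biset of isomorphism classes of trees of bisets over the fixed sphere trees of groups $\gfY,\gf$; for every $h\in M(f,C,A,\DD,\CC)$ the decomposition $\gfB(h)$ has underlying graph of bounded size since $\#h^{-1}(\CC)\le\deg(f)\cdot\#\CC$, so the combinatorial type $(\text{graph},\lambda,\rho)$ takes only finitely many values. Fix one. The group $\Mod[v](S^2,C,\DD)=\prod_{S}\Mod(\widehat S)$ acts vertexwise on sphere vertices; at each essential sphere vertex $z$ (so that $\lambda(z)$ is a sphere vertex of $\gfY$) the biset $B_z$ is, by Lemma~\ref{lem:AlgClassOfBis}, a sphere biset of degree $\le\deg(f)$ between spheres with boundedly many punctures, hence lies in one of finitely many mapping class bisets, each of which by the first claim has finitely many left $\Mod(\widehat{S})$-orbits. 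The remaining data --- bisets at curve vertices and at trivial or annular sphere vertices of $\gfB(h)$, together with the edge intertwiners surviving after quotienting by $N$ and by isomorphisms of trees of bisets --- consists of bounded-degree cyclic-group sphere bisets, of which there are finitely many, plus finitely many residual intertwiner classes. Multiplying these finitely many contributions over the finitely many combinatorial types bounds the number of left $\Mod[v](S^2,C,\DD)$-orbits, which with the reduction above completes the proof.

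\emph{Main obstacle.} The delicate point is the bookkeeping of the edge intertwiners and of how the vertexwise $\Mod[v]$-action threads through the gluings of the tree of bisets: one must verify that, after quotienting out the $\Mod[e](S^2,C,\DD)$-twists along $\DD$ and taking tree-of-bisets isomorphisms into account, only finitely many intertwiner configurations remain. This rests on the edge groups being infinite cyclic --- so each intertwiner varies over a cyclic group --- and on the relevant Dehn twist (along a curve of $\DD$ when the edge lies over an edge of $\gfY$, and otherwise a twist supported inside a small-sphere factor of $\Mod[v]$) acting on that cyclic group by translation, with finite quotient.
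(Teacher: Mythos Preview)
Your argument for the first claim coincides with the paper's: both reduce to Lemma~\ref{lem:distillations} and the finiteness of the set of distillations.

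For the second claim the paper takes a much shorter route. Instead of passing to the tree-of-bisets decomposition, it simply exploits that $M(f,C,A,\DD,\CC)$ sits inside $M(f,C,A)$ as a sub-biset for the smaller acting groups. From the first claim and right-transitivity, $\{\cdot\}\otimes_{\Mod(S^2,C)} M(f,C,A)\cong N\backslash\Mod(S^2,A)$ for a finite-index subgroup $N\le\Mod(S^2,A)$; then $\{\cdot\}\otimes_{\Mod(S^2,C,\DD)} M(f,C,A,\DD,\CC)$, which is again right-transitive (now under $\Mod(S^2,A,\CC)$), is identified with $(N\cap\Mod(S^2,A,\CC))\backslash\Mod(S^2,A,\CC)$, and $[\Mod(S^2,A,\CC):N\cap\Mod(S^2,A,\CC)]\le[\Mod(S^2,A):N]<\infty$. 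This two-line index bound avoids all of the structural analysis you set up.

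Your approach is not wrong in spirit --- it anticipates the finer decomposition of $M(f,C,A,\DD,\CC)$ carried out later in~\S\ref{ss:DecompOfMCB} and Theorem~\ref{thm:M xproduct} --- but for bare finiteness it is overkill, and the step you yourself flag as the ``main obstacle'' (that only finitely many edge-intertwiner configurations survive modulo $\Mod[e](S^2,C,\DD)$) is genuinely the hard part of that route and is not actually carried out in your outline. The paper's argument trades that difficulty for the simpler observation that liftable elements of $\Mod(S^2,A,\CC)$ lift into $\Mod(S^2,C,\DD)$, which is what makes the stabilizer exactly $N\cap\Mod(S^2,A,\CC)$.
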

\begin{proof}
  For the first claim, choose a basis $S$ of $B(f)$, write
  $G=\pi_1(S^2\setminus A,*)$ and $H=\pi_1(S^2\setminus C,\dagger)$,
  and $B_0=B(f)$. The claim then follows from
  Lemma~\ref{lem:distillations}.

  The second statement follows from the first: since the right action
  of $\Mod(S^2,A)$ is transitive on $1\otimes M(f,C,A)$, this last set
  is of the form $N\backslash\Mod(S^2,A)$ for a finite-index subgroup
  $N\le\Mod(S^2,A)$. Then $1\otimes M(f,C,A,\DD,\CC)$ is isomorphic,
  qua right $\Mod(S^2,A,\CC)$-set, to
  $(N\cap\Mod(S^2,A,\CC))\backslash\Mod(S^2,A,\CC)$, and
  $[\Mod(S^2,A,\CC):N\cap\Mod(S^2,A,\CC)]\le[\Mod(S^2,A):N]<\infty$.
\end{proof}

The biset $M(f,C,A,\DD,\CC)$ restricts to an
$\Mod[e](S^2,C,\DD)$-$\Mod[e](S^2,A,\CC)$-biset, which furthermore
encodes the Thurston matrix of $f$, namely the map $T_{f,\CC}$
from~\S\ref{ss:invariant mc}.
\begin{prop}\label{prop:MCBedge}
  Assume $\DD=f^{-1}(\CC)$ and consider $g\in M(f,C,A,\DD,\CC)$.  If
  $e\in\Mod[e](S^2,A,\CC)$ and $m\in\Mod(S^2,C,\DD)$ satisfy $g e=m g$, then
  $m\in\Mod[e](S^2,C,\DD)$.

  Furthermore, identify $\Mod[e](S^2,A,\CC)$ with $\Z^\CC$ and
  $\Mod[e](S^2,A,\DD)$ with $\Z^\DD$ by writing these groups as generated
  by Dehn twists. If $g e = m g$, then we have $m=T_{f,\CC}(e)$.
\end{prop}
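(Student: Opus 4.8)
The plan is to prove the two assertions in turn; since the statement is unchanged and $T_{g,\CC}=T_{f,\CC}$ (the Thurston matrix being a biset invariant under the multicurve-preserving mapping class actions), I allow myself to replace $g$ by $f$ throughout, so the hypothesis $ge=mg$ reads $e\circ f\approx f\circ m$. For the membership $m\in\Mod[e](S^2,C,\DD)$ I would project along the central extension~\eqref{eq:Mp_Gr_seq}. Write $\pi\colon\Mod(S^2,C,\DD)\twoheadrightarrow\Mod[v](S^2,C,\DD)$, and likewise for $(S^2,A,\CC)$; since $\Mod[e]=\ker\pi$, the hypothesis $e\in\Mod[e](S^2,A,\CC)$ says $\pi(e)=1$, i.e.\ the self-map induced by $e$ on each $\widehat S$, for $S$ a small sphere of $S^2\setminus\CC$, is isotopic to the identity. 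As $m$ preserves $\DD=f^{-1}(\CC)$ curvewise with orientations, it fixes each small sphere of $S^2\setminus\DD$; and by Theorem~\ref{thm:kameyama-mc} the relation $e\circ f\approx f\circ m$ may be realised by an isotopy respecting $\CC$ and $\DD$. Restricting this relation to a small sphere $U$ of $S^2\setminus\DD$---using Lemma-Definition~\ref{lem:SmallMaps} and Corollary~\ref{cor:unique essential} to get the small sphere map $\widehat f\colon\widehat U\to\widehat S$---yields $\widehat e\circ\widehat f\approx\widehat f\circ\widehat m$, that is, $\widehat m\cdot\widehat f=\widehat f$ in the mapping class biset $M(\widehat f,\widehat U,\widehat S)$. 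This biset is left-free by Proposition~\ref{prop:MCBfree}, so $\widehat m=\one$ for every such $U$; hence $\pi(m)=1$ and $m\in\Mod[e](S^2,C,\DD)$. (Equivalently, this is exactly the criterion of Proposition~\ref{prop:ExtOfBis} for $M(f,C,A,\DD,\CC)$ to be an extension of left-free bisets over the subgroups of Dehn twists.)

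For the identity $m=T_{f,\CC}(e)$ under the isomorphisms $\Mod[e](S^2,A,\CC)\cong\Z^\CC$ and $\Mod[e](S^2,C,\DD)\cong\Z^\DD$, note first that the assignment $e\mapsto m$ is additive: if $f\circ m=e\circ f$ and $f\circ m'=e'\circ f$ then $f\circ(mm')=e\circ e'\circ f$, and $\Mod[e]$ is abelian; since $T_{f,\CC}$ is linear, it suffices to treat $e=T_\gamma$ for a single $\gamma\in\CC$. Pick $N\in\N$ divisible by $d_\varepsilon\coloneqq\deg(f\restrict\varepsilon)$ for every component $\varepsilon$ of $f^{-1}(\gamma)$; then $f\circ m^N=T_\gamma^N\circ f$, so $m^N$ is the lift of $T_\gamma^N$ (unique by left-freeness). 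Represent $T_\gamma^N$ by a homeomorphism supported in a thin annular neighbourhood $N(\gamma)$ disjoint from $A$, whose $f$-preimage is a disjoint union of annuli $N(\varepsilon)$ with $f\colon N(\varepsilon)\to N(\gamma)$ of degree $d_\varepsilon$; the classical lift-of-Dehn-twist computation in the local model $z\mapsto z^{d_\varepsilon}$ shows that $T_\gamma^N$ lifts on $N(\varepsilon)$ to $T_\varepsilon^{N/d_\varepsilon}$---an honest twist, the identity on $\partial N(\varepsilon)$ precisely because $d_\varepsilon\mid N$---and to the identity off $f^{-1}(N(\gamma))$. Pushing this down to $(S^2,C,\DD)$, which forgets the non-essential preimage curves (all trivial or peripheral), and grouping parallel components gives
\[m^N=\prod_{\delta\in\DD}T_\delta^{\sum_{\varepsilon\approx\delta}N/d_\varepsilon}=\prod_{\delta\in\DD}T_\delta^{N\cdot T_{f,\CC}(\gamma)_\delta},\]
using that $T_{f,\CC}(\gamma)=\sum_{\varepsilon\approx\delta\in\DD}\frac1{d_\varepsilon}\delta$. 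As $\Mod[e](S^2,C,\DD)\cong\Z^\DD$ is torsion-free, dividing by $N$ yields $m=T_{f,\CC}(\gamma)$, and additivity gives the general $e$.

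The step I expect to be delicate is the descent of $e\circ f\approx f\circ m$ to the small spheres in the first part: one must check carefully that $m$ restricts to each small sphere of $S^2\setminus\DD$ compatibly with the small sphere maps of $f$---handling the trivial and annular pieces and using that $m$ preserves the oriented curves of $\DD$---so that left-freeness of the small mapping class bisets may be invoked coordinate by coordinate. The lifting computation in the second part is routine but requires attention to orientation conventions and to the bookkeeping identifying the accumulated twisting along the parallel preimage curves with the entry $T_{f,\CC}(\gamma)_\delta$; the role of the hypothesis that $m$ exists is precisely to force this a priori rational entry to be an integer.
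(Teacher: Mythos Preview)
Your proof is correct and follows essentially the same approach as the paper: for the first part you reduce to small spheres and invoke left-freeness of the small mapping class bisets (the paper phrases this as a contradiction argument, you as a direct one via the projection $\pi$); for the second part you pass to a suitable power so that Dehn twists lift honestly, compute the lift in the local annular model, and use torsion-freeness of $\Z^\DD$ to divide (the paper takes $N=d!$, you take any common multiple of the local degrees). The reduction from $g$ to $f$ is harmless since $\Mod[e]$ is central.
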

\begin{proof}
  Suppose $m\notin \Mod[e](S^2,C,\DD)$. Then there is a small sphere $U$
  in $S^2\setminus \DD$ such that the restriction of $m$ to $\widehat U$,
  call it $\widehat m\in \Mod(\widehat U)$, is non-trivial.  As in
  Definition~\ref{lem:SmallMaps}, denote by $\widehat g \colon \widehat U
  \to \widehat S$ the small sphere map induced by $g$. By
  Proposition~\ref{prop:MCBfree} we have $\widehat g\neq \widehat m\widehat
  g$, and this contradicts $\widehat g= \widehat{g e}$.

  Denote by $d$ the degree of $f$. Clearly, $m=T_{f,\CC}(e)$ if and only if
  $m^{d!}=T_{f,\CC}(e^{d!})$, since $T_{f,\CC}$ is a linear operator on
  free abelian groups.

  Consider a curve $\gamma\in\CC$ and its (possibly isotopic) preimages
  $\delta_1,\dots,\delta_m$ mapping to $\gamma$ with degrees
  $d_1,\dots,d_m$ respectively. Then the $d!$-th power of a Dehn twist
  about $\gamma$ lifts to the product, for $i=1,\dots,m$, of $d!/d_i$-th
  powers of Dehn twists about $\delta_i$. For each peripheral or trivial
  curve $\delta_i$, the corresponding Dehn twist is trivial in
  $\Mod[e](S^2,C,\DD)$, while powers add for Dehn twists about isotopic
  curves. We recover precisely~\eqref{eq:thurston matrix}.
\end{proof}

Consider two sphere maps $f\colon (S^2,D)\to (S^2,C)$ and
$g\colon (S^2,C)\to (S^2,A)$. Then there is a natural epimorphism
$M(f,D,C)\otimes M(g,C,A)\twoheadrightarrow M(f g,D,A)$ given by
composition; however this morphism needs not be injective,
see~\S\ref{ss:TensProd:MappClassBis}.

Recall~\cite{bartholdi-dudko:bc1}*{\S2} that to a group homomorphism
$\phi\colon H\to G$ we associate the $H$-$G$-set $B_\phi$, which, qua
right $G$-set, is plainly $G$; the left $H$-action is by
\[h\cdot b=h^\phi b.
\]
Suppose that $f\colon (S^2,C)\to (S^2,A)$ is a homeomorphism. Then it
induces an isomorphism $f_*\colon \Mod(S^2,C)\to \Mod(S^2,A)$ and the
biset $M(f,C,A)$ is biprincipal and is isomorphic to $B_{f_*}$. A bit
more general:
 
\begin{lem}\label{lem:mcb of homeo}
  Consider a sphere map $f\colon (S^2,C)\to (S^2,A)$ of degree
  $1$. Then $f$ induces the forgetful morphism
  $f^*\colon \Mod (S^2,A)\to \Mod(S^2,C)$ so that $f h= f^*(h) f$. The
  biset $M(f,C,A)$ is isomorphic to $B_{f^*}^{\vee}$; it is a left
  principal and right transitive biset. \qed
\end{lem}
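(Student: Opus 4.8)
The plan is to exploit the fact that a degree-one sphere map is an honest homeomorphism of the underlying sphere, which forces $M(f,C,A)$ to be a single left orbit. Since $\deg f=1$, Equation~\eqref{eq:riemannhurwitz} with $d=1$ leaves no critical points, so $f$ is an orientation-preserving homeomorphism $S^2\selfmap$ with $f(C)\subseteq A$, hence $C\subseteq f^{-1}(A)$. For a homeomorphism $h$ fixing $A$ pointwise, the conjugate $f^{-1}\circ h\circ f$ fixes $f^{-1}(A)$, and a fortiori $C$, pointwise; I would define $f^*(h)$ to be its isotopy class rel $C$ in $\Mod(S^2,C)$. This is well defined: conjugating an isotopy rel $A$ by $f$ gives an isotopy rel $f^{-1}(A)\supseteq C$, and conjugating by an isotopy $(f_t)$ of sphere maps keeps $C$ pointwise fixed throughout (since $C\subseteq f_t^{-1}(A)$ for all $t$), so $f^*$ depends only on the isotopy class of $f$. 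It is visibly a homomorphism, and the relation $f\circ f^*(h)=h\circ f$ reads, in the algebraic (left-to-right) order used for bisets, as $fh=f^*(h)f$; this is the first assertion.

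Next I would show that $M(f,C,A)=\Mod(S^2,C)\cdot f$. The relation $fh=f^*(h)f$ rewrites any generator $m'fm''$ as $\bigl(m'\,f^*(m'')\bigr)f$, so $M(f,C,A)$ is a single left orbit; combined with left-freeness of $M(f,C,A)$, which is Proposition~\ref{prop:MCBfree}, the assignment $m\mapsto mf$ is a bijection $\Mod(S^2,C)\to M(f,C,A)$. Thus $M(f,C,A)$ is left principal, and transported along this bijection the right $\Mod(S^2,A)$-action becomes $m\cdot m''=m\,f^*(m'')$. In particular right transitivity of $M(f,C,A)$ is equivalent to surjectivity of $f^*$, which I would prove by the point-pushing argument underlying the Birman exact sequence: given $\psi$ fixing $C$ pointwise, the conjugate $f\psi f^{-1}$ fixes $f(C)$ pointwise; pushing each of the finitely many points of $A\setminus f(C)$ back to its prescribed position, one at a time along arcs avoiding $f(C)$ and the already-corrected points, is an isotopy rel $f(C)$ producing a homeomorphism $h$ fixing $A$ pointwise, and then $f^*(h)=[f^{-1}hf]=[\psi]$.

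It remains to identify the resulting biset with $B_{f^*}^{\vee}$. Unravelling the definitions, $B_{f^*}^{\vee}$ is the $\Mod(S^2,C)$-$\Mod(S^2,A)$-biset with underlying set $\{b^{\vee}\mid b\in\Mod(S^2,C)\}$ and actions $m'\cdot b^{\vee}\cdot m''=\bigl(f^*(m'')^{-1}\,b\,(m')^{-1}\bigr)^{\vee}$; the map $mf\mapsto(m^{-1})^{\vee}$ is then a bijection $M(f,C,A)\to B_{f^*}^{\vee}$, and the check that it intertwines both actions is one line, using $m'(mf)m''=\bigl(m'm\,f^*(m'')\bigr)f$. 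This simultaneously re-derives that $M(f,C,A)$ is left principal and (granting surjectivity of $f^*$) right transitive, and it is consistent with the homeomorphism case noted just above, where $f^*=(f_*)^{-1}$ is an isomorphism and $B_{f^*}^{\vee}\cong B_{f_*}$.

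I expect the only genuinely non-formal step to be the surjectivity of $f^*$ via point-pushing; everything else is bookkeeping, where the one thing that needs care is keeping the algebraic composition order and the dualization $(-)^{\vee}$ consistent when writing down the isomorphism $M(f,C,A)\cong B_{f^*}^{\vee}$.
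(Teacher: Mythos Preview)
Your argument is correct and complete. The paper gives no proof for this lemma (it is stated with a bare \qed), so there is nothing to compare against; your verification of well-definedness of $f^*$, the single-left-orbit argument via Proposition~\ref{prop:MCBfree}, the Birman point-pushing for surjectivity of $f^*$, and the explicit bijection $mf\mapsto(m^{-1})^{\vee}$ to $B_{f^*}^{\vee}$ are all sound and handle the conventions on composition order and dualization correctly.
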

\subsection{Computability of mapping class bisets}
Propositions~\ref{prop:MCBfree} and~\ref{prop:MCBfinite} point to the
fact that the mapping class biset $M(f,C,A,\DD,\CC)$ is
computable. Let us make this more precise.

Firstly, the group $G=\pi_1(S^2\setminus A,*)$ is computable, since it
is a finitely generated free group. The conjugacy problem is easy to
solve in $G$ --- conjugacy classes are just freely reduced words up to
cyclic permutations --- so multicurves $\CC$ are computable as
collections of conjugacy classes.

Secondly, the biset $B(f)$ of a sphere map $f\colon(S^2,C)\to(S^2,A)$
is computable, since it is a left-free biset of degree
$d=\deg(f)$. All bisets of the form $B(f)$ will be manipulated
algorithmically by choosing a basis of cardinality $d$ and working
with the wreath map
$\Phi\colon G\to\pi_1(S^2\setminus C,\dagger)\wr d\perm$. The
decomposition of $B(f)$ along a multicurve $\CC$ is computable by
Theorem~\ref{thm:DecompOfBiset}.

Thirdly, the group $\Mod(S^2,A)$ is computable, since by
Theorem~\ref{thm:dehn-nielsen-baer} it is a subgroup of the outer automorphism
group of $G$. Elements of $\Mod(S^2,A)$ will be manipulated
algorithmically as maps $G\selfmap$, by keeping track of
their values on the standard generators of $G$. The condition that a
map on generators $\phi\colon\gamma_i\mapsto w_i$, for $w_i\in G$,
defines an element of $\Mod(S^2,A)$ is easy to check: since
$\gamma_1\cdots\gamma_n=1$ we must have $w_1\cdots w_n=1$, and each
$w_i$ must be conjugate to $\gamma_i$. It is equally easy to check
whether $\phi$ defines an element of a subgroup such as
$\Mod(S^2,A,\CC)$: the corresponding automorphism of $G$ must
preserve the conjugacy classes in $\CC$.

Finally, the biset $M(f,C,A)$ and its subbiset $M(f,C,A,\DD,\CC)$ are
computable. We express this, following
Proposition~\ref{prop:MCBfinite}, as an
\begin{algo}\label{algo:compute M(f,C,A)}
  \textsc{Given} $f$ a sphere map $(S^2,C,\DD)\to(S^2,A,\CC)$,\\
  \textsc{Compute} the biset  $M(f,C,A,\DD,\CC)$ \textsc{as follows:}\upshape
  \begin{enumerate}
  \item Write
    $G=\langle\gamma_1,\dots,\gamma_n\mid\gamma_1\cdots\gamma_n\rangle$
    the fundamental group of $S^2\setminus A$, and identify elements
    of $\CC$ with conjugacy classes in $G$; similarly write
    $H=\langle\delta_1,\dots,\delta_m\mid\delta_1\cdots\delta_m\rangle$.
  \item Choose a basis of $B(f)$, and compute its wreath map $\Phi$ as
    a table with $n$ rows and $d+1$ columns, with $d=\deg(f)$, the
    $0$th one being a permutation of $\{1,\dots,d\}$ and the remaining
    being the entries of $\Phi(\gamma_i)$, written as words in the
    $\delta_j$.
  \item Make a list of all \emph{distillations}, see
    Definition~\ref{defn:distillation}: data structures consisting of
    a list of $n$ permutations $\pi_1,\dots,\pi_n$ whose product is
    $1$, up to the diagonal action by conjugation by a permutation of
    $\{1,\dots,d\}$, and for each cycle of each $\pi_i$ of a conjugacy
    class $\in\{1,\delta_1^H,\dots,\delta_m^H\}$; for a wreath map
    $\Psi$, denote by $\overline\Psi$ its distillation, obtained by
    computing the conjugacy class of product of the entries along each
    cycle.
  \item View the mapping class group $\Mod(S^2,A)$ as a group of
    automorphisms of $G$, considered up to inner automorphisms. It is
    generated by Dehn twists $\tau_{i,j}$ for all $1\le i<j\le n$; the
    map $\tau_{i,j}$ conjugates $\gamma_k$ by $\gamma_i\cdots\gamma_j$
    for all $k\in\{i,\dots,j\}$ and fixes the other generators.
  \item Start with $X=\{\Phi\}$, the wreath map of $f$, and
    $\overline X=\{\overline\Phi\}$. As long as there exists a
    generator $\tau_{i,j}\in\Mod(S^2,A)$ and a wreath map
    $\Psi\in X$ such that
    $\overline{\tau_{i,j}\Psi}\notin\overline X$, add
    $\tau_{i,j}\Psi$ to $X$ and add
    $\overline{\tau_{i,j}\Psi}\notin\overline X$ to $\overline X$.
  \item The set $X$ is now a basis for $M(f,C,A)$, and the wreath map
    of $M(f,C,A)$ may be computed as follows. For each
    $m\in\Mod(S^2,A)$ and each $\Psi\in X$, let $\Psi'\in X$ be such
    that $\overline{\Psi'}=\overline{m\Psi}$. Write $h'_{i,j}$,
    respectively $h''_{i,j}$, for the products of entries along the
    cycles of $m\Psi,\Psi'$ respectively. Let $m'\colon H\selfmap$ be
    the automorphism of $H$ defined by $h'_{i,j}\mapsto h''_{i,j}$; it
    may e.g.\ be normalized by computing the images of the standard
    generators $\delta_k$. Then the biset $M(f,C,A)$ is defined by the
    equations $\Psi\cdot m=m'\cdot\Psi'$.
  \item The biset $M(f,C,A,\DD,\CC)$ is a subbiset of $M(f,C,A)$ in
    the following way: rather than saturating $X$ under the action of
    all $\tau_{i,j}$, only consider those Dehn twists along simple
    closed curves that do not intersect $\CC$.
  \end{enumerate}
\end{algo}

\subsection{Extensions of mapping class bisets}\label{ss:extensionsmcb}
Since we are mainly interested in the dynamical situation of a
Thurston map, let us abbreviate
\[M(f,A)\coloneqq M(f,A,A),\qquad M(f,A,\CC)\coloneqq M(f,A,A,\CC,\CC).
\]
Let us denote by $eM(f,A,\CC)$ the biset $M(f,A,\CC)$ with restricted
actions, namely the $\Mod[e](S^2,A,\CC)$-biset
\[eM(f,A,\CC)\coloneqq \subscript{\Mod[e](S^2,A,\CC)}M(f,A,\CC)_{\Mod[e](S^2,A,\CC)}
\]
and let us define the \emph{small mapping class biset} $vM(f,A,\CC)$
as the $\Mod[v](S^2,A,\CC)$-biset of restrictions to $S^2\setminus\CC$
as in Definition~\ref{lem:SmallMaps}:
\begin{equation}\label{eq:vB}
  vM(f,A,\CC)\coloneqq\{\text{restriction of } g \text{ to }(S^2,A)\setminus\CC\colon g\in M(f,A,\CC)\}.
\end{equation}

\noindent It follows from Propositions~\ref{prop:ExtOfBis},
\ref{prop:MCBfree} and~\ref{prop:MCBedge} that
\begin{equation}\label{eq:Mp_Cl_biset_seq}
  eM(f,A,\CC)\hookrightarrow M(f,A,\CC)
  \overset\pi\twoheadrightarrow \Mod[e](S^2,A,\CC)\backslash M(f,A,\CC)/\Mod[e](S^2,A,\CC)
\end{equation}
is an extension of left-free bisets. We describe it more concretely as
follows.  First, we have the natural morphism
\begin{equation}\label{eq:forgetvM}
  \Mod[e](S^2,A,\CC)\backslash M(f,A,\CC)/\Mod[e](S^2,A,\CC)\to vM(f,A,\CC)
\end{equation}
which maps every $g\in M(f,A,\CC)$ to its restriction to
$S^2\setminus\CC$.  This morphism is finite-to-one, because both
bisets are left-free and the source biset has finitely many left
orbits.

\begin{algo}
\label{algo:from vM to M:eMod}
 \textsc{Given} two $g_1,g_2$  in the source
  of~\eqref{eq:forgetvM} and an \textsc{oracle} solving the conjugacy and centralizer problems for the images of $g_1,g_2$ under~\eqref{eq:forgetvM}, \textsc{Solve} the conjugacy and centralizer problem between $g_1$ and $g_2$ and \textsc{Compute} the centralizer of $g_1$
   \textsc{as follows:}\upshape
   
   \noindent denote by $\pi$ the map~\eqref{eq:forgetvM} and denote by
  $g_1^\pi,g_2^\pi\in vM(f,A,\CC)$ the images of $g_1,g_2$ under $\pi$
\begin{enumerate}
 \item If $g_1^\pi$ and $g_2^\pi$ are not
  conjugate, then neither are $g_1$ and $g_2$.
  \item Choose $h\in\Mod[v](S^2,A,\CC)$ such that
  $h g_1^\pi=g_2^\pi h $. Compute the finite set $F$ of
  $\pi$-preimages of $g_1^\pi$. (Observe that $F\ni g_1,g_2^h$.)
  \item Compute the action of the centralizer $Z(g_1^\pi)$ of
  $g_1^\pi$ on $F$.
  \item Check if $g_1$ and $g_2^h$ are in the same
  orbit under the action of $Z(g_1^\pi)$. If no, then $g_1$ and $g_2$ are not conjugate. If yes, then find $m\in Z(g_1^\pi)$ such that $g_1=(g_2^h)^m$. Return $hm$. 
\item Compute the centralizer of $g_1$: it is the stabilizer of $g_1$ in $F$ under $Z(g_1^\pi)$ (and it has a
  finite index in $Z(g_1^\pi)$, hence it is computable).
\end{enumerate} 
 \end{algo}

Recall that $M(f, A,\CC)$ is naturally isomorphic to $M(\gfB)$ if
$\gfB$ is the sphere tree of bisets of $f$, see
Definition~\ref{defn:gfofbisets}. Similarly, $eM(f, A,\CC)$ and
$vM(f, A,\CC)$ can be identified with $eM(\gfB)$ and $vM(\gfB)$; for
instance,~\eqref{eq:vB} takes form
\begin{equation}\label{eq:vB as grBis}
  vM(\gfB)\coloneqq\{\text{essential bisets in } \gfB\}.
\end{equation}

\subsection{Decomposition of \boldmath $M(f,C,A,\DD,\CC)$}
\label{ss:DecompOfMCB}
The operation implicit in~\eqref{eq:forgetvM} may be made more explicit,
both at the topological and algebraic levels; we return to the general
(non-dynamical) setting of $f\colon (S^2,C,\DD)\to (S^2,A,\CC)$ with
$\DD=f^{-1}(\CC)$ rel $C$. The mapping class group $\Mod$ decomposes into a
direct product of $\Mod[e]$ and $\Mod[v]$,
see~\eqref{eq:Mp_Gr_seq}. Mapping class bisets encode non-invertible maps,
and decompose similarly, as we shall see below. Similarly
to~\eqref{eq:vB},~\eqref {eq:Mp_Cl_biset_seq}, and ~\eqref{eq:forgetvM}
write
\begin{gather}
  vM(f,C,A,\DD,\CC)\coloneqq\{\text{restriction of } g \text{ to }(S^2,A)\setminus\DD\colon g\in M(f,C,A,\DD,\CC)\},\notag\\
  \pi \colon  M(f,C,A,\CC,\DD)\twoheadrightarrow \Mod[e](S^2,C,\DD)\backslash M(f,C,A,\DD,\CC)/\Mod[e](S^2,A,\CC),\notag\\
  \sigma \colon \Mod[e](S^2,C,\DD)\backslash M(f,C,A,\DD,\CC)/\Mod[e](S^2,A,\CC) \to  vM(f,C,A,\DD,\CC).\label{eq:sigma}
\end{gather}

\begin{lem}\label{lem:ExtOFvM}
  With $\sigma$ as in~\eqref{eq:sigma}, there is a finite set $S$
  endowed with a right action of $\Mod[v](S^2,A,\CC)$ and there is a
  biset intertwiner
  \[\tau\colon \Mod[e](S^2,C,\DD)\backslash M(f,C,A,\DD,\CC)/\Mod[e](S^2,A,\CC)\to \subscript{\one}{S} _{\Mod[v](S^2,A,\CC)}
  \]
  such that $\tau$ restricts to a bijection $\sigma^{-1}(b)\to S$ for
  every $b\in vM(f,C,A,\DD,\CC)$. Endow $S$ with the trivial left
  action of $\Mod[v](S^2,C,\DD)$. Then $\sigma\times \tau$ is a
  $\Mod[v](S^2,C,\DD)$-$\Mod[v](S^2,A,\CC)$-biset isomorphism
  \[\Mod[e](S^2,C,\DD)\backslash M(f,C,A,\DD,\CC)/\Mod[e](S^2,A,\CC) \cong vM(f,C,A,\DD,\CC) \times S.\] 
\end{lem}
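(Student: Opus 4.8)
The plan is to argue entirely in the algebraic language of trees of bisets. By~\S\ref{ss:mcb} we identify $M(f,C,A,\DD,\CC)$ with $M(\gfB)$ for $\subscript\gfY\gfB_\gf$ the sphere tree of bisets of $f$ from Definition~\ref{defn:gfofbisets}, so that an element $g$ is a sphere tree of bisets $\subscript\gfY\gfC_\gf$ over the same trees of groups. By Proposition~\ref{prop:MCBedge}, using $\DD=f^{-1}(\CC)$, passing to the double $\Mod[e]$-quotient amounts to remembering $\gfC$ only up to twisting its edge intertwiners by the (cyclic) edge groups; and the map $\sigma$ to $vM(f,C,A,\DD,\CC)$ remembers from $\gfC$, by Lemma~\ref{lem:AlgClassOfBis} and~\eqref{eq:vB as grBis}, exactly the tuple of its essential sphere bisets, up to the left $\Mod[v]$-action.

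Next I would apply Proposition~\ref{Lem:GrBisleftInjDecomp} to write $\gfC=(\gfY,\mathfrak P)\otimes\gfC'$, splitting the data of $\gfC$ into the sphere refinement $\mathfrak P$ of $\gfY$ together with the graph morphisms $\lambda,\rho$, the essential sphere bisets of $\gfC'$ (which is precisely the $vM$-data seen by $\sigma$), the non-essential bisets of $\gfC'$ (which are cyclic or degenerate sphere bisets whose degrees are entries of the fixed Thurston matrix $T_{f,\CC}$, hence determined by $\mathfrak P,\lambda,\rho$ by Corollary~\ref{cor:unique essential} and Lemma~\ref{lem:unique algebraic essential}), and the edge intertwiners. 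Since the isotopy type of $f$ is fixed, the triples $(\mathfrak P;\lambda,\rho)$ range over a finite set; and once one quotients by $\Mod[e]$ on both sides, each edge intertwiner survives only as its class modulo translation by a cyclic edge group, so these too range over a finite set. Let $S$ be the resulting finite set of combinatorial data --- refinement, graph morphisms, and twisting-classes of intertwiners --- and let $\tau$ send $g$ to its datum in $S$. Then $\sigma\times\tau$ is a bijection: injectivity holds because these pieces reconstruct $\gfC$ up to twisting, and surjectivity because any legal assembly of a combinatorial type with a matching tuple of essential sphere bisets does realize a sphere tree of bisets, by the combination Theorem~\ref{thm:sphere tree gives sphere biset} and Corollary~\ref{cor:dehn-nielsen-baer+-mc}.

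The remaining point, and the one I expect to be the main obstacle, is the equivariance of $\tau$. Left-invariance, $\tau(m'g)=\tau(g)$ for $m'\in\Mod[v](S^2,C,\DD)$, is the easy direction: such an $m'$ is a product of pure mapping classes of the small spheres of $(S^2,C,\DD)$, so tensoring $\gfC$ on the left by its biprincipal tree of bisets re-tensors the essential bisets but leaves the underlying graph, $\lambda$, $\rho$, the non-essential bisets and the intertwiner-classes untouched (the intertwiners of $m'$ lie in the $\Mod[v]$-summand of~\eqref{eq:Mp_Gr_seq}, hence are trivial modulo $\Mod[e]$). One then \emph{defines} the right $\Mod[v](S^2,A,\CC)$-action on $S$ so that $\sigma\times\tau$ becomes right-equivariant; the work is to check that this is well defined, i.e.\ that right multiplication by $m''\in\Mod[v](S^2,A,\CC)$ on $\Mod[e]\backslash M/\Mod[e]$ permutes the fibers of $\sigma$ in a way that induces a genuine action on $S$ independent of the essential-biset representative. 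The subtlety is that a small-sphere mapping class of the target may, upon pullback, only be matched in the quotient by a small-sphere mapping class of the source together with a residual shift of intertwiner-classes (supported on annular preimage spheres, where the class lives in a finite cyclic quotient determined by a mapping degree); verifying that this shift factors through $S$, and is compatible with the bijections $\sigma^{-1}(b)\to S$ for all $b$, is the heart of the argument. Once this is done, endowing $S$ with the trivial left $\Mod[v](S^2,C,\DD)$-action turns the bijection $\sigma\times\tau$ into the asserted biset isomorphism, since both factors are biset morphisms.
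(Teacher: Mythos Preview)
Your approach is the same as the paper's: let $\tau$ record everything in the sphere tree of bisets $\gfC$ of $g$ except its essential sphere vertex bisets, take $S$ to be the set of such residual data modulo the right $\Mod[e]$-action, and argue that $\sigma\times\tau$ is a bijection. The paper's proof is far terser than yours---it simply sets $\tau(\gfC)$ equal to ``$\gfC$ minus all its essential sphere vertex bisets'' and asserts the decomposition ``by construction,'' leaving finiteness of $S$ and the well-definedness of the right $\Mod[v]$-action on $S$ implicit. Your explicit treatment of the equivariance is more careful than what the paper provides.

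One correction, though. You claim the non-essential bisets of $\gfC'$ are \emph{determined} by $(\mathfrak P,\lambda,\rho)$, citing Corollary~\ref{cor:unique essential} and Lemma~\ref{lem:unique algebraic essential}. Those results only classify which vertices are essential, annular, or trivial; they do not pin down the biset at a non-essential vertex. For an annular vertex $z$ one has $\rho(z)$ a sphere vertex of $\gf$, and right-multiplication by an element of $\Mod(\widehat S_{\rho(z)})\subset\Mod[v](S^2,A,\CC)$ genuinely changes $B'_z$ (its distillation moves within a finite orbit). So these bisets must be \emph{recorded} as part of $\tau$, not discarded---exactly as the paper does. Finiteness of $S$ then comes from Proposition~\ref{prop:MCBfinite} (finitely many left orbits in the full mapping class biset) rather than from a direct combinatorial enumeration. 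With that adjustment your argument is complete.
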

\begin{proof}
  The set $S$ is the set of restrictions of maps in
  $\Mod[e](S^2,C,\DD)\backslash M(f,A,\CC)/\Mod[e](S^2,A,\CC)$ to all
  non-essential spheres of $(S^2,f^{-1}(A),f^{-1}(\CC))$. More
  precisely, let $\subscript\gfY{\gfB(g)}_\gf$ be the sphere tree of
  bisets of $g\in M(f,C,A,\DD,\CC)$, let $\tau(\gfB(g))$ be $\gfB$
  minus all its essential sphere vertex bisets, and let
  $\sigma (\gfB(g))$ be the set of essential sphere bisets in
  $\gfB(g)$.

  By construction, $\Mod[e](S^2,C,\DD)g=\Mod[e](S^2,C,\DD)g'$ if and
  only if
  $[\tau(\gfB(g)),\sigma(\gfB(g))]=[\tau(\gfB(g')),\sigma(\gfB(g'))]$. We
  define $S$ as the quotient of
  $\{\tau(\gfB(g))\mid g\in M(f,C,A,\DD,\CC)\}$ by the right action of
  $\Mod[e](S^2,A,\CC)$.
\end{proof}

Let
\begin{equation}\label{eq:defn:H_f}
  H_f\coloneqq \{e\in  \Mod[e](S^2,A,\CC) \mid f e\in  \Mod[e](S^2,C,\DD))f\}
\end{equation}
be the subgroup of liftable twists. Denote by $\Lambda$ the biset of the
virtual homomorphism $T_{f,\CC}\colon \Mod[e](S^2,A,\CC) \cong
\Z^\CC\dashrightarrow \Z^\DD\cong \Mod[e](S^2,C,\DD))$ with
$\operatorname{Dom}(T_{f,\CC})= H_f$; the $\Z^\CC$-$\Z^\DD$-biset $\Lambda$
may be viewed as the abelian group
\begin{equation}\label{eq:defn:Lambda}
  \Lambda\coloneqq \Z^\DD\times \Z^\CC/\{(T_{f,\CC}(m), -m)\mid m\in H_f\}
\end{equation}
with natural actions. Since the action of $\Mod[e](S^2,C,\DD)\times
\Mod[e](S^2,A,\CC)$ on $M(f,C,A,\DD,\CC)$ commutes with the actions of
$\Mod[v](S^2,C,\DD)$ and of $\Mod[v](S^2,A,\CC)$, the group $\Lambda$ has a
well defined left action on $M(f,C,A,\DD,\CC)$ defined by $\lambda \cdot b=
m b n$ if $\lambda =[(m,n)]$ with $m\in\Z^\DD\cong\Mod[e](S^2,C,\DD)$ and
$n\in\Z^\CC\cong\Mod[e](S^2,A,\CC)$. We write the action of $\Lambda$ on
$M(f,C,A,\DD,\CC)$ as a left action because of the
\begin{lem}
  The action of $\Lambda\times \Mod[v](S^2,C,\DD)$ on
  $M(f,C,A,\DD,\CC)$ is free.
\end{lem}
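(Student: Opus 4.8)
The plan is to unfold the definition of the $\Lambda$-action, reduce to its ``edge'' part by passing to a left-free quotient biset, and then pin that part down with the liftability criterion of Proposition~\ref{prop:MCBedge}. So suppose $(\lambda,m')\in\Lambda\times\Mod[v](S^2,C,\DD)$ fixes some $b\in M(f,C,A,\DD,\CC)$, and choose a representative $(p,e)\in\Mod[e](S^2,C,\DD)\times\Mod[e](S^2,A,\CC)$ of $\lambda$. Since the sequence~\eqref{eq:Mp_Gr_seq} is central and split, $\Mod(S^2,C,\DD)=\Mod[e](S^2,C,\DD)\times\Mod[v](S^2,C,\DD)$, and since the right action of $\Mod[e](S^2,A,\CC)$ commutes with all left actions, the hypothesis $(\lambda,m')\cdot b=b$ reads $(pm')\cdot b\cdot e=b$ in $M(f,C,A,\DD,\CC)$, with $pm'\in\Mod(S^2,C,\DD)$.

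First I would kill $m'$. Apply the quotient map $\pi\colon M(f,C,A,\DD,\CC)\twoheadrightarrow\Mod[e](S^2,C,\DD)\backslash M(f,C,A,\DD,\CC)/\Mod[e](S^2,A,\CC)$: it erases $p$ on the left and $e$ on the right, and carries $pm'$ to the image of $m'$ under the splitting quotient $\Mod(S^2,C,\DD)\twoheadrightarrow\Mod[v](S^2,C,\DD)$, namely $m'$ itself. By Proposition~\ref{prop:ExtOfBis} combined with Proposition~\ref{prop:MCBedge}, this quotient biset is left-free, hence $m'=1$.

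Next I would kill $\lambda$. From $p\cdot b\cdot e=b$ we get $b\cdot e^{-1}=p\cdot b$. Write $b=q'fq''$ with $q'\in\Mod(S^2,C,\DD)$ and $q''\in\Mod(S^2,A,\CC)$ (Definition~\ref{def:MapClassBis}). Because $e^{-1}$ is central in $\Mod(S^2,A,\CC)$ and $p$ is central in $\Mod(S^2,C,\DD)$, the two sides become $q'\cdot(fe^{-1})\cdot q''$ and $q'\cdot(pf)\cdot q''$; cancelling the invertible actions of $q'$ on the left and of $q''$ on the right yields $fe^{-1}=pf$ in $M(f,C,A,\DD,\CC)$. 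By definition~\eqref{eq:defn:H_f} this says $e^{-1}\in H_f$, and by Proposition~\ref{prop:MCBedge} applied to $g=f$ the twist $e^{-1}$ lifts to $p=T_{f,\CC}(e^{-1})$. Thus $(p,e)$ equals $(T_{f,\CC}(x),x^{-1})$ with $x=e^{-1}\in H_f$, which is one of the defining relators of $\Lambda$ in~\eqref{eq:defn:Lambda}, so $\lambda=0$. Together with $m'=1$ this shows the stabilizer of $b$ is trivial.

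The step needing the most care is killing $\lambda$: one must keep track of which factor ($\Mod[e]$ or $\Mod[v]$) is central in which mapping class group, so that the central edge twists may be slid past $q'$ and $q''$, and one must use that the left and right actions are by \emph{groups} (not just monoids) in order to cancel $q'$ and $q''$ --- recall $M(f,C,A,\DD,\CC)$ is only asserted to be left-free, not right-free. Everything else is formal: left-freeness of the relevant (quotient) mapping class bisets comes from Propositions~\ref{prop:MCBfree} and~\ref{prop:MCBedge}, and translating ``fixes $b$'' into the defining relations of $\Lambda$ is immediate from~\eqref{eq:defn:Lambda}.
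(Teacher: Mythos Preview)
Your proof is correct and follows essentially the same approach as the paper's. The paper's two-sentence proof says that $\Lambda$ acts freely by construction and that Proposition~\ref{prop:MCBedge} then makes the combined $\Lambda\times\Mod[v](S^2,C,\DD)$-action free; you have simply unpacked both halves in detail, first passing to the left-free quotient $\Mod[e]\backslash M/\Mod[e]$ (whose left-freeness is precisely Proposition~\ref{prop:MCBedge} via Proposition~\ref{prop:ExtOfBis}) to kill $m'$, and then reducing $p\cdot b\cdot e=b$ to $pf=fe^{-1}$ by centrality and cancellation, which is exactly the defining relator of $\Lambda$.
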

\begin{proof}
  The group $\Lambda$ is defined in such a manner that its action on
  $M(f,C,A,\DD,\CC)$ is free. It follows from
  Proposition~\ref{prop:MCBedge} that the combined action of $\Lambda\times
  \Mod[v](S^2,C,\DD)$ on $M(f,C,A,\DD,\CC)$ is free.
\end{proof}

\begin{rem}
  It may happen that $T_{f,\CC}(e)\in\Z^\DD$ for some $e\in\Z^\CC$ that
  does not correspond to a liftable element, i.e.\ $e\notin H_f$. In that
  case, there is no reason for the equality $T_{f,\CC}(e)f=f e$ to
  hold. However, some positive power of $e$ is liftable: if $f$ have degree
  $d$, then $T_{f,\CC}(e^{d!})f=f e^{d!}$.

  It follows that the group $\Lambda$ need not be free Abelian: the element
  $(T_{f,\CC}(e),-e)$ has finite order in $\Lambda$ if
  $e\in\Mod[e](S^2,A,\CC)\setminus H_f$ and $T_{f,\CC}(e)\in\Z^\DD$.

  Examples of maps for which this phenomenon occurs include maps $f$ for
  which $T_{f,\CC}$ is integral but some curves in $\CC$ have lifts of
  degree $>1$. 
\end{rem}

Let
\begin{equation}\label{eq:defn:G_f}
  G_f\coloneqq \{m\in  \Mod[v](S^2,A,\CC) \mid f m\in(\Lambda\times \Mod[v](S^2,C,\DD))f\}
\end{equation}
be the group of liftable elements. Then for every $m\in G_f$ there is
a unique
$(\theta_f(m), \widetilde m)\in \Lambda\times\Mod[v](S^2,C,\DD)$ such
that $f m=(\theta_f(m), \widetilde m)f$ holds in
$M(f,C,A,\DD,\CC)$. The group $\Lambda$ as well as $\theta_f$ are
computable using Algorithm~\ref{algo:compute M(f,C,A)}; this is
essentially done in Proposition~\ref{prop:AbSubBisets}. We can also
easily compute $\theta_f$ on the set of Dehn twists in $G_f$ by
lifting the corresponding curves, see Example in~\S\ref{ss:complicated
  centralizer}.  It is easy to see that $G_f$ is a subgroup of
\[ \{m\in\Mod[v](S^2,A,\CC) \mid \sigma\circ \pi(f)m\in\Mod[v](S^2,C,\DD) \sigma\circ\pi(f)\}.\]

\begin{thm}\label{thm:M xproduct}
  Consider a sphere map $f\colon(S^2,C,\DD) \to (S^2,A,\CC)$, and the
  corresponding mapping class bisets $M(f,C,A,\DD, \CC)$. Let
  $vM_0(f)\coloneqq \Mod[v](S^2,C,\DD) \sigma\circ \pi(f) G_f$ be the
  $ \Mod[v](S^2,C,\DD)$-$G_f$-subbiset of $vM(f,C,A,\DD, \CC)$
  containing $f$; let $G_f\le \Mod[v](S^2,A,\CC)$ be the groups of
  liftable elements as in~\eqref{eq:defn:G_f}; and let $\Lambda$ be
  the $\Mod[e](S^2,C,\DD)$-$\Mod[e](S^2,A,\CC) \times G_f$-Abelian
  biset of
  $T_{f,\CC}\colon \Mod[e](S^2,A,\CC) \cong \Z^\CC\dashrightarrow
  \Z^\DD\cong \Mod[e](S^2,C,\DD))$ as above with the action of $G_f$
  given via $\theta_f$.

  \noindent Then $M(f,C,A,\DD,\CC)$ decomposes as
  \[\subscript{\Mod(S^2,C,\DD)}{(\Lambda\times vM_0(f))}_{ \Mod[e](S^2,A,\CC) \times G_f} \otimes \Mod(S^2,A,\CC)
   \] by the map sending $(m_1,g_1)f(m_2,g_2)$ to
   $([(m_1,m_2)],\sigma(\pi(g_1 f)))\otimes g_2$. The actions on
   $(\Lambda\times vM_0(f))$ are given by
   \[(m_1,g_1)\cdot(\lambda,g)\cdot(m_2,g_2)=((m_1\lambda
     m_2\theta_f(g_2),g_1 g g_2).\]
\end{thm}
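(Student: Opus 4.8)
The plan is to exhibit the claimed map explicitly and verify it is a biset isomorphism, using the extensions already set up. First I would recall that by~\eqref{eq:Mp_Gr_seq} we have a splitting $\Mod(S^2,C,\DD)\cong \Mod[e](S^2,C,\DD)\times\Mod[v](S^2,C,\DD)$ and likewise for $A,\CC$; so every element of $M(f,C,A,\DD,\CC)$ can be written (non-uniquely) as $(m_1,g_1)f(m_2,g_2)$ with $m_i$ edge-type and $g_i$ vertex-type. The target biset $(\Lambda\times vM_0(f))\otimes_{\Mod[v](S^2,A,\CC)}\Mod(S^2,A,\CC)$ is built so that $\Lambda$ records the edge-twist data (modulo the relation~\eqref{eq:defn:Lambda} coming from liftability of twists) and $vM_0(f)$ records the restriction to $S^2\setminus\DD$. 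Define $\Psi\colon (m_1,g_1)f(m_2,g_2)\mapsto ([(m_1,m_2)],\sigma(\pi(g_1f)))\otimes g_2$, and check first that this is well defined.

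The key steps, in order: (1) Well-definedness: two expressions $(m_1,g_1)f(m_2,g_2)=(m_1',g_1')f(m_2',g_2')$ in $M(f,C,A,\DD,\CC)$; I would use Proposition~\ref{prop:MCBedge} (an edge twist downstairs that lifts corresponds to $T_{f,\CC}$ of the upstairs twist) and Proposition~\ref{prop:MCBfree} (left-freeness, so the vertex part is rigid) to conclude that the difference between the two expressions lies exactly in the subgroup defining $\Lambda$, hence $\Psi$ sends them to the same element; the element $g_2$ of $\Mod(S^2,A,\CC)$ is only defined modulo $\Mod[v](S^2,A,\CC)$-translation absorbed into the tensor product, and modulo $\Mod[e]\times G_f$ absorbed by $\Lambda$ and $\theta_f$. (2) Bijectivity: surjectivity is clear since every generator of the target is hit; injectivity follows because, given the $\sigma$-component (restriction to $S^2\setminus\DD$), the remaining freedom is precisely the edge-twist data which $\Lambda$ records faithfully, by the freeness statement in the Lemma preceding this theorem and by Lemma~\ref{lem:ExtOFvM} (which identifies $\Mod[e]\backslash M/\Mod[e]$ with $vM\times S$). (3) Equivariance: verify $\Psi$ intertwines the left $\Mod(S^2,C,\DD)$-action and the right $\Mod[e](S^2,A,\CC)\times G_f\times\Mod[v](S^2,A,\CC)$-action; here one simply writes $(n_1,h_1)\cdot(m_1,g_1)f(m_2,g_2)\cdot(n_2,h_2)$, pushes the $A$-side edge-part $n_2$ across $f$ via $T_{f,\CC}$ and $\theta_f$ using~\eqref{eq:defn:H_f},~\eqref{eq:defn:G_f}, and reads off the action formula $(m_1,g_1)\cdot(\lambda,g)\cdot(m_2,g_2)=(m_1\lambda m_2\theta_f(g_2),g_1gg_2)$; the $\Mod[v](S^2,A,\CC)$ factor is what gets contracted in $\otimes_{\Mod[v](S^2,A,\CC)}$ and yields the third tensor factor.

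The main obstacle I expect is step (1)–(2), specifically controlling the interaction of the edge-twist group with the non-liftable elements: an edge twist $e\in\Z^\CC$ with $T_{f,\CC}(e)\in\Z^\DD$ but $e\notin H_f$ contributes a torsion element to $\Lambda$ (see the Remark following~\eqref{eq:defn:Lambda}), so $\Lambda$ is not free abelian, and one must be careful that the map $\Psi$ is insensitive to exactly this torsion and no more. I would handle this by invoking Proposition~\ref{prop:MCBedge} in the sharpened form $m^{d!}=T_{f,\CC}(e^{d!})$ (valid for all $e$, liftable or not) and noting that the quotient by $\{(T_{f,\CC}(m),-m)\mid m\in H_f\}$ already builds in precisely the liftable relations, so that the torsion of $\Lambda$ matches the failure of liftability exactly. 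Once this bookkeeping is pinned down, everything else is a direct check from the definitions and from Lemmas~\ref{lem:ExtOFvM} and~\ref{lem:distillations}.
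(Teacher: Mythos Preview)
Your approach is correct in outline but takes a more laborious route than the paper. The paper's proof is essentially two lines: it observes that, by the Lemma immediately preceding the theorem, the combined left action of $\Lambda\times\Mod[v](S^2,C,\DD)$ on $M(f,C,A,\DD,\CC)$ is free, and then applies the general structure theorem for left-free bisets (as in~\eqref{eq:DecOfSphBis}): any left-free $H$-$G$-biset with distinguished element $b$ decomposes as $H b G_b\otimes_{G_b} G$, where $G_b$ is the stabilizer of the left orbit of $b$. Here $H=\Lambda\times\Mod[v](S^2,C,\DD)$, $b=f$, and $G_b=G_f$ by the very definition~\eqref{eq:defn:G_f}. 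The left orbit $(\Lambda\times\Mod[v](S^2,C,\DD))f G_f$ is then identified with $\Lambda\times vM_0(f)$ by inspection.

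The advantage of the paper's route is that it bypasses your steps (1)--(3) entirely: well-definedness, injectivity, and equivariance are all automatic consequences of the structural decomposition, so there is nothing to check by hand. In particular, your ``main obstacle'' about torsion in $\Lambda$ dissolves: the freeness lemma already absorbs exactly the relations defining $\Lambda$, torsion included, so no separate bookkeeping is needed. Your invocations of Lemma~\ref{lem:ExtOFvM} and Lemma~\ref{lem:distillations} are also unnecessary for this argument. What your explicit approach buys is a concrete formula for the isomorphism, which the paper only records in the statement; but since the paper's structural argument produces the same map (as the canonical isomorphism for left-free bisets), this is a difference of presentation rather than content.
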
   
\begin{proof}
  View first $M(f,C,A,\DD,\CC)$ as a
  $(\Lambda\times\Mod[v](S^2,C,\DD))$-$\Mod[v](S^2,A,\CC)$-biset. Since
  $M(f,C,A,\DD,\CC)$ is left-free, we have
  \[M(f,C,A,\DD,\CC)\cong(\Lambda\times \Mod[v](S^2,C,\DD))f G_f \otimes_{G_f}
    \Mod[v](S^2,A,\CC).
  \]
  We then have a decomposition of
  $\Lambda \times \Mod[v](S^2,C,\DD)f G_f$ as
  \[\subscript{\Mod[e](S^2,C,\DD) \times
      \Mod[v](S^2,C,\DD)}(\Lambda\times \Mod[v](S^2,C,\DD)
    \pi(f)G_f)_{\Mod[e](S^2,A,\CC) \times G_f}
  \]
  with $\Mod[v](S^2,C,\DD) \pi(f)G_f$ viewed as a subbiset of
  $\Mod[e](S^2,C,\DD)\backslash M(f,C,A,\DD,\CC)/\Mod[e](S^2,A,\CC)$. The
  biset $\Mod[v](S^2,C,\DD) \pi(f)G_f$ is isomorphic to $vM_0(f)$.
\end{proof}

\subsection{Conjugacy and centralizers in mapping class bisets}
We are ready to apply Algorithm~\ref{algo:RedConjAndEqPr} to solving
centralizer and conjugacy problems in $M(f,A,\CC)$. We consider as
class $\Omega$ of ``computable'' groups all finite direct products
$G_1\times\cdots G_n$ of groups $G_i$ which are all finite-index
subgroups of modular groups $\Mod(S^2,A_i,\CC_i)$.

Note that $\Omega$ contains pure mapping class groups, cyclic groups
(as mapping class groups of a sphere with four marked points separated
by a curve) and $\SL_2(\Z)$ (as the mapping class group of a sphere
with four marked points). Note in particular that all groups in
$\Omega$ are finitely presented and have solvable word and conjugacy
problem.

\begin{prop}\label{prop:AbSubBisets}
  Consider the extension of bisets~\eqref{eq:Mp_Cl_biset_seq} and the natural morphism~\eqref{eq:forgetvM}. Then
  there is an algorithm, with oracle solving the conjugacy and
  centralizer problems in $vM(f,A,\CC)$, by which the conjugacy
  problem is solvable in $M(f,A,\CC)$ and the centralizer problem is
  sub-solvable in $M(f,A,\CC)$.
\end{prop}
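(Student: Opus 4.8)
The plan is to apply Proposition~\ref{prop:RedConjAndEqPr} to the central extension of bisets~\eqref{eq:Mp_Cl_biset_seq}. In the notation of~\S\ref{ss:extensionsmcb}, we take $G_1=G_2=\Mod(S^2,A,\CC)$, the normal subgroup $N_1=N_2=\Mod[e](S^2,A,\CC)$, which is central by~\cite{farb-margalit:mcg}*{Lemma~3.11}, and $B=M(f,A,\CC)$, viewed as a left $(\Mod(S^2,A,\CC)\times\Mod(S^2,A,\CC))$-set via~\eqref{eq:GsetsVsBisets} with $H=\Mod(S^2,A,\CC)$ and $\phi(g)=(g,g)$. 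The sequence~\eqref{eq:Mp_Cl_biset_seq} is an extension of left-free bisets by Propositions~\ref{prop:ExtOfBis}, \ref{prop:MCBfree} and~\ref{prop:MCBedge}, so all the hypotheses of Proposition~\ref{prop:RedConjAndEqPr} are in place once we verify computability.

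First I would check the computability hypotheses. All the groups involved --- $\Mod(S^2,A,\CC)$ itself, $\Mod[e](S^2,A,\CC)\cong\Z^\CC$, and the quotient $\Mod[v](S^2,A,\CC)$ --- belong to the class $\Omega$ of finite products of finite-index subgroups of modular groups, by the exact sequence~\eqref{eq:Mp_Gr_seq}; and $\Omega$-membership is preserved under the constructions used in the proof of Proposition~\ref{prop:RedConjAndEqPr} (finitely generated abelian quotients, kernels of computable maps to abelian groups). Next, the orbit and stabilizer problems in the kernel pair $(\iota,\subscript{}{B})$ are solvable: this is exactly the content of Proposition~\ref{prop:MCBedge}, which identifies the $\Mod[e]$-action with the Thurston matrix $T_{f,\CC}$, a linear operator on finitely generated free abelian groups; solving $g e=m g$ for $e\in\Z^\CC$, and computing stabilizers, is then elementary linear algebra over $\Z$, computable via Algorithm~\ref{algo:compute M(f,C,A)}. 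Finally, by hypothesis we have an oracle solving the conjugacy and centralizer problems in $vM(f,A,\CC)$; combined with the fact (established in~\S\ref{ss:extensionsmcb}) that the natural morphism~\eqref{eq:forgetvM} from $\Mod[e]\backslash M(f,A,\CC)/\Mod[e]$ to $vM(f,A,\CC)$ is finite-to-one between left-free bisets, the lemma in~\S\ref{ss:extensionsmcb} promotes the oracle into a solution of the conjugacy and centralizer problems in the quotient biset $\Mod[e](S^2,A,\CC)\backslash M(f,A,\CC)/\Mod[e](S^2,A,\CC)$, i.e.\ in $(\phi\pi,N\backslash B)$.

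With all hypotheses verified, Proposition~\ref{prop:RedConjAndEqPr} applies verbatim: given $b_1,b_2\in M(f,A,\CC)$, first test whether their images in the quotient are conjugate (using the oracle via~\eqref{eq:forgetvM}); if not, neither are $b_1,b_2$; if so, obtain a witness $h$, lift the equation to $b_1=a' h^\phi b_2$ with $a'\in\Mod[e]$ computed from the solvable orbit problem in the kernel, and then test whether $a'$ can be absorbed into the image of the computable homomorphism $\chi\colon H_{b_1^\pi}\to\Mod[e]/(\Mod[e])_{b_1}$; this last test is a decidable problem in a finitely generated abelian group. The centralizer of $b_1$ is then $\ker\chi$, which exhibits $Z(b_1)$ as the kernel of a map from a computable (hence $\Omega$-) subgroup of $\Mod(S^2,A,\CC)$ to a finitely generated abelian group --- i.e.\ sub-solvable, not necessarily solvable. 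The main obstacle in carrying this out rigorously is not any single step but rather the careful bookkeeping: one must confirm that the stabilizer $H_{b_1^\pi}$ supplied by the oracle genuinely lies in $\Omega$ (so that the construction of $\chi$ stays within computable groups), and that the identification of $B$ as a $(\Mod\times\Mod)$-set is compatible, under~\eqref{eq:GsetsVsBisets}, with the central-extension structure needed for Proposition~\ref{prop:RedConjAndEqPr}; these are exactly the points addressed by Propositions~\ref{prop:MCBfree}, \ref{prop:MCBfinite} and~\ref{prop:MCBedge}, so the proof reduces to citing them in the right order.
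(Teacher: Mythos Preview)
Your proposal is correct and follows essentially the same route as the paper: both reduce to Proposition~\ref{prop:RedConjAndEqPr} applied to the central extension~\eqref{eq:Mp_Cl_biset_seq}, using the unnamed lemma after~\eqref{eq:forgetvM} to promote the $vM$-oracle to the quotient biset, and verifying the kernel $(\iota,B)$ problems via the Thurston-matrix description. The paper's proof is slightly more explicit on one point you pass over quickly: it does not invoke Proposition~\ref{prop:MCBedge} alone for the kernel orbit/stabilizer problems, but first isolates the finite-index subgroup $Q_b=\{a_2\in\Mod[e]\mid b a_2\in\Mod(S^2,A,\CC)b\}$ (computable by Proposition~\ref{prop:MCBfinite}) and only then uses Proposition~\ref{prop:MCBedge} to identify the lifting map $\phi_b$ on $Q_b$ with $T_{f,\CC}$; this matters because not every $e\in\Z^\CC$ is liftable even when $T_{f,\CC}(e)$ is integral (cf.\ the Remark after~\eqref{eq:defn:Lambda}), so ``elementary linear algebra over $\Z$'' is not quite the whole story without the finite-index step.
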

\begin{proof}
  To apply Algorithm~\ref{algo:RedConjAndEqPr} we need to verify the
  following two conditions:
  \begin{itemize}
  \item given $b\in eM(f,A,\CC)$, the stabilizer
    \[P_b=\{(a_1,a_2)\in\Mod[e](S^2,A,\CC)\times \Mod[e](S^2,A,\CC)\mid a_1b=b a_2\}\]
    is finitely generated and computable,
  \item given $b_1,b_2\in eM(f,A,\CC)$, it is decidable whether there
    exist $a_1,a_2\in\Mod[e](S^2,A,\CC)$ with $a_1b_1=b_2a_2$.
  \end{itemize}

  For $b\in eM(f,A,\CC)$, consider the subgroup
  \[Q_b\coloneqq\{a_2\in\Mod[e](S^2,A,\CC)\mid b a_2\in\Mod(S^2,A,\CC)b\}.
  \]
  It has finite index in $\Mod[e](S^2,A,\CC)$ so is computable, since
  $M(f,A,\CC)$ has finitely many left orbits by
  Proposition~\ref{prop:MCBfinite}. Furthermore, it follows from
  $f^{-1}(\CC)\subset \CC$ that $Q_b$ has a well-defined image in
  $\Mod[e](S^2,A,\CC)$ under
  \[\phi_b\colon a_2\mapsto \text{the element $a_1\in\Mod[e](S^2,A,\CC)$ such that
  }a_1b=b a_2.
  \]
  Therefore, the stabilizer of $b$ has the following description:
  \[P_b=\{(\phi_b(a_2)^{-1},a_2)\in\Mod[e](S^2,A,\CC)\times Q_b\};\]
  this shows that $P_b$ is computable.

  Consider $b_1,b_2\in eM(f,A,\CC)$. It is decidable whether there
  exists $a_2\in\Mod[e](S^2,A,\CC)$ with $b_2a_2=g_1b_1$ for some
  $g_1\in\Mod(S^2,A,\CC)$. Then $b_1$ and $b_2$ are in the same
  connected component of $eM(f,A,\CC)$ if and only if
  $g_1\in\Mod[e](S^2,A,\CC)$; this last question is decidable.
\end{proof}

\subsection{The vertex mapping class biset of a Thurston map}
We study further the mapping class biset $vM(f,A,\CC)$, decomposing it
as a product.

\begin{defn}[Product bisets]
  Let $(G_i)_{i\in I}$ be a family of groups, let $G=\prod_i G_i$ be
  their product, let $f\colon I\selfmap$ be a map, and let
  $(B_i)_{i\in I}$ be left-free $G_i$-$G_{f(i)}$-bisets.  The
  \emph{product biset} of the bisets $B_i$ is the left-free
  $G$-$G$-biset
  \[\prod_i B_i=\{(b_i)_{i\in I}\mid b_i\in B_i\}
  \]
  with actions
  $h\cdot(b_i)_{i\in I}\cdot g=(h_i b_i g_{f(i)})_{i\in I}$.
\end{defn}

Such a product of bisets appears naturally out of the following
topological data. Let us denote by $(S_i)_{i\in I}$ the connected
components of the disconnection $(S^2,A)\setminus \CC$, and by
$\Mod(S_i)$ the pure mapping class group of $S_i$. Then the group
$\Mod[v](S^2,A,\CC)$ is the direct product $\prod_i\Mod(S_i)$.  For
every $b\in vM(f,A,\CC)$ let $b_i\colon \widehat{U_i}\to
\widehat{S_{f(i)}}$ be the small sphere map induced by $b$ as in
Definition~\ref{lem:SmallMaps} (note that all $b\in vM(f,A,\CC)$
induce the same map on $I$). Denote by $vM(f,A,\CC)_i$ the restriction
of $vM(f,A,\CC)$ to $S_i$:
\[vM(f,A,\CC)_i\coloneqq\subscript{\Mod(S_i)}\{b_i\mid b\in vM(f,A,\CC)\}_{\Mod(S_{f(i)})}.\]

\begin{lem}
  The $\Mod[v](S^2,A,\CC)$-biset $vM(f,A,\CC)$ is the product
  \[vM(f,A,\CC)=\prod_{i} vM(f,A,\CC)_i
  \]
  of the $\Mod(S_i)$-$\Mod(S_{f(i)})$-bisets
  $vM(f,A,\CC)_i$.\qed
\end{lem}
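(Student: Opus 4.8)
The statement is essentially a bookkeeping identity: it asserts that the vertex mapping class biset $vM(f,A,\CC)$ is the product, over the components $S_i$ of $(S^2,A)\setminus\CC$, of the ``restricted'' bisets $vM(f,A,\CC)_i$, compatibly with the product decomposition $\Mod[v](S^2,A,\CC)=\prod_i\Mod(S_i)$. The plan is to exhibit the obvious map in both directions and check it is a biset isomorphism. First I would recall that, by definition, an element $b\in vM(f,A,\CC)$ is the restriction of some $g\in M(f,A,\CC)$ to $(S^2,A)\setminus\CC$, and that the map $I\selfmap$ it induces on the index set (sending $i$ to $f(i)$, where $S_i$ maps to the small sphere containing the essential sphere above it, cf. Corollary~\ref{cor:unique essential} and Lemma-Definition~\ref{lem:SmallMaps}) is the same for every $b$, since it depends only on the dynamics of $f$ on $\CC$ and on the combinatorics of $f^{-1}(\CC)$. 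Thus the target of the claimed isomorphism is a well-defined product biset in the sense of the preceding definition.

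The map $\Psi\colon vM(f,A,\CC)\to\prod_i vM(f,A,\CC)_i$ sends $b$ to the tuple $(b_i)_{i\in I}$ of its small sphere maps $b_i\colon\widehat{U_i}\to\widehat{S_{f(i)}}$, as constructed in Lemma-Definition~\ref{lem:SmallMaps}. I would first check this is a morphism of $\Mod[v](S^2,A,\CC)$-bisets: since $\Mod[v](S^2,A,\CC)=\prod_i\Mod(S_i)$ acts componentwise on small spheres, and pre- and post-composition of a homeomorphism $m=(m_i)$ with $b$ restricts, on each $S_i$, to pre/post-composition of $m_i$ (resp.\ $m_{f(i)}$) with $b_i$, the equality $\Psi(m\cdot b\cdot m')=m\cdot\Psi(b)\cdot m'$ is immediate from the definitions and the functoriality of the restriction construction in Lemma-Definition~\ref{lem:SmallMaps}. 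Surjectivity of $\Psi$ is clear once we know $vM(f,A,\CC)_i$ is \emph{defined} as the set of such restrictions, so that a tuple $(b_i)$ of realizable small sphere maps can be assembled: choose for each $i$ an element $g^{(i)}\in M(f,A,\CC)$ restricting to $b_i$ on $S_i$; the maps $g^{(i)}$ all agree, up to isotopy supported near $\CC$, away from the $S_i$, since any two lie in the same mapping class biset and differ only in the data carried by the trivial and annular spheres of $(S^2,f^{-1}(A),f^{-1}(\CC))$ — which are dynamically irrelevant by Lemma~\ref{lem:unique algebraic essential} — so one may splice them into a single $g$ restricting to $(b_i)$.

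For injectivity, suppose $b,b'\in vM(f,A,\CC)$ have $b_i=b'_i$ for all $i$; I want $b=b'$ in $vM(f,A,\CC)$. Lift $b,b'$ to Thurston maps $g,g'\in M(f,A,\CC)$; by Corollary~\ref{cor:unique essential} each small sphere $S_i$ contains a unique essential small sphere $T_{z_i}$ of $S^2\setminus f^{-1}(\CC)$, and the assumption $b_i=b'_i$ means that the essential-sphere bisets of the tree-of-bisets decompositions $\gfB(g)$ and $\gfB(g')$ agree. The remaining (non-essential, i.e.\ trivial and annular) vertex bisets carry no information that survives the passage to $vM$: by Lemma~\ref{lem:AlgClassOfBis} these are of the form $G_{\lambda(z)}\otimes_{P_z}B'_z$ with $P_z$ cyclic peripheral or trivial, and the decomposition $M(f,A,\CC)=\Mod[e]\backslash M/\Mod[e]$-fibered structure of~\eqref{eq:Mp_Cl_biset_seq} together with Lemma~\ref{lem:ExtOFvM} show exactly that passing to $vM$ forgets precisely this non-essential data. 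Hence $b$ and $b'$ already agree in $vM(f,A,\CC)$. The main obstacle is thus not any single calculation but making precise the statement that ``the restriction map to $S^2\setminus\CC$ loses exactly the non-essential-sphere data and nothing more''; this is where I would lean most heavily on Lemma~\ref{lem:unique algebraic essential}, Corollary~\ref{cor:unique essential}, Lemma~\ref{lem:AlgClassOfBis} and Lemma~\ref{lem:ExtOFvM}, after which both injectivity and surjectivity follow formally and the biset-morphism property is routine.
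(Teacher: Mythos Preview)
The paper records this lemma with only a \qed: the authors treat it as an immediate consequence of the definitions. Most of it is. An element of $vM(f,A,\CC)$ is \emph{by definition} the restriction of some $g\in M(f,A,\CC)$ to $(S^2,A)\setminus\CC=\bigsqcup_i S_i$, which is nothing other than the tuple $(g_i)_i$ of its small sphere maps; hence $\Psi\colon b\mapsto(b_i)$ is tautologically injective, and since $\Mod[v](S^2,A,\CC)=\prod_i\Mod(S_i)$ acts componentwise on both sides, $\Psi$ is a biset morphism. Your route for injectivity through lifts $g,g'\in M(f,A,\CC)$, their tree-of-bisets decompositions, and Lemma~\ref{lem:ExtOFvM} is therefore unnecessary: equality of all components already \emph{is} equality in $vM(f,A,\CC)$.

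Your surjectivity argument, however, has a genuine gap. You assert that the chosen lifts $g^{(i)}\in M(f,A,\CC)$ ``all agree, up to isotopy supported near $\CC$, away from the $S_i$, since any two lie in the same mapping class biset and differ only in the data carried by the trivial and annular spheres''. That is false: two elements of $M(f,A,\CC)$ may differ in their \emph{essential} small-sphere maps on every $S_j$, not only in the trivial and annular data, so there is no reason for $g^{(i)}$ and $g^{(j)}$ to coincide on any $S_k$ with $k\neq i,j$, and the splicing you describe does not go through as written. A workable line is rather to note that each $c_i\in vM(f,A,\CC)_i$ has the same portrait and local degrees as $f_i$ (the pure classes $m'_i,m''_{f(i)}$ fix marked points pointwise), so the family $(c_i)$ is automatically compatible along $\CC$; one may then assemble a sphere tree of bisets over $\gf$ with these as its essential vertex bisets and realize it via Corollary~\ref{cor:dehn-nielsen-baer+-mc}. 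What remains, and what your argument does not address, is to check that the resulting map actually lies in $M(f,A,\CC)$ rather than merely in $K(S^2,A)$.
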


\begin{prop}\label{prop:bisets_gluing}
  Consider a periodic cycle
  \begin{equation}\label{eq:pr_cycle_in_I}
    \Pi=\{i,f(i),\dots, f^{(t)}(i)=i\}
  \end{equation}
  of $f\colon I\selfmap$, and assume that an oracle solves the
  conjugacy and centralizer problems in the $\Mod[v](S_i)$-biset
  \begin{equation}\label{eq:tens_pr_bis}
    vM(f,A,\CC)_i\otimes_{\Mod[v](S_{f(i)})}vM(f,A,\CC)_{f(i)}\otimes\cdots\otimes vM(f,A,\CC)_{f^{t-1}(i)}.
  \end{equation}
  Then the conjugacy and centralizer problems are solvable in the
  $\prod_{j\in\Pi}\Mod[v](S_{j})$-biset
  \begin{equation}\label{eq:pr_bis}
    vM(f,A,\CC)_\Pi\coloneqq\prod_{j\in\Pi}vM(f,A,\CC)_j.
  \end{equation}

  If for every periodic cycle as in~\eqref{eq:pr_cycle_in_I} the
  conjugacy and centralizer problems are solvable
  in~\eqref{eq:pr_bis}, then the conjugacy and centralizer problems
  are solvable in $vM(f,A,\CC)$.
\end{prop}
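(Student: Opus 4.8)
The plan is to reduce, in two stages, the conjugacy and centralizer problems for $vM(f,A,\CC)$ to those for the cyclic composite bisets~\eqref{eq:tens_pr_bis}, one for each periodic cycle of $f\colon I\selfmap$. First, $f\colon\mathscr S\selfmap$ is a bijection, being a self-map of the finite set $\mathscr S$ that is surjective because $f^{-1}(\CC)=\CC$ forces every small sphere of $(S^2,A)\setminus\CC$ to be covered by $f$; so $I=\Pi_1\sqcup\dots\sqcup\Pi_r$ is the disjoint union of the periodic cycles of $f$, the group $\Mod[v](S^2,A,\CC)$ splits as $\prod_k\bigl(\prod_{j\in\Pi_k}\Mod(S_j)\bigr)$, and $vM(f,A,\CC)=\prod_k vM(f,A,\CC)_{\Pi_k}$ as bisets, compatibly with these product decompositions of the acting groups. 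In a product biset over a product of groups whose index bijection respects the partition, two elements are conjugate if and only if all their coordinates are conjugate in the respective factor bisets, and the centralizer is the direct product of the coordinate centralizers; this reduction is effective, and it reduces the second assertion to the first, applied to each $\Pi_k$.

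For the first assertion, fix $\Pi=\{i_0,\dots,i_{t-1}\}$ with $i_{k+1}=f(i_k)$ (indices mod $t$), write $G_k\coloneqq\Mod(S_{i_k})$ and $B_k\coloneqq vM(f,A,\CC)_{i_k}$ --- a left-free $G_k$-$G_{k+1}$-biset by Proposition~\ref{prop:MCBfree} --- and set $B\coloneqq B_0\otimes\dots\otimes B_{t-1}$, the $G_0$-biset~\eqref{eq:tens_pr_bis}. I claim the assignment $(b_0,\dots,b_{t-1})\mapsto\beta\coloneqq b_0\otimes\dots\otimes b_{t-1}$ induces a bijection between conjugacy classes of $vM(f,A,\CC)_\Pi=\prod_k B_k$ and of $B$, carrying centralizers to centralizers through projection onto the first coordinate. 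One direction is a telescoping identity: if $g_kb_k=b'_kg_{k+1}$ for all $k$, then $g_0\beta=\beta'g_0$ with $\beta'=b'_0\otimes\dots\otimes b'_{t-1}$. For the converse, given $g_0$ with $g_0\beta=\beta'g_0$, I would put $\beta$ and $\beta'$ into the normal form $\widetilde b_0\otimes x_1\otimes\dots\otimes x_{t-1}$ afforded by left-freeness of the $B_k$ (push every internal group element into the leftmost factor, the $x_j$ ranging over fixed left transversals of the $B_j$), then renormalize $\beta'g_0$ by pushing $g_0$ through the factors from the right; comparing the now-unique normal forms of $g_0\beta$ and $\beta'g_0$ extracts elements $g_1,\dots,g_{t-1}$ which, after conjugation by the auxiliary elements arising in the normalization, assemble into a tuple $(h_0,\dots,h_{t-1})$ with $h_0=g_0$ and $h_kb_k=b'_kh_{k+1}$ for all $k$. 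All steps are algorithmic once the $B_k$ and their tensor products have been computed (Algorithm~\ref{algo:compute M(f,C,A)}), so the oracle for $B$ solves conjugacy in $vM(f,A,\CC)_\Pi$.

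For the centralizer, the construction just described gives a surjection $Z_{\prod_k G_k}\bigl((b_k)\bigr)\twoheadrightarrow Z_{G_0}(\beta)$ together with a computable section, the unfolding $g_0\mapsto(h_0,\dots,h_{t-1})$; to see this surjection is an isomorphism --- so that $Z_{\prod_k G_k}\bigl((b_k)\bigr)$ is the image under the unfolding of the oracle-provided generators of $Z_{G_0}(\beta)$ --- I would check that each right stabilizer $\{g\in G_{k+1}\mid b_kg=b_k\}$ is trivial: if $g\circ b_k\approx b_k$ then, after correcting $(g)_*$ by an inner automorphism, it fixes pointwise the finite-index image of $(b_k)_*$ inside the sphere group of $\widehat{S_{i_{k+1}}}$, and an automorphism of a free group of rank $\ge 2$ fixing a finite-index subgroup pointwise is the identity; small-sphere groups have at least three peripheral conjugacy classes, hence rank $\ge 2$, so $g$ is trivial and the kernel of the projection vanishes. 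The step I expect to be most delicate is the normal-form bookkeeping in the tensor product --- splitting $g_0$ into $(h_0,\dots,h_{t-1})$ and verifying the intertwining relations $h_kb_k=b'_kh_{k+1}$; the product-biset formalism of the first reduction and the free-group lemma for right stabilizers are comparatively routine.
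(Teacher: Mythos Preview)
Your argument for the first assertion is close to correct, but the justification you give for injectivity of the projection $Z_{\prod_k G_k}((b_k))\to Z_{G_0}(\beta)$ is wrong. Right stabilizers in mapping class bisets are \emph{not} trivial in general: there is no homomorphism ``$(b_k)_*$'' with finite-index image in the direction you use (the branched covering $b_k$ goes from $\widehat{S_{i_k}}$ \emph{to} $\widehat{S_{i_{k+1}}}$, not the other way), and concretely, in the paper's own example~\S\ref{ss:complicated centralizer} the small map $z^2\colon(\widehat S_1,\{0,\infty,1,-1\})\selfmap$ factors through a three-punctured sphere, so \emph{every} element of $\Mod(\widehat S_1)$ lies in the right stabilizer. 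The conclusion you want is nevertheless true, but it follows from \emph{left}-freeness: if $(1,g_1,\dots,g_{t-1})\in Z((b_k))$, read the cycle backwards from $g_{t-1}b_{t-1}=b_{t-1}g_0=b_{t-1}$ to get $g_{t-1}=1$ by left-freeness of $B_{t-1}$, then $g_{t-2}=1$, and so on. With this fix, your direct reduction of the product biset over a cycle to the tensor product works and is a legitimate alternative to the paper's device, which instead adjoins an auxiliary fixed point $i'$ carrying the tensor product as its biset and then invokes Algorithm~\ref{algo:smalltobig}.

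Your reduction for the second assertion, however, has a genuine gap. You assert that $f\colon\mathscr S\selfmap$ is a bijection, so that $I$ decomposes entirely into periodic cycles. This fails: the induced map on small spheres can have strictly preperiodic points. The paper's Example~\ref{exple:MattOFConjPolyn} makes this explicit --- for the mating $p\FM\bar p$, the map $B_*$ on sphere vertices coincides with the polynomial $p$ acting on the vertices of its Hubbard tree, and that action is typically not bijective. Your justification (``every small sphere is covered by $f$'') only shows that every target sphere is hit by \emph{some} piece of the preimage, not that this piece is essential. The paper's Algorithm~\ref{algo:smalltobig} is written precisely to handle this: it starts from the union $X_0$ of periodic cycles and then inductively adjoins preperiodic indices $i$ with $f(i)\in X$, at each step restricting the centralizer to the finite-index subgroup that stabilizes $1\otimes b_i$ and reading off the new coordinate via left-freeness. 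Without this extension step your proof of the second assertion is incomplete.
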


We will prove the proposition later, in the form of
Algorithm~\ref{algo:smalltobig}; we start by deriving some
consequences. We remark that the bisets defined
by~\eqref{eq:tens_pr_bis} are the first return maps of $f$ on the
system of small spheres $(S_i)_{i\in I}$. We have:
\begin{algo}\label{algo:bisets_decomp}
 \textsc{Given} two $g_1,g_2\in M(f,A,\CC)$ and an \textsc{oracle} solving conjugacy problems between $R(g_1,\CC)$ and $R(g_2,\CC)$ and computing the centralizers of small maps of $g_1$, \textsc{Solve} the conjugacy problem between $g_1,g_2$ and \textsc{Compute} the
  centralizer of $g_1$ as the kernel of a homomorphism from a
  finite-index subgroup of a product of mapping class groups towards a
  finitely generated abelian group
   \textsc{as follows:}\upshape
 \begin{enumerate}
 \item Using Algorithm~\ref{algo:compute M(f,C,A)} compute a recursion
   for $M(f,A,\CC)$. Applying the forgetful map, compute a recursion
   for $vM(f,A,\CC)$ and
   \[\Mod[e](S^2,A,\CC)\backslash M(f,A,\CC)/\Mod[e](S^2,A,\CC).\]
\item Using Algorithm~\ref{algo:smalltobig}, solve the conjugacy and
  centralizer problems for the images of $g_1$ and $g_2$ in
  $vM(f,A,\CC)$.
 \item Using Algorithm~\ref{algo:from vM to M:eMod}, solve the
   conjugacy and centralizer problems for the images of $g_1$ and
   $g_2$ in
   $\Mod[e](S^2,A,\CC)\backslash M(f,A,\CC)/\Mod[e](S^2,A,\CC)$.
 \item View the conjugacy and centralizer problems for
   $g_1,g_2\in M(f,A,\CC)$ as orbit and stabilizer problems in a
   $G$-set, using~\eqref{eq:GsetsVsBisets}; solve these problems using
   Algorithm~\ref{algo:RedConjAndEqPr}.
 \end{enumerate}
\end{algo}
\begin{proof}[Correctness of the algorithm]
  This follows directly from Propositions~\ref{prop:AbSubBisets}
  and~\ref{prop:bisets_gluing}.
\end{proof}

We recall again that $g_1,g_2\in M(f,A,\CC)$ are typically given in
terms of their bisets. By Theorem~\ref{thm:DecompOfBiset} the sphere
tree of bisets decomposing $B(g_1)$ and $B(g_2)$ relatively to $\CC$
are computable; so the projections of $g_1$ and $g_2$ in
$vM(f,A,\CC)$, see~\eqref{eq:vB as grBis}, and the bisets of small
Thurston maps of $g_1$ and $g_2$, see~\eqref{eq:def:SmallThBis}, are
computable.

Note that the centralizer of a homeomorphism is usually not very
complicated. For example, if $h\colon(S^2,A,\CC)\selfmap$ is a
homeomorphism, then its centralizer is, up to finite index, the direct
product of the centralizers of the small Thurston maps. The situation
is much more intricate for non-invertible maps,
see~\S\ref{ss:complicated centralizer}.

\begin{algo}\label{algo:smalltobig}
  \textsc{Given} a multicurve $\CC$ and a solution, for each periodic
  cycle of small spheres, of the conjugacy and centralizer problems in
  the first return biset,\\
  \textsc{Solve} the conjugacy and centralizer problems in $vM(f,A,\CC)$
  \textsc{as follows:}\upshape

  Consider two elements $b,c\in vM(f,A,\CC)$ for which we wish to know
  whether they are conjugate and for which we wish to compute $Z(b)$.
  \begin{enumerate}
  \item We use the notation $b_i$ for the projection of
    $b\in vM(f,A,\CC)$ to $vM(f,A,\CC)_i$, etc.

    Let $\Pi_1,\Pi_2,\dots,\Pi_n$ be all the periodic cycles of
    $f\colon I\selfmap$, viewed as subsets of $I$. To avoid
    hard-to-read subscripts, we introduce the notation
    $b[\Pi_1],\dots,b[\Pi_n]$ for the images of $b$ in
    $vM(f,A,\CC)_{\Pi_1},\dots,vM(f,A,\CC)_{\Pi_n}$ respectively, and
    define $c[\Pi_1],\dots,c[\Pi_n]$ similarly.
  \item If for some cycle $\Pi_j$ the element $b[\Pi_j]$ is not
    conjugate in $vM(f,A,\CC)_{\Pi_j}$ to $c[\Pi_j]$, then $b$ and $c$
    are not conjugate, and we have solved the conjugacy problem for
    $b,c$ in the negative.
  \item For every $X\subset I$ with $f(X)\subset X$, let us denote by
    \[\Mod_X\coloneqq\prod_{i\in X}\Mod(\widehat S_i)
    \]
    the pure mapping class group of $\bigsqcup_{i\in X}\widehat S_i$, by
    \[vM(f,A,\CC)_X\coloneqq\prod_{i\in X}vM(f,A,\CC)_i
    \]
    the mapping class biset over $\bigsqcup_{i\in X}\widehat S_i$, and by
    $b[X]$ and $c[X]$ the images of $b$ and $c$ in $vM(f,A,\CC)_X$
    respectively. We denote also by $Z(b[X])$ the centralizer of
    $b[X]$ in $vM(f,A,\CC)_X$, and by $h[X]$ an element of $\Mod_X$
    that conjugates $b[X]$ to $c[X]$, namely satisfies
    $b[X]=c[X]^{h[X]}$; or, if $b[X],c[X]$ are not conjugate, we write
    $h[X]=\texttt{fail}$.
  \item Let $X_0=\bigsqcup_j\Pi_j$ be the union of the cycles in $I$.
    From the assumptions of the proposition, $h[X_0]$ and $Z(b[X_0])$
    are computable, and we show now how to compute the variables
    $h[X]$ and $Z(b[X])$ for all $X\subseteq I$.

    After all $h[X]$ and $Z(b[X])$ have been computed, we will in
    particular know whether $b$ and $c$ are conjugate: if
    $h[I]\neq\texttt{fail}$, then it conjugates $b$ into $c$. The
    centralizer of $b$ in $Z(b[I])$.
  \item Assume that $h[X]$ and $Z(b[X])$ have been computed, and
    consider $i\in I\setminus X$ with $f(i)\in X$. This is how to
    compute $h[X\cup\{i\}]$ and $Z(b[X\cup\{i\}])$.

    If $h[X]=\texttt{fail}$ then set
    $h[X\cup\{i\}]\coloneqq\texttt{fail}$. Otherwise, compute the
    action of $Z(b[X])[f(i)]\le\Mod(\widehat S_{f(i)})$ on
    $1\otimes c_i h[f(i)]\in 1\otimes vM(f,A,\CC)_i$. Since $Z(b[X])$
    is computable and in particular finitely generated and it acts on
    a finite set, the orbit of $1\otimes c_i h[f(i)]$ is
    computable. If this orbit does not contain $1\otimes b_i$, then
    set $h[X\cup\{i\}]\coloneqq\texttt{fail}$. Otherwise, let
    $m\in Z(b[f(i)])$ and $h[i]\in\Mod(\widehat S_i)$ be such that
    $h[i]b_i=c_i h[f(i)]m[f(i)]$; by replacing $h[X]$ with $h[X]m[X]$
    and defining $h[i]$ as above, we extend $h[X]$ to $h[X\cup\{i\}]$.

    Again we use the fact that the action of $Z(b[X])$ on the finite
    set $1\otimes vM(f,A,\CC)_i$ is computable; let $H$ be the
    finite-index subgroup of $Z(b[X])$ that stabilizes $1\otimes
    b_i$.
    Furthermore, let $\phi\colon H\to\Mod(\widehat S_i)$ satisfy
    $a^\phi b_i=b_i a[f(i)]$; it is well-defined by
    Proposition~\ref{prop:MCBfree}. Let then $Z(b[X\cup\{i\}])$ be the
    diagonal image $a\mapsto(a,a^\phi)$ of $H$ in
    $Z(b[X])\cup\Mod(\widehat S_i)\le \Mod_{X\cup\{i\}}$.
  \end{enumerate}
\end{algo}

\begin{proof}[Proof of Proposition~\ref{prop:bisets_gluing}]
  The second part is proven in Algorithm~\ref{algo:smalltobig}.

  For the first part, consider a new set
  $\Pi'=\{i,f(i),\dots,f^{(t-1)}(i),f^{(t)}(i)=i'=f(i')\}$. Set $S_{i'}\coloneqq S_i$ and extend $f$ to $S_{i'}$ as
  $f^{(t)}$. Then the biset $vM(f,A,\CC)_{i'}$ is given
  by~\eqref{eq:tens_pr_bis}, which by assumption has solvable
  conjugacy and centralizer problems. Since this is the only cycle of
  $\Pi'$, we may apply the second part and deduce that the conjugacy
  and centralizer problems are solvable for $\Pi'$.

  Consider now two elements $b,c\in vM(f,A,\CC)_\Pi$ for which we wish
  to know whether they are conjugate and for which we wish to compute
  $Z(b)$.

  Write $b=(b_i,b_{f(i)},\dots,b_{f^{(t-1)}(i)})$ and
  $c=(c_i,c_{f(i)},\dots,c_{f^{(t-1)}(i)})$. Extend them to elements
  $b',c'\in vM(f,A,\CC)_{\Pi'}$ by setting
  $b_{i'}=b_i\otimes\cdots\otimes b_{f^{(t-1)}(i)}$ and
  $c_{i'}=c_i\otimes\cdots\otimes c_{f^{(t-1)}(i)}$.

  Then $b,c$ are conjugate if and only if $b',c'$ are conjugate in
  $vM(f,A,\CC)_{\Pi'}$. If $h'=(h_i,h_{f(i)},\dots,h_{i'})$ conjugates
  $b'$ to $c'$, then $h=(h_i,\dots,h_{f^{(t-1)}(i)})$ conjugates $b$
  to $c$. The centralizer $Z(b)$ coincides with the centralizer
  $Z(b')$. Therefore, both problems are solvable.
\end{proof}

\section{Decidability of Thurston equivalence}
\label{s:CombEquiv}
We prove our main result in this section, as a consequence of
Algorithm~\ref{algo:bisets_decomp} and results
from~\cites{bartholdi-dudko:bc3,bartholdi-dudko:bc4}:
\begin{thm}\label{thm:A:new}
  It is decidable whether or not two Thurston maps are combinatorially
  equivalent.

  Furthermore, the centralizer of a Thurston map $f$ (i.e.\ the set of
  homeomorphisms that commute with $f$ up to isotopy) is computable.
\end{thm}

For a sphere map $f\colon (S^2,A)\selfmap$, denote by
$A^\infty\subseteq A$ the forward orbit of the periodic critical
points of $f$. The map $f\colon (S^2,A)\selfmap $ with $\deg f>2$ is
\emph{geometric} if $f$ is either
\begin{itemize}
\item[\Exp] \emph{B\"ottcher expanding}: there is a metric on
  $S^2\setminus A^\infty$ that is expanded by $f$, and such that at
  all $a\in A^\infty$ the first return map of $f$ is locally conjugate
  to $z\mapsto z^{\deg_a(f^n)}$; or
\item[\Tor] a quotient of a torus endomorphism
  $z\mapsto M z+q\colon\R^2/\Z^2\selfmap$ by the involution
  $z\mapsto-z$, for a $2\times2$ matrix $M$ whose eigenvalues are
  different from $\pm 1$.
\end{itemize}

For a sphere map $f\colon (S^2,A)\selfmap$, denote by $P_f$ its
postcritical set and denote by $f\colon (S^2,P_f,\ord_f)\selfmap $ the
minimal orbisphere map induced by $f$.  Let $X$ be a basis of $M(f)$
and let $N$ be a finite generating set of $\Mod(S^2,A)$; so
$M(f)=\bigcup_{n\ge0} N^n X$. We call an algorithm with input in
$M(f)\times M(f)$ \emph{efficient} if for $f,g\in N^n X$ the running
time of the algorithm is bounded by a polynomial in $n$.

\begin{thm}[\cite{bartholdi-dudko:bc3}*{Corollary~\ref{bc3:cor:conjZpb} and Theorem~\ref{bc3:thm:RedConjCentrProb}}]
\label{thm:removing extra marked pnts}
 There is an efficient algorithm with oracle that, given two
  orbisphere maps $f,g\colon (S^2,A)\selfmap$ by their bisets and such that $f$ is
  geometric, decides whether $f,g$ are conjugate, and computes the
  centralizer of $f$.

  The oracle must answer, given two geometric orbisphere maps $f,g$ on
  their minimal orbisphere $(S^2,P_f,\ord_f)$ respectively
  $(S^2,P_g,\ord_g)$, whether they are conjugate and what the
  centralizer of $f$ is.
  
  The centralizer of $f\colon (S^2,A)\selfmap$ is a isomorphic to a finite-index subgroup of the centralizer of  $f\colon (S^2,P_f,\ord_f)\selfmap$. 
\end{thm}

An obstruction to being geometric is the existence of a
\emph{Levy cycle}. This is an essential simple closed curve on
$S^2\setminus A$ that is isotopic to some iterated preimage of
itself. This is the only obstruction:
\begin{thm}
  Suppose $f\colon (S^2,A)\selfmap $ is a Thurston map with degree at
  least $2$ such that $f$ admits no Levy obstruction. Then either $f$
  is isotopic to a map expanding a metric on $S^2\setminus A^\infty$, or $f$
  is isotopic to the quotient by the involution $z\mapsto -z$ of an
  affine map on $\R^2/\Z^2$ whose eigenvalues are different from
  $\pm1$.
\end{thm} 
\noindent The torus case is proven
in~\cite{selinger-yampolsky:geometrization}*{Main Theorem
  \MakeUppercase{\romannumeral 2}}, and the non-torus case is proven
in~\cite{bartholdi-dudko:bc4}*{Theorem~\ref{bc4:thm:main}}.

A sphere map $f\colon (S^2,A)\selfmap$ induces a pullback map
$\sigma_f$ on the Teichm\"uller space $\mathscr T_A$ of complex
structures on $(S^2,A)$,
see~\S\ref{ss:examples}. Pilgrim's~\cite{pilgrim:combinations}
\emph{canonical obstruction} $\CC_f$ consists of all essential simple
closed curves $\gamma$ such that for some (and equivalently all)
$\eta\in \mathscr T_A$ the length of $\gamma$ with respect to
$(\sigma_f)^n\eta$ shrinks to $0$ as $n\to +\infty$.

\begin{algo}\label{algo:ratmaps}
  \textsc{Given} two sphere bisets $\subscript{ G }B_G$ and $\subscript{ G }B'_G$,\\
  \textsc{Decide} whether $B$ and $B'$ are bisets of rational non-\Tor~maps;\\
  in that case, \textsc{Decide} whether $B$ and $B'$ are conjugate by $\Mod(G)$, and if so \textsc{Compute} a conjugator \textsc{ as follows:}\\
  \begin{enumerate}
  \item
    Using~\cite{bartholdi-dudko:bc4}*{Algorithms~\ref{bc4:algo:is2cover}},
    check whether $B$ is the biset of a map double covered by a torus
    endomorphism. If so, then $B$ is not the biset of a non-\Tor\
    rational map.
  \item Compute a complete list $\mathscr R$ of conjugacy classes of
    $G$-$G$-bisets of non-\Tor\ rational maps of degree $\deg(B)$ as
    follows:
    \begin{enumerate}
    \item Choose arbitrary complex co\"ordinates for the position of a
      marked set $C$ of $\rank (G)$ points in $\hC$. Every rational
      map $g\colon(\hC,C)\selfmap$ with $\deg(g)=\deg(B)$ is given as
      a quotient of two polynomials of degree $\deg(B)$, namely by
      $2\deg(B)+1$ complex co\"efficients.
    \item These co\"efficients satisfy a system of algebraic
      equations, and by Thurston's rigidity theorem this system has
      finitely many non-\Tor\ solutions.
    \item Every non-\Tor\ solution can then be approximated to any
      desired floating-point precision. This only requires standard
      algorithms for root finding and root isolation.
    \item By homotopy lifting of paths in $\hC$ away from $C$, the
      biset of each solution $g$ may be computed. Create the list
      $\mathscr R$ containing bisets $B(g)$ of all the rational
      non-\Tor\ maps $g\colon (\hC,C)\selfmap$.
    \end{enumerate}
  \item Perform in parallel the following steps:
    \begin{enumerate}
    \item Enumerate all multicurves on the sphere $(S^2,A)$. For every
      multicurve $\CC$, check whether $\CC$ is an obstruction for
      $B$. If so, $B$ is not equivalent to a rational map.
      \label{st:7:algo:ratmaps}
    \item Enumerate isomorphisms of sphere groups
      $G\to\pi_1(\hC,C)$. For every isomorphism $m$ and every
      $C\in \mathscr R$, check whether $m$ conjugates $B$ into $C$,
      namely $B^m\cong C$. If so, $B$ is the biset of a rational map.
      \label{st:8:algo:ratmaps}
    \end{enumerate}
  \item Repeat the previous steps for $B'$.
  \item The bisets $B$ and $B'$ are conjugate if and only if $B$ and
    $B'$ are conjugate to the same biset $C\in \mathscr R$. If $m$ and
    $m'$ are the conjugators found in Step~\eqref{st:8:algo:ratmaps},
    then $B^m\cong C$ and $(B')^{m'}\cong C$ so $m(m')^{-1}$ is a
    conjugator from $B$ to $B'$.
  \end{enumerate}
\end{algo}
We remark that this is the only place, in our text, where
floating-point calculations are invoked. In terms of performance, it
is faster to run Thurston's algorithm instead of computing the list
$\mathscr R$; see~\cite{bonnot-braverman-yampolsky:thurstondecidable}.

The computability of the \emph{canonical decomposition} (namely the
decomposition with respect to $\CC_f$) of a sphere map was shown
in~\cite{selinger-yampolsky:geometrization}*{Main Theorem
  \MakeUppercase{\romannumeral 1}}. If a map $f$ is Levy free
non-torus, then $\CC_f$ is a unique minimal obstruction such that all
maps in $R(f,\CC_f)$ are rational
\cite{bartholdi-dudko:bc4}*{Lemma~\ref{bc4:lem:ExpThmFirstObserv}}. We
have the following algorithm:

\begin{algo}\label{algo:canonical}
  \textsc{Given} a Levy-free non-\Tor\ sphere biset $\subscript G B_G$,\\
  \textsc{Compute} the canonical decomposition $\mathfrak B$ of $B$ \textsc{ as follows:}\\
  \begin{enumerate}
  \item Enumerate in increasing order all multicurves $\CC$ on the
    sphere $(S^2,A)$; for every multicurve $\CC$ perform the following
    steps:
  \item If $\CC$ is not fully $B$-invariant, discard it;
  \item Using Theorem~\ref{thm:mc to gog}, compute the decomposition
    $\mathfrak B$ of $B$ with respect to $\CC$;
  \item Using Algorithm~\ref{algo:ratmaps}, check whether all the
    bisets in $R(B,\CC)$ are the bisets of rational non-\Tor~maps. If
    so, return $\mathfrak B$.\label{st:5:algo:canonical}
  \end{enumerate}
\end{algo}

The following algorithm solves the conjugacy problem for Levy free
non-torus maps:
\begin{algo}\label{algo:DecidExp}
  \textsc{Given} two geometric sphere bisets $\subscript {G}B_{G}$ and  $\subscript {H}C_{H}$ that are not \Tor,\\
  \textsc{Decide} whether $B$ and $C$ are conjugate, and \textsc{compute} the centralizer $Z(B)$ \textsc{as
    follows:}\\\upshape
 \begin{enumerate}
 \item Using Theorem~\ref{thm:removing extra marked pnts}, reduce the problem to the case when the peripheral conjugacy classes in $B$ and $C$ correspond to the postcritical set.
 \item Using Algorithm~\ref{algo:canonical}, compute the canonical 
   decompositions $\subscript \gf\gfB_\gf$ and
   $\subscript \gfY\gfC_\gfY$ of $B$ and $C$
   respectively.
 \item Let $X$ and $Y$ be the set of distinguished conjugacy classes
   of $\gf$ and $\gfY$ respectively, see~\S\ref{ss:distinguished
     cc}. Enumerate all possible bijections $h\colon X\to Y$.
 \item For every $h\colon X\to Y$ try to do the following
   steps. Return \texttt{fail} if no $h$ is successful.
 \item Using Algorithm~\ref{algo:MarkCl:Prom} try to promote
   $h\colon X\to Y$ into a biprincipal sphere $\gf$-$\gfY$-tree of
   bisets $\gfI$. Discard $h$ if there is no promotion.
 \item Check by Algorithm~\ref{algo:chech:InMCB} whether
   $\gfC^\gfI \in M(\gfB)$; if not, discard
   $h$.
 \item For every $\gfI$ check using Algorithm~\ref{algo:ratmaps}
   whether the bisets in $R(\gfB)$ are conjugate to the associated
   bisets in $R(\gfC^\gfI)$. If not, discard $h$.
 \item For every $\gfI$ using Algorithm~\ref{algo:bisets_decomp}
   check whether the conjugacies between $R(\gfB)$ and
   $R(\gfC^\gfI)$ promote into a conjugacy between
   $\gfB$ and $\gfC$. 
 \item Write $\gfB=B(f\colon (S^2,A,\CC)\selfmap)$. Compute the
   centralizer $Z(B)$ of $B$: since all bisets in $R(\gfB)$ have
   trivial centralizers the centralizer $Z(B)$ consists of Dehn twists
   $\gfI\in \Mod[e](S^2,A,\CC)$ commuting with $\gfB$; this is
   computable by Algorithm~\ref{algo:compute M(f,C,A)}.
 \end{enumerate}
\end{algo}

\begin{algo}\label{algo:conj:LevyFree}
  \textsc{Given} two sphere bisets $\subscript {G}B_{G}$ and  $\subscript {H}C_{H}$ that are either degree-$1$ or Levy-free,\\
  \textsc{Decide} whether $B$ and $C$ are conjugate, and \textsc{compute} the centralizer $Z(B)$ as a finitely generated subgroup of a product of pure mapping class groups \textsc{as follows:}\\\upshape
  \begin{enumerate}
  \item If $B,C$ are degree-$1$ then apply Algorithm~\ref{algo:conjmcg};
  \item If $B,C$ are geometric and not \Tor\ then apply Algorithm~\ref{algo:DecidExp}.
  \item If $B,C$ are \Tor\ bisets, then using
    Theorem~\ref{thm:removing extra marked pnts} reduce the problem to
    the case when $B$ and $C$ are minimal, and apply
    \cite{bartholdi-dudko:bc3}*{Algorithm~\ref{bc3:algo:minimaltor}}.
  \end{enumerate}
\end{algo}

The \emph{canonical Levy obstruction} $\CC_{\text{Levy}}$ of a
Thurston map $f\colon(S^2,A)\selfmap$ is the minimal $f$-invariant
multicurve all of whose small Thurston maps are either homeomorphisms
or admit no Levy cycle. It is unique by
\cite{bartholdi-dudko:bc4}*{Proposition~\ref{bc4:prop:HypLevyMCurvInter}}. The
\emph{Levy decomposition} of $f$ (and equivalently of its biset) is
its decomposition (as a graph of bisets) along the canonical Levy
obstruction.

\begin{algo}[\cite{bartholdi-dudko:bc4}*{Algorithm~\ref{bc4:algo:levy}}]\label{algo:decidelevy}
  \textsc{Given} a Thurston map $f\colon(S^2,A)\selfmap$ by its biset,\\
  \textsc{Compute} the Levy decomposition of $f$.
\end{algo}

\noindent We are ready to prove Theorem~\ref{thm:A:new}:
\begin{algo}\label{algo:Decid}
  \textsc{Given} two sphere bisets $\subscript G B_G$ and  $\subscript H C_H$,\\
  \textsc{Decide} whether $B$ and $C$ are conjugate, and \textsc{compute} the centralizer $Z(B)$ \textsc{as follows:}\\\upshape
  \begin{enumerate}
  \item Using Algorithm~\ref{algo:decidelevy}, compute the Levy
    decompositions $\subscript \gf\gfB_\gf$ and
    $\subscript \gfY\gfC_\gfY$ of $B$ and $C$
    respectively.
  \item Let $X$ and $Y$ be the set of distinguished conjugacy classes
    of $\gf$ and $\gfY$ respectively, see~\S\ref{ss:distinguished
      cc}. Enumerate all possible bijections $h\colon X\to Y$.
  \item For every $h\colon X\to Y$ try to do the following
    steps. Return \texttt{fail} if there is no success.
  \item Using Algorithm~\ref{algo:MarkCl:Prom} try to promote
    $h\colon X\to Y$ into a biprincipal sphere $\gf$-$\gfY$-tree of
    bisets $\gfI$. Discard $h$ if there is no promotion.
  \item Check by Algorithm~\ref{algo:chech:InMCB} whether
    $\gfC^\gfI \in M(\gfB)$; if not, discard
    $\gfI$.
  \item For every $\gfI$ check, using
    Algorithm~\ref{algo:conj:LevyFree}, whether bisets in $R(\gfB)$
    and $R(\gfC^\gfI)$ are conjugate; if not discard
    $\gfI$.
  \item Using Algorithm~\ref{algo:conj:LevyFree} compute the centralizer
    of bisets in $R(\gfB)$.
  \item For every $\gfI$ using Algorithm~\ref{algo:bisets_decomp}
    check whether the conjugacies between $R(\gfB)$ and
    $R(\gfC^\gfI)$ promote into a conjugacy between
    $\gfB$ and $\gfC$.
  \item Using Algorithm~\ref{algo:bisets_decomp} compute the centralizer
    $Z(B)$ of $B$.
  \end{enumerate}
\end{algo}

\subsection{Remarks on the Thurston iteration and its symbolic version}\label{ss:rem:Thurston iter}
Recall that every sphere map $f\colon (S^2,C)\to(S^2,A)$ induces a
pullback map $\sigma_f\colon\mathscr T_A\to\mathscr T_C$ between the
Teichm\"uller spaces of complex structures on the respective spheres,
see~\S\ref{ss:examples}. Selinger~\cite{selinger:augts} shows that
$\sigma_f$ extends continuously to a self-map of the augmented
Teichmüller $\widehat{\mathscr T}_A$. Suppose for simplicity that $f$
is not double covered by a torus endomorphism. If the map $f$ is
obstructed, for any $\eta\in \mathscr T_A$ the sequence
$(\sigma_f)^n(\eta)$ converges to the stratum of
$\partial\mathscr T_A$ associated with Pilgrim's canonical obstruction
$\CC_f$.  This yields a method of computing $\CC_f$.

The pullback map $\sigma_f\colon \mathscr T_A\selfmap$ projects to a
modular correspondence
\[
\mathscr M_A
\overset{\overline{\sigma_f}}\longleftarrow
\mathscr W_f \overset i\longrightarrow  \mathscr M_A
\]
of finite degree, see~\eqref{eq:thurston correspondence}. The
attracting fixed point or stratum of $\sigma_f$ projects to a
repelling fixed point or stratum of
$i\circ (\overline{\sigma_f})^{-1}$. By Proposition~\ref{prop:modular
  correspondence} we have $M(f)\cong B(\overline{\sigma_f},i)$; fixing
a basepoint $*\in\mathscr M_A$, every every $g\in M(f)$ can be viewed
as an element of $B((\overline{\sigma_f},i),*)$, namely as a path
$b_g=b_{g,0}\colon [0,1]\to \mathscr M_A$ with $b_g(0)=*$ and
$b_g(1)=\overline{\sigma_f}(x)$ for some $x\in i^{-1}(*)$. We can lift
$b_g$ to $\mathscr W_f$ and then project the lift back to
$\mathscr M_A$ to construct a path $b_{g,1}$ starting at the endpoint
of $b_{g,0}$. Continuing the process, we construct the sequence of
paths $b_{g,n}$ converging to the attracting fixed point or stratum
associated with $g$. In this manner we have implemented the Thurston
iteration at the level of the modular correspondence. Furthermore,
this process can be performed symbolically as follows.

Choose a basis $X$ for $M(f)$, so every $g\in M(f)$ can be uniquely
written as $m x$ for some $m=m_{g,0}\in \Mod(S^2,A)$ and
$x=x_{g,0}\in X$. Then $g=m x$ is conjugate to $x m$, which can
uniquely be rewritten as $m_{g,1} x_{g,1}$. We obtain the
\emph{symbolic pullback iteration}:
\begin{equation}\label{eq:SymbIterat}
  g=m x \sim x_{g,0}m_{g,0} = m_{g,1}x_{g,1}\sim x_{g,1}m_{g,1} = m_{g,2}x_{g,2}\sim x_{g,2}m_{g,2} = m_{g,3} x_{g,3}\sim\cdots
\end{equation}
and we are interested in the attractor of this process, namely in the
minimal subset of $M(f)$ to which $m_{g,n}x_{g,n}$ gets attracted. We
remark that the issue of convergence of $m_{g,n}x_{g,n}$ is more
delicate than the convergence of $b_{g,n}$ because at each step there
is an ``additive correction'' depending on the choice of $X$, which
may dominate the contraction under lifting.

For a sphere map $g\colon (S^2,A)\selfmap$, let $\CC_{\text{Levy}}$
be its canonical Levy obstruction (recall that it is the minimal
$g$-invariant multicurve all of whose small Thurston maps are either
homeomorphisms or admit no Levy cycle). If a small sphere $S'$ of
$(S^2,A,\CC_{\text{Levy}})$ is periodic and belongs to a $\deg >1$
cycle of small sphere maps, then let $H(S')$ be the trivial subgroup
of $\Mod(S')$; otherwise set $H(S')\coloneqq \Mod(S')$. In all cases,
view $H(S')$ as a subgroup of $\Mod(S^2,A)$. Denote by
$\Z^{\CC_{\text{Levy}}}$ the subgroup of $\Mod(S^2,A)$ generated by
Dehn twists along curves in $\CC_{\text{Levy}}$. Set then
\[H_g\coloneqq \Z^{\CC_{\text{Levy}}} \times \prod_{S'\in (S^2,A,\CC_{\text{Levy}})} H(S'),
\]
where the product is taken over all small spheres of
$(S^2,A,\CC_{\text{Levy}})$. Observe that all maps in $H_g g$ have
the same Levy decomposition.

Let us choose a finite generating set for $\Mod(S^2,A)$; it induces
the word metric ``$|\cdot|$'' on $\Mod(S^2,A)$. Let us also choose a
basis $X$ of $M(f)$, and for $g = m x\in M(f)$ let us set $|g|=|m|$.

\begin{conj}\label{conj:GenralNucl}
  There is a finite set $\mathscr N\subseteq M(f)$ such that, for
  every $g\in M(f)$, the symbolic pullback
  iteration~\eqref{eq:SymbIterat} run on $g$ converges to
  $\bigcup_{h\in\mathscr N}H_h h$ in $\mathcal O(\log |g|)$ steps.
\end{conj}
If Conjecture~\ref{conj:GenralNucl} is true, then the symbolic
pullback iteration~\eqref{eq:SymbIterat} allows to replace $g$ with a
new $\tilde g \in H_h h$ for some $h\in\mathscr N$, in such a manner
that $|\tilde g|$ is bounded by a polynomial in $|g|$. In this manner,
the conjugacy and centralizer problems in $M(f)$ would be reduced in
polynomial time to the conjugacy and centralizer problems in a modular
group $H_h$. These problems are expected to be solvable in polynomial
time; for recent progress see~\cite{bell-webb:poly,calvez:NT}. A
consequence of Conjecture~\ref{conj:GenralNucl} and such results would
be the
\begin{conj}\label{conj:p}
  For every Thurston map $f$, the conjugacy and centralizer problems
  are solvable in $M(f)$ in polynomial time.
\end{conj}

\section{Orbispheres}\label{ss:orbispheres}
All the previous considerations, on marked spheres, apply equally well
to a slightly more general situation, that of
\emph{orbispheres}. Consider a marked sphere $(S^2,A)$, and let there
also be given a function $\ord\colon A\to\{2,3,\dots,\infty\}$,
assigning a positive or infinite degree to each marked point. This
describes an \emph{orbispace} structure: if $\ord(a)=\infty$, then the
sphere is punctured at $a\in A$, while if $\ord(a)=n$ then the space
has a cone-type singularity of angle $2\pi/n$ at $a$.  This can be
thought of as specifying a complex structure on $S^2$: a small enough
neighbourhood of a point $a\in A$ with $\ord(a)=\infty$ is identified
with a neighbourhood of $0$ in $\C^*$; while if $\ord(a)\in\N$, a
neighbourhood of $a$ is modelled on a neighbourhood of $0$ in
$\C/\langle e^{2\pi i/\ord(a)}\rangle$. The parabolic class $\Gamma_i$
consists in elements of order $\ord(\Gamma_i)\coloneqq\ord(a_i)$,
see~\eqref{eq:orbispheregp}.  For more details,
consult~\cite{mcmullen:renormalization}*{Appendix~A}.

Orbispheres are denoted $(S^2,A,\ord)$, or just $(S^2,A)$ if the
degree function is implicit. To avoid degenerate cases, assume
$\#A\neq1$; and if $\#A=2$, assume furthermore that $\ord$ is constant
on $A$. The Euler characteristic of $(S^2,A,\ord)$ is defined as
\[\chi(S^2,A,\ord)=2-\sum_{a\in A}\left(1-\frac1{\ord(a)}\right).\]
Accordingly, $(S^2,A,\ord)$ is called \emph{spherical},
\emph{euclidean} or \emph{hyperbolic} if its Euler characteristic is
$>0$, $=0$ or $<0$. If we endow $S^2$ with a complex
structure, then the \emph{universal cover} $\widetilde{(S^2,A,\ord)}$
of $(S^2,A,\ord)$ is respectively $\hC$, $\C$ or $\mathbb D(0,1)$ with
its usual complex
structure~\cite{mcmullen:renormalization}*{Theorem~A.2}; in the sense
that $(S^2,A,\ord)=\widetilde{(S^2,A,\ord)}/G$ for a group $G$ of
isometries acting with finite stabilizers. Those points with
non-trivial stabilizers project then precisely to the marked points
$A$; because of the ambient complex structure, these stabilizers are
perforce cyclic groups, and the order of $G_{\widetilde a}$ is
$\ord(a)$.

The same loops $\gamma_i$ as before generate $G$; but now,
$\gamma_i^{\ord(a_i)}=1$ whenever $\ord(a_i)<\infty$. We have
\begin{equation}\label{eq:orbispheregp}
  G=\pi_1(S^2,A,\ord,*)=\langle \gamma_1,\dots,\gamma_n\mid \gamma_1^{\ord(a_1)},\dots,\gamma_n^{\ord(a_n)}, \gamma_1\cdots\gamma_n\rangle.
\end{equation}
Indeed $G$ is a group of isometries of $\widetilde{(S^2,A,\ord)}$, with
fundamental domain a $\#A$-sided polygon with angles $2\pi/\ord(a_i)$;
so the presentation~\eqref{eq:orbispheregp} follows from Poincar\'e's
Theorem, see e.g.~\cite{harpe:ggt}*{Theorem~V.B.40}.

\begin{defn}[Orbisphere groups]
  An \emph{orbisphere group} is a tuple $(G,\Gamma_1,\dots,\Gamma_n)$
  consisting of a group and $n\neq 1$ conjugacy classes $\Gamma_i$ in
  $G$, such that $G$ admits a presentation as
  in~\eqref{eq:orbispheregp} for some choice of
  $\gamma_i\in\Gamma_i$. The \emph{Euler characteristic} of $G$ is
  \[\chi(G)=2-\sum_{i=1}^n\left(1-\frac1{\ord(a_i)}\right).\] 
  
  Suppose that $G_1=\pi_1(S^2,A,\ord_1,*)$ and
  $G_2=\pi_1(S^2,A,\ord_2,*)$ are two orbisphere groups and suppose
  that $\ord_2(a)\mid \ord_1(a)$ for all $a\in A$. Then the natural
  homomorphism $G_1\to G_2$ is called an \emph{inessential forgetful
    map}.
\end{defn}

It is convenient to extend `$\ord$' to $S^2$ so that
$\ord(p)=1\Leftrightarrow p\notin A$.  Note again that marked spheres
are subsumed in the definition of orbispheres; namely, as those for
which $\ord(a)=\infty$ for all $a\in A$.

\subsection{Pure homeomorphisms between orbispheres}
\begin{defn}[Orbisphere maps]
  An \emph{orbisphere map} $f\colon(S^2,C,\ord_C)\to(S^2,A,\ord_A)$
  between orbispheres is a branched covering between the underlying
  spheres, with $f(C)\cup\{\text{critical values of }f\}\subseteq A$,
  that is locally modelled at $p\in S^2$ in oriented complex charts by
  $z\mapsto z^{\deg_p(f)}$ for some integer $\deg_p(f)\ge1$, so that
  $\ord_C(p)\deg_p(f)\mid\ord_A(f(p))$ for all $p\in S^2$. A
  \emph{covering} between orbispheres is an orbisphere map $f$ for
  which one has $\ord_C(p)\deg_p(f)=\ord_A(f(p))$ for all $p\in S^2$.
  
  An \emph{anti-orbisphere map}
  $f\colon(S^2,C,\ord_C)\to(S^2,A,\ord_A)$ between orbispheres is a
  branched covering satisfying the same condition as above except that
  $f$ is locally modelled in oriented charts at $p\in S^2$ by
  $z\mapsto {\bar z}^{\deg_p(f)}$ for some integer $\deg_p(f)\ge1$.
\end{defn}
In particular, if $f$ is a covering then
$A=f(C\cup\{\text{critical points of }f\})$.

For an orbisphere $(S^2,A,\ord)$ we define the \emph{modular group}
consisting of orbisphere and anti-orbisphere pure homeomorphisms as
\[\Mod^{\pm}(S^2,A,\ord) =\{ h\colon (S^2,A,\ord)\selfmap\mid \deg(h)=1, h|_{A}=\one\},\]
and we denote by $\Mod(S^2,A,\ord)$ the subgroup of order-preserving
homeomorphisms in $\Mod^{\pm}(S^2,A,\ord)$.  Clearly,
$\Mod(S^2,A,\ord)$ and $ \Mod(S^2,A)$ are isomorphic.

Choose $*\in S^2\setminus A$, write $G=\pi_1(S^2,A,\ord,*)$, and set
\[\Mod^{\pm}(G)=\{\phi\in\Out(G)\mid \Gamma_i^\phi=\Gamma^{\pm1 }_i\;\forall i=1,\dots,n\}.
\]
We have the following generalization of
Theorem~\ref{thm:dehn-nielsen-baer}.

\begin{thm}[Dehn-Nielsen-Baer-Zieschang-Vogt-Coldewey]
  \label{thm:dehn-nielsen-baer-zieschang}
  Let $(S^2,A,\ord)$ be an orbisphere with non-positive Euler
  characteristic. Then the natural map
  $\Mod^{\pm}(S^2,A,\ord)\to \Mod^{\pm}(\pi_1(S^2,A,\ord))$ is an
  isomorphism.
\end{thm}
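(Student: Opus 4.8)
The statement is an orbisphere analogue of the classical Dehn–Nielsen–Baer Theorem~\ref{thm:dehn-nielsen-baer}, which handles the case $\ord\equiv\infty$. My strategy is to reduce the general non-positively-curved orbisphere case to the punctured-sphere case by passing to a finite cover that kills all the torsion. More precisely, given $(S^2,A,\ord)$ with $\chi\le 0$, by Selberg's lemma (or by an explicit construction since $G$ is virtually free of rank $\ge 1$, hence residually finite) there is a finite-index torsion-free characteristic subgroup $K\trianglelefteq G$; the quotient surface $\widetilde{(S^2,A,\ord)}/K$ is then a genuine punctured (possibly higher-genus) surface $\Sigma$, carrying a free action of the finite group $Q=G/K$ with quotient orbifold $(S^2,A,\ord)$.

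\textbf{Injectivity.} I would prove injectivity directly and algebraically, not through the covering. The natural map $\Mod^\pm(S^2,A,\ord)\to\Mod^\pm(\pi_1(S^2,A,\ord))$ is well-defined because an (anti-)orbisphere homeomorphism sends each $\Gamma_i$ to $\Gamma_i^{\pm1}$. For injectivity, suppose $h\in\Mod^\pm(S^2,A,\ord)$ induces an inner automorphism of $G$. Lift $h$ to a homeomorphism $\widetilde h$ of the universal cover $\widetilde{(S^2,A,\ord)}\in\{\C,\mathbb D\}$ equivariant with respect to an inner automorphism; composing with the deck transformation we may assume $\widetilde h$ is $G$-equivariant and fixes a point, and then the usual Alexander-trick / straight-line-homotopy argument in $\C$ or $\mathbb D$ produces an equivariant isotopy of $\widetilde h$ to the identity, descending to an isotopy of $h$ to $\one$ on $(S^2,A,\ord)$. (Here one uses that $h$ fixes $A$ pointwise so the equivariant isotopy can be arranged to fix the orbifold points.) This is essentially the argument underlying Theorem~\ref{thm:dehn-nielsen-baer} and needs only the cone-point bookkeeping.

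\textbf{Surjectivity.} Given $\phi\in\Mod^\pm(\pi_1(S^2,A,\ord))=\Mod^\pm(G)$, I want to realize it by an (anti-)homeomorphism. Take the characteristic torsion-free $K\le G$ above; then $\phi$ restricts to $\phi|_K\in\mathrm{Out}(K)$, and since $K=\pi_1(\Sigma)$ with $\Sigma$ a punctured surface, the classical Dehn–Nielsen–Baer theorem for $\Sigma$ (in its version with boundary/punctures and allowing orientation reversal) realizes $\phi|_K$ by a homeomorphism $g\colon\Sigma\selfmap$, provided $\phi|_K$ preserves the set of peripheral conjugacy classes of $K$ up to inversion — which it does, because the peripheral classes of $K$ are exactly the $K$-conjugacy classes meeting $\Gamma_i^{\ord(a_i)\cdot(\text{stuff})}$, a set $\phi$ preserves. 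The key point is then to descend $g$ to $(S^2,A,\ord)$: the conjugation action of $\phi$ on $K$ makes $g$ compatible with the $Q$-action on $\Sigma$ up to isotopy, and one must upgrade "$g$ conjugates the $Q$-action to itself up to isotopy" to "$g$ can be chosen $Q$-equivariant." This is done by an averaging/equivariant-isotopy argument: straighten $g$ to be equivariant using the fact that the space of hyperbolic (or euclidean) metrics descending to $(S^2,A,\ord)$ is contractible and the mapping class $[g]$ acts on it, so there is a metric fixed by $[g]$ up to the $Q$-action, and then $g$ may be taken to be an isometry, in particular equivariant. The induced map on $\Sigma/Q=(S^2,A,\ord)$ is the desired (anti-)orbisphere homeomorphism inducing $\phi$.

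\textbf{Main obstacle.} The genuinely delicate step is the descent of a homeomorphism of the cover $\Sigma$ to the orbifold $(S^2,A,\ord)$ — equivalently, promoting equivariance-up-to-isotopy to honest equivariance. The clean route is via Teichmüller theory: $[g]\in\Mod^\pm(\Sigma)$ normalizes the image of $Q$ in $\Mod(\Sigma)$, hence acts on the (contractible) Teichmüller space of $\Sigma$, and on its $Q$-fixed locus, which is the Teichmüller space of the orbifold $(S^2,A,\ord)$ and is nonempty and contractible for $\chi\le 0$; by Kerckhoff's realization (or Nielsen realization for finite-order elements, combined with the structure of the reducible case) one finds a point representing a metric for which $g$ is an isometry, automatically $Q$-equivariant. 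One should also separately check the edge case $\#A=2$ with $\ord$ constant and $\ord<\infty$ excluded by $\chi\le 0$, and $\#A=3$ with $\chi=0$, where $\Mod^\pm$ is finite and the statement is a direct finite computation; these are the spherical/euclidean triangle-group situations that the hypothesis $\chi\le 0$ is designed to avoid, so only a small finite list of euclidean cases ($\chi=0$) needs hand-checking.
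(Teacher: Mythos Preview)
Your approach is genuinely different from the paper's. The paper does not pass to a finite torsion-free cover at all: it simply observes that the result is essentially Zieschang--Vogt--Coldewey's Theorems~5.8.3 and~5.14.1, after a small trick. Those theorems are stated for planar discontinuous groups with only finite-order peripheral classes, so the paper replaces each $\ord(a)=\infty$ by a large finite value (say $2016$), checks that $\chi\le 0$ persists when $\#A\ge 4$, and notes that the natural map for $(S^2,A,\ord)$ factors through the one for $(S^2,A,\ord')$; the cited theorems then give both surjectivity and injectivity directly. The cases $\#A\le 3$ are disposed of by hand. This is a two-paragraph proof by citation.

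Your cover-and-descend strategy is a reasonable alternative, and your injectivity sketch is fine, but your surjectivity argument has a genuine gap at the descent step. You write that ``by Kerckhoff's realization \dots\ one finds a point representing a metric for which $g$ is an isometry, automatically $Q$-equivariant.'' This is false as stated: Nielsen/Kerckhoff realization produces a fixed point in Teichm\"uller space for a \emph{finite} subgroup of the mapping class group, but your $g$ is typically of infinite order (e.g.\ pseudo-Anosov), and then there is no hyperbolic metric for which $g$ is an isometry. What you actually need is not a fixed metric but a $Q$-equivariant representative of $[g]$. The clean fix is: pick any point $X$ in the $Q$-fixed locus of $\mathscr T(\Sigma)$, note that $[g]\cdot X$ also lies in this locus since $[g]$ normalizes $Q$, and take $g'$ to be the unique Teichm\"uller map $X\to [g]\cdot X$ in the class $[g]$; uniqueness then forces $g' q=\bar\phi(q)\,g'$ for all $q\in Q$, so $g'$ descends to $(S^2,A,\ord)$. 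Alternatively one can invoke Birman--Hilden theory for the branched cover $\Sigma\to\Sigma/Q$. Either way, the descent step requires a different tool than the one you named.
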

\begin{proof} 
  This is essentially a direct consequence of
  \cite{zieschang-vogt-coldewey:spdg}*{Theorems~5.8.3
    and~5.14.1}. Since these theorems deal with groups that have only
  ``finite order peripheral classes'' (such a set is always preserved
  by automorphisms), we need to make slight adjustments.

  Let us a assume $\#A\ge 4$; if not, $\Mod^{\pm }(S^2,A,\ord)$ is a
  group of order two and the claim is easy to verify.

  Define a new map $\ord'\colon A\to\{2,3,\dots\}$ by
  \[\ord'(a)=\begin{cases} \ord(a)&\text{if } \ord(a)<\infty,\\
      2016 & \text{if } \ord(a)=\infty.
    \end{cases}
\]
We still have $\chi(S^2,A,\ord)\le 0$ because $\#A\ge 4$. Since the
natural map
$\Mod^{\pm}(S^2,A,\ord) =\Mod^{\pm}(S^2,A,\ord')\to
\Mod^{\pm}(\pi_1(S^2,A,\ord'))$ factors as
\[\Mod^{\pm}(S^2,A,\ord)\to \Mod^{\pm}(\pi_1(S^2,A,\ord))\to
  \Mod^{\pm}(\pi_1(S^2,A,\ord')),
\]
it suffices to prove the theorem for the orbisphere $(S^2,A,\ord')$.

Consider $\phi\in \Mod^{\pm}(\pi_1(S^2,A,\ord'))$. By
\cite{zieschang-vogt-coldewey:spdg}*{Theorem~5.8.3} there is a
homeomorphism $h_{\phi}\colon (S^2\setminus A)\selfmap$ inducing
$\phi$. By \cite{zieschang-vogt-coldewey:spdg}*{Theorem~5.14.1} it is
unique up to isotopy rel $A$. Indeed, if $h'_{\phi}$ is another such
homeomorphism, then the lift of $h_{\phi}\circ h^{-1}_{\phi}$ to the
universal cover $\subscript{\pi_1(S^2,A,\ord')}U$ commutes with the action of
$\pi_1(S^2,A,\ord')$; and by
\cite{zieschang-vogt-coldewey:spdg}*{Theorem~5.14.1}
$h_{\phi}\circ h^{-1}_{\phi}$ is isotopic to the identity relatively
to the action of $\pi_1(S^2,A,\ord')$. We get a homomorphism
$\Mod^{\pm}(\pi_1(S^2,A,\ord'))\to \Mod^{\pm}(S^2,A,\ord')$ that is
inverse to $\Mod^{\pm}(S^2,A, \ord') \to \Mod(\pi_1(S^2,A,\ord'))$.
\end{proof}

For $G=\pi_1(S^2,A,\ord)$, define $\Mod(G)$ to be the image in
$\Mod^{\pm}(G)$ of $\Mod(S^2,A,\ord)$.

\begin{cor}\label{cor:ModPresUnderInessMap}
  Let $G$ be an orbisphere group and let $\widetilde G$ be a
  sphere group together with an inessential forgetful map
  $\widetilde G\to G$. Then the natural map $\widetilde G\to G$
  induces an isomorphism $\Mod(\widetilde G)\to\Mod(G)$.
\end{cor}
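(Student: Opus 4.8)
The plan is to identify the map $\Mod(\widetilde G)\to\Mod(G)$ with a composite of two maps that are isomorphisms by the Dehn--Nielsen--Baer-type theorems already available. Write $\widetilde G=\pi_1(S^2\setminus A,*)$ as a sphere group and $G=\pi_1(S^2,A,\ord,*)$, so that the given inessential forgetful map is the canonical quotient $q\colon\widetilde G\twoheadrightarrow G$ obtained by adjoining $\gamma_i^{\ord(a_i)}=1$ at the marked points of finite order. An element of $\Mod(\widetilde G)$ fixes each peripheral class $\gamma_i^{\widetilde G}$, hence preserves $\ker q$ (the normal closure of the $\gamma_i^{\ord(a_i)}$), hence descends to an automorphism of $G$ fixing each $\Gamma_i$; modding out inner automorphisms, this defines the map $q_*\colon\Mod(\widetilde G)\to\Mod(G)$ whose bijectivity we must prove.

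I would first dispose of the low-complexity cases $\#A\le 3$ (where $\chi(G)$ may be positive, so Theorem~\ref{thm:dehn-nielsen-baer-zieschang} is unavailable). Here $\Mod(S^2,A)$ is the trivial group; by Theorem~\ref{thm:dehn-nielsen-baer} so is $\Mod(\widetilde G)$; and since $\Mod(G)$ is by definition the image of $\Mod(S^2,A,\ord)\cong\Mod(S^2,A)$ in $\Mod^{\pm}(G)$, it too is trivial, so $q_*$ is an isomorphism for trivial reasons. For $\#A=n\ge 4$ one has $\sum_{i=1}^n(1-1/\ord(a_i))\ge n/2\ge 2$, hence $\chi(G)\le 0$, so Theorem~\ref{thm:dehn-nielsen-baer-zieschang} applies and the natural map $\Mod^{\pm}(S^2,A,\ord)\to\Mod^{\pm}(G)$ is an isomorphism. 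Restricting it to orientation-preserving homeomorphisms on the source --- which on the target corresponds exactly to the outer automorphisms fixing each $\Gamma_i$ rather than inverting it, i.e.\ precisely the subgroup defining $\Mod(G)$ --- shows that $\Mod(S^2,A,\ord)\to\Mod(G)$ is an isomorphism; and Theorem~\ref{thm:dehn-nielsen-baer} gives that $\Mod(S^2,A)\to\Mod(\widetilde G)$ is an isomorphism. Since a homeomorphism of $S^2$ fixing $A$ pointwise automatically respects $\ord$, we have $\Mod(S^2,A)=\Mod(S^2,A,\ord)$.

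It then remains to check that the triangle $\Mod(\widetilde G)\xleftarrow{\ \cong\ }\Mod(S^2,A)=\Mod(S^2,A,\ord)\xrightarrow{\ \cong\ }\Mod(G)$ commutes with $q_*$ as its third side; this is the only real (though routine) work. The point is that the automorphisms of $\widetilde G$ and of $G$ attached to a homeomorphism $h$ of $(S^2,A)$ together with a path $\ell$ from $*$ to $h(*)$ are given by the same formula $[\gamma]\mapsto[\ell\#(h\circ\gamma)\#\ell^{-1}]$, which is meaningful in both groups and is compatible with $q$; hence $q_*$ sends the class in $\Mod(\widetilde G)$ realised by $h$ to the class in $\Mod(G)$ realised by $h$, so the triangle commutes. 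As $q_*$ is thereby exhibited as a composite of two isomorphisms, it is an isomorphism, which proves the corollary.
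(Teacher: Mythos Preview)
Your proof is correct and follows essentially the same route as the paper: handle the low-complexity cases where both mapping class groups are trivial, and otherwise invoke the Dehn--Nielsen--Baer theorems for sphere and orbisphere groups to identify both $\Mod(\widetilde G)$ and $\Mod(G)$ with $\Mod(S^2,A)$. Your version is considerably more detailed than the paper's two-line argument, and the triangle-commutativity check you supply is a genuine service.

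One small inaccuracy worth flagging: your parenthetical identification of $\Mod(G)$ with ``the outer automorphisms fixing each $\Gamma_i$ rather than inverting it'' is not always correct. When every peripheral class has order $2$, fixing $\Gamma_i$ and sending it to $\Gamma_i^{-1}$ are the same condition, so this description would give all of $\Mod^{\pm}(G)$; yet $\Mod(G)$ still has index two there (this is exactly the phenomenon treated in Proposition~\ref{prop:OrbiIndByHomeo}). Fortunately you do not actually need that characterization: since the paper \emph{defines} $\Mod(G)$ as the image of $\Mod(S^2,A,\ord)$ in $\Mod^{\pm}(G)$, surjectivity of $\Mod(S^2,A,\ord)\to\Mod(G)$ is tautological, and injectivity follows by restricting the isomorphism $\Mod^{\pm}(S^2,A,\ord)\to\Mod^{\pm}(G)$ of Theorem~\ref{thm:dehn-nielsen-baer-zieschang}. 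If you drop the parenthetical and say this instead, the argument is clean.
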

\begin{proof}
  If $\chi(G)>0$, then $\Mod(\widetilde G)$ and $\Mod( G)$ are trivial
  groups. The case $\chi(G)\le 0$ follows from
  Theorem~\ref{thm:dehn-nielsen-baer-zieschang}.
\end{proof}
\noindent Since $\Mod(G)$ is countable and its elements can be
explicitly enumerated we also have
\begin{cor}\label{cor:can lift groups}
  Let $\widetilde G\to G$ be as in
  Corollary~\ref{cor:ModPresUnderInessMap}. There is an algorithm
  that, given $h\in \Mod(G)$ computes its preimage under the
  forgetful map $\Mod(\widetilde G)\to\Mod(G)$.\qed
\end{cor}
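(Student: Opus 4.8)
The plan is to compute the preimage by an exhaustive search: enumerate $\Mod(\widetilde G)$, push each candidate forward along the forgetful map $\Mod(\widetilde G)\to\Mod(G)$, and compare the result with the given $h\in\Mod(G)$, stopping at the first match. Correctness of "first match" and termination will be handed to us for free by Corollary~\ref{cor:ModPresUnderInessMap}, which says this forgetful map is an isomorphism, so exactly one element of $\Mod(\widetilde G)$ maps to $h$.

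First I would recall that $\Mod(\widetilde G)$ is generated by the finite, explicit family of Dehn twists $\tau_{i,j}$ from~\S\ref{ss:maps between spheres}; running through all words in the $\tau_{i,j}^{\pm1}$ therefore yields a recursive enumeration $\widetilde h_1,\widetilde h_2,\dots$ of $\Mod(\widetilde G)$, each $\widetilde h_k$ produced concretely as a map on the standard generators $\widetilde\gamma_i\mapsto\widetilde w_{k,i}$ (well-defined as an outer automorphism by Theorem~\ref{thm:dehn-nielsen-baer}). Next I would note that the forgetful map is effectively computable on such data: the image of $\widetilde h_k$ in $\Mod(G)$ sends $\gamma_i$ to the image of $\widetilde w_{k,i}$ under $\widetilde G\twoheadrightarrow G$, which is obtained just by reducing the word $\widetilde w_{k,i}$ modulo the torsion relations $\gamma_j^{\ord(a_j)}$. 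Writing the input $h$ as a map $\gamma_i\mapsto w_i$, the test at step $k$ is then whether the pushforward of $\widetilde h_k$ and $h$ agree in $\Out(G)$, i.e.\ whether there is $g\in G$ with $\overline{\widetilde w_{k,i}}=g\,w_i\,g^{-1}$ simultaneously for all $i$.

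The one point that needs justification is that this simultaneous conjugacy test is decidable in $G$. I would argue it by cases on $\chi(G)$: an orbisphere group is finite when $\chi(G)>0$, virtually abelian when $\chi(G)=0$, and word-hyperbolic when $\chi(G)<0$, and in each of these three classes the simultaneous conjugacy problem is solvable (for the hyperbolic case, intersect the computable virtually cyclic coset of conjugators of $\gamma_1\mapsto$ its target with the remaining conditions). Given this, the algorithm simply runs the test for $k=1,2,\dots$ and halts at the unique index with $\widetilde h_k\mapsto h$, returning $\widetilde h_k$; when $\chi(G)>0$ both groups are trivial and it returns the identity immediately. I expect no real obstacle here: the substance of the corollary is only that the isomorphism of Corollary~\ref{cor:ModPresUnderInessMap} is effective in the easy (downward) direction, and everything apart from the standard conjugacy test is bookkeeping with words in a free group and finite cyclic quotients.
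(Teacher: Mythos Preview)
Your proposal is correct and follows essentially the same brute-force enumeration approach as the paper, which simply observes that $\Mod(\widetilde G)$ is countable and explicitly enumerable (the corollary carries a \qed\ with no further argument). You supply more detail than the paper does---in particular the reduction of the equality test in $\Mod(G)$ to simultaneous conjugacy in $G$ and its decidability by the trichotomy on $\chi(G)$---but this is elaboration rather than a different route; the paper's subsequent Remark also notes, as you might expect, that decomposing $h$ into Dehn twists and lifting them individually gives a more efficient variant.
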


\begin{rem}
  A more effective algorithm is to decompose $h\in \Mod(G)$ into a
  product of Dehn twists, and then to lift each Dehn twist to
  $\Mod(\widetilde G)$. This will be the developed
  in~\cite{bartholdi-dudko:bc5}.
\end{rem}

\begin{prop}\label{prop:OrbiIndByHomeo}
  Let $G$ be an orbisphere group. 

  If $\chi(G)\le0$, then $\Mod(G)$ has index two in $\Mod^\pm(G)$, and
  there is an algorithm that, given $\phi\in\Mod^\pm(G)$, determines
  whether $\phi$ belongs to $\Mod(G)$.

  If $G$ has at least one peripheral conjugacy class $\Gamma$ with
  $\ord(\Gamma)>2$, then $\phi\in\Mod(G)$ if and only if
  $\Gamma^\phi=\Gamma$.
\end{prop}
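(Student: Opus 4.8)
\emph{Overall approach and the index.} I would prove the three assertions in turn, using the Dehn--Nielsen--Baer--Zieschang--Vogt--Coldewey Theorem~\ref{thm:dehn-nielsen-baer-zieschang} to pass between the algebraic group $\Mod^{\pm}(G)$ and the geometric group $\Mod^{\pm}(S^2,A,\ord)$ of pure (possibly orientation-reversing) homeomorphisms, together with the remark --- implicit in the definitions --- that $\Mod^{\pm}(S^2,A,\ord)$ and $\Mod(S^2,A,\ord)$ depend only on $(S^2,A)$, not on $\ord$. Write $G=\pi_1(S^2,A,\ord,*)$ and assume $\chi(G)\le 0$. Then Theorem~\ref{thm:dehn-nielsen-baer-zieschang} identifies $\Mod^{\pm}(S^2,A,\ord)$ with $\Mod^{\pm}(G)$, carrying the subgroup of orientation-preserving homeomorphisms onto $\Mod(G)$; so it suffices to see that $\Mod(S^2,A,\ord)$ has index $2$ in $\Mod^{\pm}(S^2,A,\ord)$. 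The orientation of a homeomorphism defines a homomorphism $\Mod^{\pm}(S^2,A,\ord)\to\Z/2$ whose kernel is exactly $\Mod(S^2,A,\ord)$, and it is onto because $(S^2,A)$ carries an orientation-reversing pure homeomorphism: realize $A$ as a finite subset of a round circle in $S^2$ and take the reflection fixing that circle, an anti-orbisphere map of degree one fixing $A$ pointwise. Hence the index is $2$.

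\emph{The algorithm.} Given $\phi\in\Mod^{\pm}(G)$, I would enumerate $\Mod(G)$ --- generated by the finitely many Dehn twists $\tau_{i,j}$ for a sphere group, and in general obtained by transporting these generators through the isomorphism $\Mod(\widetilde G)\cong\Mod(G)$ of Corollary~\ref{cor:ModPresUnderInessMap}, where $\widetilde G$ is the free sphere group on the same punctures --- simultaneously with the coset $\sigma\Mod(G)$, where $\sigma\in\Mod^{\pm}(G)$ is the explicitly computable outer automorphism induced by the reflection above, comparing each listed element with $\phi$ by a conjugacy test in $G$; the search halts with a match in exactly one list since $\Mod^{\pm}(G)=\Mod(G)\sqcup\sigma\Mod(G)$. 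Equivalently, and without exhibiting $\sigma$, one lifts $\phi$ to $\widetilde\phi\in\Mod^{\pm}(\widetilde G)$ --- the argument of Corollary~\ref{cor:ModPresUnderInessMap} gives $\Mod^{\pm}(\widetilde G)\cong\Mod^{\pm}(G)$, as $\chi(\widetilde G)\le\chi(G)\le 0$ --- and tests whether $\widetilde\phi$ fixes every peripheral conjugacy class of the free group $\widetilde G$; by the classical Dehn--Nielsen--Baer Theorem~\ref{thm:dehn-nielsen-baer} this is precisely the condition $\widetilde\phi\in\Mod(\widetilde G)$, i.e.\ $\phi\in\Mod(G)$.

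\emph{The peripheral criterion.} If $\phi\in\Mod(G)$, pick a representing orientation-preserving homeomorphism $h$ with $h|_A=\one$; since $\deg_p h=1$ everywhere, $h$ is locally $z\mapsto z$ in oriented charts, so it sends a small counterclockwise loop around each $a_i$ to another such loop, whence $\Gamma_i^{\phi}=\Gamma_i$ for all $i$, in particular $\Gamma^{\phi}=\Gamma$. Conversely, suppose $\ord(\Gamma)>2$ and $\Gamma^{\phi}=\Gamma$. If $\chi(G)\le 0$, write $\phi=h_{*}$ by Theorem~\ref{thm:dehn-nielsen-baer-zieschang}; if $h$ reversed orientation then, being locally $z\mapsto\bar z$ near each marked point, it would give $\Gamma_i^{\phi}=\Gamma_i^{-1}$ for all $i$, hence $\Gamma=\Gamma^{-1}$ --- but this is impossible when $\ord(\Gamma)>2$: as recalled in~\S\ref{ss:orbispheres}, $G$ acts on $X\in\{\C,\mathbb D(0,1)\}$ by orientation-preserving isometries, and for $\gamma\in\Gamma$ the cyclic group $\langle\gamma\rangle$ is the full stabilizer of a point $\widetilde a$ of $\overline X$ (interior if $\ord(\Gamma)<\infty$, boundary if $\ord(\Gamma)=\infty$), which is also the only fixed point of $\gamma$ and of $\gamma^{-1}$; so $g\gamma g^{-1}=\gamma^{-1}$ forces $g$ to fix $\widetilde a$, whence $g\in\langle\gamma\rangle$ and $\gamma=\gamma^{-1}$, contradicting $\ord(\Gamma)>2$. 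Thus $h$ preserves orientation and $\phi\in\Mod(G)$. The case $\chi(G)>0$ concerns only the finitely many spherical orbisphere groups with a peripheral class of order $>2$, namely $\Z/n$ and $D_n$ with $n>2$ and $A_4,S_4,A_5$; there $\#A\le 3$ forces $\Mod(G)=\{1\}$, and a direct inspection shows that in each case either $\Mod^{\pm}(G)=\Mod(G)$, making the equivalence vacuous, or $\Mod^{\pm}(G)/\Mod(G)\cong\Z/2$ with the non-trivial coset sending $\Gamma$ to $\Gamma^{-1}\neq\Gamma$ (this occurs for $\Z/n$ and $A_4$), which again yields the equivalence.

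\emph{The main obstacle.} The crux is exactly the impossibility $\Gamma\neq\Gamma^{-1}$ for $\ord(\Gamma)>2$, and this genuinely needs $\chi(G)\le 0$: on the round sphere a rotation of order $>2$ can be conjugate to its inverse, because a rotation centre and its antipode are both fixed points of the cyclic stabilizer, so the spherical subcase of part (b) cannot be treated by the uniform isometry argument and must be dispatched by the finite classification. This is also why part (b) excludes order-$2$ peripheral classes --- for those $\Gamma=\Gamma^{-1}$ always, so the peripheral test is blind, as for the pillowcase orbisphere $(2,2,2,2)$ --- and why the algorithm in part (a) detours through the free cover $\widetilde G$ rather than testing fixation of peripheral classes directly in $G$.
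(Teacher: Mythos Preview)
Your proof is correct. The index-$2$ claim and the peripheral criterion follow the same line as the paper (invoke Theorem~\ref{thm:dehn-nielsen-baer-zieschang}, exhibit an orientation-reversing class, and observe that an anti-orbisphere map sends $\Gamma$ to $\Gamma^{-1}$); you are in fact more careful than the paper in justifying $\Gamma\ne\Gamma^{-1}$ via the fixed-point argument on the universal cover, and in dispatching the finitely many spherical cases by hand, which the paper leaves implicit.

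The genuine difference is in the algorithm for the all-order-$2$ case. The paper does not lift to the free sphere group; instead it passes to an index-$2$ subgroup $H\le G$ (the kernel of the map sending $\gamma_1,\dots,\gamma_4\mapsto 1\in\Z/2$ and the rest to $0$), computes a presentation of $H$ by Reidemeister--Schreier, and reads off the orientation as the sign of the determinant of the induced action of $\phi$ on $H^{\mathrm{ab}}\otimes\Q\cong\Q^2$. Geometrically this is the action on the first homology of a genus-one branched cover of the orbisphere. Your approach (enumerate $\Mod(G)\sqcup\sigma\Mod(G)$, or lift $\phi$ to $\Mod^{\pm}(\widetilde G)$ and test fixation of peripheral classes in the free group) is perfectly valid and conceptually clean, but it hides the work inside an unbounded search; the paper's determinant test is a closed-form computation, linear in the data of $\phi$, and makes explicit \emph{why} orientation can be detected purely algebraically even when every $\Gamma_i=\Gamma_i^{-1}$.
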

\begin{proof}
  There exist anti-orbisphere maps, so $\Mod(G)\neq\Mod^\pm(G)$. This
  can be seen algebraically, by considering the map $\phi$ given on
  generators $\gamma_1,\dots,\gamma_n$ with $\gamma_1\cdots\gamma_n=1$
  by $\gamma_i^\phi=\gamma_i^{-\gamma_{i-1}^{-1}\cdots\gamma_1^{-1}}$.
  By Theorem~\ref{thm:dehn-nielsen-baer-zieschang} we have
  $[\Mod^\pm(G):\Mod(G)]=2$ if $\chi(G)\le0$.

  If $\phi$ is induced by an anti-orbisphere map, then
  $\phi(\Gamma)=\Gamma^{-1}\neq\Gamma$; this proves the last claim,
  and also gives an algorithm in case at least one peripheral
  conjugacy class has order $>2$. 

  Suppose now that all peripheral conjugacy classes in $G$ have order
  $2$, and let us give an algorithm deciding whether
  $\phi\in\Mod^{\pm}(G)$ is in $\Mod(G)$. Denote by $\Gamma_i$ all
  peripheral conjugacy classes in $G$, so $G$ has presentation
  $\langle\gamma_1,\dots,\gamma_n\mid\gamma_i^2,\gamma_1\cdots\gamma_n\rangle$.
  Consider the map $G\twoheadrightarrow\Z/2$ mapping generators
  $\gamma_1,\dots,\gamma_4$ to $1\pmod 2$ and the others to $0\pmod2$,
  and let $H$ denote the kernel of this map. Setting
  $\alpha=\gamma_2\gamma_1$ and $\beta=\gamma_1\gamma_3$ and
  $\delta_i=\gamma_i^{\gamma_1\gamma_2\gamma_3}$ for $n=5,\dots,n$, we
  obtain by the Reidemeister-Schreier theorem (see
  e.g.~\cite{magnus-k-s:cgt}*{\S2.3}) the presentation
  \[H=\langle\alpha,\beta,\gamma_5,\delta_5,\dots,\gamma_n,\delta_n\mid[\alpha,\beta]\delta_5\cdots\delta_n\gamma_5\cdots\gamma_n,\gamma_i^2,\delta_i^2\rangle.\]
  Consider $\phi\in\Mod^{\pm}(G)$. Since $\phi$ preserves every
  $\Gamma_i$, it also preserves $H$, and therefore induces an
  automorphism $\phi_*$ of $H/[H,H]\otimes\Q\cong\Q^2$, whose action
  on generators can be explicitly computed. We have
  \[\Mod(G)=\{\psi\in\Mod^{\pm}(G)\mid\det(\psi_*)=1\},\]
  so to determine whether $\phi\in\Mod(G)$ it suffices to compute the
  action of $\phi_*$ on $H/[H,H]\otimes\Q$. Note that, for the
  orientation-reversing map $\phi$ given at the beginning of the
  proof, we have
  $\phi_*=(\begin{smallmatrix}-1&2\\0&1\end{smallmatrix})$ in basis
  $\{\alpha,\beta\}$.
\end{proof}

\begin{rem}
  In the above proof, the homomorphism $G\twoheadrightarrow\Z/2$
  constructs a genus-$1$ cover of the punctured sphere; it is on the
  cover that the orientation of a mapping class can be read. More
  conceptually, if $n$ is even then we may consider
  $H=\ker(\gamma_i\mapsto1\pmod2)$, corresponding to the hyperelliptic
  cover of the $n$-punctured sphere, which is then a surface of genus
  $n/2-1$ without punctures; then $\phi_*$ is the action of $\phi$ on
  the homology of this cover.
\end{rem}

\subsection{Orbisphere bisets}
Let $H,G$ be orbisphere groups. An \emph{orbisphere $H$-$G$-biset} $B$
is defined exactly as in Definition~\ref{dfn:SphBis} --- the only
difference is that the acting groups are orbisphere groups. As in
Definition~\ref{dfn:SphBis} the biset $B$ induces a map
$B_*\colon C\to A$ from peripheral conjugacy classes of $G$ indexed as
$(\Gamma_a)_{a\in A}$ to peripheral conjugacy classes of $H$ indexed
as $(\Delta_c)_{c\in C}$. In the dynamical case $G=H$, then we have a
self-map $B_*\colon A\selfmap$ called the \emph{portrait} of $B$.

The same argument as in Lemma~\ref{lem:SphBisOfSphMap} shows that the
biset of an orbisphere map is an orbisphere biset. Moreover, if
$\ord_C(C)= \{2\}$, then the biset of an anti-orbisphere map is an
orbisphere biset. We are now ready to generalize
Theorem~\ref{thm:dehn-nielsen-baer+}. We omit the case
$\ord_C(C)= \{2\}$ and $\chi(G)> 0$ from the theorem --- this case
will be illustrated in Example~\ref{ex:positive orbisphere}.

\begin{thm}\label{thm:dehn-nielsen-baer++}
  Suppose that $(S^2,C,\ord_C)$ and $(S^2,A,\ord_A)$ are marked
  orbispheres with $\#C\ge 2$; write
  $H\coloneqq\pi_1(S^2,C,\ord_C,\dagger)$ and
  $G\coloneqq\pi_1(S^2,A,\ord_A,*)$ for choices of $\dagger\in
  S^2\setminus C$ and $*\in S^2\setminus A$.

  Suppose first $\ord_C(C)\neq \{2\}$. Then orbisphere maps
  $f_0,f_1\colon(S^2,C,\ord_C)\to(S^2,A,\ord_A)$ are isotopic
  if and only if $B(f_0)\cong B(f_1)$. Conversely, for every
  orbisphere $H$-$G$-biset $B$ there exists an orbisphere map
  $f\colon(S^2,C)\to(S^2,A)$, unique up to isotopy, such that
  $B\cong B(f)$.
  
  Suppose that $\ord_C(C)= \{2\}$ but $\chi(G)\le 0$. Then orbisphere
  or anti-orbisphere maps
  $f_0,f_1\colon(S^2,C,\ord_C)\to(S^2,A,\ord_A)$ are isotopic
  if and only if $B(f_0)\cong B(f_1)$. Conversely, for every
  orbisphere $H$-$G$-biset $B$ there exists an orbisphere or
  anti-orbisphere map $f\colon(S^2,C)\to(S^2,A)$, unique up to isotopy, such that $B\cong B(f)$.
\end{thm}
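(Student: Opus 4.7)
The plan is to mirror the proof of Theorem~\ref{thm:dehn-nielsen-baer+} step by step, replacing the Dehn--Nielsen--Baer theorem by its orbifold version (Theorem~\ref{thm:dehn-nielsen-baer-zieschang}) and Hurwitz's theorem by orbifold covering theory. The forward direction is immediate in both cases: an isotopy $(f_t)$ produces bisets $B(f_t)$ that are locally constant in $t$, so $B(f_0)\cong B(f_1)$.

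For the backward direction when $\ord_C(C)\neq\{2\}$, I would first establish the orbisphere analogs of Lemmas~\ref{lem:CorrOfSphBis} and~\ref{lem:CorrOfSphBis2}: given $b\in B$ and the subgroup $G_b\le G$ stabilizing $\{\cdot\}\otimes b$, the decomposition $B\cong HbG_b\otimes_{G_b}{}_{G_b}G_G$ expresses $G_b$ as an orbisphere group, whose peripheral conjugacy classes are obtained from~\eqref{eq:SplGamToXi} applied to the peripheral classes of $G$. The key new input is that $G_b$, being of finite index in $G$, corresponds via orbifold covering theory to a finite orbifold cover of $(S^2,A,\ord_A)$: the quotient of $\widetilde{(S^2,A,\ord_A)}$ by $G_b$ is again an orbisphere, with order function divided appropriately at each marked point. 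With this in hand, given $\beta\colon B(f_0)\to B(f_1)$, decompose using $b$ and $\beta(b)$ to obtain $G_b=G_{\beta(b)}$, realize both $f_0,f_1$ as orbifold covers associated with the same subgroup, and form $\psi\coloneqq f_0\circ f_1^{-1}$, whose biset is isomorphic to $\subscript HH_H$. Theorem~\ref{thm:dehn-nielsen-baer-zieschang} then gives $\psi\approx\one$; because $H$ has a peripheral conjugacy class of order $>2$, Proposition~\ref{prop:OrbiIndByHomeo} rules out the anti-orbisphere case, so $\psi$ is an orbisphere homeomorphism and $f_0\approx f_1$.

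The case $\ord_C(C)=\{2\}$ with $\chi(G)\le 0$ differs only because order-$2$ peripheral classes are self-inverse, so the biset conditions of Definition~\ref{dfn:SphBis} are insensitive to the orientation of the source map: an orbisphere biset arising from an anti-orbisphere map $f$ satisfies the same axioms as one arising from an orbisphere map. The same construction then produces $\psi\in\Mod^\pm(H)$ with $B(\psi)\cong\subscript H H_H$, which by Theorem~\ref{thm:dehn-nielsen-baer-zieschang} is isotopic to either $\one$ or to a fixed anti-orbisphere homeomorphism $\rho$; either way, $f_0$ is isotopic to $f_1$ (resp. to $\rho\circ f_1$), proving the stated equivalence among orbisphere and anti-orbisphere maps. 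The hypothesis $\chi(G)\le 0$ is essential here, since Theorem~\ref{thm:dehn-nielsen-baer-zieschang} fails for spherical orbifolds (explaining the exclusion of the $\ord_C(C)=\{2\}$, $\chi(G)>0$ case).

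For the converse (realization), I would again decompose $B\cong HbG_b\otimes_{G_b}{}_{G_b}G_G$, use the orbisphere version of the covering step above to build a cover $f\colon(S^2,f^{-1}(A),\ord')\to(S^2,A,\ord_A)$ associated with $G_b\le G$, and then interpret $HbG_b$ as a left-principal orbisphere biset whose underlying homomorphism $\iota\colon G_b\to H$ forgets precisely those marked points whose lifted peripheral conjugacy class is trivial. Applying Theorem~\ref{thm:dehn-nielsen-baer-zieschang} to $\iota$ gives a homeomorphism identifying $(S^2,C)$ with the resulting marked sphere (up to a possible anti-orbisphere homeomorphism in the $\ord_C(C)=\{2\}$ case), completing the construction. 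The main obstacle I anticipate is the orbifold covering step: one must check carefully that the multiset of lifts of peripheral conjugacy classes through $\subscript{G_b}G_G$ endows $G_b$ with a genuine orbisphere presentation, and that the resulting orbifold order function $\ord'$ on $f^{-1}(A)$ is consistent with the requirement $\ord'(p)\deg_p(f)\mid\ord_A(f(p))$; carrying this out requires Poincar\'e's polygon theorem for the quotient of $\widetilde{(S^2,A,\ord_A)}$ by $G_b$, together with an accounting of degrees analogous to~\eqref{eq:riemannhurwitz} but weighted by $\ord$.
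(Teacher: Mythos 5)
Your proposal follows essentially the same route as the paper: the authors also prove the theorem by rerunning the argument of Theorem~\ref{thm:dehn-nielsen-baer+}, replacing Theorem~\ref{thm:dehn-nielsen-baer} by Theorem~\ref{thm:dehn-nielsen-baer-zieschang} and replacing Lemma~\ref{lem:CorrOfSphBis} by an orbifold version (their Lemma~\ref{lem:CorrOfSphBis:orb}), which is precisely the ``orbifold covering step'' you flag as the main obstacle — they handle it by setting $\ord_{\widetilde A}(p)=\ord_A(f(p))/\deg_p(f)$ and modifying~\eqref{eq:SplGamToXi} to $(\Gamma_i^+\setminus\{1\})\cap G_b=\bigsqcup_j(\Xi_{i,j}^+\setminus\{1\})$ to account for torsion peripheral classes. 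Your treatment of the two cases (orientation detected by a peripheral class of order $>2$ versus anti-orbisphere maps being indistinguishable when $\ord_C(C)=\{2\}$) matches the paper's.
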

\noindent The proof is essentially the same as that of
Theorem~\ref{thm:dehn-nielsen-baer+}; therefore we only sketch the
argument, underlining the differences.
\begin{proof}
  Recall that the proof of Theorem~\ref{thm:dehn-nielsen-baer+} is
  based on Theorem~\ref{thm:dehn-nielsen-baer}, on
  Decomposition~\eqref{eq:DecOfSphBis} and on
  Lemmas~\ref{lem:CorrOfSphBis} and~\ref{lem:CorrOfSphBis2}.
  Theorem~\ref{thm:dehn-nielsen-baer} is generalized by
  Theorem~\ref{thm:dehn-nielsen-baer-zieschang} --- see
  Corollary~\ref{cor:ModPresUnderInessMap}.
  Decomposition~\eqref{eq:DecOfSphBis} holds for orbisphere bisets,
  while Lemma~\ref{lem:CorrOfSphBis} is adjusted as follows, with a
  completely analogous proof:

  \begin{lem}\label{lem:CorrOfSphBis:orb}
    Suppose that $\subscript H B_G$ is the biset of an orbisphere map
    $f\colon (S^2,C, \ord_C)\to (S^2,A,\ord_A)$ with
    $G=\pi_1(S^2\setminus A,*)$ and $H=\pi_1(S^2\setminus
    C,\dagger)$. If $\ord_C(C)=\{2\}$, then allow $f$ to be an
    anti-orbisphere map.
  
    For $p\in S^2$ define
    $\ord_{\widetilde A}(p)\coloneqq \ord_A(f(p))/\deg_p(f)$ and set
    $\widetilde A\coloneqq \{p\in S^2\mid \ord_{\widetilde A}(p)>1 \}$
    so that
    $f\colon (S^2,\widetilde A,\ord_{\widetilde A})\to (S^2,A,\ord_A)$
    is a covering of orbispheres. Consider $b\in B$ and let $*'$ be
    the endpoint of $b$.

    Then $\pi_1(S^2,\widetilde A,\ord_{\widetilde A}, *')$ is
    identified via
    $f_*\colon \pi_1(S^2,\widetilde A,\ord_{\widetilde A}, *')\to
    \pi_1(S^2,A, *)$ with $G_b$, and via this identification the
    $\pi_1(S^2,\widetilde A,\ord_{\widetilde A},*')$-$G$-biset of
    $f\colon (S^2,f^{-1}(A))\to (S^2,A)$ is isomorphic to
    $\subscript{G_b}G_G$ while the $H$-$\pi_1(S^2,f^{-1}(A),*')$-biset of
    $(S^2,C)\overset{\one}{\rightarrow} (S^2,f^{-1}(A))$ is isomorphic
    to $H b G_b$.
 
    Moreover, via the identification of $G_b$ with
    $\pi_1(S^2,\widetilde A,\ord_{\widetilde A},*')$ the peripheral
    conjugacy classes of $G_b$ are $(\Xi_{i,j})_{i,j}$ constructed as
    follows. Let $\Gamma_1,\dots,\Gamma_n$ be the peripheral conjugacy
    classes of $G$. Then for each $\Gamma_i$ there is a unique
    decomposition
    \begin{equation}\label{eq:SplGamToXi:orb}
      (\Gamma_i^+\setminus \{1\}) \cap G_b = (\Xi^{+}_{i,1}\setminus \{1\})\sqcup (\Xi^{+}_{i,2}\setminus \{1\})\sqcup\dots \sqcup (\Xi^{+}_{i,s}\setminus \{1\}) 
    \end{equation}
    such that every $\Xi_{i,j}$ is a non-trivial conjugacy class of
    $G_b$. Assuming $\Xi_{i,j}$ is generated by
    $\gamma_{i,j}^{d(i,j)}$ with $\gamma_{i,j}\in \Gamma_i$ and with
    minimal possible $d(i,j)\ge 1$, we let $\{(d(i,j), \Xi_{i,j})\}$
    be the multiset of lifts of $\Gamma_i$ via $\subscript{G_b}G_{G}$.\qed
  \end{lem}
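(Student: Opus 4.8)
The plan is to run the proof of Lemma~\ref{lem:CorrOfSphBis} essentially verbatim, replacing the unramified cover $f\colon S^2\setminus f^{-1}(A)\to S^2\setminus A$ by the orbifold cover $f\colon(S^2,\widetilde A,\ord_{\widetilde A})\to(S^2,A,\ord_A)$, and invoking orbifold covering space theory (cf.~\cite{mcmullen:renormalization}*{Appendix~A}) and Theorem~\ref{thm:dehn-nielsen-baer-zieschang} wherever the original argument used ordinary covering theory and Theorem~\ref{thm:dehn-nielsen-baer}. First I would note that, by the very definition $\ord_{\widetilde A}(p)=\ord_A(f(p))/\deg_p(f)$, one has $\ord_{\widetilde A}(p)\deg_p(f)=\ord_A(f(p))$ for every $p\in S^2$, so $f$ really is a covering of orbispheres — this remains true when $f$ is anti-orbisphere and $\ord_C(C)=\{2\}$, since reversing orientation does not alter local degrees. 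A finite orbifold covering of an orbisphere (of non-positive Euler characteristic) is again an orbisphere — the orbifold analogue of the use of Hurwitz's theorem in Lemma~\ref{lem:CorrOfSphBis2} — and its biset is the coset biset. Hence $\pi_1(S^2,\widetilde A,\ord_{\widetilde A},*')$ is identified via $f_*$ with the subgroup $G_b\le G$ of index $\deg(B)$ that stabilises $\{\cdot\}\otimes b$ in $\{\cdot\}\otimes_H B$, and the corresponding $G_b$-$G$-biset of $f$ is $\subscript{G_b}G_G$.

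Next, the splitting $\subscript H B_G\cong HbG_b\otimes\subscript{G_b}G_G$ of~\eqref{eq:DecOfSphBis} is purely algebraic and needs no change; and, exactly as in Lemma~\ref{lem:CorrOfSphBis}, reading off~\eqref{eq:Dfn:SphBis} shows that $HbG_b$ is the $H$-$\pi_1(S^2,\widetilde A,\ord_{\widetilde A},*')$-biset of the forgetful correspondence $(S^2,C)\overset{\one}{\to}(S^2,\widetilde A,\ord_{\widetilde A})$.

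For the peripheral classes, fix $\Gamma_i$ around $a_i$ and enumerate $f^{-1}(a_i)=\{c_{i,1},\dots,c_{i,s}\}$ with local degrees $d(i,j)\coloneqq\deg_{c_{i,j}}(f)$; since $\ord_C(c_{i,j})d(i,j)\mid\ord_A(a_i)$ we get $d(i,j)\mid\ord_A(a_i)$, and the residual order $\ord_{\widetilde A}(c_{i,j})=\ord_A(a_i)/d(i,j)$ exceeds $1$ exactly when $c_{i,j}\in\widetilde A$. Let $\Xi_{i,j}$ be the peripheral class of $\pi_1(S^2,\widetilde A,\ord_{\widetilde A},*')$ around $c_{i,j}$, the trivial class if $c_{i,j}\notin\widetilde A$. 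A small positively oriented loop around $c_{i,j}$ is carried by $f$ to a loop around $a_i$ traversed $d(i,j)$ times, in the same or the opposite sense according as $f$ is an orbisphere or an anti-orbisphere map; since $d(i,j)\mid\ord_A(a_i)$, both $\gamma_{i,j}^{d(i,j)}$ and $\gamma_{i,j}^{-d(i,j)}$ generate the same cyclic subgroup of $\langle\gamma_{i,j}\rangle$, so in either case $f_*(\Xi_{i,j})$ is generated by $\gamma_{i,j}^{d(i,j)}$ with $d(i,j)$ the least positive such exponent, as in the statement. The $\Xi_{i,j}$ are peripheral around pairwise distinct points of $\widetilde A$, hence pairwise disjoint, so the $\Xi^+_{i,j}\setminus\{1\}$ are pairwise disjoint; and $f_*$ carries their union bijectively onto $(\Gamma_i^+\setminus\{1\})\cap G_b$ — every nontrivial element of the latter is, under $f_*^{-1}$, a nontrivial power of a loop around some $c_{i,j}$, hence lies in $\Xi^+_{i,j}\setminus\{1\}$, and conversely every nontrivial power of $\Xi_{i,j}$ maps to a nontrivial power of $\gamma_{i,j}$ that lies in $G_b$. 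This yields~\eqref{eq:SplGamToXi:orb} and exhibits $\{(d(i,j),\Xi_{i,j})\}$ as the multiset of lifts of $\Gamma_i$ through $\subscript{G_b}G_G$ in the sense of Definition~\ref{defn:lifts}.

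The routine parts coincide with those of Lemma~\ref{lem:CorrOfSphBis}; the main obstacle, and the only genuinely new point, is the bookkeeping forced by torsion. One must track the residual orders $\ord_{\widetilde A}$ so that "a finite cover of an orbisphere is an orbisphere" holds with the correct orbifold structure — this is where finiteness of the index and $\chi\le0$ enter, exactly as in Lemma~\ref{lem:CorrOfSphBis2} together with Theorem~\ref{thm:dehn-nielsen-baer-zieschang} — and one must check that the possibly orientation-reversing local model $z\mapsto\bar z^{d}$ does not disturb the identification $f_*(\Xi_{i,j})=\langle\gamma_{i,j}^{d(i,j)}\rangle$, which it does not for the reason indicated above. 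That $G_b$ with these peripheral classes is itself an orbisphere group — needed merely to make sense of the statement — then follows as in Lemma~\ref{lem:CorrOfSphBis2}.
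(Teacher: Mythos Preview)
Your proposal is correct and follows precisely the approach the paper indicates: the paper simply states that Lemma~\ref{lem:CorrOfSphBis:orb} has ``a completely analogous proof'' to Lemma~\ref{lem:CorrOfSphBis} and places a \qed, so your detailed verification---replacing the unramified covering by the orbifold covering, tracking the residual orders $\ord_{\widetilde A}$, and noting that the $\setminus\{1\}$ is forced by torsion---is exactly what is intended. One minor remark: the proof of Lemma~\ref{lem:CorrOfSphBis} itself does not invoke Theorem~\ref{thm:dehn-nielsen-baer}, and the fact that a finite orbifold cover of an orbisphere is again an orbisphere does not require $\chi\le 0$ (it follows from Riemann--Hurwitz on the underlying branched cover), so your references to Theorem~\ref{thm:dehn-nielsen-baer-zieschang} and to the Euler characteristic at that step are superfluous, though harmless.
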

  \noindent Lemma~\ref{lem:CorrOfSphBis2}, as well as its proof, holds
  for orbisphere bisets, as long as references to
  Lemma~\ref{lem:CorrOfSphBis} are replaced by references to
  Lemma~\ref{lem:CorrOfSphBis:orb}.

  Now the proof copies that of Theorem~\ref{thm:dehn-nielsen-baer+}
  with the exception that in the case $\ord_C(C)=\{2\}$
  anti-orbisphere maps are indistinguishable from orbisphere maps.
\end{proof}

\begin{cor}\label{cor:can lift bisets}
  Let $\widetilde G\to G$ and $\widetilde H\to H$ be as in
  Corollary~\ref{cor:ModPresUnderInessMap}. There is an algorithm
  that, given $B$ an $H$-$G$-biset, computes its preimage under the
  forgetful map.
\end{cor}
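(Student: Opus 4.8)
The plan is to re-run the (effective) proof of Theorem~\ref{thm:dehn-nielsen-baer++} with sphere groups in place of orbisphere groups: the forgetful lift of an orbisphere biset is built from the \emph{same} decomposition data as the biset itself, and every step of the reconstruction is algorithmic. First I would dispose of degenerate cases: if $\#C=0$ then $\widetilde H=H=1$ and $B$ is merely a right $G$-set, whose forgetful preimage is the right $\widetilde G$-set with the same action pulled back along $\widetilde G\twoheadrightarrow G$; the case $\#C=1$ is excluded; and the case $\ord_C(C)=\{2\}$, $\chi(G)>0$ left out of Theorem~\ref{thm:dehn-nielsen-baer++} (see Example~\ref{ex:positive orbisphere}) is handled by hand, since then $\Mod^{\pm}$ is finite and the few possibilities may be checked directly. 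So assume $\#C,\#A\ge2$. Pick $b\in B$ and form $\subscript H B_G\cong H b G_b\otimes\subscript{G_b}G_G$ as in~\eqref{eq:DecOfSphBis}; by Lemma~\ref{lem:CorrOfSphBis:orb} the index-$\deg(B)$ subgroup $G_b\le G$ carries a canonical orbisphere structure whose peripheral classes $(\Xi_a)_a$ and orders are read off from the multiset of lifts of the peripheral classes of $G$ through $\subscript{G_b}G_G$. All of this is effective, since subgroup membership, conjugacy, and multisets of lifts are computable in orbisphere groups.

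Next I would build $\subscript{\widetilde H}{\widetilde B}_{\widetilde G}$ factor by factor. For the right factor, let $\widetilde{G_b}\le\widetilde G$ be the preimage of $G_b$ under $\widetilde G\twoheadrightarrow G$; it again has index $\deg(B)$, and, being of finite index in the free group $\widetilde G$ and realizing the pull-back of the coset monodromy of $B$, it is the free sphere group over $G_b$, with peripheral classes the preimages of the $\Xi_a$, all of infinite order; a generating set is computable by Reidemeister-Schreier rewriting, so $\subscript{\widetilde{G_b}}\widetilde G_{\widetilde G}$ is computable. For the left factor, $H b G_b$ is left-principal and right-transitive (it is a sphere biset, Lemma~\ref{lem:CorrOfSphBis2}), hence isomorphic to $B_\iota^\vee$ for the epimorphism $\iota\colon G_b\twoheadrightarrow H$ determined by $b g=\iota(g) b$; it is computable because $G_b$ is finitely generated and $B$ is left-free, and by Lemma~\ref{lem:CorrOfSphBis:orb} it sends each $\Xi_a$ to a peripheral class of $H$ or to $1$. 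I would then lift $\iota$ to a homomorphism $\widetilde\iota\colon\widetilde{G_b}\to\widetilde H$ that makes the square with $\widetilde{G_b}\to G_b$ and $\widetilde H\to H$ commute and sends the peripheral generator of each preimage of $\Xi_a$ into the peripheral conjugacy class of $\widetilde H$ lying over $\iota(\Xi_a)$ (or to $1$): since $\widetilde{G_b}$ is free, such a lift is found by enumerating the images of a chosen generating set subject to the relation $\prod\widetilde\xi_a=1$, the two conditions being decidable. Setting $\widetilde B\coloneqq\widetilde H\,\widetilde b\,\widetilde{G_b}\otimes\subscript{\widetilde{G_b}}\widetilde G_{\widetilde G}$ with left part $B_{\widetilde\iota}^\vee$, its forgetful image is $B$, because the forgetful operation commutes with $\otimes$ and with passing to preimage subgroups.

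The real content — and where I expect the main difficulty — is to show that $\widetilde B$ is, up to isomorphism, independent of the choices (of $b$ and of $\widetilde\iota$), so that the algorithm computes \emph{the} preimage, and that a valid $\widetilde\iota$ exists, which also guarantees termination of the search. Both existence and independence follow by realizing the degree-one orbisphere biset $H b G_b$, together with $\subscript{G_b}G_G$, by an orbisphere map via Theorem~\ref{thm:dehn-nielsen-baer++}: the associated forgetful sphere map — the same branched covering regarded between punctured spheres — has biset the desired $\widetilde B$, and it is unique up to isotopy rel its marked set by Theorem~\ref{thm:dehn-nielsen-baer+}; hence if $\widetilde B_0,\widetilde B_1$ both forget to $B$, they are isomorphic. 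The residual indeterminacy of $\widetilde\iota$ is an automorphism of the source sphere group lying over the identity of $G_b$, together with an inner automorphism of $\widetilde H$; that the former is trivial modulo $\Mod(\widetilde{G_b})$ is exactly Corollary~\ref{cor:ModPresUnderInessMap} (via Theorem~\ref{thm:dehn-nielsen-baer-zieschang}), which is therefore the key structural input. This establishes Corollary~\ref{cor:can lift bisets} for every $H$-$G$-biset $B$ in the image of the forgetful map, i.e.\ for every orbisphere biset.
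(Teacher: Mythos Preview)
Your approach is correct for producing a preimage, but it differs substantially from the paper's proof. The paper argues in one sentence: enumerate all sphere and anti-sphere $\widetilde H$-$\widetilde G$-bisets $\widetilde B$ and return the first one whose image under the forgetful maps is $B$; termination is implicit in Theorem~\ref{thm:dehn-nielsen-baer++}, which guarantees that some such $\widetilde B$ exists. Your route is more structural and far more explicit: you reuse the epic--monic decomposition~\eqref{eq:DecOfSphBis}, pull back $G_b$ to $\widetilde{G_b}\le\widetilde G$ by Reidemeister--Schreier, and lift the degree-one factor $\iota\colon G_b\to H$ generator-by-generator using freeness of $\widetilde{G_b}$. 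This buys you an algorithm that is practically implementable, whereas the paper's enumeration is purely existential; conversely the paper's argument requires no case analysis and no verification of compatibility of choices.

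One caveat: your uniqueness claim (``if $\widetilde B_0,\widetilde B_1$ both forget to $B$, they are isomorphic'') is too strong when $\ord_C(C)=\{2\}$. In that regime Theorem~\ref{thm:dehn-nielsen-baer++} realises $B$ by an orbisphere \emph{or} anti-orbisphere map, and the two are indistinguishable at the level of $H$; once orientations are fixed via $\widetilde H,\widetilde G$, both a sphere biset and an anti-sphere biset sit above $B$ (this is exactly what the paper exploits in the proposition following the corollary). Your algorithm still terminates and returns \emph{a} preimage, which is all the statement asks, but the independence-of-choices paragraph should be weakened accordingly: different lifts $\widetilde\iota$ can yield non-isomorphic $\widetilde B$'s differing by the orientation of the source, and the paper's phrasing ``its preimage'' should be read as ``a preimage''.
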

\begin{proof}
Anti-sphere bisets are defined in complete analogy with sphere bisets, see Definition~\ref{dfn:SphBis}, except that the multisets of all lifts of $\Gamma_i$ contain $\Delta^{-1}_j$ instead of $\Delta_j$. Enumerate then all sphere and
  anti-sphere $\widetilde H$-$\widetilde G$-bisets $\tilde B$, and return the
  first one that maps to $B$ under the inessential forgetful maps.
\end{proof}

\subsubsection{Orbisphere bisets and orientation}
By Lemma~\ref{lem:sphere to sphere group}, to every non-trivial sphere group there
is a uniquely associated oriented sphere. This is also true for
orbisphere groups, except if all peripheral conjugacy classes have
order $2$; in that case, it is not possible to recover the orientation
of the topological orbisphere: if $A\subset\R$, the map
$z\mapsto\overline z\colon(\hC,A,\ord)\selfmap$ has biset isomorphic
to that of $z\mapsto z$. Thus, in Corollary~\ref{cor:can lift bisets},
the choices of $\widetilde G$ and $\widetilde H$ amount to choices of
orientations above the orbispheres of $G,H$ respectively.

\begin{prop}
  Suppose that in an orbisphere group $H$ all peripheral conjugacy
  classes have order $2$ and $\chi(H)\le0$.

  Then there is an algorithm that decides whether the bisets
  $\subscript H B_G$, $\subscript H C_G$ have the same or different
  orientation. In particular, there is an algorithm deciding whether
  an orbisphere biset $\subscript H B_H$ corresponds to an orientation
  preserving map.
\end{prop}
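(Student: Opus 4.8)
The plan is to push the question down to the level of honest (non-orbifold) punctured spheres, where the orientation of a branched covering is recorded by the dichotomy ``sphere biset versus anti-sphere biset'' used in the proof of Corollary~\ref{cor:can lift bisets}. The reason the orientation is invisible in $\subscript H B_G$ itself is that an orbisphere group all of whose peripheral classes have order~$2$ is blind to orientation --- already $\Gamma_i=\Gamma_i^{-1}$ for each peripheral class --- so the orientation datum lives only in the choice of a sphere group $\widetilde H$ together with an inessential forgetful map $\widetilde H\twoheadrightarrow H$ (the two possible orientations of $\widetilde H$ both forget onto $H$). The key point is that this choice, though not canonical, is the \emph{same} for $B$ and for $C$ (they share $H$, and they share $G$), so the comparison ``same or different orientation'' will not depend on it.

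Concretely, the algorithm is as follows. Fix a sphere group $\widetilde H$ with an inessential forgetful map $p_H\colon\widetilde H\twoheadrightarrow H$ (say $\widetilde H=\langle\delta_1,\dots,\delta_n\mid\delta_1\cdots\delta_n\rangle$ with $\delta_i\mapsto\gamma_i$), and likewise $p_G\colon\widetilde G\twoheadrightarrow G$. By Corollary~\ref{cor:can lift bisets} compute the preimage $\widetilde B$ of $\subscript H B_G$ under $(p_H,p_G)$; by the proof of that corollary $\widetilde B$ is either a sphere or an anti-sphere $\widetilde H$-$\widetilde G$-biset, and which one it is is decidable, since it amounts to testing conditions \eqref{cond:1:dfn:SphBis}--\eqref{cond:3:dfn:SphBis} of Definition~\ref{dfn:SphBis} on the finite left-free biset $\widetilde B$, the anti-variant differing only in that each $\widetilde\Delta_c$ is replaced by $\widetilde\Delta_c^{-1}$ in the multiset of lifts. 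Compute $\widetilde C$ in the same way and output ``same orientation'' if $\widetilde B,\widetilde C$ are both sphere bisets or both anti-sphere bisets, and ``different orientation'' otherwise. For the ``in particular'', take $G=H$ and $p_G=p_H$: then $\subscript H B_H$ corresponds to an orientation preserving orbisphere map if and only if $\widetilde B$ is a sphere biset --- equivalently, compare $B$ with the identity biset $\subscript H H_H$, whose lift is the identity sphere biset.

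Correctness hinges on three facts. First, $\widetilde B$ is unique among sphere and anti-sphere bisets: this is the injectivity of the forgetful functor on this class, which follows from Theorem~\ref{thm:dehn-nielsen-baer++}, since an (anti-)orbisphere map is determined up to isotopy by its orbisphere biset and any two (anti-)sphere bisets mapping to the same orbisphere biset are realized by sphere maps between punctured spheres with isotopic orbifold completions. Second, the label ``$\widetilde B$ is a sphere biset'' is a genuine invariant of the pair $(B,\text{orientation datum})$: a continuous family of (anti-)orbisphere maps cannot interpolate between the local models $z\mapsto z^d$ and $z\mapsto\overline z^d$, so isotopy preserves the orbisphere/anti-orbisphere dichotomy, hence the sphere/anti-sphere type of the lift. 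Third, the comparison is canonical: passing to the oppositely-oriented sphere group over $H$ replaces the standard peripheral classes $\widetilde\Delta_c$ of $\widetilde H$ by their inverses (up to relabelling) and thereby swaps ``sphere'' with ``anti-sphere'' for $\widetilde B$ and for $\widetilde C$ \emph{simultaneously}, and similarly for a reorientation over $G$; so ``same type'' is unaffected. In the dynamical case $G=H$, $p_G=p_H$, a reorientation flips the type of $\widetilde B$ twice, so ``$\widetilde B$ is a sphere biset'' is itself canonical --- as it must be, since reversing the orientation of both source and target of a self-map preserves orientation-preservingness.

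I expect the main obstacle to be exactly this last bookkeeping: verifying that the sphere/anti-sphere type of $\widetilde B$ is well defined, and that the only arbitrary input --- the orientation implicit in $\widetilde H$, and also in $\widetilde G$ when $G$ likewise has only order-$2$ peripheral classes --- disappears from the output. Both rely on the hypothesis $\chi(H)\le 0$ (and, in the step invoking Theorem~\ref{thm:dehn-nielsen-baer++}, on $\chi(G)\le 0$, which should be included in the relevant setting or reduced to), guaranteeing through Theorems~\ref{thm:dehn-nielsen-baer-zieschang} and~\ref{thm:dehn-nielsen-baer++} that the realizing (anti-)orbisphere maps exist and are unique up to isotopy. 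In the excluded range $\chi(H)>0$, for instance for $z\mapsto\overline z$ on $(\hC,\{-1,1\},\ord\equiv2)$, whose biset equals that of the identity, the orientation is genuinely not recoverable, consistently with the hypothesis.
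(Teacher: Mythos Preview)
Your proposal is correct and follows essentially the same route as the paper: lift to sphere groups $\widetilde H,\widetilde G$, lift $B$ and $C$ to (anti-)sphere bisets via Corollary~\ref{cor:can lift bisets}, and declare ``same orientation'' precisely when the two lifts have the same type; for the ``in particular'' compare $B$ with the identity biset $\subscript H H_H$. Your additional bookkeeping --- checking that a simultaneous reorientation of $\widetilde H$ (or $\widetilde G$) swaps the sphere/anti-sphere type of both lifts at once, so that the output is independent of the arbitrary orientation choices --- is a welcome justification that the paper's terse proof leaves implicit.
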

\begin{proof}
  Choose by Corollary~\ref{cor:can lift groups} sphere groups
  $\widetilde G,\widetilde H$ above $G,H$ respectively.  Using
  Corollary~\ref{cor:can lift bisets}, find $\widetilde
  H$-$\widetilde G$-bisets $\widetilde B,\widetilde C$ above $B,C$
  respectively. Then $B,C$ have the same orientation if and only if
  $\widetilde B,\widetilde C$ are either both sphere or both
  anti-sphere bisets.

  The second statement follows from the first, by taking $G=H$ and
  $C=\subscript G G_G$.
\end{proof}

Note that if $H$ has four peripheral classes, so $\chi(H)=0$, there is
a more efficient algorithm: consider indeed an orbisphere biset
$\subscript H B_H$. Since $H$ has a unique subgroup of index $2$
isomorphic to $\Z^2$, the restriction of $B$ to $\Z^2$ yields a
$2\times2$-integer matrix $M_B$, and $B$ is orientation-preserving
precisely when $\det(M_B)>0$. Details will be given
in~\cite{bartholdi-dudko:bc3}.

\begin{exple}[Different orbisphere bisets of the same map]\label{ex:positive orbisphere}
  Consider the following post-critically finite rational map
  $f(z)=\left(\frac{z^2-1}{z^2+1}\right)^2$. It has order-$2$ critical
  points at $\pm i, \pm 1, 0, \infty$ with
  $f(\pm i)=\infty, f(\pm 1)=0, f(0)=f(\infty)=1$. Let
  $A\coloneqq \{0,1, \infty\}$ be the post-critical set of $f$ and
  write $G\coloneqq \pi_1(\hC,A,*)=\langle a, b, c \mid a b c \rangle$
  so that $a$ and $c$ are counterclockwise loops around $0$ and
  $\infty$ respectively intersecting once $\R_{\ge0}$ while $b$ is a
  counterclockwise loop around $1$ intersecting twice $\R_{\ge0}$.
  
  We compute $B(f)$ viewed as a sphere map. Suppose $*\notin
  \R_{\ge0}$ and let $X\coloneqq\{1,2,3,4\}$ be the basis of $B(f)$
  normalized so that $1,2,3,4$ are unique curves in $\C\setminus
  \R_{\ge0}$ connecting $*$ to its unique preimage in $\{z\mid
  \operatorname{Re}(z)>0,\operatorname{Im}(z)>0\}$, $\{z\mid
  \operatorname{Re}(z)>0,\operatorname{Im}(z)<0\}$, $\{z\mid
  \operatorname{Re}(z)<0,\operatorname{Im}(z)<0\}$, $\{z\mid
  \operatorname{Re}(z)>0,\operatorname{Im}(z)<0\}$ respectively. Then
  $B(f)$ is described by the recursion
  \begin{equation}
    \begin{aligned}
      a &\mapsto \pair{a^{-1},1,1,ab}(1,4)(2,3)\\
      b &\mapsto \pair{c,1,1,a}(1,3)(2,4)\\
      c &\mapsto \pair{1,1,1,,1}(1,2)(3,4).
    \end{aligned}
  \end{equation}

  View next $f$ as an orbisphere map $\hC\to (\hC, A,\ord)$ with
  $\ord(A)=\{2\}$. Then $f\colon \hC\to (\hC, A,\ord)$ is a covering
  of orbifolds with
  $\hC/{\langle -z, \frac{1}{z}\rangle}\overset{f}{\approx}
  (\hC,A,\ord)$. Write
  $G'\coloneqq \pi_1(\hC, A,\ord, *)=\langle a, b, c \mid
  a b c,a^2,b^2,c^2\rangle \cong\langle -z, \frac{1}{z}\rangle$. Then
  $G'$ is isomorphic to $(\Z/2)^{2}$ and the biset of
  $f\colon \hC\to (\hC, A,\ord)$ is
  \[\one \otimes_{G} B(f) \otimes_G  G' \cong \subscript{\one}{(\Z/2)^{2}}_{(\Z/2)^{2}}.\]

  Finally view $f$ as an orbisphere map
  $ (\hC, A,\ord)\to (\hC, A,\ord^\circ)$ with
  $\ord^\circ(a)=\ord^\circ(b)=4$ and $\ord^\circ(c)=2$.  Write
  $G^{\circ}\coloneqq \pi_1(\hC, A,\ord^\circ, *)=\langle a, b, c \mid
  a b c,a^4,b^4,c^2\rangle$. Then $\chi(G^\circ)=0$ and $G^\circ$ has
  an index $3$-subgroup isomorphic to $\Z^2$. The biset of
  $(\hC, A,\ord)\to (\hC, A,\ord^\circ)$ is
  \[\subscript{G'}B^\circ_{G^\circ} = G' \otimes_{G} B(f) \otimes_G  G^\circ;\]
  as a set it has $16=\#G'\cdot\#X$ elements transitively permuted by
  $G^{\circ}$. We note that $\subscript{G'}B^\circ_{G^\circ}$ uniquely
  determines $f$ among all (orientation preserving) orbisphere maps
  from $ (\hC, A,\ord)$ to $ (\hC, A,\ord^\circ)$; however the biset
  of anti-orbisphere map
  $f\circ \bar z \colon (\hC, A,\ord)\to (\hC, A,\ord^\circ)$ is also
  $\subscript{G'}B^\circ_{G^\circ}$.
\end{exple}

\subsection{Inessential forgetful morphisms}\label{ss:inessential}
Consider a Thurston map $f\colon(S^2,A)\selfmap$.  There exist various
orbispace structures on $(S^2,A)$, namely functions
$\ord\colon A\to\{2,3,\dots,\infty\}$, that turn $f$ into an
orbisphere map. The minimal one is given by
\begin{equation}\label{eq:minorbispace}
  \ord_f(a)=\lcm\big\{\deg_z(f^k)\mid k\in\N,\,z\in f^{-k}(a)\big\}\in\N\cup\{\infty\},
\end{equation}
and is denoted by $(S^2,P_f,\deg_f)$ with $P_f\subseteq A$ the
post-critical set of $f$. The maximal one is given by
$\deg_\infty(a)=\infty$ for all $a\in A$, and is denoted by
$(S^2,A,\deg_\infty)$. The former is the usual one to consider when
one is just given a map $f$; the latter has the advantage of being
defined on a punctured sphere rather than on an orbispace; but
intermediate orbispace structures will be required, firstly because
sometimes the map $f$ to consider is given by a previous construction
that specified its orbispace structure; secondly because periodic
non-postcritical cycles are invisible in the fundamental group of the
minimal orbispace structure (they correspond to generators of order
$1$); and thirdly because the maximal structure will not satisfy the
required ``contraction'' properties in the next
article~\cite{bartholdi-dudko:bc3} of the series. If periodic cycles
were marked is $A$, then $\deg_f$ can be extended to $A\setminus P_f$
by taking a constant value, for example $2$, on non-post-critical
points.

These notions have algebraic counterparts. Consider an orbisphere
biset $\subscript G B_G$. It has a \emph{portrait}
$B_*\colon A\selfmap$ induced by the map on peripheral conjugacy
classes, and a \emph{local degree} $\deg_a(B)$. There is a
\emph{minimal orbisphere quotient} of $G$ associated with $B$, which
is the quotient $\overline G$ of $G$ by the additional relations
$\Gamma_a^{\ord_B(a)}=1$, for
\begin{equation}\label{eq:ordB}
  \ord_B(a)=\lcm\{d\mid n\ge0\text{ and $\Gamma_a$ has a lift of degree $d$ under }B^{\otimes n}\},
\end{equation}
and clearly $\ord_{B}(a) \mid \ord_{G}(a)$. We call the quotient biset
$\overline G\otimes_G B\otimes_G\overline G$ the \emph{minimal
  orbisphere biset} of $\subscript G B_G$.

There is an even smaller quotient of $G$ than $\overline G$,
informally ``the smallest for which a quotient biset of $B$ can be
defined''. Let $\subscript G B_G$ be an orbisphere biset; we then
naturally get a right action of $G$ on
\[T(B)\coloneqq\bigsqcup_{n\ge0}\{\cdot\}\otimes_G B^{\otimes n}.
\]
If $B$ is left-free of degree $d$ then $T(B)$ naturally has the
structure of a $d$-regular rooted tree: if $S$ is a basis of $B$, then
$T(B)$ is in bijection with the set of words $S^*$, which forms a
$\#S$-regular tree if one puts an edge between $s_1\dots s_n$ and
$s_1\dots s_{n+1}$ for all $s_i\in S$. The action of $G$ on $T(B)$
need not be free; following~\cite{nekrashevych:ssg}*{5.1.1} we denote
by $\IMG_B(G)$ the quotient of $G$ by the kernel of this action.

Let $\widetilde G$ and $G$ be two orbisphere groups with a forgetful
morphism $\iota\colon\widetilde G\to G$ between them. Let
$\subscript{\widetilde G}{\widetilde B}_{\widetilde G}$ be an
orbisphere biset.  If the kernel of $\iota\colon\widetilde G\to G$ is
contained in the kernel of
$\widetilde G\to\IMG_{\widetilde B}(\widetilde G)$, then the biset
\begin{equation}\label{eq:tildeB to B}
  B\coloneqq G\otimes_{\widetilde G}\widetilde B_{\widetilde G}\otimes G
\end{equation}
is an orbisphere $G$-$G$-biset. It is easy to see that the kernel of
$G\to \overline G$ is contained in the kernel of
$G\to \overline \IMG(G)$.

\begin{cor}
  Suppose that $\widetilde G\to G$ is an inessential forgetful
  morphism such that
  $B\coloneqq G\otimes_{\widetilde G}\widetilde B_{\widetilde
    G}\otimes G$ as in~\eqref{eq:tildeB to B} is an orbisphere biset.
  Then the natural map
  $\widetilde b\mapsto 1\otimes\widetilde b\otimes 1$ induces an
  isomorphism between the mapping class bisets $M(\widetilde B)$ and
  $M(B)$.
\end{cor}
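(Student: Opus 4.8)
The plan is to leverage the two main structural facts already established: that $\Mod(\widetilde G)\to\Mod(G)$ is an isomorphism under an inessential forgetful morphism (Corollary~\ref{cor:ModPresUnderInessMap}), and that mapping class bisets are determined by their bisets via $M(B)\cong\{B_\psi\otimes B\otimes B_\phi\}/{\cong}$. So first I would unwind the definitions: an element of $M(\widetilde B)$ is an isomorphism class of $\widetilde G$-$\widetilde G$-bisets of the form $B_{\widetilde\psi}\otimes\widetilde B\otimes B_{\widetilde\phi}$ with $\widetilde\psi,\widetilde\phi\in\Mod(\widetilde G)$, and similarly for $M(B)$ over $G$. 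The candidate map $\Theta\colon M(\widetilde B)\to M(B)$ sends the class of $\widetilde C$ to the class of $G\otimes_{\widetilde G}\widetilde C_{\widetilde G}\otimes G$; one checks this is well-defined on isomorphism classes and that it is a biset morphism, i.e.\ intertwines the $\Mod(\widetilde G)$-actions on $M(\widetilde B)$ with the $\Mod(G)$-actions on $M(B)$ through the isomorphism $\Mod(\widetilde G)\cong\Mod(G)$ of Corollary~\ref{cor:ModPresUnderInessMap}. This is essentially formal, using associativity of $\otimes$ and the identity $G\otimes_{\widetilde G}B_{\widetilde\psi}\cong B_\psi\otimes_G G$ for $\psi$ the image of $\widetilde\psi$.

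Next I would verify surjectivity: every $C\in M(B)$ has the form $B_\psi\otimes B\otimes B_\phi$, and since $\psi,\phi$ lift uniquely to $\widetilde\psi,\widetilde\phi\in\Mod(\widetilde G)$, the biset $\widetilde C\coloneqq B_{\widetilde\psi}\otimes\widetilde B\otimes B_{\widetilde\phi}$ satisfies $\Theta([\widetilde C])=[C]$, because tensoring a left- and right-multiplied biset by the forgetful data commutes with the multiplications and $G\otimes_{\widetilde G}\widetilde B\otimes_{\widetilde G}G=B$ by hypothesis~\eqref{eq:tildeB to B}. For injectivity, suppose $\Theta([\widetilde C_0])=\Theta([\widetilde C_1])$; then $G\otimes_{\widetilde G}\widetilde C_0\otimes G\cong G\otimes_{\widetilde G}\widetilde C_1\otimes G$ as $G$-$G$-bisets. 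Both $\widetilde C_0,\widetilde C_1$ are orbisphere $\widetilde G$-$\widetilde G$-bisets (being left multiples of $\widetilde B$), so by Theorem~\ref{thm:dehn-nielsen-baer++} they correspond to orbisphere maps $\widetilde f_0,\widetilde f_1$, and the pushed-forward bisets correspond to the composite maps through the inessential forgetful (homeomorphisms) on source and target. The isotopy realizing the isomorphism downstairs lifts, by the uniqueness clause of Theorem~\ref{thm:dehn-nielsen-baer++} applied upstairs (the relevant groups have the same $\Mod$ by Corollary~\ref{cor:ModPresUnderInessMap}), to an isotopy between $\widetilde f_0$ and $\widetilde f_1$, hence $\widetilde C_0\cong\widetilde C_1$.

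The main obstacle I anticipate is the bookkeeping around the case $\ord_C(C)=\{2\}$, where Theorem~\ref{thm:dehn-nielsen-baer++} only gives isotopy of maps \emph{up to orientation reversal}: an orbisphere biset with all peripheral classes of order $2$ does not remember orientation. However, for injectivity we only need $\widetilde C_0\cong\widetilde C_1$ as bisets, not isotopy of the underlying maps, so the orientation ambiguity is harmless --- an orbisphere map and its anti-orbisphere twin have the same biset, which is precisely what we want. One subtlety to check carefully is that the hypothesis on the kernel of $\iota$ being contained in $\ker(\widetilde G\to\IMG_{\widetilde B}(\widetilde G))$ --- needed to make $B$ an orbisphere biset in the first place --- is automatically inherited when $\widetilde B$ is replaced by a left/right multiple $B_{\widetilde\psi}\otimes\widetilde B\otimes B_{\widetilde\phi}$; this holds because pre- and post-composing with homeomorphisms does not change the $\IMG$ quotient (the tree $T(\widetilde B)$ and the action on it are unchanged up to the relabelling induced by $\widetilde\psi,\widetilde\phi$). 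Alternatively, and perhaps more cleanly, I would phrase the whole argument purely in the language of bisets, bypassing maps entirely: define $\Theta$ as above, use Theorem~\ref{thm:dehn-nielsen-baer++} only once, to identify $M(\widetilde B)$ with $M(B)$ via the chain $M(\widetilde B)\cong\{$orbisphere maps$\}/{\approx}\cong\{$orbisphere maps downstairs$\}/{\approx}\cong M(B)$, with the middle bijection provided by composing with the fixed inessential forgetful homeomorphisms and noting these are invertible on the level of mapping classes by Corollary~\ref{cor:ModPresUnderInessMap}.
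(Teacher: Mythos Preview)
Your proposal is correct and takes essentially the same approach as the paper: the paper's proof is the single line ``Follows immediately from Theorems~\ref{thm:dehn-nielsen-baer-zieschang} and~\ref{thm:dehn-nielsen-baer++}'', and your argument is precisely a detailed unpacking of what that means---using the $\Mod(\widetilde G)\cong\Mod(G)$ isomorphism (Corollary~\ref{cor:ModPresUnderInessMap}, itself a consequence of Theorem~\ref{thm:dehn-nielsen-baer-zieschang}) together with the biset-to-map correspondence of Theorem~\ref{thm:dehn-nielsen-baer++}. Your careful handling of the $\ord_C(C)=\{2\}$ orientation ambiguity and of the $\IMG$ kernel condition under twisting goes beyond what the paper spells out, but is consistent with its intent.
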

\begin{proof}
  Follows immediately from
  Theorems~\ref{thm:dehn-nielsen-baer-zieschang}
  and~\ref{thm:dehn-nielsen-baer++}.
\end{proof}

\subsection{Freely-oriented mapping class bisets}
Consider $f\colon (S^2,C)\to (S^2,A)$ and as in~\eqref{eq:intro:m}
define the \emph{freely-oriented mapping class biset}
\[M^{\pm}(f,C,A)=\{m'fm''\mid m'\in\Mod^{\pm}(S^2,C),m''\in\Mod^{\pm}(S^2,A)\}.
\] 
Then $M^{\pm}(f,C,A)$ is a \emph{crossed product} of $\Z/2$ and $M(f,C,A)$ in
the following way. On the level of mapping class groups we have short
exact sequences
\begin{equation}\label{eq:Modpm:split}
  \begin{aligned}
1\longrightarrow \Mod(S^2,C)\longrightarrow &\Mod^{\pm}(S^2,C)\longrightarrow  \Z/2\longrightarrow 1,\\
1\longrightarrow \Mod(S^2,A)\longrightarrow &\Mod^{\pm}(S^2,A)\longrightarrow  \Z/2\longrightarrow 1.
 \end{aligned}
\end{equation}
In particular, we have a short exact sequence of bisets, see
Definition~\ref{defn:BisExt},
\begin{equation}\label{eq:SES:NonOrientMCB}
  \subscript{\Mod(S^2,C)}M^{\pm}(f,C,A)_{\Mod(S^2,A)}\hookrightarrow \subscript{\Mod^{\pm}(S^2,C)}M^{\pm}(f,C,A)_{\Mod^{\pm}(S^2,A)} \twoheadrightarrow  \Z/2.
\end{equation}
The sequences in~\eqref{eq:Modpm:split} split; fix splittings
$\sigma \colon \Z/2\to \Mod^{\pm}(S^2,C)$ and
$\sigma \colon \Z/2\to \Mod^{\pm}(S^2,A)$. For example, we may choose
$\sigma(1)=\bar z$ after identifying $S^2$ with $\widehat \C$ and
realizing $C$ and $A$ as subsets of $\R$. Then $\Z/2$ acts on
$m\in\Mod(S^2,C)$ or $m\in\Mod(S^2,A)$ by
$(m)^{a}= \sigma(a^{-1}) m\sigma(a)$ and on $b\in M(f,C,A)$ by
$(b)^{a}= \sigma(a^{-1}) b\sigma(a)$. Using these splitting we have
semidirect products
$\Mod^{\pm}(S^2,C)=\Z/2 \ltimes_\sigma \Mod(S^2,C)$ and
$\Mod^{\pm}(S^2,A)=\Z/2 \ltimes_\sigma \Mod(S^2,A)$. Define
$\Z/2 \ltimes_\sigma M(f,C,A)$ to be the
$(\Z/2 \ltimes_\sigma
\Mod(S^2,C))$-$(\Z/2 \ltimes_\sigma \Mod(S^2,A))$-biset that is
$\Z/2\times M(f,C,A)$ as a set, endowed with the actions
\begin{equation}
\label{eq:NonOrientMCB:CrossProd}
(a', m') \cdot (a, g)\cdot (a'', m'')=  (a' a a'', (m')^{a a''} (g)^{a''} m'').
\end{equation}
\begin{lem}[Cross product for~\eqref{eq:SES:NonOrientMCB}]
  The biset of $ M^{\pm}(f,C,A)$ is isomorphic to the cross product
  $\Z/2 \ltimes M(f,C,A)$ by an isomorphism mapping
  $\sigma(a) g\in M^{\pm}(f,C,A)$ to $(a,g)\in \Z/2\times M(f,C,A)$ with
  $g\in M(f,C,A)$.  \qed
\end{lem}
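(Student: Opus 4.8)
The plan is to parallel, on the level of bisets, the splittings $\Mod^{\pm}(S^2,C)=\Z/2\ltimes_\sigma\Mod(S^2,C)$ and $\Mod^{\pm}(S^2,A)=\Z/2\ltimes_\sigma\Mod(S^2,A)$ coming from~\eqref{eq:Modpm:split}, and then to verify that the resulting bijection is a biset morphism. The first ingredient I would record is that $M^{\pm}(f,C,A)$ carries a well-defined orientation sign $\varepsilon\colon M^{\pm}(f,C,A)\to\Z/2$, given by $\varepsilon(m'fm'')=\varepsilon(m')+\varepsilon(m'')$; this is well defined because $f$ is orientation preserving, so $\deg(m''\circ f\circ m')=(-1)^{\varepsilon(m')+\varepsilon(m'')}\deg(f)$ with $\deg(f)>0$, and the degree of a branched covering is invariant under isotopy. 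Thus $\varepsilon$ is precisely the surjection $\pi$ of~\eqref{eq:SES:NonOrientMCB}, whose kernel is the sub-biset $M(f,C,A)$, and the statement to prove is exactly that this extension of left-free bisets (an extension by Definition~\ref{defn:BisExt} and Propositions~\ref{prop:ExtOfBis}, \ref{prop:MCBfree}) is the crossed product associated with the chosen splittings.

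Next I would make the bijection explicit. Given $b\in M^{\pm}(f,C,A)$, put $a\coloneqq\varepsilon(b)$ and $g\coloneqq\sigma(a)^{-1}b$, using the left action of $\sigma(a)\in\Mod^{\pm}(S^2,C)$; since $\sigma(a)$ has sign $a$, the element $g$ has sign $0$, hence $g\in M(f,C,A)$ and $b=\sigma(a)g$. Define $\Phi(b)=(a,g)$. It is injective, since $\sigma(a)g$ has sign $a$, so $a$ and then $g=\sigma(a)^{-1}b$ are recovered from $b$; and it is surjective, since any $(a,g)$ is the image of $\sigma(a)g\in M^{\pm}(f,C,A)$. (Alternatively one can start from $M^{\pm}(f,C,A)=\Mod^{\pm}(S^2,C)\,f\,\Mod^{\pm}(S^2,A)$, write $m'=\sigma(a')n'$ and $m''=\sigma(a'')n''$, and absorb the $\sigma$'s; the sign bookkeeping above is just a tidy way to package this.)

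Finally I would check compatibility with the actions, which is the computational heart of the argument. One expands
\[
  (\sigma(a')n')\cdot\bigl(\sigma(a)g\bigr)\cdot(\sigma(a'')n'')
\]
inside $M^{\pm}(f,C,A)$ and pushes the three copies of $\sigma$ together using the defining relations $n^{a}=\sigma(a)^{-1}n\sigma(a)$ of the semidirect products on $\Mod(S^2,C)$ and $\Mod(S^2,A)$ together with the induced twist $g^{a}=\sigma(a)^{-1}g\sigma(a)$ on $M(f,C,A)$; the accumulated sign in the leftmost $\sigma$ is $a'+a+a''$, and the remaining factor in $M(f,C,A)$ is $(n')^{a+a''}\,g^{a''}\,n''$, which is exactly the right-hand side of~\eqref{eq:NonOrientMCB:CrossProd}. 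Hence $\Phi$ transports the $(\Z/2\ltimes_\sigma\Mod(S^2,C))$-$(\Z/2\ltimes_\sigma\Mod(S^2,A))$-biset structure of $M^{\pm}(f,C,A)$ to that of $\Z/2\ltimes_\sigma M(f,C,A)$.

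I do not anticipate a real obstacle: once the orientation sign $\varepsilon$ is in place the result is a bookkeeping consequence of the fact that the two extensions~\eqref{eq:Modpm:split} split, together with left-freeness from Proposition~\ref{prop:MCBfree}. The only point demanding care is the consistent use of the two different splittings — on the source and on the range — and of the composition convention $m'fm''=m''\circ f\circ m'$, so that the twists $aa''$ and $a''$ appearing in~\eqref{eq:NonOrientMCB:CrossProd} come out on the correct side; I would fix these conventions at the outset and carry orientations through every step.
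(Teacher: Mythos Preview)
Your proposal is correct and complete. The paper itself gives no proof of this lemma (it ends with a bare \qed), so there is nothing to compare against; your argument is exactly the routine unwinding of the semidirect product relations that the paper leaves to the reader, and your care in distinguishing the two splittings $\sigma_C$ and $\sigma_A$ and in tracking the composition convention is precisely what is needed to make the exponents in~\eqref{eq:NonOrientMCB:CrossProd} come out right.
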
 
  
In the dynamical setting $(S^2,C)=(S^2,A)$ we may
interpret~\eqref{eq:NonOrientMCB:CrossProd} as a semidirect product of
bisets. Denote by $K^{\pm}(S^2,A)$ the set of isotopy classes of
sphere and anti-sphere maps $f\colon (S^2,A)\selfmap$. It is naturally
a semigroup under composition, and we have a short exact sequence of
semigroups
\[1\longrightarrow K(S^2,A)\longrightarrow K^{\pm}(S^2,A)\longrightarrow  \Z/2\longrightarrow 1
\]
that splits. Fixing a splitting
$\sigma \colon \Z/2 \to K^{\pm}(S^2,A) $ we get
$K^{\pm}(S^2,A)\cong \Z/2 \ltimes_\sigma K^{\pm}(S^2,A)$.  Of course
$M^{\pm}(f,C,A)$ is isomorphic to the algebraic associated
freely-oriented mapping class biset $M^{\pm}(B(f))$ consisting of
sphere and anti-sphere bisets; thus we also have
$M^{\pm}(B(f))=\Z/2\ltimes_{\sigma} M(B(f))$ once a splitting is
fixed.

\section{Examples}\label{ss:examples}
We briefly recall the application of Teichm\"uller theory to Thurston
maps; see~\cite{bartholdi-dudko:bc0}*{\S\ref{bc0:ss:canonical decomposition}} and~\cite{selinger:canonical}.

The \emph{Teichm\"uller space} of $(S^2,A)$ is the set $\mathscr T_A$
of complex structures on $(S^2,A)$, or equivalently Riemannian
structures on $S^2\setminus A$ of curvature $-1$. We view it as the
set of homeomorphisms $(S^2,A)\to\hC$ up to isotopy rel $A$ and
post-composition by M\"obius transformations. \emph{Moduli space}
$\mathscr M_A$ is the set of embeddings $A\hookrightarrow\hC$ up to
post-composition by M\"obius transformations. Teichm\"uller space is
contractible, while the fundamental group of $\mathscr M_A$ is
$\Mod(S^2,A)$; there is a natural map
$\mathscr T_A\to\mathscr M_A$, given by restricting homeomorphisms
$(S^2,A)\to\hC$ to $A$, which is a universal covering map with group
$\Mod(S^2,A)$.

Thurston associates with $f\colon(S^2,C)\to(S^2,A)$ a map
$\sigma_f\colon\mathscr T_A\to\mathscr T_C$ defined by pulling back
complex structures through $f$, see~\cite{douady-h:thurston}: given
$h\in\mathscr T_A$ represented by a homeomorphism
$h\colon(S^2,A)\to\hC$, the composition $h\circ f$ defines a
holomorphic atlas on $S^2\setminus C$, with erasable singularities at
$C$; so there exists a homeomorphism $h'\colon S^2\to\hC$ and a
rational map $f_h$ making the following diagram commutative:
\begin{equation}\label{eq:h'}
  \begin{tikzcd}
    (S^2,C)\arrow[dashed]{rr}{h'}\dar{f} & & \hC\arrow[dashed]{d}{f_h}\\
    (S^2,A)\arrow{rr}{h} & & \hC.
  \end{tikzcd}
\end{equation}
Furthermore, $h'$ is uniquely defined up to post-composition by a
M\"obius transformation, and its class in $\mathscr T_C$ depends only
on the class of $h$ in $\mathscr T_A$. The map $\sigma_f$ is defined
by $\sigma_f(h)=h'$. It is a contravariant functor from the category
of spheres $(S^2,A)$ with sphere maps as morphisms to the category of
Teichm\"uller spaces $\mathscr T_A$ with analytic maps as morphisms.

Unless $f$ is a homeomorphism, the map
$\sigma_f\colon\mathscr T_A\to\mathscr T_C$ does not descend to a map
$\mathscr M_A\to\mathscr M_C$. However, define $H_f\le\Mod(S^2,A)$,
the subgroup of \emph{liftable classes}, as
\begin{equation}\label{eq:liftable}
  H_f=\{h\in\Mod(S^2,A)\mid \exists h'\in\Mod(S^2,C)\text{ with
}h\circ f=f\circ h'\}.
\end{equation}
By Proposition~\ref{prop:MCBfinite}
or~\cite{koch-pilgrim-selinger:pullback}*{Proposition 3.1}, the
subgroup $H_f$ has finite index in $\Mod(S^2,A)$, and
Proposition~\ref{prop:MCBfree} implies that the $h'$
in~\eqref{eq:liftable} is unique; see
also~\cite{bartholdi-buff:compactification}. Define then
\[\mathscr W_f=\mathscr T_A/H_f.\]
Then $\mathscr W_f$ is a finite covering of $\mathscr M_A$, say with
covering map $i$, and the map $\sigma_f$ descends to a map
$\overline{\sigma_f}\colon\mathscr W_f\to\mathscr M_C$. We therefore
have a correspondence $(\overline{\sigma_f},i)$, which we call
\emph{modular correspondence}, see~\cite{koch:teichmuller-cfe},
\begin{equation}\label{eq:thurston correspondence}
\mathscr M_C
\overset{\overline{\sigma_f}}\longleftarrow
\mathscr W_f \overset i\longrightarrow  \mathscr M_A
\end{equation}
with the associated biset
(see~\cite{bartholdi-dudko:bc1}*{\S\ref{bc1:ss:biset of
    correspondence} and Equation~\eqref{bc1:eq:Bfi:lmm:FibrCorr}})
\begin{equation}
\label{eq:def:bis:MapClassCorr}
  B((\overline{\sigma_f},i), \dagger,*)=\{(\delta\colon[0,1]\to \mathscr M_C,z\in \mathscr W_f) \mid\delta(0)=\dagger, \delta(1)=\overline{\sigma_f}(z), i(z)=*\}\,/\,{\approx}.
\end{equation}

Choose $\widetilde \dagger\in \mathscr T_C$ and
$\widetilde *\in \mathscr T_ A$ above $\dagger ,*$ respectively; by
definition they are maps $\widetilde \dagger\colon (S^2,C)\to \hC$ and
$\widetilde *\colon (S^2,A)\to \hC$. Using these choices we have
identifications $\pi_1(\mathscr M_C,\dagger)\cong \Mod(S^2,C)$ and
$\pi_1(\mathscr M_A,*)\cong \Mod(S^2,A)$. Indeed, every
$\gamma\in \pi_1(\mathscr M_C,\dagger)$ determines an element of
$\Mod(S^2,C)$ as the composition
\begin{equation}\label{eq:ident:Mod with pi1:C}
C\hookrightarrow(S^2,C)\overset{\widetilde \dagger }{\longrightarrow} \dagger \overset{\widetilde\gamma}{\longrightarrow}\dagger \overset{\widetilde \dagger ^{-1}}{\longrightarrow}(S^2,C)
\end{equation}
with $\dagger$ viewed as a marked Riemann sphere and
$ \dagger \overset{\widetilde\gamma}{\longrightarrow}\dagger$ viewed
as an isotopy of marked Riemann spheres. Similarly, every
$\gamma\in \pi_1(\mathscr M_A,*)$ determines an element of
$\Mod(S^2,A)$ as the composition
\begin{equation}\label{eq:ident:Mod with pi1:A}
  A \hookrightarrow (S^2,A)\overset{\widetilde * }{\longrightarrow} * \overset{\widetilde\gamma}{\longrightarrow}* \overset{\widetilde * ^{-1}}{\longrightarrow}(S^2,A).
\end{equation}

The following result identifies biset elements
$b=(\delta, z)\in B((\overline{\sigma_f},i), \dagger,*)$ with branched
coverings $b\colon (S^2,C)\to (S^2,A)$:
\begin{prop}\label{prop:modular correspondence}
  Suppose that $\widetilde \dagger\in \mathscr T_ C$ and
  $\widetilde *\in \mathscr T_ A$ are fixed as above and suppose that
  $\pi_1(\mathscr M_C,\dagger)\cong \Mod(S^2,C)$ and
  $\pi_1(\mathscr M_A,*)\cong \Mod(S^2,A)$ are identified
  by~\eqref{eq:ident:Mod with pi1:C} and~\eqref{eq:ident:Mod with
    pi1:A}.

  Then the biset of the modular correspondence~\eqref{eq:thurston
    correspondence} is isomorphic to the mapping class biset
  $M(f,C,A)$, see Definition~\ref{def:MapClassBis}, by an isomorphism
  mapping $(\delta,z)\in B((\overline{\sigma_f},i), \dagger,*)$ given
  in the form of~\eqref{eq:def:bis:MapClassCorr} to the following
  sphere map~\eqref{eq:def:b as a S map}. Choose any
  $\widetilde z\in \mathscr T_A$ above $z$. Then the sphere map
  \begin{equation}\label{eq:def:b as a S map}
    b\colon (S^2,C)\overset{\widetilde\dagger }\longrightarrow \dagger \overset{\widetilde\delta}{\longrightarrow}\delta(1)
    \overset{\overline{\sigma_f} (\widetilde z)^{-1}}{\longrightarrow}(S^2,C)
    \overset{f}{\longrightarrow}(S^2,A)
    \overset{\widetilde z}{\longrightarrow}
    * \overset{\widetilde *^{-1}}\longrightarrow (S^2,A)
  \end{equation}
  is independent up to isotopy of the choice of $\widetilde z$. Here
  the map $\widetilde\delta\colon[0,1]\to\mathscr T_C$ is the lift of
  $\delta$ that ends at $\overline{\sigma_f} (\widetilde z)(C))$, and
  represents the continuous deformation of
  $(\hC,\widetilde\dagger(C))$ into
  $(\hC,\overline{\sigma_f} (\widetilde z)(C))$.
\end{prop}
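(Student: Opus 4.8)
The plan is to unwind the definitions and check that the map $(\delta,z)\mapsto b$ described by~\eqref{eq:def:b as a S map} is a well-defined biset isomorphism, by verifying three things: that $b$ is independent of the choice of $\widetilde z$, that the assignment is a bijection of underlying sets, and that it intertwines the left and right actions. Throughout I would use Proposition~\ref{prop:MCBfree} (left-freeness) and the uniqueness of the lift $h'$ in~\eqref{eq:h'}, which identifies $\overline{\sigma_f}$ on Teichm\"uller space with the Thurston pullback.

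First, independence of $\widetilde z$. Two lifts $\widetilde z,\widetilde z'\in\mathscr T_A$ of the same point $z\in\mathscr W_f$ differ by an element $h\in H_f$, the group of liftable classes~\eqref{eq:liftable}. By functoriality of $\sigma_f$ we have $\overline{\sigma_f}(\widetilde z')=\sigma_f(h)\cdot\overline{\sigma_f}(\widetilde z)$ where $\sigma_f(h)=h'$ is the unique lift satisfying $h\circ f=f\circ h'$. Substituting into~\eqref{eq:def:b as a S map}, the homeomorphism $h'$ appears in the composition $\overline{\sigma_f}(\widetilde z')^{-1}\circ f=h'^{-1}\circ\overline{\sigma_f}(\widetilde z)^{-1}\circ f$, and this $h'^{-1}$ is cancelled by the $h$ appearing in $\widetilde z'=h\cdot\widetilde z$ through the relation $h\circ f=f\circ h'$; so the composite map $b$ is unchanged up to isotopy rel $C$. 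The tail of the argument is also where I would pin down that $\widetilde\delta$ has the correct endpoint: $\delta$ is a path in $\mathscr M_C$ from $\dagger$ to $\overline{\sigma_f}(z)$, and $\widetilde\delta$ is its lift to $\mathscr T_C$ starting at $\widetilde\dagger$, whose endpoint is some Teichm\"uller representative of $\overline{\sigma_f}(z)$; choosing $\widetilde z$ above $z$ forces this endpoint to be $\overline{\sigma_f}(\widetilde z)(C)$, which is exactly the normalization used in~\eqref{eq:def:b as a S map}.

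Next, bijectivity and equivariance. Recall from~\cite{bartholdi-dudko:bc1}*{\S\ref{bc1:ss:biset of correspondence}} that the biset $B((\overline{\sigma_f},i),\dagger,*)$ of the correspondence is left-free with left action of $\pi_1(\mathscr M_C,\dagger)$ by precomposition of the path $\delta$ and right action of $\pi_1(\mathscr M_A,*)$ by lifting through the covering $i$ and moving $z$. Under the identifications~\eqref{eq:ident:Mod with pi1:C} and~\eqref{eq:ident:Mod with pi1:A}, precomposing $\delta$ by a loop $\widetilde\gamma$ at $\dagger$ corresponds, in~\eqref{eq:def:b as a S map}, to precomposing $b$ by the mapping class $\widetilde\dagger^{-1}\widetilde\gamma\widetilde\dagger\colon(S^2,C)\selfmap$; this is exactly the left $\Mod(S^2,C)$-action on $M(f,C,A)$. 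Similarly, transporting $z$ along a loop $\widetilde\gamma$ at $*$ changes $\widetilde z$ to $\widetilde\gamma\cdot\widetilde z$ (after lifting to $\mathscr T_A$) and, by the independence argument, introduces a postcomposition of $b$ by $\widetilde *^{-1}\widetilde\gamma\widetilde *\colon(S^2,A)\selfmap$, which is the right action; so the map is a biset morphism. For bijectivity, given any sphere map $b\colon(S^2,C)\to(S^2,A)$ in $M(f,C,A)$, one reconstructs $z$ as the complex structure $\widetilde z\coloneqq \widetilde *\in\mathscr T_A$ (since the right action is transitive, $b=m'fm''$ with $m''$ recovered modulo $H_f$) and $\delta$ as the image in $\mathscr M_C$ of a path of complex structures interpolating the identifications; the inverse assignment then follows from the uniqueness statements in Theorem~\ref{thm:dehn-nielsen-baer+}.

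The main obstacle I expect is bookkeeping the Teichm\"uller-theoretic normalizations consistently: the map $\overline{\sigma_f}$ is only defined on $\mathscr W_f=\mathscr T_A/H_f$ and its interaction with the basepoints $\widetilde\dagger,\widetilde *$, the path lift $\widetilde\delta$, and the arbitrary lift $\widetilde z$ must all be tracked so that the six-term composition in~\eqref{eq:def:b as a S map} genuinely lands in $(S^2,A)$ and not merely in $\hC$ up to an unspecified M\"obius transformation. Everything else is a routine transfer of the general biset-of-a-correspondence formalism of~\cite{bartholdi-dudko:bc1} to this specific correspondence, combined with the facts, already established in Propositions~\ref{prop:MCBfree} and~\ref{prop:MCBfinite}, that $H_f$ has finite index and that the lift $h'$ in~\eqref{eq:h'} is unique.
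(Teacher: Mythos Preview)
Your approach is correct and covers the same ground as the paper's proof, but you work harder than necessary in the second half. The paper's argument for the isomorphism is much shorter: having checked (as you do) that the assignment is well-defined and is a biset morphism, the paper simply observes that both $B((\overline{\sigma_f},i),\dagger,*)$ and $M(f,C,A)$ are left-free and right-transitive, and that they share the \emph{same} group of liftable elements $H_f$. A morphism between left-free, right-transitive bisets is automatically an isomorphism once the right-stabilizers (equivalently, the groups of liftables) coincide, since both bisets are then principal homogeneous under the same $H\times G$-quotient. This replaces your explicit construction of an inverse, which is valid in principle but whose sketch (``reconstruct $z$ as $\widetilde z\coloneqq\widetilde*$ \dots'') is a little muddled as written. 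Your treatment of the independence of $\widetilde z$ is essentially identical to the paper's.
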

\begin{proof}
  Suppose that $\widetilde z' \in \mathscr T_A$ is a different choice
  of a point above $z$. Then $\widetilde z'= m\widetilde z$ for some
  $m\in H_f$, see~\eqref{eq:liftable}, and $f m=m' f$ for some
  $m'\in \Mod(S^2,C)$. By definition of $\sigma_f$, we have
  $\overline{\sigma_f}(m \widetilde z)=m' \sigma(\widetilde z)$;
  therefore substitution of $\widetilde z$ by $\widetilde z'$ does not
  change the isotopy type of~\eqref{eq:liftable}.

  The map in question is a morphism of left-free transitive bisets; it
  is an isomorphism because the group of the bisets have the same
  group of liftable elements.
\end{proof}

\subsection{Twisted rabbits}
We give now a few examples of calculations of the structure of mapping
class bisets, following~\cite{bartholdi-n:thurston}. It was used in
that article to solve ``twisted rabbit'' problems, of the following
kind: consider the ``rabbit'' polynomial $f_R(z)=z^2+c$ for
$c\approx-0.12256+0.74486i$, with post-critical set
$A=\{0,c,c^2+c,\infty\}$. Let $t$ denote a Dehn twist about a simple
closed curve surrounding $c$ and $c^2+c$ (the ``ears'' of the
rabbit). By Thurston's Theorem~\ref{thm:thurston} characterizing
rational maps, the ``twisted rabbit'' $t^n\circ f_R$ is
combinatorially equivalent to a degree-two polynomial, which may then
be normalized in the form $z^2+c$ with $(c^2+c)^2+c=0$; so it is
either $f_R$, $f_C\coloneqq z^2+\overline c$ (the ``corabbit''), or
$f_A\approx z^2-1.7549$ (the ``airplane''). The problem is to
determine which one.

The reformulation of this question is the following. The biset
$M(f_R,A)$ is a left-free $\Mod(S^2,A)$-biset of degree $2$, and can be
computed explicitly by Algorithm~\ref{algo:compute M(f,C,A)}. By
Thurston's Theorem~\ref{thm:thurston}, there are precisely $3$
conjugacy classes (in the sense
of~\cite{bartholdi-dudko:bc1}*{\S\ref{bc1:ss:cc in bisets}}) in
$M(f_R,A)$, containing respectively the rabbit, corabbit and airplane
polynomials. As we shall see in~\cite{bartholdi-dudko:bc4}, the
conjugacy problem is solvable in $M(f_R,A)$.

The answer to the concrete question above is: to determine the
conjugacy class of $t^n\circ f_R$, write $n$ is base $4$, as
$n=\sum_{i\ge0}n_i4^i$ with $n_i\in\{0,1,2,3\}$; if $n\ge0$ then
almost all $n_i=0$ while if $n<0$ then almost all
$n_i=3$. Then~\cite{bartholdi-n:thurston}*{Proposition~4.3}
\begin{equation}\label{eq:rabbit}
  t^n\circ f_R\sim\begin{cases}f_R & \text{ if $n\ge0$ and all }n_i\in\{0,3\},\\
    f_C & \text{ if $n<0$ and all }n_i\in\{0,3\},\\
    f_A & \text{ if some }n_i\in\{1,2\}.\end{cases}
\end{equation}

The equalities in~\eqref{eq:rabbit} follow from an explicit
computation of the structure of $M(f_R,A)$. In this case, it is more
direct to obtain it using Teichm\"uller theory, as it is done
in~\cite{bartholdi-n:thurston}*{\S5}, than to run our general
algorithm.  Moduli space $\mathscr M_A$ may be described as
$\hC\setminus\{0,1,\infty\}$ by identifying $A=\{0,c,c^2+c,\infty\}$
with its cross-ratio $c+1$. The correspondence
$\mathscr M_A\leftarrow\mathscr W_{f_R}\to\mathscr M_A$
from~\eqref{eq:thurston correspondence} descends in this case to the
inverse of a rational map $g\colon\mathscr M_A\leftarrow\mathscr
M_A$. Indeed, consider $h'\colon(S^2,A)\to\hC$ in Teichm\"uller space,
and its image $\sigma_{f_R}(h')=h$. Let $\{0,1,w,\infty\}$ be the
image of $h'$ in moduli space. Then in~\eqref{eq:h'} the rational
function $f_h$ has degree $2$, has critical points at $0,\infty$, and
satisfies
\[f_h(\infty)=\infty,\quad f_h(0)=1,\quad f_h(w)=0
\]
so $f_h(z)=1-z^2/w^2$; and the image $\{0,1,w',\infty\}$ of $h'$ in
moduli space is given by $w'=f_h(1)$ so we have
\[g(w)=1-\frac1{w^2}.\]

The post-critical set of $g$ is $\{0,1,\infty\}$, and $M(S^2,A)$ is
naturally identified with $\pi_1(\hC\setminus\{0,1,\infty\},w)$, by
identifying the loop traced by $w$ in $\hC\setminus\{0,1,\infty\}$
with the mapping class of $S^2\setminus A$ in which the non-critical
preimage of the critical point is dragged along the loop, keeping all
other points in $A$ fixed. We write
$M(S^2,A)=\langle s,t,u\mid u t s\rangle$, with $s$ representing
the positive Dehn twist about a curve surrounding $c^2+c$ and $0$.

\begin{figure}
  \begin{center}\begin{tikzpicture}[x=3.5cm,y=3.5cm]
  \node (0) at (0,0) {$0$};
  \node (1) at (1,0) {$1$};
  \node (m1) at (-1,0) {$-1$};
  \node (*1) at (0.8774388331,0.74486176) [label=left:$f_R$] {$*_1$};
  \node (*2) at (-0.8774388331,-0.74486176) {$*_2$};
  \lasso{(*1)}{4mm}{(1)}{node[right] {$s$}};
  \lasso{(*1)}{4mm}{(0)}{node[right=2pt] {$t$}};
  \lasso[dotted]{(*2)}{4mm}{(m1)}{}
  \draw[dotted] (*2) -- ($(0)!4mm!(*2)$);
  \draw[->] (*1) .. controls +($(0)!0.5!160:(*1)$) and +($(0)!0.5!20:(*1)$) .. node[left] {$f_R\cdot t$} (*2);
  \draw (0,0) ellipse (6cm and 4cm);
  \draw[->,dash pattern=on 0pt off 1cm] (6cm,1cm) -- +(0,-1cm) node [left] {$u$};
  \clip (0,0) ellipse (6cm and 4cm);
  \draw (*1) -- +(1,2);
  \draw[dotted] (*2) -- +(-1,-2);
  \end{tikzpicture}\end{center}
  \caption{Computing the biset of the map $1-w^{-2}$}\label{fig:bnmap}
\end{figure}
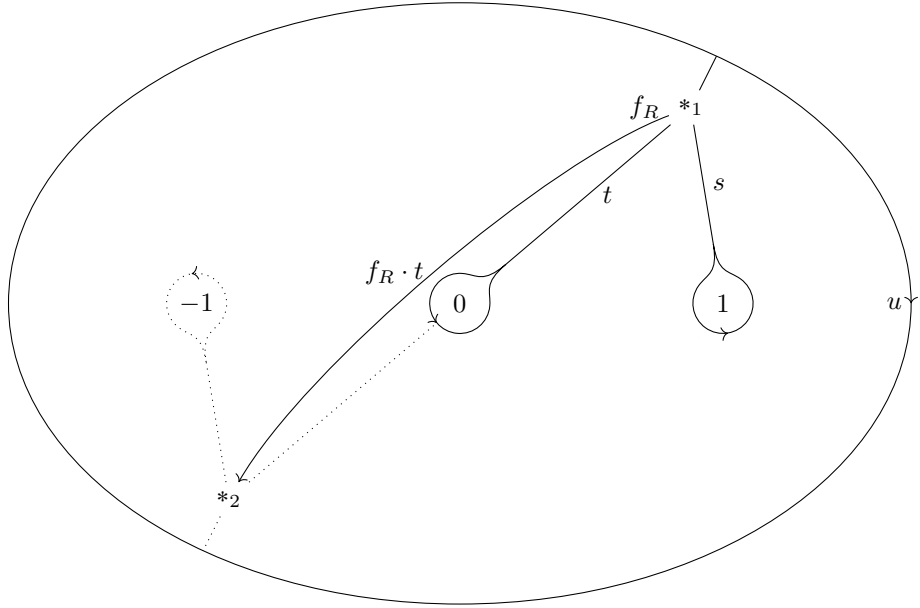

We have $M(f_R,A)\cong B(g)$, and may write its structure as follows,
in basis $\{f_R,f_R t\}$, see Figure~\ref{fig:bnmap}:
\begin{equation}\label{eq:rabbit mcb}
  \begin{alignedat}{3}
    f_R\cdot t&=f_R t,\qquad & f_R\cdot s&=t\cdot f_R,\qquad & f_R\cdot u&=s\cdot f_R t,\\
    f_R t\cdot t&=u\cdot f_R,\qquad & f_R t\cdot s&=f_R t,\qquad & f_R t\cdot u&=f_R.
  \end{alignedat}
\end{equation}

The claims in~\eqref{eq:rabbit} then follow from the initial cases
\[t^{-1}\cdot f_R\sim f_R t^{-1}\sim f_C,\quad t\cdot f_R\sim f_R t\sim f_A\]
which can be checked by hand, and the equations
\begin{alignat*}{2}
  t^{4k}f_R &\sim f_R t^{4k}=u^{2k}f_R\sim f_R u^{2k}=s^k f_R\sim f_R s^k&&=t^k f_R,\\
  t^{4k+1}f_R &\sim f_R t^{4k+1}=u^{2k}f_R t\sim f_R t u^{2k}=s^k f_R t\sim f_R t s^k&&=f_R t,\\
  t^{4k+2}f_R &\sim f_R t^{4k+2}=u^{2k+1}f_R\sim f_R u^{2k+1}=s^{k+1} f_R t\sim f_R t s^{k+1}&&=f_R t,\\
  t^{4k+3}f_R &\sim f_R t^{4k+3}=u^{2k+1}f_R t\sim f_R t u^{2k+1}=s^k f_R\sim f_R s^k&&=t^k f_R
\end{alignat*}
which directly follow from~\eqref{eq:rabbit mcb}.

Similar calculations were performed
in~\cite{bartholdi-n:thurston}*{\S6} to study
$M(z^2+i,A,A)$ with $A=\{i,i-1,-i,\infty\}$. Again this biset is
left-free of degree $2$; and coincides with the biset of the rational
map $g(w)=(1-2/w)^2$ after $\mathscr M_A$ has been identified with
$\hC\setminus\{0,1,\infty\}$. There are infinitely many conjugacy
classes in $M(z^2+i,A,A)$,
see~\cite{bartholdi-n:thurston}*{Corollary~6.11}: two
classes corresponding to rational maps $z^2+i$ and $z^2-i$, and a
$\Z$'s worth of conjugacy classes corresponding to obstructed maps
$f_n$ with $n\in\Z$, as described
in~\cite{bartholdi-dudko:bc0}*{\S\ref{bc0:ex:z2pi}}.

\subsection{Belyi maps}\label{ss:BelyaMaps}
We now describe a class of maps $f$ for which the modular
correspondence~\eqref{eq:thurston correspondence}, and the mapping
class biset, can be quite explicitly understood: sphere maps
$f\colon(S^2,C)\to(S^2,A)$ with $\#A=4$ and such that only three
values in $A$ are critical.  Even though the data are purely
topological, we write $A=\{0,1,\infty,v\}$ by abuse of notation. There
is an underlying map
$f'\colon(S^2,f^{-1}\{0,1,\infty\})\to(S^2,\{0,1,\infty\})$, and $f$
may be considered as a `decorated' version of $f'$ with the extra
marked point $v$.

In~\cite{bartholdi-dudko:bc3} we shall consider the more general
situation of a map $f\colon(S^2,C)\to(S^2,A)$ and its restriction
$f'\colon(S^2,C')\to(S^2,A')$ for $C'\subset C, A'\subset A$; the
philosophy being that $M(f,C,A)$ is an extension of $M(f',C',A')$ via
the natural maps $\Mod(S^2,C)\twoheadrightarrow\Mod(S^2,C')$,
$\Mod(S^2,A)\twoheadrightarrow\Mod(S^2,A')$ and
$M(f,C,A)\twoheadrightarrow M(f',C',A')$, using which $M(f,C,A)$ can
be efficiently encoded using $M(f',C',A')$ and $B(f')$. In our case
$M(f',C',A')$ is trivial and the considerations are much simplified.

We normalize all maps $(S^2,A)\to\hC$ so that
$0\mapsto0,1\mapsto1,\infty\mapsto\infty$, and in this manner we
identify $\mathscr M_A$ with the image of $v$ in
$\hC\setminus\{0,1,\infty\}$. We also identify $f'$ with a
rational map; such maps with critical values in $\{0,1,\infty\}$ are
called \emph{Belyi maps}.

Let us fix a basepoint $*\in S^2\setminus A$, and write
$\pi=\pi_1(S^2\setminus A,*)$. It admits a presentation as a sphere group
\[\pi=\langle a,b,c,d\mid d c b a\rangle\]
generated by loops around the punctures $0,1,\infty,v$
respectively. Then the covering
$f\colon S^2\setminus f^{-1}(A)\to S^2\setminus A$ is described by an
index-$\deg(f)$ subgroup $\pi_f\le\pi$: fix a preimage $*_0$ of $*$,
and let $\pi_f$ denote those loops in $\pi$ that lift to a loop at
$*_0$. Equivalently, $S^2\setminus A\cong\pi\backslash\mathbb D$ and
$S^2\setminus f^{-1}(A)\cong\pi_f\backslash\mathbb D$ with
$f\colon\pi_f\backslash\mathbb D\to\pi\backslash\mathbb D$ the natural
map.

There is a natural map
$\pi=\pi_1(S^2\setminus
A,*)\twoheadrightarrow\pi_1(S^2\setminus\{0,1,\infty\},*)\eqqcolon G$;
let $H$ denote the image of $\pi_f$ in $G$. We have
$G=\pi/\langle d\rangle^\pi$; and $\pi_f\ge\langle d\rangle^\pi$ since
$d$ lifts to $d$ loops at every preimage of $*$; so
$[G:H]=[\pi:\pi_f]=\deg(f)$, and we may also represent $f'$ as the
quotient map $H\backslash\mathbb D\to G\backslash\mathbb D$.

By choosing a path from $*$ to $v$, we may also identify $G$ with
$\pi_1(S^2\setminus\{0,1,\infty\},v)$. Let $\{c_1,\dots,c_d\}\subset
f^{-1}(A)$ be the preimages of $v$. Then $G$ acts by monodromy on
$\{c_1,\dots,c_d\}$, so that we have a transitive permutation
representation $\rho\colon G\to d\perm$. A choice of $c_1\in
f^{-1}(A)$ determines a choice of $H$ as the stabilizer of $c_1$ under
$\rho$.  The pure modular group $\Mod(S^2,A)$ is free of rank $2$, and
is identified with $G$: the element of $G$ represented by a simple
loop of $v$ turning respectively around $0,1,\infty$ corresponds to
the Dehn twist in $\Mod(S^2,A)$ in which $v$ is dragged respectively
around $0,1,\infty$.

We may also present $G$ as a sphere group,
\begin{equation}
\label{eq:def:ModGr:G}
G=\Mod(S^2,A)=\langle s,t,u\mid u t s\rangle.
\end{equation}
It is the group of outer automorphisms of $\pi$ that fixes conjugacy
classes of generators. The outer automorphisms $s,t,u$ are, modulo
interior automorphisms,
\[\begin{array}{cllll}
  s: & a\mapsto a^{ad}, & b\mapsto b, & c\mapsto c, & d\mapsto d^{ad},\\
  t: & a\mapsto a, & b\mapsto b^{db}, & c\mapsto c^{[d,b]}, & d\mapsto d^{db},\\
  u: & a\mapsto a, & b\mapsto b, & c\mapsto c^{dc}, & d\mapsto d^{dc}.
\end{array}\]
These generators are respectively Dehn twists about the curves
$ad,db,dc$.

\begin{lem}\label{lem:Numb left orb of Belyi maps}
  The group of liftable elements $H_f\le G$ is the kernel of
  $\rho\colon G\to d\perm$.

  More precisely, denote by $\Mod^*(S^2,C)$ the group of not
  necessarily pure mapping classes of $(S^2,C)$ that fix
  $0,1,\infty$. Then $\Mod^*(S^2,C)$ extends the left action of
  $\Mod(S^2,C)$ on $M(f,C,A)$ and the new extended action is free and
  transitive. For every $h\in G$ there is a unique $\widetilde h\in
  \Mod^*(S^2,C)$ such that $\widetilde h f= f h$. The element $h$ is
  liftable if and only if $\widetilde h\in \Mod(S^2,C)$.
\end{lem}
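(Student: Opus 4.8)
The plan is to exploit the defining feature of this class of maps — that $f$ is unramified over $v$ — so that mapping classes of $(S^2,A)$, which are all ``point-draggings of $v$'', can be lifted through the covering $f\colon S^2\setminus f^{-1}\{0,1,\infty\}\to S^2\setminus\{0,1,\infty\}$. First I would fix a convenient normalization of representatives: by~\eqref{eq:def:ModGr:G}, $\Mod(S^2,A)\cong G$ is generated by the three Dehn twists $s,t,u$, each dragging $v$ once around one of $0,1,\infty$, so any such twist — hence any $h\in\Mod(S^2,A)$ — is represented by a homeomorphism of $(S^2,A)$ that is the identity on a fixed neighbourhood $U$ of $\{0,1,\infty\}$. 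Lifting such a representative, together with an ambient isotopy joining it to the identity, through the covering $f$ over $S^2\setminus\{0,1,\infty\}$, I obtain a homeomorphism $\widetilde h$ of $S^2$ that is the identity on $f^{-1}(U)$ (so it fixes $f^{-1}\{0,1,\infty\}$, in particular $0,1,\infty$, pointwise), permutes the fibre $\{c_1,\dots,c_d\}$ according to the monodromy permutation $\rho(h)$, and satisfies $f\circ\widetilde h=h\circ f$; thus $\widetilde h\in\Mod^*(S^2,C)$ and $\widetilde h\cdot f=f\cdot h$ in $M(f,C,A)$. Lifting the isotopy between two representatives of $h$ shows that $\widetilde h$ is well defined in $\Mod^*(S^2,C)$, and $h\mapsto\widetilde h$ is a group homomorphism $\Mod(S^2,A)\to\Mod^*(S^2,C)$.

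Next I would establish transitivity and freeness of the extended action. For transitivity: since $M(f,C,A)=\Mod(S^2,C)\,f\,\Mod(S^2,A)$ and $f\cdot m''=\widetilde{m''}\cdot f$ for every $m''\in\Mod(S^2,A)$, every element of $M(f,C,A)$ has the form $m'\,\widetilde{m''}\cdot f$ with $m'\widetilde{m''}\in\Mod^*(S^2,C)$, so $M(f,C,A)=\Mod^*(S^2,C)\cdot f$; dually one checks $\Mod^*(S^2,C)\cdot f\subseteq M(f,C,A)$, which amounts to verifying that $\Mod^*(S^2,C)$ is generated by $\Mod(S^2,C)$ together with the lifts $\widetilde h$ — equivalently, that the only permutations of $\{c_1,\dots,c_d\}$ induced by elements of $\Mod^*(S^2,C)$ are those in $\rho(\Mod(S^2,A))$ — using Propositions~\ref{prop:MCBfree} and~\ref{prop:MCBfinite} to see that $M(f,C,A)$ has exactly $[\Mod(S^2,A):H_f]$ left $\Mod(S^2,C)$-orbits. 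For freeness: if $g\in\Mod^*(S^2,C)$ fixes some $b\in M(f,C,A)$, then by transitivity we may assume $b=f$, so $f\circ g\approx f$; lifting this isotopy, exactly as in the proof of Proposition~\ref{prop:MCBfree}, shows that $g$ permutes the $f$-preimages of a path issuing from a marked point and therefore has finite order, hence by Theorem~\ref{thm:nielsen-thurston} is isotopic to a M\"obius transformation, which fixes $0,1,\infty$ and so equals $\one$. In particular the lift $\widetilde h$ satisfying $\widetilde h\cdot f=f\cdot h$ is the unique element of $\Mod^*(S^2,C)$ with this property.

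Finally, $H_f=\ker\rho$. An element $h\in\Mod(S^2,A)$ is liftable precisely when some pure $h'\in\Mod(S^2,C)$ satisfies $h'\cdot f=f\cdot h$; by uniqueness this forces $h'=\widetilde h$, so $h\in H_f$ if and only if $\widetilde h\in\Mod(S^2,C)$. But $\widetilde h$ already fixes $f^{-1}\{0,1,\infty\}$ pointwise, so it is pure if and only if it fixes every $c_i$, i.e. if and only if $\rho(h)=\one$; since $\rho$ is the coset action of $G$ on $G/H$, this is exactly the assertion $H_f=\ker\rho$ (whence the count of left orbits).

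The step I expect to be the main obstacle is the bookkeeping around $\Mod^*(S^2,C)$: making precise that the stated not-necessarily-pure mapping class group coincides with the group generated by $\Mod(S^2,C)$ and the lifts $\widetilde h$, so that the extended action is genuinely well defined and transitive (rather than merely that $M(f,C,A)$ lies inside a single $\Mod^*$-orbit). The lifting constructions and the finite-order argument are routine, modulo the small-marked-set care already present in the proof of Proposition~\ref{prop:MCBfree}.
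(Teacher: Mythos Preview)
Your argument establishes the substantive content of the lemma correctly: the explicit construction of the lift $\widetilde h$ via covering theory, the deduction that $M(f,C,A)\subseteq\Mod^*(S^2,C)\cdot f$, freeness, uniqueness of $\widetilde h$, and hence $H_f=\ker\rho$. The paper's proof is much terser and differs mainly in the freeness step: rather than re-running the isotopy-lifting argument from Proposition~\ref{prop:MCBfree}, it simply takes a power $h^e$ lying in the \emph{pure} group $\Mod(S^2,C)$, applies Proposition~\ref{prop:MCBfree} directly to get $h^e=\one$, and then finishes with the M\"obius argument. This is cleaner and avoids duplicating that proof; you should adopt it.

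Your flagged ``main obstacle'' deserves a warning rather than more work. You propose to show that $\Mod^*(S^2,C)$ is generated by $\Mod(S^2,C)$ together with the lifts $\widetilde h$, equivalently that every permutation of $\{c_1,\dots,c_d\}$ realised by $\Mod^*(S^2,C)$ lies in $\rho(G)$. That is \emph{false} in general: $\Mod^*(S^2,C)/\Mod(S^2,C)$ surjects onto the full symmetric group on $\{c_1,\dots,c_d\}$, while $\rho(G)$ can be a proper transitive subgroup (e.g.\ cyclic of order $5$ in the example of \S\ref{ss:TensProd:MappClassBis}). In such cases $\Mod^*(S^2,C)\cdot f\not\subseteq M(f,C,A)$, so the lemma's phrase ``free and transitive on $M(f,C,A)$'' is imprecise. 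The paper does not address this either---its proof shows only freeness and declares the rest easy---and the applications use only what you have proved: existence and uniqueness of $\widetilde h$, freeness, and $H_f=\ker\rho$ (giving the count of left orbits as $|\rho(G)|$). So drop the attempt to prove $\Mod^*\cdot f\subseteq M(f,C,A)$; the inclusion you do have, $M(f,C,A)\subseteq\Mod^*(S^2,C)\cdot f$, together with freeness, is all that is needed.
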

\begin{proof}
  We show that the action of $\Mod^*(S^2,C)$ on $M(f,C,A)$ is free;
  the other claims then follow easily. Suppose that $hf=f$ in
  $M(f,C,A)$ for $h\in \Mod^*(S^2,C)$. There is an $e\ge 1$ such that
  $h^e\in \Mod(S^2,C)$. We have $h^e f=f$, so by
  Proposition~\ref{prop:MCBfree} we have $h^e=\one$, so $h$ is
  isotopic to a M\"obius transformation. Since $h$ fixes $0,1,\infty$
  we get $h=\one$ in $\Mod^*(S^2,C)$.
\end{proof}

Fix $f'$-preimages $0',1',\infty'$ of $0,1,\infty$ respectively, and
write $\mathscr T'=\mathscr T_{\{0',1',\infty',c_1,\dots,c_d\}}$ and
$\mathscr M'=\mathscr M_{\{0',1',\infty',c_1,\dots,c_d\}}$. Once
$0,1,\infty$ and the points above them are fixed, the map $f_h$
in~\eqref{eq:h'} is uniquely determined, so we may consider
$\mathscr T',\mathscr M'$ rather than the larger
$\mathscr T_{f^{-1}(A)},\mathscr M_{f^{-1}(A)}$.  In the following
diagram, vertical arrows are coverings:
\begin{equation}\label{eq:diagr:ModCorr}
\begin{tikzcd}
    \mathscr T_C\ar{dd} & \mathscr T'\ar{l}\ar{d} & & \mathscr T_A\ar{ll}\ar{dl}\ar{dd}\\
    & \mathscr M'\ar{dl} & \mathscr W_f\ar[hook]{l}\ar{dr}\\
    \mathscr M_C & & & \mathscr M_A
  \end{tikzcd}
\end{equation}
The space $\mathscr M'$ is naturally a subset of affine space:
\begin{equation}
\label{eq:deef:M'}
\mathscr M'=\{(c_1,\dots,c_d)\in\hC^d\mid c_i\neq 0',1',\infty';c_i\neq c_j\},\end{equation}
and the embedding of $\mathscr W_f$ in $\mathscr M'$ is onto the curve
\[\mathscr W_f=\{(c_1,\dots,c_d)\in\hC^d\mid f(c_1)=\cdots=f(c_d)\neq0,1,\infty\}.
\]
The correspondence in then determined by the map
$\mathscr M'\to\mathscr M_C$, which encodes how $f^{-1}(A)$
corresponds to $C$.

\subsubsection{Products of mapping class bisets}\label{ss:TensProd:MappClassBis}
Write $A=\{0,1,\infty,v\}$, consider the map
$f(z)=z^5\colon(\hC,\{0,1,\infty,f^{-1}(v)\})\to(\hC,A)$, set
$C=\{0,1,\infty,f^{-1}(v)\}=f^{-1}(A)$ and denote by
$\{c_1,\dots,c_5\}$ the five $f$-preimages of $v$. Choose a
homeomorphism $i\colon(\hC,A)\to(\hC,\{c_1,c_2,c_3,c_4\})$. Denote
also by $k=\one\colon(\hC,\{c_1,c_2,c_3,c_4\})\to(\hC,C)$ the map
marking $0,1,\infty,c_5$, and set $j=i k$. Denote also $\Mod(\hC,A)$
by $G$. We shall compute $M(f,C,A)$, $M(j,A,C)$ and $M(j f,A,A)$ and
note that the natural map
$M(j,A,C)\otimes M(f,C,A)\twoheadrightarrow M(j f,A,A)$ is not an
isomorphism of $G$-$G$-bisets.

Since $f$ is a Belyi map, we determine $M(f,C,A)$ using
Lemma~\ref{lem:Numb left orb of Belyi maps}. Since $\rho(G)$ is cyclic
of order $5$, the biset $M(f,C,A)$ is left-free of degree $5$. Also,
every mapping class $n\in G$ is liftable by $f$, but possibly to an
impure class: there exists $n'\in\Mod^*(\hC,\{0,1,\infty,f^{-1}(v)\})$
with $n' f=f n$, and $n'$ acts on $\{c_1,\dots,c_5\}$ by a power of
the cycle $(1,2,3,4,5)$.

The maps $i$ and $k$ are homeomorphisms, so $M(j,A,C)$ is the dual of
the forgetful homomorphism $j^*$ induced on mapping class groups, see
Lemma~\ref{lem:mcb of homeo}. In particular, $M(j,A,C)$ is
left-principal.  The product $M(j,A,C)\otimes M(f,C,A)$ is therefore
left-free of degree $5$.

We claim that the biset $M(j f,A,A)$ is left-principal. Indeed, we may
write $j f=g\ell$ with
$g=z^5\circ i\colon(\hC,A)\to(\hC,\{0,\infty,v\})$ and
$\ell=\one\colon(\hC,\{0,\infty,v\})\to(\hC,A)$. Consider $n\in G$;
then we have $j f n=g \ell n = g \ell = j f$, since
$\Mod(\hC,\{0,\infty,v\})$ is trivial and $\ell^{-1}$ is an erasing
map.

We also remark that the pullback map $\sigma_{j f}=\sigma_{g\ell}$ is
constant, because $\sigma_\ell$ is constant; so the modular
correspondence $\mathscr W_{j f}$ coincides with $\mathscr M_A$, with
induced map $\overline{\sigma_{j f}}$ constant. Its image point is the
cross-ratio of four fifth roots of unity. The
diagram~\eqref{eq:diagr:ModCorr} becomes
\[
  \begin{tikzcd}
    \mathscr T_A\ar{dd} & \mathscr T'\ar{l}{\sigma_j}\ar{d} & & \mathscr T_A\ar{ll}{\sigma_f}\ar{dl}\ar{dd}\\
    & \mathscr M'\ar{dl} & \mathscr W_f\ar[hook]{l}\ar{dr}{5:1}\ar{d}\\
    \mathscr M_A & & \mathscr W_{j f} \ar{ll}{\overline{\sigma_{j f}}}\ar[equal]{r} & \mathscr M_A.
  \end{tikzcd}
\]

\begin{figure}
\begin{center}\begin{tikzpicture}
    \begin{scope}
      \fill[gray!20] (0,3) rectangle +(3,3);
      \fill[gray!20] (3,0) rectangle +(3,3);
      \def\bendota{(6,3) .. controls (8,4) and (10,5) .. (12,3)}
      \def\bendotb{(9,0) .. controls (10,2) and (11,4) .. (9,6)}
      \def\bendtta{(6,3) .. controls (9,3) and (9,3) .. (9,0)}
      \def\bendttb{(9,0) .. controls (7.5,1) and (7,1.5) .. (6,3)}
      \fill[gray!20] \bendtta \bendttb;
      \begin{scope}
        \clip \bendotb -- (6,6) -- (6,0) -- cycle;
        \fill[gray!20] \bendota -- (12,6) -- (6,6) -- cycle;
      \end{scope}
      \begin{scope}
        \clip \bendotb -- (12,6) -- (12,0) -- cycle;
        \fill[gray!20] \bendota -- (12,0) -- (6,0) -- cycle;
      \end{scope}

      \draw[gray,very thick] (0,0) -- (0,6) -- (12,6) -- (12,0) -- cycle;
      \draw[gray,thin] (6,0) -- (6,6);
      \draw[gray,thin] (3,0) -- (3,6);
      \draw[gray,thin] (0,3) -- (6,3);
      \draw[gray,thin,name path=p12a] \bendota;
      \draw[gray,thin,name path=p12b] \bendotb;
      \draw[gray,thin,name path=p23a] \bendtta;
      \draw[gray,thin,name path=p23b] \bendttb;
      \node (A1) at (11,5) {A1};
      \node (A2) at (9.3,3.3) {A2};
      \node (A3) at (6.7,0.7) {A3};
      \node (A4) at (4.5,4.5) {A4};
      \node (A5) at (1.5,1.5) {A5};
      \path[name path=diag] (A2) -- (A3);
      \draw[red] (12,3) ++(110:0.3) -- (A1);
      \draw[red,->] (12,3.3) arc (90:270:0.3);
      \draw[red] (6,3) ++(15:0.3) .. controls (8,3.5) .. (A2);
      \draw[red] (6,3) ++ (-80:0.3) .. controls (6.3,1.5) .. (A3);
      \draw[red,->] (6.3,3) arc (0:360:0.3);
      \draw[red] (6,3) ++ (135:0.3) -- (A4);
      \draw[red] (0,3) ++ (-45:0.3) -- (A5);
      \draw[red,->] (0,2.7) arc (-90:90:0.3);
      \draw[green,name intersections={of=p12a and p12b,by=x12}] (x12) ++(45:0.3) -- (A1);
      \draw[green] (x12) ++(225:0.3) -- (A2);
      \draw[green,->] (x12) ++(0.3,0) arc (0:360:0.3);
      \draw[green,name intersections={of=p23b and diag,by=y23}] (y23) ++(225:0.3) -- (A3);
      \draw[green,->] (y23) ++(0.3,0) arc (0:360:0.3);
      \draw[green] (3,3) ++(45:0.3) -- (A4);
      \draw[green] (3,3) ++(225:0.3) -- (A5);
      \draw[green,->] (3.3,3) arc (0:360:0.3);
      \draw[blue] (9,6) ++(-30:0.3) -- (A1);
      \draw[blue,->] (8.7,6) arc (180:360:0.3);
      \draw[blue] (3,6) ++(-45:0.3) -- (A4);
      \draw[blue] (9,0) ++(75:0.3) .. controls (9.5,2) .. (A2);
      \draw[blue] (9,0) ++ (170:0.3) .. controls (7.5,0.3) .. (A3);
      \draw[blue,>-] (9.3,0) arc (0:180:0.3);
      \draw[blue] (3,0) ++(135:0.3) -- (A5);
      \draw[blue,->] (3.3,0) arc (0:180:0.3);
      \draw[blue,->] (2.7,6) arc (180:360:0.3);
      \draw[black] (12,6) ++(225:0.3) -- (A1);
      \draw[black,->] (11.7,6) arc (180:270:0.3);
      \draw[black,->] (0,5.7) arc (-90:0:0.3);
      \draw[black,->] (12,0.3) arc (90:180:0.3);
      \draw[black,name intersections={of=p23a and diag,by=x23}] (x23) ++(45:0.3) -- (A2);
      \draw[black,->] (x23) ++(0.3,0) arc (0:360:0.3);
      \draw[black] (6,0) ++(45:0.3) -- (A3);
      \draw[black,->] (6.3,0) arc (0:180:0.3);
      \draw[black] (6,6) ++(225:0.3) -- (A4);
      \draw[black,->] (5.7,6) arc (180:360:0.3);
      \draw[black] (0,0) ++(45:0.3) -- (A5);
      \draw[black,->] (0.3,0) arc (0:90:0.3);
    \end{scope}
    \begin{scope}[xshift=6cm,yshift=-3cm,scale=0.4]
      \fill[gray!20] (0,0) rectangle +(6,6);
      \draw[gray,very thick] (0,0) rectangle +(12,6);
      \draw[gray,thin] (6,0) -- (6,6);
      \node (A) at (9,3) {\large A};
      \node[red] at (6,6) [above] {\large $a$};
      \draw[red,thick] (6,6) ++(-45:0.5) -- (A);
      \draw[red,thick,->] (5.5,6) arc (180:360:0.5);
      \node[green] at (12,6) [above right] {\large $c$};
      \draw[green,thick] (12,6) ++(225:0.5) -- (A);
      \draw[green,thick,->] (11.5,6) arc (180:270:0.5);
      \draw[green,thick,->] (0,5.5) arc (-90:0:0.5);
      \node[blue] at (12,0) [below right] {\large $b$};
      \draw[blue,thick] (12,0) ++ (135:0.5) -- (A);
      \draw[blue,thick,->] (12,0.5) arc (90:180:0.5);
      \draw[blue,thick,->] (0.5,0) arc (0:90:0.5);
      \node[black] at (6,0) [below] {\large $d$};
      \draw[black,thick] (6,0) ++ (45:0.5) -- (A);
    \draw[black,thick,->] (6.5,0) arc (0:180:0.5);
    \end{scope}
    \draw[thick,->] (3,-0.5) -- node[below] {$i$} (5.5,-2.5);
    \draw[thick,->] (11.5,-0.5) -- node[right] {$f$} (11,-2.5);
\end{tikzpicture}\end{center}
\caption{Pilgrim's ``blowing up an arc'' subdivision rule}\label{fig:pilgrimmap}
\end{figure}
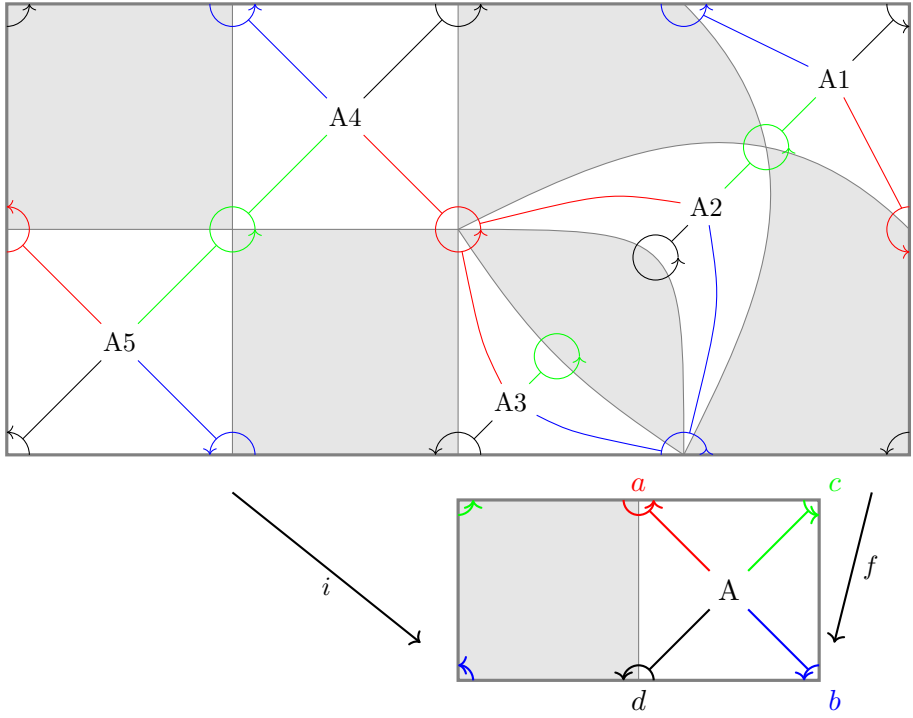

\subsubsection{Kevin Pilgrim's ``blowing up an arc'' map}\label{ss:pilgrim}
We consider another concrete example illustrating
\S\ref{ss:BelyaMaps}: a degree-$5$ self-map $g$, due to Kevin Pilgrim,
which is obtained from a torus endomorphism by blowing up an arc;
see~\cite{bartholdi-dudko:bc0}*{\S\ref{bc0:ex:pilgrim}}, repeated for
convenience in Figure~\ref{fig:pilgrimmap} as the correspondence
$(i,f)$. Note that we exchanged $b$ and $c$ compared
to~\cite{bartholdi-dudko:bc0}*{Figure~\ref{bc0:fig:pilgrimmap}}.

The map $g$ is non-dynamically modeled on the map
$f(z)=z^3((4z+5)/(5z+4))^2$ viewed as a Belyi map $(\hC,C)\to (\hC,A)$
with $A=\{0,1,\infty,v\}$ and $C=\{0,1,\infty, f^{-1}(v)\}$; with such
choices the map $\mathscr M'$ and $\mathscr M_C$ are identical via the
map in Diagram~\eqref{eq:diagr:ModCorr}. Recall that Pilgrim's map is
the map $g\colon(S^2,A)\selfmap$ obtained (topologically) from
$f\colon(\hC,C)\to(\hC,A)$ by identifying $A$ with four preimages
$c_1,\dots,c_4$ of $v$. We thus have a map $\sigma'\colon\mathscr
M_C\approx \mathscr M'\to \mathscr M_A$ given by evaluating the
cross-ratio of the first four coordinates of $\{c_1,\dots,c_5\}\in
\mathscr M'$, see Diagram~\eqref{eq:diagram pilgrim}.

Let $\mathscr M_A\overset{\overline{\sigma_g} }\longleftarrow\mathscr
W_f\overset{i }\longrightarrow\mathscr M_A$ be the correspondence
obtained from $\mathscr M_C\leftarrow\mathscr W_f\to\mathscr M_A$ in
Diagram~\eqref{eq:diagr:ModCorr} by precomposing with $\mathscr
M_A\overset{\sigma'}{\longleftarrow }\mathscr M_C$. The correspondence
we obtain is explicitly given by the pair of maps
\begin{align}
  i(c_1,\dots,c_5) &=f(c_1),\notag\\
  \sigma'(c_1,\dots,c_5) &= \frac{(c_1-c_3)(c_2-c_4)}{(c_1-c_4)(c_2-c_3)}\eqqcolon x;\label{eq:sigma'}
\end{align}
We shall see that it is the modular correspondence of $g$. On the
other hand, we shall also see that $\mathscr W_g$ is not the minimal
correspondence above $\mathscr M_A$ covered by the map $\sigma_g$.
Thus diagram~\eqref{eq:diagr:ModCorr} will become
\begin{equation}\label{eq:diagram pilgrim}
  \begin{tikzcd}
  \mathscr T_A\ar{ddd} & \mathscr T_C\ar{l}\ar{d} & & \mathscr T_A\ar{ll}{\sigma_{f}}\ar{dl}\ar{ddd}\\
  & \mathscr M_C\ar{ddl}[swap]{\sigma'} & \mathscr W_f=\mathscr W_g\ar[hook]{l}\ar{ddr}{5!:1}\ar{ddll}{\overline{\sigma_g}}\ar{d}\\
  & & \mathscr W\ar{dll}\ar{dr}[swap]{30:1}\\
  \mathscr M_A & & & \mathscr M_A.
\end{tikzcd}
\end{equation}

Let us compute the biset of the map $g$. With $\pi=\pi_1(\hC\setminus
A)$, it is the following $\pi$-$\pi$-biset $B(g)$, in the basis
$X=\{1,2,3,4,5\}$:
\begin{equation}
  \begin{aligned}
    a &\mapsto \pair{a,1,1,1,a^{-1}}(1,3,5)(2,4)\\
    b &\mapsto \pair{b^{-1},1,1,b,1}(1,4)(2,5,3)\\
    c &\mapsto \pair{1,1,c,c^{-1},1}(1,2)(3,4)\\
    d &\mapsto \pair{a,b,d,c,1}.
  \end{aligned}\label{eq:Fbiset}
\end{equation}

Let us write $G=\Mod(\hC,A)$. To compute the monodromy representation
$\rho$ of $f$, we simply consider the permutations in the biset
$B(g)$. We present $G$ as a sphere group
$\langle s,t,u\mid u t s\rangle$; the twists $s,t,u$ correspond to a
motion of $v$ around $0,\infty,1$ respectively. Therefore,
$\rho\colon G\to 5\perm$ is given by
\[s\mapsto(1,3,5)(2,4),\qquad t\mapsto(1,4)(2,5,3),\qquad
u\mapsto(1,2)(3,4).
\]
In particular, $\rho$ is onto $5\perm$ so the subgroup of liftables
$H_f=\ker(\rho)$ from~\eqref{eq:def:ModGr:G} has index $120$, and both
the mapping class biset $M(f,C,A)$ and the modular correspondence
$\mathscr W_f$ have degree $120$.

Let us consider the $G$-$G$-biset $M(g,A)\coloneqq\{B(\phi
g\psi)\}/{\sim}$ of isomorphism classes of twists of the biset
$B(g)$. This biset is left-free, and a basis may be found by
considering its distillations, see
Definition~\ref{defn:distillation}. Since $\rho(G)=5\perm$, its
centralizer in $5\perm$ is trivial, so the permutations associated
with the generators $a,b,c$ in the biset $B(\phi g\psi)$ can in a
unique manner be brought respectively to
$(1,3,5)(2,4),(1,4)(2,5,3),(1,2)(3,4)$. The conjugacy classes on the
five entries of $d$ in $B(\phi g\psi)$ are then $\one,a,b,c,d$ in some
order, and this order determines uniquely an element of $5\perm$. All
such orderings may appear, so $M(g,A)$ is also left-free of degree
$120$.

Using a choice of distillations, Algorithm~\ref{algo:compute M(f,C,A)}
lets us compute a presentation for $M(g,A)$, and in this manner
describe the correspondence. We shall not describe $M(g,A)$ in such a
basis, but will content ourselves with giving, for illustration, the
results of the calculation in the form of the lifts of the conjugacy
classes $s^G,t^G,u^G$.

Since $\mathscr M_A$ is a sphere punctured at $\{0,1,\infty\}$, the
modular correspondence is a sphere correspondence. Recall
from~\S\ref{ss:sphere bisets} that to each of the peripheral conjugacy
classes $s^G,t^G,u^G$ can be assigned the multiset $\{(d_i,h_i^G)\mid
i=1,\dots,\ell\}$ of its lifts.

Let us start with $u^G$. The degrees $d_i$ are all the same, and equal
to the order of $\rho(u)$, so there are $60$ cycles of length $2$,
given by the regular action of $\rho(u)$ on $5\perm$. The lifts are
$16\times s^G$, $16\times t^G$, $16\times u^G$, $4\times (s^2)^G$,
$4\times (t^2)^G$, $4\times (u^2)^G$. Similarly, the degrees of the
lifts of $s^G$ and $t^G$ are all $6$ since $\rho(s)$ and $\rho(t)$
have order $6$, and both $s^G$ and $t^G$ lift to $8\times 1^G$,
$4\times (s^5)^G$, $4\times(t^5)^G$ and $4\times (u^5)^G$. We deduce
that the degree of the map $\overline{\sigma_g}\colon\mathscr
W_f\to\mathscr M_A$ is $16\times1+4\times2+4\times5+4\times5=64$,
counting the number with degree and multiplicity of `$(s^n)^G$' that
appear as lifts of $s^G,t^G,u^G$. In particular, the correspondence is
not constant, and moreover is given by a pair of branched coverings
$\mathscr W_f\rightrightarrows\mathscr M_A$.

In this example, there exists a correspondence $\mathscr
M_A\leftarrow\mathscr W\rightarrow\mathscr M_A$ which is a quotient of
$\mathscr M_A\leftarrow\mathscr W_f\rightarrow\mathscr M_A$, and which
also covers $\sigma_g\colon\mathscr T_A\selfmap$. In fact, we shall
give the minimal such $\mathscr W$, and show that it has left-degree
$30$.

The variables $c_2,\dots,c_5$ may be eliminated from $\mathscr W_f$,
yielding a planar projection of the correspondence. It is given by a
polynomial equation $P(v,x)=0$ of degree $30$ in $x$ and $16$ in $v$,
see~\eqref{eq:sigma'}; so that the degree of the map towards
$v=f(c_1)$ is $30$, and the degree of the map towards
$x=[c_1,c_2;c_3,c_4]$ is $16$. The polynomial $P(v,x)$ may be computed
using Gr\"obner bases as follows, in \textsc{Maple} using the package
\textsc{FGb}\footnote{available from \texttt{http://www-polsys.lip6.fr/\char126jcf/FGb/}}:
\begin{verbatim}
with(FGb);
sys := [(c[1]-c[3])*(c[2]-c[4])-x*(c[1]-c[4])*(c[2]-c[3]),
  coeff(16*product(z-c[i],i=1..5)-z^3*(4*z+5)^2+v*(5*z+4)^2,z,i)$i=0..4]:
fgb_gbasis_elim(sys,0,[c[i]$i=1..5], [x,v],{"verb"=3,"index"=2000000}):
P := simplify(%[1]/v):
lprint(P);
\end{verbatim}
One can easily check that $P$ is irreducible, so
$\mathscr M_A\leftarrow\mathscr W\rightarrow\mathscr M_A$ is indeed
the minimal correspondence covered by $\sigma_g$.

Here is the explanation of the existence of a smaller cover $\mathscr
W$, using group theory. The minimality of $\mathscr W$ follows from
the fact that the induced map on conjugacy classes admits an order-$4$
symmetry and no larger one. The cross-ratio map $\sigma'$
from~\eqref{eq:sigma'} admits symmetries:
$\sigma'(c_1,c_2,c_3,c_4,c_5)=\sigma'(c_2,c_1,c_4,c_3,c_5)=\sigma'(c_3,c_4,c_1,c_2,c_5)=\sigma'(c_4,c_3,c_2,c_1,c_5)$. We
consider
\[V=\{1,(1,2)(3,4),(1,3)(2,4),(1,4)(2,3)\}
\]
the Klein group of order $4$, so that $\sigma'$ is invariant under
permutation of its arguments by $V$. We define $\mathscr W=\mathscr
T_A/\rho^{-1}(V)$; then the map $\overline{\sigma_g}\colon\mathscr
W_g\to\mathscr M_A$ descends to a map $\mathscr W\to\mathscr
M_A$. Thus $V$ acts on $\mathscr W_g$, and the quotient is $\mathscr
W$.

The Klein group $V$ acts regularly on the set of distillations of
bisets in $M(g,A)$; this action combines with the left action of $G$
on $M(g,A)$ to give a left action of $V\ltimes G$ on $M(g,A)$, by
impure mapping classes, which is still free, but now of degree
$30$. The quotient biset $V\backslash M(g,A)$ is a left-free
$G$-$G$-biset of degree $30$, and coincides with the biset $B(\mathscr
M_A\leftarrow\mathscr W\rightarrow\mathscr M_A)$ of the quotient
correspondence.

Recall that the map $g\colon(S^2,A)\selfmap$ was obtained from the
doubling map $z\mapsto 2z$ on the torus $\C/\Z+\Z i$, by blowing up an
arc. The Klein group $V$ acts on the torus by $\langle z\mapsto
z+\frac12,z\mapsto z+\frac i2\rangle$, and elements of the biset
$V\backslash M(g,A)$ can be identified with orbits of $V$ under this
action.

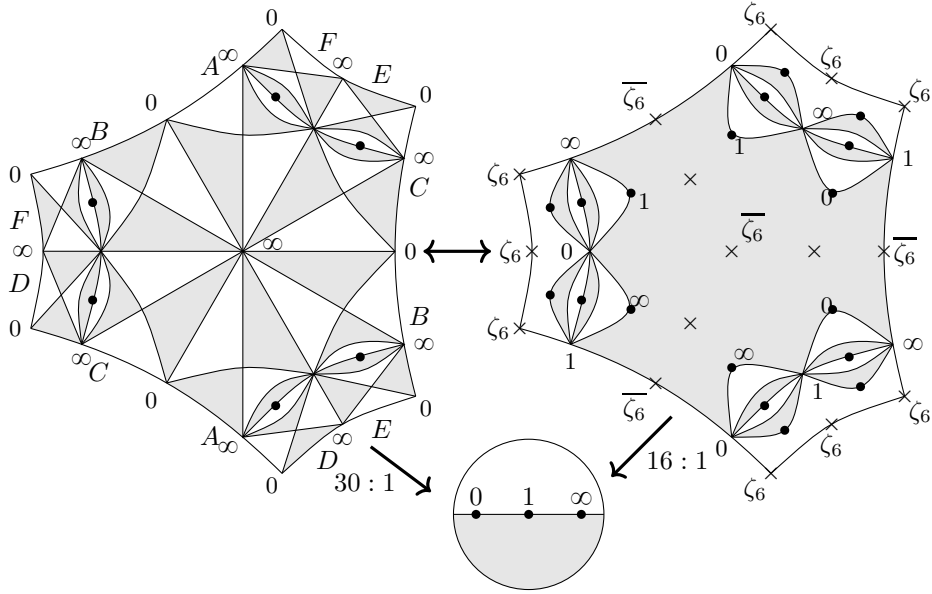
\begin{figure}
\begin{center}
\begin{tikzpicture}
  \gdef\pathSR{(160:3) .. controls (180:2.6) .. (200:3)}
  \gdef\pathRS{(-40:3) .. controls (-15:2) and (15:2) .. (40:3)}
  \gdef\surfacelabels{
    \foreach \a/\x/\y/\z/\t in {0/B/C/F/D,120/A/B/D/E,240/C/A/E/F} {
      \draw[rotate=\a] node at (-20:2.5) {$\x$};
      \draw[rotate=\a] node at (20:2.5) {$\y$};
      \draw[rotate=\a] node at (172:3.0) {$\z$};
      \draw[rotate=\a] node at (188:3.0) {$\t$};
    }
  }
  \begin{scope}
  \surfacelabels
  \gdef\surfacecoords{
  \foreach \a/\i in {0/0,120/1,240/2} {
    \begin{scope}[rotate=\a]
      \path[name path=v0] \pathSR coordinate (R\i);
      \path[name path=w0] (0,0) -- (-3,0);

      \path[name path=v1] \pathRS coordinate (S\i);
      \path[name path=w1] (0,0) -- (3,0);
      \path[name intersections={of=v0 and w0,by=p0},name intersections={of=v1 and w1,by=p1}] (p0) coordinate (P\i) -- (p1) coordinate (Q\i);

      \path[name path=w2] (0,0) -- (30:3);
      \path[name intersections={of=v1 and w2,by=p2}] (p2) coordinate (T\i);
      \path[name path=w2] (0,0) -- (-30:3);
      \path[name intersections={of=v1 and w2,by=p2}] (p2) coordinate (U\i);
    \end{scope}
  }
  \foreach \a/\i/\j in {0/0/1,120/1/2,240/2/0} {
    \begin{scope}[rotate=\a]
      \gdef\pathTU{(T\i) .. controls (60:1.8) .. (U\j)}
      \path[name path=v0] \pathTU;
      \path[name path=w0] (0,0) -- (60:3);
      \path[name intersections={of=v0 and w0,by=p0}] (p0) coordinate (V\i);
    \end{scope}
  }  
  }
  \surfacecoords
  \gdef\surfaceleft{
  \foreach \a/\i/\j/\k in {0/0/1/2,120/1/2/0,240/2/0/1} {
    \begin{scope}[rotate=\a]
      \gdef\pathQV{(Q\i) .. controls (30:1.5) .. (V\i)}
      \clip (0,0) -- \pathQV -- cycle;
      \fill[gray!20] (0,0) -- (T\i) -- (V\i) -- cycle;
    \end{scope}
    \begin{scope}[rotate=\a]
      \gdef\pathVQ{(V\i) .. controls (90:1.5) .. (Q\j)}
      \clip (0,0) -- \pathVQ -- cycle;
      \fill[gray!20] (0,0) -- (U\j) -- (Q\j) -- cycle;
    \end{scope}
    \begin{scope}[rotate=\a]
      \clip (0,0) -- \pathRS -- cycle;
      \clip (S\i) -- \pathQV -- cycle;
      \fill[gray!20] (0,0) -- (T\i) -- (Q\i) -- cycle;
    \end{scope}
    \begin{scope}[rotate=\a]
      \gdef\pathVTa{(V\i) .. controls (50:1.8) and (40:1.7) .. (T\i)}
      \clip (0,0) -- \pathTU -- cycle;
      \fill[gray!20] (S\i) -- \pathVTa -- cycle;
    \end{scope}
    \begin{scope}[rotate=\a]
      \gdef\pathVUa{(V\i) .. controls (70:1.8) and (80:1.7) .. (U\j)}
      \clip (0,0) -- \pathVUa -- cycle;
      \fill[gray!20] \pathVQ -- (U\j) -- cycle;
    \end{scope}
    \begin{scope}[rotate=\a]
      \gdef\pathVUb{(V\i) .. controls (75:2.3) .. (U\j)}
      \clip (0,0) -- \pathVUb -- cycle;
      \fill[gray!20] (P\k) -- \pathTU -- cycle;
    \end{scope}
    \begin{scope}[rotate=\a]
      \gdef\pathVTb{(V\i) .. controls (45:2.3) .. (T\i)}
      \clip (P\k) -- \pathVTb -- cycle;
      \fill[gray!20] (0,0) -- (V\i) -- (S\i) -- (Q\i) -- cycle;
    \end{scope}
    \begin{scope}[rotate=\a]
      \clip (V\j) -- \pathSR -- cycle;
      \fill[gray!20] (T\j) -- (P\i) -- (S\j) -- cycle;
    \end{scope}
    \begin{scope}[rotate=\a]
      \clip (V\j) -- \pathSR -- cycle;
      \fill[gray!20] (V\j) -- (P\i) -- (U\k) -- cycle;
    \end{scope}
    \begin{scope}[rotate=\a]
      \clip (V\k) -- \pathRS -- cycle;
      \fill[gray!20] (U\i) -- (P\j) -- (R\j) -- cycle;
    \end{scope}
  }
  \foreach \a/\i/\j/\k/\c in {0/0/1/2/,120/1/2/0/,240/2/0/1/} {
    \begin{scope}[rotate=\a]
      \draw \pathRS;
      \draw \pathSR;
      \draw (P\i) -- (Q\i);
      \draw (0,0) -- \pathTU -- cycle;
      \draw \pathVUa;
      \draw \pathVUb;
      \draw \pathVTa;
      \draw \pathVTb;
      \draw \pathVQ;
      \draw \pathQV;
      \draw (V\i) -- (R\k);
      \draw (V\i) -- (S\i);
      \draw (U\j) -- (P\k);
      \draw (T\i) -- (P\k);
      \fill (42:2.1) circle (0.06);
      \fill [\c] (78:2.1) circle (0.06);
    \end{scope}
  }
  \node at (0.4,0.1) {\small $\infty$};

  \node[right] at (U0) {\small $\infty$};
  \node[right] at (Q0) {\small $0$};
  \node[right] at (T0) {\small $\infty$};
  \node[above right=-1mm] at (S0) {\small $0$};
  \node[above] at (P2) {\small $\infty$};
  \node[above left=-1mm] at (R2) {\small $0$};
  \node[above left=-1mm] at (U1) {\small $\infty$};
  \node[above left] at (Q1) {\small $0$};
  \node[above] at (T1) {\small $\infty$};
  \node[left] at (S1) {\small $0$};
  \node[left] at (P0) {\small $\infty$};
  \node[left] at (R0) {\small $0$};
  \node[below] at (U2) {\small $\infty$};
  \node[below left] at (Q2) {\small $0$};
  \node[below left=-1mm] at (T2) {\small $\infty$};
  \node[below left=-1mm] at (S2) {\small $0$};
  \node[below] at (P1) {\small $\infty$};
  \node[below right=-1mm] at (R1) {\small $0$};
  }
  \surfaceleft
\end{scope}

\begin{scope}[shift={(6.5,0)}]
  \surfacecoords
  \foreach \a/\i/\j/\k in {0/0/1/2,120/1/2/0,240/2/0/1} {
    \begin{scope}[rotate=\a]
      \clip (0,0) -- \pathTU -- cycle;
      \fill[gray!20] (S\i) -- \pathVTa -- cycle;
    \end{scope}
    \begin{scope}[rotate=\a]
      \clip (0,0) -- \pathTU -- cycle;
      \fill[gray!20] (R\k) -- \pathVUa -- cycle;
    \end{scope}
    \begin{scope}[rotate=\a]
      \gdef\pathVTd{(V\i) .. controls (48:2.6) .. (T\i)}
      \clip (0,0) -- \pathVTd -- cycle;
      \fill[gray!20] (P\k) -- \pathVTb -- (S\i) -- cycle;
    \end{scope}
    \begin{scope}[rotate=\a]
      \gdef\pathVUd{(V\i) .. controls (72:2.6) .. (U\j)}
      \clip (0,0) -- \pathVUd -- cycle;
      \fill[gray!20] (P\k) -- \pathVUb -- (R\k) -- cycle;
    \end{scope}
  }
  \gdef\pathVUc{(V\i) .. controls (95:1.4) .. (U\j)}
  \gdef\pathVTc{(V\i) .. controls (25:1.4) .. (T\i)}
  \begin{scope}
    \foreach \a/\i/\j/\k in {0/0/1/2,120/1/2/0,240/2/0/1} {
      \clip[rotate=\a] \pathVUc -- (T\j) -- (U\k) -- (T\k) -- (U\i) -- (T\i) -- cycle;
      \clip[rotate=\a] \pathVTc -- (U\i) -- (T\k) -- (U\k) -- (T\j) -- (U\j) -- cycle;
      \clip[rotate=\a] \pathRS -- (R\k) -- (S\j) -- (R\i) -- (S\k) -- cycle;
    }
    \fill[gray!20] (0,0) circle (3);
  \end{scope}
  \foreach \a/\i/\j/\k/\c in {0/0/1/2/,120/1/2/0/,240/2/0/1/} {
    \begin{scope}[rotate=\a]
      \draw \pathRS;
      \draw \pathSR;
      \draw \pathTU;
      \draw \pathVUa;
      \draw \pathVUb;
      \draw \pathVUc;
      \draw \pathVUd;
      \draw \pathVTa;
      \draw \pathVTb;
      \draw \pathVTc;
      \draw \pathVTd;
      \fill (42:2.1) circle (0.06);
      \fill [\c] (78:2.1) circle (0.06);
      \path[name path=v0] \pathVQ; \path[name path=w0] (0,0) -- (U\j);
      \fill[name intersections={of=v0 and w0,by=p0}] (p0) circle (0.06);
      \path[name path=v0] \pathQV; \path[name path=w0] (0,0) -- (T\i);
      \fill[name intersections={of=v0 and w0,by=p0}] (p0) circle (0.06);
      \path[name path=v0] (U\j) -- (P\k); \path[name path=w0] (R\k) -- (V\i);
      \fill[name intersections={of=v0 and w0,by=p0}] (p0) circle (0.06);
      \path[name path=v0] (T\i) -- (P\k); \path[name path=w0] (S\i) -- (V\i);
      \fill[name intersections={of=v0 and w0,by=p0}] (p0) circle (0.06);
      \node at (Q\i) {$\times$};
      \node at (P\i) {$\times$};
      \node at (S\i) {$\times$};
      \node at (R\i) {$\times$};
      \node at (0:1.1) {$\times$};
    \end{scope}
  }
  \node at (0,0) {$\times$};
  \node[above right] at (0,0) {$\overline{\zeta_6}$};

  \node[right] at (U0) {\small $\infty$};
  \node[right] at (Q0) {$\overline{\zeta_6}$};
  \node[right] at (T0) {\small $1$};
  \node[above right=-1mm] at (S0) {$\zeta_6$};
  \node[above] at (P2) {$\zeta_6$};
  \node[above left=-1mm] at (R2) {$\zeta_6$};
  \node[above left=-1mm] at (U1) {\small $0$};
  \node[above left] at (Q1) {$\overline{\zeta_6}$};
  \node[above] at (T1) {\small $\infty$};
  \node[left] at (S1) {$\zeta_6$};
  \node[left] at (P0) {$\zeta_6$};
  \node[left] at (R0) {$\zeta_6$};
  \node[below] at (U2) {\small $1$};
  \node[below left] at (Q2) {$\overline{\zeta_6}$};
  \node[below left=-1mm] at (T2) {\small $0$};
  \node[below left=-1mm] at (S2) {$\zeta_6$};
  \node[below] at (P1) {$\zeta_6$};
  \node[below right=-1mm] at (R1) {$\zeta_6$};

  \node[above right] at (25:1.17) {\small $0$};
  \node[above right] at (95:1.17) {\small $1$};
  \node[left] at (145:1.17) {\small $1$};
  \node[left] at (215:1.17) {\small $\infty$};
  \node[below right] at (265:1.17) {\small $\infty$};
  \node[below right] at (335:1.17) {\small $0$};
  \node[above right] at (V0) {\small $\infty$};
  \node[left=1mm] at (V1) {\small $0$};
  \node[below right] at (V2) {\small $1$};
\end{scope}

\begin{scope}[shift={(3.8,-3.5)}]
  \begin{scope}
    \clip (-1,0) rectangle (1,-1);
    \fill[gray!20] (0,0) circle (1);
  \end{scope}
  \draw (0,0) circle (1);
  \draw (-1,0) -- (1,0);
  \fill (-0.7,0) circle (0.06) node[above] {$0$};
  \fill (0,0) circle (0.06) node[above] {$1$};
  \fill (0.7,0) circle (0.06) node[above] {$\infty$};
\end{scope}

\draw[<->,very thick] (2.4,0) -- (3.3,0);
\draw[->,very thick] (1.7,-2.6) -- node[below left=-1mm] {$30:1$} (2.5,-3.2);
\draw[->,very thick] (5.7,-2.2) -- node[below right=-1mm] {$16:1$} (4.9,-3.0);

\end{tikzpicture}
\end{center}
\caption{The modular correspondence of $g$.}
\label{fig:pilgrimcorr}
\end{figure}

We repeat the analysis of the action of peripheral elements on
$M(g,A)/V$, and derive information on $\mathscr W$.  It is readily
checked that the elements $s,t$ act on the coset space $5\perm/V$ as a
product of five $6$-cycles, each carrying the conjugacy classes
$(s^5)^G,(t^5)^G,(u^5)^G,1^G,1^G$; and that $u$ acts as a product of
twelve involutions (with six fixed points).

Therefore, the correspondence $\mathscr W$ has five degree-$6$
punctures above $0$, mapping respectively by degree $5$ to
$0,1,\infty$ and two other points (the sixth roots of unity); it has
similarly five degree-$6$ punctures above $\infty$, mapping
respectively by degree $5$ to $0,1,\infty$ and the sixth roots of
unity; and eighteen punctures above $1$, twelve of degree $2$ and six
of degree $1$, mapping by degree $1$ to $0,1,\infty$ six times
each. The surface $\mathscr W$ has therefore $28$ punctures and its
Euler characteristic is $30\cdot(2-3)=-30$, so $\mathscr W$ has genus
$2$. The correspondence is given in Figure~\ref{fig:pilgrimcorr}, in
the standard description of holomorphic maps by shading the lower half
plane.

The central component of the figure on the right is homeomorphic to a
punctured torus; it maps $4:1$ to the disk, with the boundary mapping
$4:1$, one point (the sixth root of unity $\overline{\zeta_6}$) having
two order-$2$ preimages, and three points having one order-$2$
preimage (marked by a simple $\times$) and two regular preimages.

\subsection{A Thurston map with infinitely generated centralizer}\label{ss:complicated centralizer}
Our last example shows that centralizers of Thurston maps can be
sometimes quite complicated, and in particular not finitely generated
(whence our notion of ``sub-computable''). We will also explicitly
compute a biset of the form $M(f,A,\CC)$.

\subsubsection{General construction}
We consider a Thurston map $f\colon(S^2,A,\CC)\selfmap$ with
$A=A_0\sqcup A_1\sqcup A_2$. The map admits an annular obstruction
$\CC=\{s,t\}$, with $s$ separating $A_0$ from $A_1\cup A_2$ and $t$
separating $A_2$ from $A_0\cup A_1$. The small spheres $S_0,S_1,S_2$
containing $A_0,A_1,A_2$ respectively are fixed by $f$, which acts on
$S_0$ and $S_2$ as the identity and acts on $S_1$ as a rational map of
degree $2$.

All $f$-preimages of $s$ and $t$ map by degree $1$. The curve $s$ has
a unique essential preimage that is isotopic to $s$ while $t$ has
$2\ell\ge 2$ preimages isotopic to $s$ and $2\ell+1\ge 3$ preimages
isotopic to $t$. The Thurston matrix of $f$ is therefore
\[T_{f,\CC}=\begin{pmatrix}1 & 2\ell\\ 0 & 2\ell+1\end{pmatrix}.
\]
The case $\ell=1$ is show in Figure~\ref{fig:infinitely generated}.
  
We assume that all trivial $f$-preimages of $S_0,S_1,S_2$ are of
degree $1$ and all annular $f$-preimages of $S_1,S_2$ are of degree
$2$. (We remark that $S_0$ has no annular preimages.) The degree of
$f$ is thus $4\ell+2$.

The spheres $S_1$ has $\ell$ annular preimages isotopic to $s$ and
$\ell$ annular preimages isotopic to $t$. We denote them by
$U_1,\dots,U_{2\ell}$ in order of increasing distance to
$S_0$. Similarly, $S_2$ has $2\ell$ annular preimages isotopic to $s$
or $t$, denoted $T_1,\dots,T_{2\ell}$ in order of increasing distance
to $S_0$.

Let us next assume that $\#A_1=2$ and that $A_1$ is fixed by
$f$. Therefore, we may normalize the restriction of $f$ to $S_1$ as
$z^2\colon (\hC, \{0,\infty,1,-1\})\selfmap$ so that $1$ and $-1$
correspond to curves $t$ and $s$ respectively. For $i>1$ we normalize
$f\colon \widehat U_i\to \widehat S_1$ as
$z^2\colon (\hC, \{1,-1\})\to (\hC, \{0,\infty,1,-1\})$ so that
$1,-1\in \widehat U_i$ encode curves isotopic to $s$ if $i\le \ell$,
and isotopic to $t$ if $i>\ell$. Finally we normalize
$f\colon \widehat U_1\to \widehat S_1$ as
$z^2\colon (\hC, \{1,i\})\to (\hC, \{0,\infty,1,-1\})$ so that
$1,i\in \widehat U_i$ encode curves isotopic to $s$.

Recall that the restrictions of $f$ to $S_0$ and $S_2$ are set to be
the identity. We assume $\#A_2\ge 3$ and we mark the points
$\infty, x_6,x_7\in A_2$. For $i\le \ell$ we normalize
$f\colon \widehat T_i\to\widehat S_2$ as
$\frac{z^2+x_6}{1+x_6}\colon (\hC,\{1,-1\})\to (\hC,A_2\cup \{1\})$ so
that $x_6, \infty\in A_2$ are the critical values. The points
$1,-1\in \widehat T_i$ encode curves isotopic to $s$ while
$1\in \widehat S_2$ encodes the curve $t$. For $i> \ell$ we normalize
$f\colon\widehat T_i\to \widehat S_2$ as
$\frac{z^2+x_7}{1+x_7}\colon (\hC,\{1,-1\})\to (\hC,A_2\cup \{1\})$ so
that $x_7, \infty\in A_2$ are the critical values. The points
$1,-1\in \widehat T_i$ encodes curves isotopic to $t$ and
$1\in \widehat S_2$ encodes the curve $t$.
 
\subsubsection{The mapping class biset $M(f,A,\CC)$}
We describe $M(f,A,\CC)$ following the recipe
of~\S\ref{ss:DecompOfMCB}.  Write
$\Mod(S^2,A,\CC)=\Mod[e](S^2,A,\CC)\times\Mod[v](S^2,A,\CC)$ with
$\Mod[e](S^2,A,\CC)\cong \Z^{\CC}$ and
$\Mod[v](S^2,A,\CC)\cong \Mod(\widehat S_0)\times \Mod(\widehat
S_1)\times\Mod(\widehat S_2)$.

Let us first compute $G_f\le \Mod[v](S^2,A,\CC)$,
see~\eqref{eq:defn:G_f}. Write
$G_{f}=G_{f,0}\times G_{f,1}\times G_{f,2}$ with
$G_{f,i}\le \Mod(\widehat S_i)$. Since
$f\colon (\widehat S_1, \{0,\infty, 1,-1\})\selfmap$ factors as
$ (\widehat S_1, \{0,\infty, 1,-1\})\overset{z^2}{\longrightarrow}
(\widehat S_1, \{0,\infty, 1\}) \overset{\one}{\longrightarrow}
(\widehat S_1, \{0,\infty, 1,-1\})$, every element in
$\Mod(\widehat S_1)$ is liftable through
$f\colon (S_1, \{0,\infty, 1,-1\})\selfmap$; the lift is trivial in
$\Mod(\widehat S_1)$ but needs not be trivial in
$\Mod(S^2,A,\CC)$. Namely, a Dehn twist about a curve surrounding
$1,-1$ lifts to a Dehn twist around $1$ which encodes $t$. By the same
reasoning, every element in $\Mod(\widehat S_1)$ is liftable trough
$f\colon (\widehat U_i,\{1,-1\})\to (\widehat S_1, \{0,\infty, 1,-1\})$ for $i>1$. On
the other hand, the subgroup of liftable elements in
$\Mod(\widehat S_1)$ under
$f\colon (\widehat U_1,\{1,i\})\to (\widehat S_2,\{0,\infty, 1,-1\})$
has index two in $\Mod(\widehat S_1)$: a Dehn twist about a simple
essential closed curve in $(\widehat S_2,\{0,\infty, 1,-1\})$
separating $1$ from $-1$ lifts to a half twist about the simple closed
curve in $(\widehat U_1,\{1,i\})$ separating $1$ from $i$; this gives
an index two condition. Therefore, $G_{f,1}\le \Mod(\widehat S_1)$ has
index two.

Since
$f\colon (\widehat T_i, \{ 1,-1\})\to (\widehat S_2,A_2\cup \{1\})$
factors as
$ (\widehat T_i, \{0,\infty, 1,-1\})\overset{f}{\longrightarrow}
(\widehat S_2, \{f(0),\infty, 1\}) \overset{\one}{\longrightarrow}
(\widehat S_2, A_2\cup \{ 1\})$, every element in $\Mod(\widehat S_2)$
is liftable trough
$f\colon (\widehat T_i,\{1,-1\})\to (\widehat S_2, A_2\cup \{
1\})$. Since the restrictions of $f$ to $S_0$ and to $S_2$ are the
identity, every element in $\Mod(\widehat S_0)$ and in
$\Mod(\widehat S_2)$ is liftable through the global map
$f\colon (S^2,A,\CC)\selfmap$. Therefore, $G_{f,0}=\Mod(\widehat S_0)$
and $G_{f,2}=\Mod(\widehat S_2)$.

Since all $f$-preimages of $s$ and $t$ map by degree $1$, the group
$\Lambda$ from~\eqref{eq:defn:Lambda} is $\Z^{\CC}$ with the actions
of $\Z^{\CC}$ given by $n_1\cdot b\cdot n_2=
n_1+b+T_{f,\CC}(n_2)$. The map
$\theta_f\colon G_{f,0}\times G_{f,1}\times G_{f,2}\to \Lambda
\cong\Z^{\CC}$ is computed as
\begin{equation}
\label{eq:Ex:Defn:theta_f}
\theta_f(n_0,n_1,n_2) =(\theta'_1(n_1)+(2\ell-1)\theta_{1}(n_1)+ 2\ell \theta_{2,s}(n_2), (2\ell+1)\theta_{1}(n_1)+2\ell \theta_{2,t}(n_2))\in \Z^{\{s,t\}}
\end{equation}
with $\theta'_1,\theta_1, \theta_{2,s},\theta_{2,t}$ defined as follows.

The map $\theta'_1\colon G_{f,1}\to \Z$ corresponds to lifting through
$f\colon (\widehat U_1,\{1,i\})\to (\widehat S_1,\{0,\infty,
1,-1\})$. If $n$ be a positive Dehn twist about an essential simple
closed curve $c$ in $(\widehat S_2,\{0,\infty, 1,-1\})$, then
$\theta'_1(n)=0$ if $c$ does not separate $1$ from $-1$, and
$\theta'_1(n^2)=1$ otherwise. (Recall that in the last case
$n\notin G_{f,1}$).

The map $\theta_{1}\colon G_{f,1}\to \Z$ is the restriction of
$\theta_{1}\colon \Mod(\widehat S_1)\to \Z$ defined in the following
way.  If $n$ be a Dehn twist about an essential simple closed curve
$c$ in $(\widehat S_1,\{0,\infty, 1,-1\})$, then $\theta_{1}(n)=1$ if
$c$ is a peripheral curve around $1$ in
$(\widehat S_1,\{0,\infty, 1\})$, and $\theta_{1}(n)=0$ otherwise. In
the former case $c$ has exactly $2\ell-1$ lifts isotopic to $s$ (not
counting lifting trough
$f\colon (\widehat U_1,\{1,i\})\to (\widehat S_1,\{0,\infty, 1,-1\})$)
and $2\ell+1$ lifts isotopic to $t$. In the latter case none of the
lifts of $c$ is isotopic to a curve in $\{s,t\}$.

Suppose $n$ is a Dehn twist about a curve $c$ in
$(\widehat S_2, A_2\cup\{1\})$. If $c$ be a peripheral curve in
$(\widehat S_2,\{x_6,\infty, 1\})$ around $1$, then
$\theta_{2,s}(n)=1$; otherwise $\theta_{2,s}(n)=0$. In the former case
$c$ has exactly $2\ell$ lifts isotopic to $s$. Similarly, if $c$ is a
peripheral curve in $(\widehat S_2,\{x_7,\infty, 1\})$ around $1$,
then $\theta_{2,t}(n)=1$; otherwise $\theta_{2,s}(n)=0$. In the former
case $c$ has exactly $2\ell$ lifts isotopic to $t$.

Denote by $\subscript{\Mod(\widehat S_1)}M'_{G_{f,1}}$ the mapping
class biset of $f\colon \widehat S_1\selfmap$ with the right action
restricted to $G_{f,1}$. It was already shown that the action of
$G_{f,1}$ on $M'$ is trivial; thus
\[\subscript{\Mod(\widehat S_1)}M'_{G_{f,1}}\cong \subscript{\Mod(\widehat S_1)}\Mod(\widehat S_1)_\one \otimes \subscript{\one}\{\cdot\}_{G_{f,1}}.  \]
Then the decomposition in Theorem~\ref{thm:M
  xproduct} takes the form
\[\subscript{\Mod(S^2,A,\CC)}(\Z^{\CC}\ltimes \Mod(\widehat S_0) \times M'\times \Mod(\widehat S_2))_{\Mod[e](S^2,A,\CC)\times G_f}\otimes \Mod(S^2,A,\CC).\] 

\subsubsection{Computation of the centralizer}
Since the spectrum radius of $\CC$ is greater than $1$, the canonical
obstruction of $f$ contains $\CC$; but since $\CC$ separates $f$ into
rational and finite order maps, $\CC$ is the canonical
obstruction~\cite{pilgrim:combinations}. (Note that every curve in
$S_0$ or $S_2$ is a Levy cycle; but such curves are not part of the
canonical obstruction.) Therefore, $Z(f)$ fixes $\CC$ and we may write
\[Z(f)\le \Mod(S^2,A,\CC)\cong \Mod(\widehat S_0)\times \Mod(\widehat S_1)\times
  \Mod(\widehat S_2)\times \Z^{\CC}.
\] 
Furthermore, the projection of $Z(f)$ into $\Mod(\widehat S_1)$ is trivial
because the restriction of $f$ to $S_1$ is a rational
self-map. Therefore, $Z(f)$ is a subgroup of $\Mod(S_0)\times \Mod(S_2)\times\Z^\CC$. 

Consider an element
$(n_0,n_1,n_2,v)\in \Mod(\widehat S_0)\times\Mod(\widehat
S_1)\times\Mod(\widehat S_2)\times\Z^\CC$. Then
$(n_0,n_1,n_2,v)\in Z(f)$ if and only if
$(n_0,n_1,n_2,v)\cdot f=f\cdot (n_0,n_1,n_2,v)$; namely, if $n_1=\one$
and $v=\theta(n_0,n_1,n_2)+T_{f,\CC}(v)$. Writing $v=(v_s,v_t)$, we
obtain the equations $\theta_{2,s}(n_2)=\theta_{2,t}(n_2)=-v_t$.
Therefore, $Z(f)$ is isomorphic to
\[\Mod(\widehat S_0)\times \Z^{\{s\}}\times  \{n\in \Mod(\widehat S_2)\mid \theta_{2,s}(n_2)= \theta_{2,t}(n_2)\}.
\]
It is easy to see that the last factor is infinitely generated, since
it is the kernel of an epimorphism from $\Mod(\widehat S_2)$ to $\Z$. We
compute it explicitly below in one case.

\subsubsection{The case $\ell=1$ and $\#A=7$} 
These seem to produce the Thurston map of smallest degree and size of
critical set whose centralizer is infinitely generated. The map $f$
has degree $6$ and has $7$ marked points, see
Figure~\ref{fig:infinitely generated}. They are labeled as
$A_0=\{x_3,x_4\}$, $A_1=\{x_2,x_5\}$ and $A_2=\{x_1,x_6,x_7\}$.

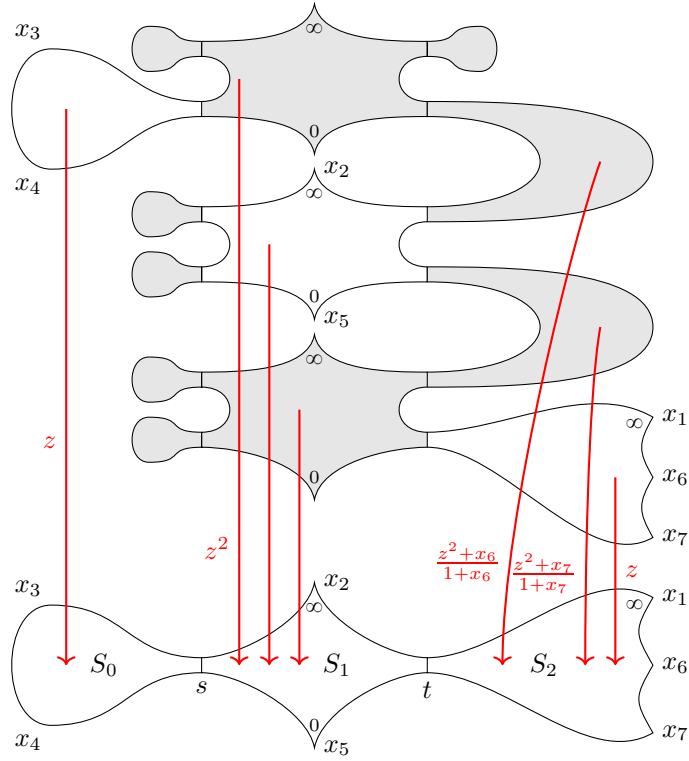
\begin{figure}
  \begin{center}
    \begin{tikzpicture}
      \def\smallsphere{.. controls +(180:0.5) and +(0:0.5) .. +(-0.7,0.2)
        .. controls +(180:0.3) and +(180:0.3) .. +(-0.7,-0.4)
        .. controls +(0:0.5) and +(180:0.5) .. +(0,-0.2) -- +(0,0.0) -- +(0,-0.2)}
      \def\smallrightsphere{-- +(0,0.2) .. controls +(0:0.5) and +(180:0.5) .. +(0.7,0.4)
        .. controls +(0:0.3) and +(0:0.3) .. +(0.7,-0.2)
        .. controls +(180:0.5) and +(0:0.5) .. +(0,0)}
      \draw (-3,7.5) .. controls +(180:1) and +(0:1) .. +(-2,0.7) node[above left] {$x_3$}
      .. controls +(180:0.7) and +(180:0.7) .. +(-2,-0.9) node[below left] {$x_4$}
      .. controls +(0:1) and +(180:1) .. ++(0,-0.2);
      \draw[fill=gray!20] (0,7.5) .. controls +(180:0.5) and +(180:0.5) .. ++(0,0.6) \smallrightsphere --
      ++(0,0.2) .. controls +(180:0.5) and +(-80:0.5) .. +(-1.5,0.5) node[below=4pt] {\scriptsize $\infty$}
      .. controls +(-100:0.5) and +(0:0.5) .. ++(-3,0) \smallsphere -- ++(0,-0.2)
      .. controls +(0:0.5) and +(0:0.5) .. ++(0,-0.6) -- ++(0,-0.2)
      .. controls +(0:0.5) and +(100:0.5) .. +(1.5,-0.5) node[above=3pt] {\scriptsize $0$}
      .. controls +(80:0.5) and +(180:0.5) .. ++(3,0) -- +(0,0.2);
      \draw[fill=gray!20] (0,7.5) .. controls +(0:1) and +(90:0.8) .. +(3,-0.8)
      .. controls +(-90:0.8) and +(0:1) .. ++(0,-1.6) +(0,0.2) .. controls +(0:1) and +(-90:0.3) .. +(1.5,0.8)
      .. controls +(90:0.3) and +(0:1) .. ++(0,1.4) -- ++(0,0.2);	   
      \draw (0,5.3) .. controls +(180:0.5) and +(180:0.5) .. ++(0,0.6) --
      ++(0,0.2) .. controls +(180:0.5) and +(-80:0.5) .. +(-1.5,0.5) node[below=4pt] {\scriptsize $\infty$} node[right] {$x_2$}
      .. controls +(-100:0.5) and +(0:0.5) .. ++(-3,0) -- ++(0,-0.2)
      .. controls +(0:0.5) and +(0:0.5) .. ++(0,-0.6) -- ++(0,-0.2)
      .. controls +(0:0.5) and +(100:0.5) .. +(1.5,-0.5) node[above=3pt] {\scriptsize $0$} node[right] {$x_5$}
      .. controls +(80:0.5) and +(180:0.5) .. ++(3,0) -- +(0,0.2);
      \draw[fill=gray!20] (-3,5.3) \smallsphere ++(0,0.8) \smallsphere
      (0,5.3) .. controls +(0:1) and +(90:0.8) .. +(3,-0.8)
      .. controls +(-90:0.8) and +(0:1) .. ++(0,-1.6) +(0,0.2) .. controls +(0:1) and +(-90:0.3) .. +(1.5,0.8)
      .. controls +(90:0.3) and +(0:1) .. ++(0,1.4) -- ++(0,0.2);
      \draw[fill=gray!20] (0,3.1) .. controls +(180:0.5) and +(180:0.5) .. ++(0,0.6) --
      ++(0,0.2) .. controls +(180:0.5) and +(-80:0.5) .. +(-1.5,0.5) node[below=4pt] {\scriptsize $\infty$}
      .. controls +(-100:0.5) and +(0:0.5) .. ++(-3,0) \smallsphere -- ++(0,-0.2)
      .. controls +(0:0.5) and +(0:0.5) .. ++(0,-0.6) \smallsphere -- ++(0,-0.2)
      .. controls +(0:0.5) and +(100:0.5) .. +(1.5,-0.7) node[above=3pt] {\scriptsize $0$}
      .. controls +(80:0.5) and +(180:0.5) .. ++(3,0) -- ++(0,0.2);
      \draw (0,2.9)
      .. controls +(0:1) and +(-150:1) .. +(3,-1.2) node[right] {$x_7$}
      .. controls +(120:0.4) and +(-120:0.4) .. +(3,-0.4) node[right] {$x_6$}
      .. controls +(120:0.4) and +(-120:0.4) .. +(3,0.4) node[right] {$x_1$} node[below left=-1mm and 0mm] {\scriptsize $\infty$}
      .. controls +(150:1) and +(0:1) .. ++(0,0.2);
      \draw (0,0.1) .. controls +(180:0.5) and +(-80:0.5) .. +(-1.5,1) node[below=4pt] {\scriptsize $\infty$} node[right] {$x_2$}
      .. controls +(-100:0.5) and +(0:0.5) .. ++(-3,0)
      .. controls +(180:1) and +(0:1) .. +(-2,0.7) node[above left] {$x_3$}
      .. controls +(180:0.7) and +(180:0.7) .. +(-2,-0.9) node[below left] {$x_4$}
      .. controls +(0:1) and +(180:1) .. ++(0,-0.2) node[below] {$s$} -- +(0,0.2) ++(0,0)
      .. controls +(0:0.5) and +(100:0.5) .. +(1.5,-1) node[above=3pt] {\scriptsize $0$} node[right] {$x_5$}
      .. controls +(80:0.5) and +(180:0.5) .. ++(3,0) node[below] {$t$} -- +(0,0.2) ++(0,0)
      .. controls +(0:1) and +(-150:1) .. +(3,-0.8) node[right] {$x_7$}
      .. controls +(120:0.5) and +(-120:0.5) .. +(3,0.1) node[right] {$x_6$}
      .. controls +(120:0.5) and +(-120:0.5) .. +(3,1.0) node[right] {$x_1$} node[below left=-1mm and 0mm] {\scriptsize $\infty$}
      .. controls +(150:1) and +(0:1) .. ++(0,0.2);
      \draw[<-,red,thick] (-4.8,0) -- node[left,pos=0.4] {$z$} +(0,7.4);
      \node at (-4.3,0) {$S_0$};
      \draw[<-,red,thick] (-2.5,0) -- node[left,pos=0.2] {$z^2$} +(0,7.8);
      \draw[<-,red,thick] (-2.1,0) -- +(0,5.6);
      \draw[<-,red,thick] (-1.7,0) -- +(0,3.4);
      \node at (-1.2,0) {$S_1$};
      \draw[<-,red,thick] (1.0,0) .. controls +(90:2) and +(-110:1) .. node[left,pos=0.2] {$\frac{z^2+x_6}{1+x_6}$} +(1.3,6.7);
      \node at (1.55,0) {$S_2$};
      \draw[<-,red,thick] (2.1,0) .. controls +(90:1) and +(-100:1) .. node[left,pos=0.3] {$\frac{z^2+x_7}{1+x_7}$} +(0.2,4.5);
      \draw[<-,red,thick] (2.5,0) -- node[right,pos=0.5] {$z$} +(0,2.5);
    \end{tikzpicture}
  \end{center}
  \caption{A Thurston map with infinitely generated centralizer}\label{fig:infinitely generated}
\end{figure}

To compute the centralizer of $f$, we write down a presentation of $B(f)$, and
compute some relations in its mapping class biset. We set
\[G=\langle x_1,x_2,x_3,x_4,x_5,x_6,x_7\mid x_1x_2x_3x_4x_5x_6x_7\rangle,\]
write $s=x_3x_4$ and $t=x_2x_3x_4x_5$, and in a basis
$\{\ell_1,\dots,\ell_7\}$ we compute the presentation
\begin{equation}\label{eq:infinite centralizer}
  \begin{aligned}
  x_1 &=\pair{1,s,s^{-1},t,t^{-1},x_1}(2,3)(4,5),\\
  x_2 &=\pair{1,1,s^{-1},x_2s,t^{-1},t}(1,2)(3,4)(5,6),\\
  x_3 &=\pair{x_3,1,1,1,1,1},\\
  x_4 &=\pair{x_4,1,1,1,1,1},\\
  x_5 &=\pair{1,1,x_5,1,1,1}(1,2)(3,4)(5,6),\\
  x_6 &=\pair{1,1,1,1,1,x_6}(2,3),\\
  x_7 &=\pair{1,1,1,1,1,x_7}(4,5),
\end{aligned}
\end{equation}
giving $s=\pair{s,1,1,1,1,1}$ and $t=\pair{1,s,s^{-1},t,t^{-1},t}$.
We write $\sigma,\tau,\alpha,\beta$ for Dehn twists about $s,t,x_1x_6$
and $x_6x_7$ respectively; their actions on $G$ are given respectively
by
\begin{align*}
  \sigma:\qquad & x_3\mapsto x_3^s,\;x_4\mapsto x_4^s,\\
  \tau:\qquad & x_2\mapsto x_2^t,\;x_3\mapsto x_3^t,\;x_4\mapsto x_4^t,\;x_5\mapsto x_5^t,\\
  \alpha:\qquad & x_1\mapsto x_1^{t x_6t^{-1}},\;x_6\mapsto x_6^{t^{-1}x_1t x_6},\\
  \beta:\qquad & x_6\mapsto x_6^{x_6x_7},\;x_7\mapsto x_7^{x_6x_7},
\end{align*}
all other generators being fixed. Naturally
$[\sigma,\alpha]=[\tau,\alpha]=[\sigma,\beta]=[\tau,\beta]=1$ while
$\langle\alpha,\beta\rangle$ is a free group of rank $2$. We then compute
\begin{xalignat*}{2}
  B(f)\cdot\sigma&\cong\sigma\cdot B(f),& B(f)\cdot\tau&\cong\sigma^2\tau^3\cdot B(f),\\
  B(f)\cdot\alpha&\cong\alpha\cdot\sigma^2 B(f),& B(f)\cdot\beta&\cong\beta\cdot B(f).
\end{xalignat*}
For the second equality, the recursion of
$\sigma^{-2}\tau^{-3}\cdot B(f)\cdot\tau$ in basis
$\{s^2t^3\ell_1,st^3\ell_2,st^3\ell_3,t^2\ell_4,t^2\ell_5,\ell_6\}$
coincides with~\eqref{eq:infinite centralizer}, while for the third
equality, the recursion of
$\sigma^{-2}\alpha^{-1}\cdot B(f)\cdot\alpha$ in basis
$\{s^2\ell_1,s^2\ell_2,\ell_3,\dots,\ell_6\}$ coincides
with~\eqref{eq:infinite centralizer}.

Consider the homomorphism $\phi\colon\langle\alpha,\beta\rangle\to\Z$
which counts the total exponent in $\alpha$ of a word; it is the
quotient by the normal closure of $\beta$. Then, for
$w\in\langle\alpha,\beta\rangle$, the element $w\sigma^m\tau^n$
belongs to the centralizer of $f$ if and only if
$(m,n)=(m+2n+\phi(w),3n)$, if and only if $n=0$ and
$w\in\ker(\phi)$. Therefore,
\[Z(f)=\langle\sigma\rangle\times\ker(\phi)=\langle\sigma\rangle\times\langle\beta,\beta^\alpha,\beta^{\alpha\beta},\dots\rangle\cong\Z\times F_\infty.\]

These calculations were checked using the computer algebra program
\textsc{Gap}~\cite{gap4:manual}, and its package \textsc{Img},
specially designed to manipulate wreath recursions. The commands issued were:
{\footnotesize\begin{verbatim}
gap> m := NewSphereMachine("x1=<,x3*x4,x4^-1*x3^-1,x2*x3*x4*x5,x5^-1*x4^-1*x3^-1*x2^-1,x1>(2,3)(4,5)",
	"x2=<,,x4^-1*x3^-1,x2*x3*x4,x5^-1*x4^-1*x3^-1*x2^-1,x2*x3*x4*x5>(1,2)(3,4)(5,6)",
	"x3=<x3,,,,,>","x4=<x4,,,,,>","x5=<,,x5,,,>(1,2)(3,4)(5,6)",
	"x6=<,,,,,x6>(2,3)","x7=<,,,,,x7>(4,5)","x1*x2*x3*x4*x5*x6*x7");;
gap> g := StateSet(m);;
gap> AssignGeneratorVariables(g);; s := x3*x4;; t := x2*s*x5;;
gap> sigma := GroupHomomorphismByImages(g,[x1,x2,x3^s,x4^s,x5,x6,x7]);;
gap> tau := GroupHomomorphismByImages(g,[x1,x2^t,x3^t,x4^t,x5^t,x6,x7]);;
gap> alpha := GroupHomomorphismByImages(g,[x1^(t*x6/t),x2,x3,x4,x5,x6^(x1^t*x6),x7]);;
gap> beta := GroupHomomorphismByImages(g,[x1,x2,x3,x4,x5,x6^x7,x7^(x6*x7)]);;

gap> sigma*m = m*sigma;
true
gap> ChangeFRMachineBasis(tau*m/sigma^2/tau^3,[t^3*s^2,t^3*s,t^3*s,t^2,t^2,t^0]) = m;
true
gap> beta*m = m*beta;
true
gap> ChangeFRMachineBasis(alpha*m/alpha/sigma^2,[s^2,s^2,s^0,s^0,s^0,t^0]) = m;
true
\end{verbatim}}

\begin{bibdiv}
\begin{biblist}
\bibselect{math}
\end{biblist}
\end{bibdiv}

\end{document}